\newcommand{\R}{\mathbb{R}}
\newcommand{\Z}{\mathbb{Z}}
\newcommand{\N}{\mathbb{N}}
\newcommand{\h}{\mathbb{H}}
\newcommand{\Ha}{\mathbb{H}}
\DeclareMathOperator{\HNN}{HNN}
\DeclareMathOperator{\up}{up}
\DeclareMathOperator{\Stab}{Stab}
\DeclareMathOperator{\Nbhd}{Nbhd}
\DeclareMathOperator{\Aut}{Aut}
\DeclareMathOperator{\Axis}{Axis}
\DeclareMathOperator{\Isom}{Isom}
\DeclareMathOperator{\BS}{BS}
\DeclareMathOperator{\id}{id}
\DeclareMathOperator{\ev}{ev}
\DeclareMathOperator{\tr}{tr}
\DeclareMathOperator{\SL}{SL}
\DeclareMathOperator{\CAT}{CAT}
\DeclareMathOperator{\Cay}{Cay}
\newcommand{\inv}{^{-1}}
\newtheorem{thm}{Theorem}[subsection]
\renewcommand{\thethm}{%
	\ifnum\value{subsection}>0
	\thesubsection
	\else
	\thesection
	\fi
	.\arabic{thm}%
}
\newtheorem{lemma}[thm]{Lemma}
\newtheorem{cor}[thm]{Corollary}
\newtheorem{prop}[thm]{Proposition}
\newtheorem{question}{Question}
\theoremstyle{definition}
\newtheorem{Def}[thm]{Definition}
\title{Groups Acting on Horocyclic Products}
\author{Noah Caplinger and Daniel Levitin}
\date{\today}
\begin{document}

\begin{abstract}
    Horocyclic products are a well-studied class of metric spaces that provide models for various solvable Lie groups, Baumslag-Solitar groups, and Lamplighter groups. Let $G$ act geometrically on a horocyclic product $X \bowtie Y$ of $\CAT(-\kappa)$ spaces $X,Y$. We show that every such group is either an ascending HNN extension of a finitely-generated virtually nilpotent group, or else is not finitely presented, depending on the connectivity of the visual boundary of $X\bowtie Y$.
\end{abstract}

\maketitle

\vspace{-1cm}

\section{Introduction}





The \textit{horocyclic product} $X\bowtie Y$ of two Gromov-hyperbolic spaces $X$ and $Y$ is the subspace $$X\bowtie Y = \{(x,y) \in X\times Y \mid h_X(x)=-h_Y(y)\},$$ where $h_X$ and $h_Y$ are \textit{height functions} (the negative of Busemann functions), equipped with one of a family of \textit{admissible metrics} arising from a choice of norm on $\R^2$ (See Definitions \ref{def:horocyclic_prod} and \ref{def:MetricOnHorocyclicProduct}). Horocyclic products appear naturally in geometric group theory as sources of \textit{geometric} (properly discontinuous and cocompact) actions of well-known groups. Let $T_n$ denote the regular $(n+1)$-valent tree and $\h^2$ the hyperbolic plane. There are geometric actions \[(\Z/n\Z)\wr \Z\curvearrowright T_n\bowtie T_n, \quad BS(1,n)\curvearrowright \h^2\bowtie T_n, \quad \text{and} \quad \Z^2 \rtimes_A\Z\curvearrowright \h^2\bowtie\h^2, \]
of the lamplighter group $(\Z/n\Z)\wr\Z$, the solvable Baumslag-Solitar group $BS(1,n)=\langle a, t\,|\, tat^{-1}=a^n\rangle$, and $\Z^2\rtimes_A \Z$, for $A \in \SL_2(\Z)$ with $|\tr(A)| > 2$. The group $\Z^2\rtimes_A\Z$ acts isometrically on the Lie group Sol, which carries a left-invariant Riemannian metric with which it is isometric to (some choice of admissible metric on) $\h^2\bowtie \h^2$.  

These three groups display well-known algebraic properties: the lamplighter groups are the prototypical finitely-generated groups that are not finitely-presented, and both $\BS(1,n)$ and $\Z^2 \rtimes_A \Z$ are ascending HNN extensions of abelian groups, the latter being a semidirect product. More generally, there is a class of fundamental groups of mapping of tori of certain Anosov (or expanding) maps on nilmanifolds---which are ascending HNN extensions of nilpotent groups---that act geometrically on horocyclic products---see Subsection \ref{subsec:Construct_millefeuille_example} for details. 

Our main result says that these algebraic properties of groups acting on horocyclic products can be read off from the \textit{visual boundary} of $X\bowtie Y$. In \cite{Ferragut:Visual_Boundary}, Ferragut showed that $X\bowtie Y$ has a visual boundary $\partial (X\bowtie Y)$ independent of choice of metric, and that $\partial (X\bowtie Y)$ is a disjoint union between the \textit{upper boundary} $\partial^u Y=\partial Y\setminus \{\infty_Y\}$, and the \textit{lower boundary} $\partial_lX=\partial X\setminus\{\infty_X\}$, where the boundary points $\infty_Y$ and $\infty_X$ are determined by the Busemann functions defining $X\bowtie Y$. See Section \ref{subsec:FerragutVisualBoundaries} for precise definitions and discussion of Ferragut's theorems.

\begin{Theorem} [manual-num=A, label=thm:boundary_implies_algebra]
    Let $X$ and $Y$ be proper, geodesically-complete $\CAT(-\kappa)$ spaces equipped with height functions based at non-isolated points at infinity. Let $G$ be a finitely-generated group acting geometrically on $X\bowtie Y$ with metric $d_{\bowtie}$ arising from a monotone norm.

    \begin{enumerate}
        \item If neither $\partial_l X$ nor $\partial^u Y$ is connected, then $G$ is not finitely presented.

        \item If exactly one of $\partial_l X$ and $\partial^uY$ is connected, then there is a virtually nilpotent, finitely-generated group $H$ and an injection $f:H\to H$ satisfying $1 < [H:f(H)] < \infty$ so that
        \[G\cong \HNN(H,f) = \langle H, t \mid tht\inv = f(h) \rangle.\]

        \item If both $\partial_l X$ and $\partial^u Y$ are connected, then there is a virtually nilpotent, finitely-generated group $H$ and a finite-index subgroup $K \subset G$ so that $K \cong H\rtimes \Z$.
    \end{enumerate}
\end{Theorem}

One might hope for a stronger conclusion to Part 1, perhaps that $G$ is somehow related to a lamplighter group. This would appear to be difficult, even in the $T_n\bowtie T_n$ case: Cornulier-Fisher-Kashyap \cite{CFK} characterized cocompact, closed subgroups of $\Isom(T_n\bowtie T_n)$, and in particular found lattices not virtually isomorphic to lamplighter groups. 

We actually prove a stronger (and much more verbose) result for Parts 2 and 3 of \cref{thm:boundary_implies_algebra} which essentially says that $H$ is nearly a uniform lattice in a nilpotent Lie group $N_1\times N_2$, and the map $f$ is nearly an Anosov map which expands $N_1$ and contracts $N_2$---see Corollaries \ref{cor:thmA_part2_STRONG} and \ref{cor:thmA_part3_STRONG}. As we show in Subsection \ref{subsec:Construct_millefeuille_example}, HNN extensions of such Anosov maps of lattices $\Gamma \subset N_1\times N_2$ act on horocyclic products, so this result is essentially sharp.

We obtain \cref{thm:boundary_implies_algebra} by upgrading the action of $G$ on $X\bowtie Y$ to a new action of a finite-index subgroup of $G$ on a horocyclic product of \textit{millefeuille spaces}, which we define below. We will later describe how this action on $Z\bowtie W$ allows us to read off algebraic properties of the group $G$.

\begin{Def}[Heintze group]
    A \textit{Heintze group} is a group of the form \[N\rtimes_\alpha \R,\] where $N$ is a simply connected nilpotent Lie group, and $\alpha:\R \to \Aut(N)$ a one parameter family of expanding maps. This means the eigenvalues of $\alpha(t):T_1N \to T_1N$ all lie outside the unit circle for $t>0$.
\end{Def}



Heintze groups will be implicitly endowed with some left-invariant Riemannian metric. Heintze \cite{Heintze} showed that there is such a metric with negative sectional curvature, and that all negatively curved homogeneous manifolds are isometric to some metric on a Heintze group. For example, $\Ha^n$ is isometric to a choice of metric on $\R^{n-1} \rtimes_{e^t} \R$.

\begin{Def}[Millefeuille space]
    Let $X = N\rtimes \R$ be a Heintze group endowed with a left-invariant Riemannian metric of negative curvature, and the height function $h_X(n,t) = t$, and let $k \geq 1$ an integer. The \textit{millefeuille space} $X[k]$ is $$X[k] = \{(x,v) \in X \times T_k \mid h_X(x) = h_{T_k}(v)\},$$ where $T_k$ denotes the $(k+1)$-regular tree. 
\end{Def}

For every left-invariant Riemannian distance on $X$ with negative curvature, there is an associated metric on $X[k]$ which is $\CAT(-\kappa)$. All millefeuille spaces will have such a metric in the sequel. Note also that our definition is somewhat more restrictive than the original definition due to \cite{CCMT}, where the space $X$ is allowed to be any $\CAT(-\kappa)$ space. 

\begin{figure}
    \centering\includegraphics[scale=0.6]{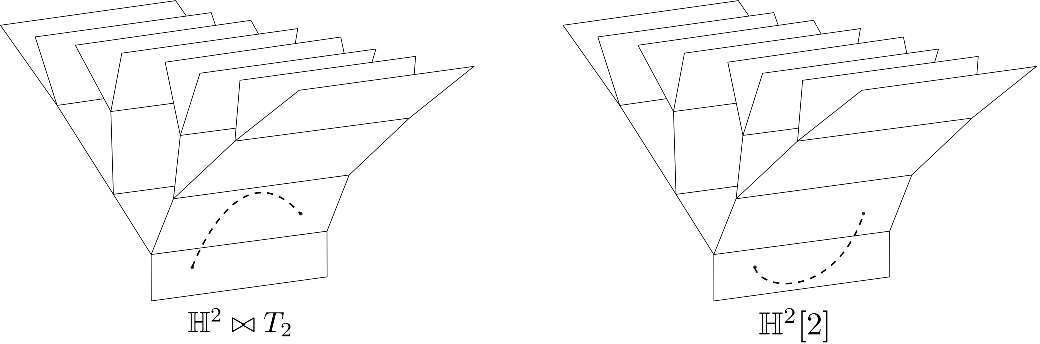}
    \caption{\small The horocyclic product $\h^2\bowtie T_2$ (left) and the millefeuille space $\h^2[2]$ (right) with dotted geodesics. Both are trees of hyperbolic planes, but the $\h^2$-leaves are embedded with opposite heights.\vspace{-.1cm}}
    \label{figure:Difference}
\end{figure}

Note that millefeuille spaces are not horocyclic products---they are defined by the equation $h_X = h_Y$, rather than $h_X = -h_Y$. The difference is illustrated in Figure \ref{figure:Difference}. Note also that in the cases where either $X$ or $T_k$ is a line, the millefeuille space $X[k]$ is (respectively) a tree or a Heintze group. Such millefeuille spaces will be termed \textit{degenerate}.

\begin{Theorem}[manual-num=B, label=thm:Upgrade_to_Millefeuille]
    Let $X\bowtie Y$ be a horocyclic product of proper, geodesically-complete $\CAT(-\kappa)$ spaces equipped with height functions about non-isolated points at infinity, and let $G$ be a finitely-generated group acting geometrically on it. Then there are (possibly degenerate) millefeuille spaces $Z,W$, quasi-isometric to $X$ and $Y$ respectively, so that $Z\bowtie W$ admits a geometric action of a finite-index subgroup of $G$. 
\end{Theorem}

The millefeuille spaces $Z$ and $W$ in Theorem \ref{thm:Upgrade_to_Millefeuille} arise from the structure theory of amenable hyperbolic groups due to Caprace, Cornulier, Monod, and Tessera \cite{CCMT}. A consequence of this theory is that if the stabilizer $\Isom(X)_{\infty_X}\subset\Isom(X)$ of the point $\infty_X$ acts cocompactly on $X$, then there is a millefeuille space $Z$ quasi-isometric to $X$ and a geometric action of $\Isom(X)_{\infty_X}$ on $Z$. If the geometric action $G \curvearrowright X\bowtie Y$ gives a cocompact action $G \curvearrowright X$ by $\Isom(X)_{\infty_X}$, we could apply this fact to produce a millefeuille space $Z$. One therefore might hope that $\Isom(X\bowtie Y)$ consists of maps $f$ satisfying:

\begin{enumerate}
\item [1.]$f$ is a product map of isometries $f_X$ and $f_Y$ of $X$ and $Y$.
\item [2.]$f_X$ and $f_Y$ fix $\infty_X$ and $\infty_Y$ respectively.
\item [3.]$f_X$ and $f_Y$ have opposite height changes (i.e. $f_X$ increases heights in $X$ by the same amount that $f_Y$ decreases heights in $Y$).
\end{enumerate}

\noindent We will denote this group of isometries by $\Isom(X)_\infty \times_{-h} \Isom(Y)_\infty$ (the fibered product along opposite height functions). Conditions 1-3 together imply that $f$ stabilizes the locus $X\bowtie Y\subset X\times Y$ and acts isometrically for any choice of admissible metric. It turns out that the guess $\Isom(X\bowtie Y)=\Isom(X)_\infty \times_{-h} \Isom(Y)_\infty$ is overly optimistic for two reasons.

The first reason is simple. If $X\cong Y$, then the map $(x, y)\mapsto(y,x)$ is an isometry for many choices of metric. So we must restrict our focus to a finite-index subgroup of $\Isom(X\bowtie Y)$, and thus to the action of a finite-index subgroup $K\subset G$, in order to have any chance to find factor maps $f_X$ and $f_Y$.

The second issue with the guess is technical. For a finite-index subgroup of $\Isom(X\bowtie Y)$, we can indeed extract factor maps $f_X$ and $f_Y$ so that $f=(f_X, f_Y)$, but $f_X$ and $f_Y$ may not be isometries of $X$ and $Y$. However, $f_X$ and $f_Y$ do turn out to be isometries of \textit{different metrics} $d'_X$ and $d'_Y$ (see Definition \ref{def:d'_initial}). These new metric spaces $X'=(X, d'_X)$ and $Y'=(Y, d'_Y)$  are quasi-isometric to $X$ and $Y$, and have isometry groups that contain those of $X$ and $Y$, but may in principle be larger. We prove that a finite-index subgroup of $\Isom(X\bowtie Y)$, must satisfy the weaker condition 

\begin{enumerate}
\item[1$'$.]$f$ is a product map of isometries $f_X$ and $f_Y$ of $X'$ and $Y'$.
\end{enumerate}

\noindent as well as Conditions 2 and 3, above.

\begin{Theorem}[manual-num=C, label=thm:IsomGroupOfHorocyclicProduct]
    Let $X\bowtie Y$ satisfy the hypotheses of \cref{thm:boundary_implies_algebra}. If $(x,y)\mapsto(y,x)$ is an isometry of $X\bowtie Y$, then \[ \Isom(X)_\infty \times_{-h} \Isom(Y)_\infty \subset \Isom(X\bowtie Y)\subset \big{(}\Isom(X')_{\infty} \times_{-h} \Isom(Y')_{\infty}\big{)}\rtimes\Z/2\Z, \]\noindent where a generator of the $\Z/2\Z$-factor can be taken to be the map $(x,y)\mapsto(y,x)$. Otherwise \[ \Isom(X)_\infty \times_{-h} \Isom(Y)_\infty \subset \Isom(X\bowtie Y)\subset \Isom(X')_{\infty} \times_{-h} \Isom(Y')_{\infty}. \]
\end{Theorem}

For a horocyclic product $X\bowtie Y$, we will say $X\cong Y$ if the two are isometric by an isometry sending $\infty_X$ to $\infty_Y$. We extend this notation to the spaces $X'$ and $Y'$ as well. Sufficient conditions for $(x,y)\mapsto(y,x)$ to be an isometry are that $X'\cong Y'$ and that the norm defining the admissible metric is symmetric. As a special case, if the norm defining the metric is $L^\infty$, then $X\cong X'$ and $Y\cong Y'$, and so we can compute $\Isom(X\bowtie Y)$ exactly. Besides metrics arising from the $L^\infty$ norm, some other isometry groups have been calculated, such as $\h^2\bowtie \h^2$ with metric arising from (a normalization of) $L^2$.
    
\begin{Corollary}[manual-num=D]
\label{cor:Isom_cor}
    When the norm defining the horocyclic product metric is $L^\infty$, the isometry group $\Isom(X\bowtie Y)$ is isomorphic to  either \[ \Isom(X)_{\infty} \times_{-h} \Isom(Y)_{\infty} \quad \text{or} \quad \big{(}\Isom(X)_{\infty} \times_{-h} \Isom(Y)_{\infty}\big{)}\rtimes\Z/2\Z ,\] where the latter occurs if and only if $X \cong Y$.
\end{Corollary}

The isometry groups in Corollary \ref{cor:Isom_cor} have already been computed in some special cases: Bartholdi--Neuhauser--Woess \cite{HorocyclicProductsOfTrees} computed $\Isom(T_n \bowtie T_n)$ and $\Isom(\Ha^2\bowtie T_n)$ was computed independently by Farb--Mosher \cite{FM2} and Bendikov--Saloff-Coste--Salvatori--Woess \cite{IsomTreebolic} (see also \cite{Caplinger} Proposition 3.2). The first author \cite{Caplinger} also classified lattices in $\Isom(\Ha^2\bowtie T_n)$, thereby proving \cref{thm:boundary_implies_algebra} in the special case $X\bowtie Y = \Ha^2\bowtie T_n$.

\vspace{.4cm}

\noindent \textbf{Proof outline.} We will now outline the proofs of Theorem \ref{thm:boundary_implies_algebra} and Theorem \ref{thm:Upgrade_to_Millefeuille}, assuming \cref{thm:IsomGroupOfHorocyclicProduct}, which we obtain by direct calculation. Let $G$ be discrete and act geometrically on $X\bowtie Y$. A finite-index subgroup $K$ of $G$ is a subgroup of $\Isom(X')_\infty\times_{-h}\Isom(Y')_\infty$. The actions $\Isom(X')_\infty\curvearrowright X'$ and $\Isom(Y')_\infty\curvearrowright Y'$ have properties sufficient to conclude that both groups are amenable and hyperbolic. Using the structure theory of amenable hyperbolic groups, we obtain millefeuille spaces $Z$ and $W$ quasi-isometric to $X$ and $Y$ so that $K$ acts on both $Z$ and $W$. By rescaling the metrics as necessary, we find an action on $Z\bowtie W$, and show that it is geometric, proving \cref{thm:Upgrade_to_Millefeuille}.

The assumptions on the connectedness of the boundaries now tell us how many of $Z$ and $W$ are Heintze groups. In \cref{thm:boundary_implies_algebra} Part three, both $Z$ and $W$ are Heintze groups, so $K$ acts geometrically on a Sol-type group. It is then not difficult to see that the action on heights is discrete, and that the kernel of this action acts geometrically on a nilpotent Lie group. In \cref{thm:boundary_implies_algebra} Part two, exactly one of $Z,W$ is a Heintze group, and the other factor gives a cocompact action on a tree fixing an end. Bass-Serre theory then gives the desired HNN extension, with vertex groups acting geometrically on a nilpotent Lie group. For \cref{thm:boundary_implies_algebra} Part one, we prove the contrapositive: assume $G$ is finitely presented and use the Bieri-Strebel splitting lemma to show that the height function $h:K\to \Z$ splits $K$ as an ascending HNN extension over a finitely-generated subgroup of $\ker(h)$. Using Briton's Lemma, we conclude that any two points on the same horocycle of $Z\bowtie W$ can be connected by a coarse path never going above the horocycle. This is clearly not true if both $Z$ and $W$ are either trees or non-degenerate millefeuille spaces.

\subsection{Classification scheme for actions on horocyclic products.}

By \cref{thm:Upgrade_to_Millefeuille}, geometric actions on horocyclic products can be represented by actions on horocyclic products of Heintze groups, trees, or non-degenerate millefeuille spaces. Among the six possible combinations, we have discussed three: the groups $\Z^2\rtimes_A \Z$ ($|\tr(A)| > 2$), $\BS(1,n)$ and $\Z/n\Z\wr \Z$ are examples of groups modeled on horocyclic products of trees and Heintze groups. In the next subsection, we will exhibit a class of examples acting on a horocyclic product of a Heintze group and a non-degenerate millefeuille space---the simplest example is $\HNN(\Z^2,\tiny{\begin{pmatrix}3 & 1 \\ 1 & 1 \end{pmatrix}})$. The authors are not aware of any examples in the case where one of the factors is non-degenerate millefeuille and the other factor is not a Heintze group. The situation is summarized by the below table.

\vspace{.3cm}

\begin{center}
\begin{tabular}{c|c c c}
     & Heintze & tree & millefeuille \\
     \hline 
    Heintze &  $\Z^2 \rtimes_{\tiny{\begin{pmatrix}2 & 1 \\ 1 & 1\end{pmatrix}}} \Z$ & $\;\;\BS(1,n)\;\;$ & $\HNN(\Z^2,\tiny{\begin{pmatrix}3 & 1 \\ 1 & 1 \end{pmatrix}})$\\[2.5mm]
    tree & $-$ & $\Z/2 \wr\Z$ & ? \\[2.5mm]
    millefeuille & $-$ & $-$ & ?
\end{tabular}
\end{center}

\vspace{.3cm}

\noindent This suggests an obvious question:

\begin{question}
   Are there any discrete groups acting geometrically on a horocyclic product $Z\bowtie W$ where $Z$ is a nondegenerate millefeuille space and $W$ is a tree or non-degenerate millefeuille space? 
\end{question}

By \cref{thm:boundary_implies_algebra}, such a group is not finitely presented, and as we discuss in Section \ref{subsec:use_CCMT}, it must also be amenable.

Farb-Mosher \cite{FM1}, \cite{FM2}, \cite{FM3}, Eskin-Fisher-Whyte \cite{EFW1}, \cite{EFW2}, \cite{EFW3} and Peng \cite{Peng} have proven various quasi-isometric rigidity results for the groups appearing in the above table. Further work of Ferragut \cite{Ferragut:Visual_Boundary}, \cite{Ferragut:Rigidity} (which we draw on heavily in this paper) extended their techniques to a wider class of horocyclic products. Establishing the scope of these techniques (which rely on the horocyclic product structure) was one of our main motivations for this paper---any new examples of groups modeled on horocyclic products would be immediate candidates for a quasi-isometric rigidity theorem.

\subsection{Actions of HNN extensions of nilpotent groups on horocyclic products}
\label{subsec:Construct_millefeuille_example}

We will now describe a general class of ascending HNN extensions of nilpotent groups that act on horocyclic products, simultaneously generalizing the cases of $\BS(1,n)\curvearrowright T_n\bowtie \h^2$ and $\Z^2\ltimes_A\Z\curvearrowright \h^2\bowtie \h^2$.


Let $N_1,N_2$ be simply connected nilpotent Lie groups, and let $\Gamma \subset N := N_1\times N_2$ be a lattice. Let $\alpha_1: \R \to \Aut(N_1)$ and $\alpha_2: \R \to \Aut(N_2)$ be one parameter subgroups so that $\alpha: \R \to \Aut(N)$ given by $\alpha(t)(n_1,n_2) = (\alpha_1(t)(n_1), \alpha_2(t)(n_2))$ satisfies $\alpha(1)(\Gamma) \subset \Gamma$. Then $\alpha(1)$ descends to a map $\beta: N/ \Gamma \to N/\Gamma$ whose mapping torus $M_\beta$ has fundamental group $$\pi_1(M_\beta) = \HNN(\Gamma, \alpha(1)|_\Gamma) = \langle \Gamma, t \mid t\gamma t\inv = \alpha(1)(\gamma) \rangle.$$ This group acts by isometries on $(N_1\times N_2)\rtimes_\alpha \R$, and on the Bass-Serre tree $T_k$, for $k = [\Gamma: \alpha(\Gamma)]$. As observed by Farb-Mosher \cite{FM3}, it in fact acts \textit{geometrically} on the fibered product $$\{((n,t),v) \in (N\rtimes_\alpha \R) \times T_k \mid  t = -h_{T_k}(v)\}.$$ 

This space is not obviously a horocyclic product, since $N\rtimes_\alpha \R$ is not necessarily hyperbolic ($\alpha$ is not assumed to be expanding). We are particularly interested in the case where $\alpha_1$ is expanding and $\alpha_2$ is contracting (that is, $\alpha_2(-t)$ is expanding). In this case, $N\rtimes_\alpha \R$ is a Sol-type group, and is not hyperbolic. Nevertheless, as noted by Dymarz in \cite{Dymarz_Envelopes}, Section 9.2, the above space \textit{is} a horocyclic product with a different decomposition: there is an isometry
\[\{((n,t),v) \in (N\rtimes_\alpha \R) \times T_k \mid  t = -h_{T_k}(v)\} \to (N_1 \rtimes_{\alpha_1} \R) \bowtie (N_2\rtimes_{\alpha_2}\R)[k]\]
given by \[(((n_1,n_2),t), v) \to ((n_1,t),((n_2,t), v)).\]

For $a > 1$, let $\h^2_a$ denote the Heintze group $\R \rtimes_{a^t}\R$ equipped with the left-invariant Riemannian metric $dt^2 + e^{-2a t} dx^2$ and the height function given by projection to the second coordinate. Such spaces are isometric to the hyperbolic plane $\h^2$ rescaled to have curvature $-(\ln{a})^2$. Let $A\in \SL_2(\Z)$ have eigenvalues $\lambda_1 > 1 > \lambda_2$ and set $N_i \rtimes_{\alpha_i} \R = \R \rtimes_{\lambda_i^t} \R$.  When $\Gamma \subset N_1\times N_2  = \R^2$ is a well-chosen copy of $\Z^2$, the above construction gives a geometric action $\Z^2 \rtimes_A \Z \curvearrowright \Ha^2_{\lambda_1}\bowtie \Ha^2_{\lambda_2\inv}$. One can also obtain an action on $\Ha^2 \bowtie \h^2$ by choosing the curvature $-1$ metric on the Heintze groups, although $\Ha^2_{\lambda_1}\bowtie \Ha^2_{\lambda_2\inv}$ is is the more natural choice.

When $N_1\rtimes \R$ is $\R \rtimes_{n^t} \R \cong \Ha^2_n$, $N_2$ is trivial and $\Gamma = \Z \subset \R$, we obtain the example\footnote{The $\Ha^2 \bowtie T_n$ example can be obtained by choosing the curvature $-1$ metric on $\R \rtimes_{n^t} \R$ and declaring that the edges of $T_n$ have length $\ln{n}$ } $\BS(1,n)\curvearrowright \Ha^2_n \bowtie T_n$. The HNN extension $\HNN(\Z^2,\tiny{\begin{pmatrix}3 & 1 \\ 1 & 1 \end{pmatrix}})$ is an example involving a non-degenerate millefeuille space: the $N_i\rtimes_{\alpha_i}\R$ are $\R \rtimes_{\lambda_i^t} \R$, as in the $\Z^2 \rtimes_A\Z$ example, but the HNN extension is not a semidirect product, so the Bass-Serre tree is not a line. Then $\HNN(\Z^2,\tiny{\begin{pmatrix}3 & 1 \\ 1 & 1 \end{pmatrix}})$ acts geometrically on $\h^2_{\lambda_1} \bowtie \h^2_{\lambda_2\inv}[2]$.

We note that all horocyclic products $X\bowtie Y$ arising from this construction have factor spaces $X$ and $Y$ with identical exponential growth parameters (See \cite{Ferragut:Rigidity}, Definition 3.1, (E3)) and thus are not subject to Ferragut's geometric rigidity theorem for horocyclic products \cite{Ferragut:Rigidity}, Theorem A.

\subsection{Overview of the paper}

Section \ref{sec:Prelim} defines horocyclic products, proves basic properties about them, and discusses Ferragut's computation of their visual boundary. Section \ref{sec:CAT-1} establishes the main properties of $\CAT(-\kappa)$ spaces that we will use throughout the work. Section \ref{sec:NewMetricOnX} details the construction of the new metrics $d_X'$ and $d_Y'$ whose isometry groups are used in \cref{thm:IsomGroupOfHorocyclicProduct}. Section \ref{sec:Isom_group} proves \cref{thm:IsomGroupOfHorocyclicProduct}, describing the isometry group of a general horocyclic product. This theorem is then used in Section \ref{sec:Upgrade_to_Millefeuille} to upgrade the action on an arbitrary horocyclic product to a horocyclic product of millefeuille spaces, thereby proving \cref{thm:Upgrade_to_Millefeuille}. This upgraded action is then used in Section \ref{sec:boundary_implies_algebra} to prove \cref{thm:boundary_implies_algebra}. 

\noindent \textbf{Suggested itinerary.} The paper is organized to prove the most general version of the main theorem in logical order. On a first reading, the reader may wish to specialize to the case where the $L^\infty$ norm is used to construct the metric on $X\bowtie Y$. In that case, the metric spaces $X$ and $X'$ are isometric, and the entirety of Section \ref{sec:NewMetricOnX} (which contains several technical metric geometry arguments about $X'$) may be skipped, while replacing $X'$ with $X$ in all subsequent sections.  

\subsection{Acknowledgments}

The authors would like to thank Benson Farb and Tullia Dymarz for their advice and encouragement throughout this project. They would also like to thank Kevin Whyte, Daniel Groves, Tom Ferragut, Amie Wilkinson, Sven Sandfeldt, Wolfgang Woess, Gabriel Pallier, and Anthony Genevois for helpful conversations and suggestions. The authors would also like to thank Benson Farb for his extensive comments on early drafts.

\section{Horocyclic products and their visual boundaries}
\label{sec:Prelim}

In this section we define and prove some basic properties of horocyclic products and their visual boundaries. 

    \subsection{Negative curvature}

    We will be interested in horocyclic products of proper, geodesically-complete $\CAT(-\kappa)$ spaces. Here is the definition of the first two terms.
    
    \begin{Def} \label{def:BasicProperties}
        Let $X$ be a metric space. $X$ is \textit{proper} if the closures of metric balls are compact. $X$ is \textit{geodesic} if between any two points $x$ and $x'$ in $X$, there is a continuous path $\gamma:[0, 1]\to X$, so that $\gamma(0)=x$, $\gamma(1)=x'$, and where the length $\ell(\gamma)=d(x, x')$, where 
        \[\ell(\gamma)=\sup_{0=t_0<t_1<...t_n=1} \sum_{i = 1}^n d(\gamma(t_i), \gamma(t_{i+1})).\]
        Such a path is a \textit{geodesic}. A geodesic metric space is \textit{geodesically-complete} if every geodesic segment can be extended to a bi-infinite path isometric to $\R$.
    \end{Def}

    A space is $\CAT(-\kappa)$ if, informally, it displays at least as much negative curvature as the hyperbolic plane at every scale.

    \begin{Def}\label{def:CAT-1}
        Let $X$ be a proper geodesic metric space. Let $x_1$, $x_2$, $x_3$ be points in $X$. A \textit{curvature-}$\kappa$ \textit{comparison triangle} is a triple of points $X_i$ in the surface $\Sigma$ of constant curvature $\kappa$, so that $d_X(x_i, x_j)=d_{Z}(X_i, X_j)$. For any $y$ on a geodesic between $x_i$ and $x_j$, there is a \textit{comparison point} $Y$ on any choice of comparison triangle so that $d_X(x_i, y)=d_{\Sigma}(X_i, Y)$ and $d_X(x_j, y)=d_{\Sigma}(X_j, Y)$.

        $X$ is said to be $\CAT(\kappa)$ if, for any $y_1$ and $y_2$ on a geodesic triangle in $X$, and $Y_1$ and $Y_2$ their comparison points, $d_{X}(y_1, y_2)\le d_{\Sigma}(Y_1, Y_2)$.
    \end{Def}

    Henceforth, all metric spaces will be assumed to be at least proper, geodesically-complete, and $\CAT(-\kappa)$, unless a subset of these properties is listed explicitly as hypotheses for a result. While researchers are interested in $\CAT(\kappa)$ spaces for $\kappa>0$, for our purposes, when we say $\CAT(-\kappa)$, we implicitly mean $\kappa$ to be positive.

    We will only rarely use the full strength of the $\CAT(-\kappa)$ assumption. Usually, we will only need three properties of $\CAT(-\kappa)$ spaces. The first is the well-known fact that $\CAT(-\kappa)$ spaces are Gromov-hyperbolic, as described below. 

    \begin{Def}
        A proper geodesic metric space $X$ is said to be $\delta$\textit{-hyperbolic} if, for all geodesic triangles in $X$, each side is within the union of the $\delta$-neighborhoods of the other two.

        A space is \textit{Gromov-hyperbolic} if it is $\delta$-hyperbolic for some $\delta$.
    \end{Def}

    The second property that we use is that $\CAT(-\kappa)$ spaces are Busemann.

    \begin{Def}
        Let $X$ be a geodesic metric space. $X$ is \textit{Busemann} if, for any two geodesic segments $\gamma_1$ and $\gamma_2$ parameterized by length, the function $d(\cdot \, , \, \cdot):[0, \ell(\gamma_1)]\times[0,\ell(\gamma_2)]\to \R_{>0}$ is convex.
    \end{Def}
    
    Notice that we can immediately apply this property to rays and lines as well in any space where they exist.
    
    \begin{lemma} \label{lemma:CAT-1ImpliesBusemann}
        Let $X$ be $\CAT(-\kappa)$. Then $X$ is Busemann.
    \end{lemma}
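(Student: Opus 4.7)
The plan is to reduce to the classical $\CAT(0)$ convexity-of-distance lemma and then reformulate it as the Busemann condition. First I would observe that $\CAT(-\kappa)$ implies $\CAT(0)$ for $\kappa > 0$: comparison triangles in the model space of constant curvature $-\kappa$ are thinner than Euclidean comparison triangles with the same side-lengths, so the $\CAT(0)$ inequality is the weaker of the two and follows automatically from the $\CAT(-\kappa)$ inequality. Thus I may assume $X$ is $\CAT(0)$ for the remainder.

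The key technical input I would cite (or sketch from scratch) is the standard lemma that, for any two constant-speed geodesics $c_1, c_2 : [0,1] \to X$, the function $\lambda \mapsto d(c_1(\lambda), c_2(\lambda))$ is convex on $[0,1]$. The proof I have in mind introduces the auxiliary geodesic $\sigma : [0,1] \to X$ from $c_1(0)$ to $c_2(1)$ as a ``diagonal'' splitting the Euclidean comparison quadrilateral into two triangles. Applying the $\CAT(0)$ inequality to the triangle with vertices $c_1(0), c_1(1), c_2(1)$ yields $d(c_1(\lambda), \sigma(\lambda)) \le \lambda \cdot d(c_1(1), c_2(1))$, and applying it to the triangle with vertices $c_1(0), c_2(0), c_2(1)$ yields $d(\sigma(\lambda), c_2(\lambda)) \le (1-\lambda) \cdot d(c_1(0), c_2(0))$. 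Summing via the triangle inequality gives the convexity estimate.

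The Busemann property then follows by restricting to affine line segments in the parameter rectangle. Given $(s_0,t_0), (s_1, t_1) \in [0, \ell(\gamma_1)] \times [0, \ell(\gamma_2)]$, the subsegments of $\gamma_1$ on $[s_0, s_1]$ and $\gamma_2$ on $[t_0, t_1]$ are themselves geodesics; reparameterizing each on $[0,1]$ with constant speed and invoking the lemma yields convexity of $\lambda \mapsto d(\gamma_1(s_\lambda), \gamma_2(t_\lambda))$ along the chosen segment in parameter space, which is precisely the joint convexity of $(s,t) \mapsto d(\gamma_1(s), \gamma_2(t))$. There is no genuine obstacle here: the substantive content sits in the two-triangle comparison argument, and the passage to the Busemann formulation is formal, modulo trivially reversing parameterizations if $s_0 > s_1$ or $t_0 > t_1$.
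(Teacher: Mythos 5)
Your proof is correct, but it takes a different route from the one in the paper. You first pass to $\CAT(0)$ via the standard monotonicity of the $\CAT(\kappa)$ condition in $\kappa$ and then run the classical diagonal argument (essentially Bridson--Haefliger II.2.2): insert the geodesic $\sigma$ from $c_1(0)$ to $c_2(1)$ inside $X$ itself, apply the $\CAT(0)$ inequality to the two resulting triangles---where Euclidean similar triangles give the exact coefficients $\lambda$ and $1-\lambda$---and sum. The paper explicitly acknowledges that this $\CAT(0)$ route exists (it cites the implication $\CAT(-\kappa)\Rightarrow\CAT(0)$ together with a characterization of $\CAT(0)$ spaces) but deliberately avoids it: its proof stays entirely in the curvature-$(-\kappa)$ model surface, reducing to midpoint convexity, gluing two comparison triangles along the diagonal $\overline{X_2X_3}$, checking that the segment between the comparison midpoints actually crosses that diagonal, and then invoking convexity of the distance function in the negatively curved model surface. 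The trade-off is clear: your version leans on the (standard but nontrivial) fact that the model spaces become ``thinner'' as curvature decreases, while the paper's version needs the crossing argument and convexity in $\h^2$ but never leaves the $\CAT(-\kappa)$ world; yours also yields the full convexity statement directly for arbitrary constant-speed parameterizations, whereas the paper upgrades from midpoints by continuity. Both are complete; just be aware that your one-sentence justification of $\CAT(-\kappa)\Rightarrow\CAT(0)$ is itself a comparison theorem that should be cited rather than treated as obvious.
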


    Lemma \ref{lemma:CAT-1ImpliesBusemann} is a corollary of a description of $\CAT(0)$ in \cite{FoertschLytchakSchroeder}, together with the fact that $\CAT(-\kappa)$ implies $\CAT(0)$. We present a simpler proof that works only in $\CAT(-\kappa)$ in Section \ref{sec:CAT-1}.

    The final property that we will use, that $\CAT(-\kappa)$ spaces are vertically convergent, requires some setup to explain. We first recall the boundary of a Gromov-hyperbolic space.

    \begin{Def}
        Let $X$ be Gromov-hyperbolic. For any two geodesic rays $\eta_1$, $\eta_2$ in $X$, we say $\eta_1\sim\eta_2$ if and only if the two are at finite Hausdorff distance (we will say that such rays are \textit{asymptotic to one another}). The collection
        \[\{\text{Geodesic rays in } X \}/\sim\]
        is defined to be the \textit{visual boundary of X}, denoted $\partial X$.

        Since any isometry $f$ of $X$ sends geodesic rays to geodesic rays, and preserves Hausdorff distance, any isometry has an induced map $\partial f$ on the boundary.
    \end{Def}

    See \cite{BridsonHaefliger}, chapter III.H.3 for details on this construction. From the general theory, we recall that between any two boundary points $\eta$ and $\xi$ in $\partial X$, there is a geodesic $\gamma:(-\infty, \infty)\to X$ so that $\gamma:[0, -\infty)=[\eta]$ and $\gamma:[0, \infty)\to X=[\xi]$ (\cite{BridsonHaefliger} Lemma III.H.3.2). 
    
    Gromov-hyperbolic spaces that are also Busemann spaces have a notion of height, measured in the direction of a preferred ray.

    \begin{Def}
        A \textit{Busemann function} on $X$ is a function $$\beta(x)=\lim_{t\to \infty} d(\gamma(t), x)-d(\gamma(t), x_0),$$ where $\gamma(t)$ is a fixed geodesic ray, and $x_0 \in \gamma(\R)$ is a choice of base point. Note that this limit always exists in a Busemann space. A level set of a Busemann function is a \textit{horosphere}, and a sublevel set is a \textit{horoball}. The negative of a Busemann function is a \textit{height function}, which we will denote $h$.
    \end{Def}

    One may compute directly that Busemann functions and height functions depend on their choice of base point $x_0$ only up to a constant: the differences $\beta(x)-\beta(y)$ and $h(x)-h(y)$ are well-defined and depend only on the choice of ray $\gamma$. Moreover, both functions are 1-Lipschitz by a direct application of the triangle inequality. Heuristically, the Busemann function measures how much closer or further a point $x$ is to $\gamma(\infty)$ than a fixed reference point $x_0$, with negative numbers indicating that $x$ is close to $\gamma(\infty)$ than $x_0$. 

    If $X$ has a height function on it, we will be particularly interested in rays asymptotic to the one that defines the height function.
    
    \begin{Def}
        Let $X$ be Gromov-hyperbolic and have a fixed height function on it defined by a ray $\eta$. A ray $\xi$ is said to be \textit{vertical} if $\xi\in[\eta]$.
    \end{Def}

    Ferragut proves in \cite{Ferragut:Visual_Boundary} Proposition 2.7 that, under the additional assumption that $X$ is Busemann, a ray is vertical if and only if it is parameterized by height, i.e. $\xi:[0, \infty)\to X$ is vertical if and only if $\lim h(\xi(t))=\infty$ and $d(\xi(t_1), \xi(t_2))=|h(\xi(t_1))- h(\xi(t_2))|$. We will often assume vertical rays are parameterized by height tacitly.

    In a general $\delta$-hyperbolic space, pairs of rays that are asymptotic to one another must eventually come within $5\delta$ of one another (\cite{BridsonHaefliger} Lemma III.H.3.3). However, in the sequel, we will require the infimum distance between vertical rays to go to $0$.
    
    \begin{Def}
        \label{def:vertically_Convergent}
        Let $\infty_X\in\partial X$. X is \textit{vertically convergent} (with respect to $\infty_X$) if for every pair of rays $\eta_1$, $\eta_2$ in $X$ asymptotic to $\infty_X$, parameterized by length, there is an $s$ so that
        \[ \lim_{t\to\infty} d(\eta_1(t), \eta_2(s+t))= 0.\] 
        Usually, $X$ will be equipped with at least a height function, in which case we will omit reference to the specific boundary point in question. It will always be the ray $\gamma$ with respect to which the height is defined.
    \end{Def}

    As mentioned, this is a property of all $\CAT(-\kappa)$ spaces for any choice of height function.

    \begin{lemma} \label{lemma:CAT-1ImpliesVerticalConvergence}
        Let $X$ be $\CAT(-\kappa)$ with height function $h$ defined by the ray $\infty_X$. Then $X$ is vertically convergent with respect to $\infty_X$.
    \end{lemma}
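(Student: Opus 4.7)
The plan is to reduce vertical convergence to a one-variable convexity statement and then use $\CAT(-\kappa)$ comparison to strengthen the resulting bounded limit to a zero limit.

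\textbf{Matching heights.} By the Ferragut criterion cited just above, every ray asymptotic to $\infty_X$ parameterized by length is automatically parameterized by height, so $h(\eta_i(t)) = h(\eta_i(0)) + t$ for $i = 1, 2$. Choosing $s = h(\eta_1(0)) - h(\eta_2(0))$ (and interchanging $\eta_1, \eta_2$ if $s < 0$) ensures $h(\eta_1(t)) = h(\eta_2(s+t))$ for every $t \ge 0$.

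\textbf{Convexity and boundedness.} Define $f(t) = d(\eta_1(t), \eta_2(s+t))$. Applying Lemma~\ref{lemma:CAT-1ImpliesBusemann} to the geodesic segments $\eta_1|_{[0,T]}$ and $\eta_2|_{[s,s+T]}$ and letting $T \to \infty$ shows $f$ is convex on $[0, \infty)$. Since $\eta_1, \eta_2$ are asymptotic in the sense of having some finite Hausdorff distance $H$, and since $h$ is $1$-Lipschitz and changes at unit speed along $\eta_2$, any point $\eta_2(u)$ within $H$ of $\eta_1(t)$ must satisfy $|u - (s+t)| \le H$; hence $d(\eta_2(u), \eta_2(s+t)) \le H$, and the triangle inequality gives $f(t) \le 2H$. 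A bounded convex function on $[0, \infty)$ is non-increasing, so $f(t) \searrow L$ for some $L \ge 0$.

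\textbf{Forcing the limit to zero via $\CAT(-\kappa)$.} The remaining step is to show $L = 0$, and this is where strict negative curvature enters. In the model plane $\h^2_\kappa$ a direct computation shows that two asymptotic rays matched by height at initial distance $D$ satisfy an explicit exponential decay at rate $e^{-\sqrt{\kappa}\,t}$. To transfer this to $X$, fix $t$ and for each $T \gg t$ consider the quadrilateral with corners $\eta_1(0), \eta_1(T), \eta_2(s+T), \eta_2(s)$; cutting along the diagonal $[\eta_1(0), \eta_2(s+T)]$ decomposes it into two geodesic triangles, each admitting a comparison triangle in $\h^2_\kappa$. Applying the $\CAT(-\kappa)$ inequality to each triangle, and letting $T \to \infty$ so that the cutting diagonal approaches a vertical ray and the two triangles together realize the geometry of two asymptotic rays, yields $f(t) \le \varphi(t)$ for an explicit $\varphi$ inherited from the hyperbolic-plane estimate with $\varphi(t) \to 0$. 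This contradicts $L > 0$.

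The main obstacle is the third step. The $\CAT(-\kappa)$ inequality compares distances only within a single triangle, whereas $f(t)$ naturally lives across a quadrilateral spanned by two asymptotic rays; choosing the right triangle decomposition and controlling the comparison configurations as $T \to \infty$, so that the hyperbolic decay estimate transfers cleanly to $X$, is the technical core of the argument.
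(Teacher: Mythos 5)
Your Steps 1 and 2 are correct: height-matching via Ferragut's criterion, joint convexity of $d(\eta_1(t),\eta_2(s+u))$ from Lemma~\ref{lemma:CAT-1ImpliesBusemann}, and the bound $f(t)\le 2H$ forcing $f$ to be non-increasing with limit $L\ge 0$ are all sound, and this is broadly the same strategy as the paper (convexity plus a comparison configuration whose far vertex escapes to infinity). The gap is in Step 3, at exactly the point you flag as the technical core, and it is not merely a matter of "controlling the configurations": the limiting configuration does not give $\varphi(t)\to 0$ for the points you are comparing. Concretely, the comparison triangle for $\triangle_2=\bigl(\eta_1(0),\eta_2(s),\eta_2(s+T)\bigr)$ has base $f(0)$ and long sides of lengths $T$ and $T+\sigma_T$, where $\sigma_T=d(\eta_1(0),\eta_2(s+T))-T\in[0,f(0)]$ is non-increasing, say $\sigma_T\downarrow\sigma$. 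As $T\to\infty$ the two long sides converge to rays $r_A,r_B$ in $\h^2_\kappa$ asymptotic to an ideal point $\omega$ with Busemann offset $\beta_\omega(A)-\beta_\omega(B)=\sigma$. The points you compare are at arc length $t$ from $A$ and from $B$ respectively, and for such points the hyperbolic estimate gives $d(r_A(t),r_B(t))\to\sigma$, not $0$. Your phrase ``matched by height'' silently identifies arc length along $\eta_2$ (which is $h_X$-height) with the model plane's Busemann level for $\omega$; that identification holds only if $\sigma=0$, which amounts to knowing that the Busemann function of the ray $\eta_2$ computes $h_X$ up to a constant. In this paper that fact (Lemma~\ref{lemma:HeightFromAnyVerticalRay}) is derived \emph{from} vertical convergence, so it cannot be assumed here without a separate argument.

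The argument is repairable. Since Definition~\ref{def:vertically_Convergent} only asks for \emph{some} shift, run your comparison with the points $r_A(t)$ and $r_B(t+\sigma)$, which do converge in $\h^2_\kappa$; this yields $d\bigl(\eta_1(t),\eta_2(s+t+\sigma)\bigr)\to 0$. But $h_X\bigl(\eta_2(s+t+\sigma)\bigr)-h_X\bigl(\eta_1(t)\bigr)=\sigma$ for all $t$, and $h_X$ is $1$-Lipschitz, so this distance is bounded below by $\sigma$; hence $\sigma=0$ and the original claim follows. Alternatively you could invoke the CAT(0) fact that Busemann functions of asymptotic rays differ by additive constants. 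For comparison, the paper sidesteps the offset issue by using a single comparison triangle with vertices $\eta_1(0)$, $\eta_1(T)$, $\eta_2(s)$ — so that $\eta_2$ itself is never a side — and recovering $\eta_2$ only at the end via the compact-open convergence of the segments $\overline{\eta_1(T)\,\eta_2(s)}$ to $\eta_2$ together with $1$-Lipschitzness of $h_X$. Either repair should be written out; as it stands the decisive inequality $\varphi(t)\to 0$ is unjustified.
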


    We prove Lemma \ref{lemma:CAT-1ImpliesVerticalConvergence} in Section \ref{sec:CAT-1}.

    We make two more observations about $\CAT(-\kappa)$ spaces. The first is that vertical geodesic lines are uniquely determined by their endpoint other than $\infty_X$.
    
    \begin{lemma} \label{lemma:VerticalUniquenessInFactorSpaces}
       Let $X$ as before be vertically convergent with respect to $\infty_X$, and let $\xi\in\partial X\setminus \{\infty_X\}$. Then there is a unique vertical geodesic between $\xi$ and $\infty_X$.
    \end{lemma}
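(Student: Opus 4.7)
The plan is to obtain existence as a direct consequence of Gromov-hyperbolicity and to reduce uniqueness to the elementary fact that a bounded convex function on $\R$ is constant. For existence, since any $\CAT(-\kappa)$ space is Gromov-hyperbolic, the cited \cite[Lemma~III.H.3.2]{BridsonHaefliger} furnishes a bi-infinite geodesic $\gamma:\R\to X$ with $\gamma(-\infty)=\xi$ and $\gamma(+\infty)=\infty_X$, and its forward ray $\gamma|_{[0,\infty)}$ is asymptotic to $\infty_X$, hence vertical.

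For uniqueness, I would take two such vertical geodesics $\gamma_1,\gamma_2$, parameterized by arc length, and study the function $f(t):=d(\gamma_1(t),\gamma_2(t))$. Since the forward rays are asymptotic to $\infty_X$, vertical convergence supplies a shift $s$ with $d(\gamma_1(t),\gamma_2(t+s))\to 0$ as $t\to\infty$; after absorbing $s$ into the parameterization of $\gamma_2$ I may assume $f(t)\to 0$ as $t\to+\infty$. On the other hand, the reversed rays $t\mapsto\gamma_i(-t)$ are both asymptotic to $\xi$, so by the $5\delta$ estimate recalled in the excerpt there exist constants $a,b$ with $d(\gamma_1(-t-a),\gamma_2(-t-b))\le 5\delta$ for all sufficiently large $t$; a single triangle-inequality step against a segment of $\gamma_1$ of length $|b-a|$ then gives $f(u)\le |b-a|+5\delta$ for $u$ sufficiently negative.

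By Lemma~\ref{lemma:CAT-1ImpliesBusemann}, $f$ is convex, so combined with the forward decay and the backward bound, $f$ is a bounded convex function on $\R$. A bounded convex function on $\R$ is constant; since $f\to 0$ at $+\infty$ that constant is zero, and hence $\gamma_1=\gamma_2$.

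The main obstacle I anticipate is the backward bound: the $5\delta$-lemma only provides closeness after an uncontrolled parameter shift, so a priori one might worry that the two geodesics drift apart under the parameterization fixed by the forward vertical-convergence step. The resolution is that I only need \emph{boundedness} of $f$, not smallness; absorbing the shift $|b-a|$ through a triangle inequality is harmless, and the convexity argument then converts boundedness at $-\infty$ plus vanishing at $+\infty$ into global vanishing.
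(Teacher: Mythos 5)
Your proof is correct and follows essentially the same route as the paper's: existence from the standard fact that two boundary points of a proper hyperbolic space are joined by a geodesic line, and uniqueness by showing that $t\mapsto d(\gamma_1(t),\gamma_2(t+s))$ is a bounded convex function tending to $0$, hence identically $0$. The only difference is that you spell out the backward boundedness via the $5\delta$ estimate, a detail the paper's one-line proof leaves implicit.
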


    Note that Busemann spaces are uniquely geodesic, and in hyperbolic Busemann spaces, there is always a single ray from any point to any boundary point. However, vertical convergence is required to obtain uniqueness of geodesic lines.
    
    \begin{proof}
        A vertical geodesic exists as $X$ is geodesically-complete. If $\gamma_1$ and $\gamma_2$ are any two vertical geodesics, then there is an $s$ so that $d(\gamma_1(t), \gamma_2(s+t))$ is a bounded convex function which tends to $0$, hence constantly $0$.
    \end{proof}
    
    Our second observation is that in a vertically-convergent space, the height function may be calculated from the Busemann function with respect to any vertical ray. In a general hyperbolic metric space, the Busemann functions with respect to different vertical rays may differ by a bounded amount.
    
    \begin{lemma}\label{lemma:HeightFromAnyVerticalRay}
        Let $X$ have height function $h$. Then for any vertical ray $\eta$ parameterized by arc length, and for any point $x\in X$,
        \[h(x) = h(\eta(0)) + \lim_{t\to\infty} \Big(t-d(x, \eta(t))\Big).\] 
    \end{lemma}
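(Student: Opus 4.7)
The plan is to compare the Busemann function defined by the reference ray $\gamma$ (which sets up $h$) with the Busemann-like expression on the right-hand side, which is essentially the height function defined by $\eta$ itself, with base point $\eta(0)$. Vertical convergence (Lemma \ref{lemma:CAT-1ImpliesVerticalConvergence}) makes this comparison easy by letting us identify the tails of $\gamma$ and $\eta$ up to an arc-length shift.

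First I would rewrite the target identity as an equality between well-defined differences. Since differences of heights do not depend on the choice of base point, the defining formula gives
\[h(x) - h(\eta(0)) = \lim_{t\to\infty}\bigl(d(\gamma(t),\eta(0)) - d(\gamma(t),x)\bigr),\]
so it suffices to show that this equals $\lim_{t\to\infty}(t - d(\eta(t),x))$.

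Next I would invoke vertical convergence: since $\gamma$ and $\eta$ are both vertical rays (parameterized by arc length), there exists $s\in\R$ with $d(\gamma(t),\eta(s+t))\to 0$. By the triangle inequality, for every $y\in X$,
\[\bigl|d(\gamma(t),y) - d(\eta(s+t),y)\bigr| \leq d(\gamma(t),\eta(s+t)) \to 0.\]
Applying this with $y=\eta(0)$ and $y=x$, together with $d(\eta(s+t),\eta(0)) = s+t$, gives $d(\gamma(t),\eta(0)) = (s+t) + o(1)$ and $d(\gamma(t),x) = d(\eta(s+t),x) + o(1)$ as $t\to\infty$.

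Substituting these into the displayed expression for $h(x)-h(\eta(0))$ and re-indexing by $u = s+t$ yields
\[h(x) - h(\eta(0)) = \lim_{u\to\infty}\bigl(u - d(\eta(u),x)\bigr),\]
which is exactly the required identity. Existence of the limit on the right follows from existence of the limit on the left (the Busemann function converges in any Busemann space). There is no real obstacle here; the only non-formal input is vertical convergence, and the rest is the triangle inequality. The potential pitfall to watch for is sign conventions between $h$ and $\beta$, and keeping track that the base-point-dependent constant is absorbed into $h(\eta(0))$.
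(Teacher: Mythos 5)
Your proof is correct and follows essentially the same route as the paper's: both arguments use vertical convergence to replace the tail of the reference ray $\gamma$ by $\eta(s+\cdot)$ up to an $o(1)$ error via the triangle inequality, and then absorb the shift $s$ into $h(\eta(0))$. The only cosmetic difference is that you work directly with the base-point-independent difference $h(x)-h(\eta(0))$, whereas the paper computes $h(x)$ and $h(\eta(0))$ separately against a fixed base point $x_0$ and subtracts.
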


    Notice that the final term in the equality above is the negative of a Busemann function with respect to the ray $\eta$ and base point $\eta(0)$, i.e., a height function for the ray $\eta$. We emphasize also that this lemma does not rely on the Busemann property of $X$, which will be relevant in the sequel.

    
    \begin{proof}
        Let $h=-\beta_{\alpha}$, for some vertical ray $\alpha$ in $X$, where we parameterize $\alpha$ by height. By definition, there is an $x_0$ in $X$ so that
        \[ \beta_\alpha(x)=\lim_{t\to\infty} d(x, \alpha(t))-d(\alpha(t), x_0).\]
        so that
        \[ h(x)=\lim_{t\to\infty} d(\alpha(t), x_0)-d(x,\alpha(t)).\]

        Now, by vertical convergence, let $s$ be so that $\lim_{t\to\infty} d(\alpha(t), \eta(s+t))=0$. It follows from the triangle inequality that

        \[ h(x)=\lim_{t\to\infty} d(\eta(s+t), x_0)-d(x, \eta(s+t)).\]

        Applying this to $x=\eta(0)$ yields 

        \[ h(\eta(0))=\lim_{t\to\infty} d(\eta(s+t), x_0)-d(\eta(0), \eta(s+t))=\lim_{t\to\infty}d(\eta(s+t), x_0)-(s+t).\]

        We compute that 

        \[ h(x)-h(\eta(0))=\lim_{t\to\infty}s+t-d(x, \eta(s+t))\]

        which is equivalent to the desired equality.
    \end{proof}

    As a consequence, we see that isometries of vertically-convergent Gromov-hyperbolic Busemann spaces fixing the special point at infinity must have a well-defined height-change.
    
    \begin{prop}\label{prop:HeightChangeHom}
        Let $X$ have height function $h$ defined by a ray in $[\infty_X]$. Let $f:X\to X$ be an isometry fixing $\infty_X$. Then $h(f(x))-h(x)$ does not depend on $x$.
    \end{prop}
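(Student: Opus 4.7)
The plan is to apply Lemma \ref{lemma:HeightFromAnyVerticalRay} twice: once to express $h(x)$ using a convenient vertical ray $\eta$, and once to express $h(f(x))$ using the pushed-forward ray $f\circ\eta$. Since $f$ is an isometry, the two resulting limits will be identical, and the difference $h(f(x))-h(x)$ will collapse to a quantity depending only on the ray $\eta$ and its image under $f$, not on $x$.

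First I would fix any vertical geodesic ray $\eta:[0,\infty)\to X$ parameterized by arc length. I then need to verify that $f\circ\eta$ is again a vertical ray parameterized by arc length, so that the lemma may be applied to it. Arc-length parameterization is immediate from $f$ being an isometry. For verticality, note that $f$ sends geodesic rays to geodesic rays and preserves the asymptoticity relation; since $f$ fixes the boundary point $\infty_X$, it sends rays in the class $[\infty_X]$ to rays in the same class, so $f\circ\eta$ is asymptotic to $\infty_X$, hence vertical.

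Now I apply Lemma \ref{lemma:HeightFromAnyVerticalRay} to $\eta$ and to $f\circ\eta$ to get
\[h(x) = h(\eta(0)) + \lim_{t\to\infty}\bigl(t - d(x,\eta(t))\bigr),\]
\[h(f(x)) = h(f(\eta(0))) + \lim_{t\to\infty}\bigl(t - d(f(x),f(\eta(t)))\bigr).\]
Since $f$ is an isometry, $d(f(x),f(\eta(t))) = d(x,\eta(t))$, so the two limits agree. Subtracting yields
\[h(f(x)) - h(x) = h(f(\eta(0))) - h(\eta(0)),\]
and the right-hand side is manifestly independent of $x$.

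I do not expect any real obstacle here: the entire content of the proposition has been absorbed into Lemma \ref{lemma:HeightFromAnyVerticalRay}, which already packaged the dependence of height on the choice of vertical ray in a form that is compatible with isometries. The only small subtlety is the verification that $f$ preserves verticality of rays, which follows from $f$ fixing $\infty_X$ together with the fact that isometries send rays to rays and preserve asymptoticity classes.
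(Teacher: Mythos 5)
Your proof is correct and follows essentially the same route as the paper: both push the defining ray forward through $f$ and use the fact (resting on vertical convergence) that the height can be computed from any vertical ray. The paper carries out the limit computation inline, replacing $f(\eta(t))$ by a shifted $\eta(t-s)$ via vertical convergence, whereas you package that step by citing Lemma \ref{lemma:HeightFromAnyVerticalRay} applied to $f\circ\eta$ — a slightly cleaner but equivalent argument.
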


    \begin{proof}
        Suppose $h$ is defined by a ray $\eta$ tending to $\infty_X$. Because $X$ is vertically-convergent, $f(\eta)$ is a ray also tending to $\infty_X$, and there is some $s$ so that $d(\eta(t), f(\eta(t+s)))\to0$. Notice that $s=-h(f(x_0))$, for if not, the height difference between $\eta(t)$ and $f(\eta(s+t))$ would be a constant nonzero lower bound for their distance. We compute

        \begin{align*}
            h(x)&= \lim_{t\to\infty} d(\eta(t), x_0)-d(\eta(t), x)\\
             &= \lim_{t\to\infty} d(f(\eta(t)), f(x_0))-d(f(\eta(t)), f(x))\\
            &= \lim_{t\to\infty} d(\eta(t-s), f(x_0))-d(\eta(t-s), f(x))\\
            &=\lim_{t\to\infty} d(\eta(t-s), f(x_0))-d(\eta(t-s), x_0)+d(\eta(t-s), x_0)-d(\eta(t-s), f(x))\\
            &=-h(f(x_0))+h(f(x)),
        \end{align*}

        so that $h(x)-h(f(x))=s$, which depends only on $\eta$ and $f$.
    \end{proof}

    We term this difference $h(f(x))-h(x)=h(f(x_0))$ the \textit{height change of f}. The following corollary is immediate.

    \begin{cor}
    \label{cor:Factors_height_change_wellDef}
        Let $X$ be Gromov-hyperbolic, Busemann, and vertically convergent to $\infty_X$ with height function $h$.
        The height change map $f \mapsto h(f(x)) - h(x)$ is a well-defined homomorphism from $\Isom_\infty(X)$ to $\R$. \qed
    \end{cor}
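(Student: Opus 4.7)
The plan is to observe that the well-definedness of the map has already been handled by \Cref{prop:HeightChangeHom}: that proposition shows that for any $f \in \Isom_\infty(X)$, the quantity $h(f(x)) - h(x)$ is independent of the choice of $x$, so the assignment $f \mapsto h(f(x)) - h(x)$ gives a genuine function from $\Isom_\infty(X)$ to $\R$. What remains is the homomorphism property, for which I would simply pick an arbitrary basepoint $x_0 \in X$ and arbitrary $f, g \in \Isom_\infty(X)$, and compute
\[
h((f\circ g)(x_0)) - h(x_0) \;=\; \bigl[h(f(g(x_0))) - h(g(x_0))\bigr] + \bigl[h(g(x_0)) - h(x_0)\bigr].
\]
The second bracket is by definition the height change of $g$. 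The first bracket is the height change of $f$ evaluated at the point $g(x_0)$, which by \Cref{prop:HeightChangeHom} equals the height change of $f$ (since that quantity does not depend on which point we evaluate at). Adding gives exactly the sum of the height changes of $f$ and $g$, establishing the homomorphism property.

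There is essentially no obstacle here; the content of the corollary is entirely contained in \Cref{prop:HeightChangeHom}, and the homomorphism property is a one-line telescoping argument. The only thing worth double-checking is that $\Isom_\infty(X)$ is indeed a group under composition (i.e., that isometries fixing $\infty_X$ are closed under composition and inverses), which is immediate since $\infty_X$ is a single boundary point and composition/inversion of isometries acts on the boundary by the corresponding composition/inversion.
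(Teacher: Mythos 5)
Your proposal is correct and matches the paper's intent exactly: the paper declares this corollary ``immediate'' from Proposition \ref{prop:HeightChangeHom} and gives no further argument, and your telescoping computation is precisely the omitted one-line verification of the homomorphism property. Nothing further is needed.
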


    By slight abuse of notation, we will denote this homomorphism by $h_X$, or just $h$ when it does not cause confusion.

\subsection{Metric properties of horocyclic products}    
 
 The main characters of this work are horocyclic products, defined as follows.
    
    \begin{Def}
    \label{def:horocyclic_prod}
        The \textit{horocyclic product} of two spaces $X$ and $Y$ with height functions $h_X$ and $h_Y$, denoted $X\bowtie Y$, is defined to be $$X\bowtie Y = \{(x, y): h_X(x)=-h_Y(y)\}.$$
    \end{Def}

    We assign a horocyclic product a metric as follows.

    \begin{Def}\label{def:MetricOnHorocyclicProduct}
        Let $N$ be a norm on $\R^2$ normalized so that $N(1, 1)=1$. $N$ is \textit{admissible} if $N\ge \frac{L^1}{2}$. A norm will further be said to be \textit{monotone} if it is non-decreasing in both factors.

        Given such a norm, we get a distance $d_{\bowtie}$ on $X\bowtie Y$ defined as follows. Denote $d_X$ and $d_Y$ on $X\bowtie Y$ to be distance between the $X$ and $Y$ coordinates of a pair of points respectively. Consider paths $\gamma:[0,1]\to X\bowtie Y$ linking a pair of points $(x, y)$ and $(x', y')$. The length of such a path is defined to be 
        \[\ell_N(\gamma)=\sup_{0=t_0<t_1<...t_n=1} \sum N\bigl(d_X(\gamma(t_i), \gamma(t_{i+1})), d_Y(\gamma(t_i), \gamma(t_{i+1}))\bigr).\]
        Then $d_{\bowtie}((x, y), (x', y'))$ is the infimal such path length. We call such a metric \textit{admissible}.
    \end{Def}

    As we will repeatedly have to deal with chains of points $\gamma(0)=\gamma(t_0), \gamma(t_1), ... \gamma(t_n)=\gamma(1)$, we will fix the notation that $\mathscr{C}$ is the collection of all finite chains $0=t_0<t_1<...<t_n=1$.

    It is a straightforward exercise to show that an admissible monotone norm $N$ is bounded by the supremum norm, and thus by $L^1$. We will sometimes use this fact without explicitly mentioning it. We will henceforth assume all norms used to define metrics on horocyclic products are admissible and monotone, and again, sometimes not make this assumption explicitly.

    \begin{figure}
    \centering\includegraphics[scale=0.6]{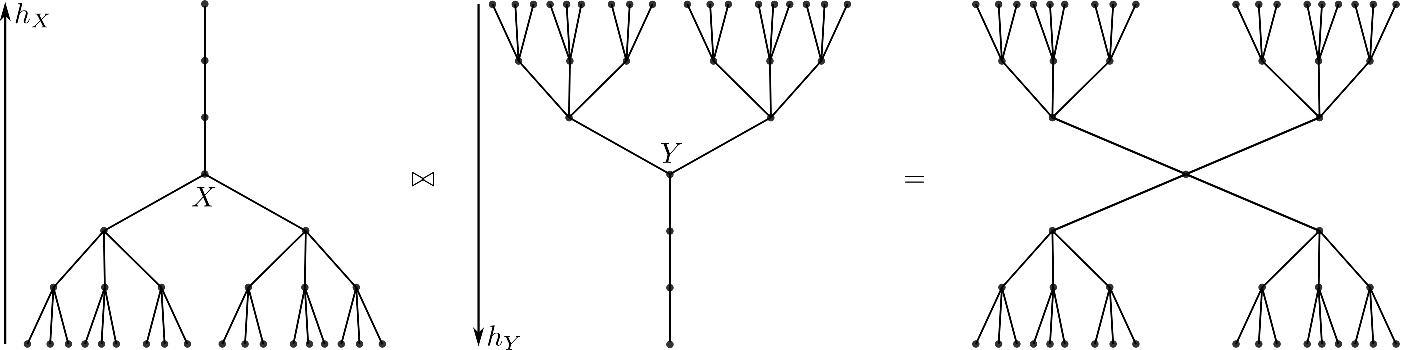}
    \caption{The horocyclic product of two trees might itself be a tree if we choose the height function badly.}
    \label{figure:Counterexample}
    \end{figure}

    If the spaces $X$ and $Y$ are trees, or more generally sufficiently-nice graphs, then $X\bowtie Y$ is isometric to a graph regardless of choice of norm $N$. This fact explains \cref{thm:boundary_implies_algebra}'s assumption that the height functions on $X$ and $Y$ correspond to points at infinity which are not isolated in the topology of $\partial X$ and $\partial Y$. For example consider Figure \ref{figure:Counterexample}. The spaces $X$ and $Y$ arise as follows. Consider a rooted tree $T$ whose root has two children, and all other vertices have three children. Attach a half-line $[0,\infty)$ to the root. Choose a height function with distinguished point given by the ray $[0,\infty)$ and so that the root has height $0$. Then $X\bowtie Y$ is the regular tree of valence four, and hence admits a geometric action of $F_2$. $X\bowtie Y$ therefore does not satisfy the conclusion of \cref{thm:boundary_implies_algebra}. In addition, $\Isom(X\bowtie Y)$ cannot be amenable because it contains a discrete free group, and hence cannot take the form described in \cref{thm:IsomGroupOfHorocyclicProduct}.
    
    In fact, there are many other possible counterexamples to \cref{thm:boundary_implies_algebra} when a distinguished point at infinity is isolated. As we will see in Corollary \ref{cor:RILowerBound}, if $Y$ is a line, then $X\bowtie Y$ is a hyperbolic space quasi-isometric to $X$, and therefore (outside trivial cases) cannot receive a geometric action by any of the groups described in \cref{thm:boundary_implies_algebra}, because no such group is nonelementary hyperbolic. The exact step where our arguments break down in the case where one space is a line is contained in Section \ref{subsec:IsomsPreserveUpperAndLower}.

    We next provide a basic characterization of admissible metrics on horocyclic products, which says roughly that two points are near one another if they are near one another in both coordinates. The converse is a basic exercise in the definition of admissible distances. 
    \begin{lemma}\label{lemma:BoundedCoordinateDinstancesImpliesBoundedDistance}
        Let $X\bowtie Y$ be any horocyclic product of geodesically-complete metric spaces, with admissible metric $d_{\bowtie}$. Suppose $(x_1, y_1)$ and $(x_2, y_2)$ are points in $X\bowtie Y$. Then $d_{\bowtie}((x_1, y_1),(x_2, y_2))$ is bounded above by $d_X(x_1, x_2)+d_Y(y_1, y_2)+\min\{d_X(x_1, x_2),d_Y(y_1, y_2)\}$.
    \end{lemma}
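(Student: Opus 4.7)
The plan is to build an explicit two-phase path from $(x_1,y_1)$ to $(x_2,y_2)$ inside $X\bowtie Y$ whose $\ell_N$-length matches the claimed bound. Write $a=d_X(x_1,x_2)$, $b=d_Y(y_1,y_2)$, and $\delta=h_X(x_2)-h_X(x_1)=h_Y(y_1)-h_Y(y_2)$ (the second equality uses that both endpoints lie in $X\bowtie Y$); since $h_X$ and $h_Y$ are 1-Lipschitz, $|\delta|\le\min(a,b)$. Using geodesic-completeness, fix a bi-infinite vertical geodesic $\eta\colon\R\to Y$ through $y_1$ and one $\zeta\colon\R\to X$ through $x_2$, each parametrized by height, which is valid by \cref{lemma:HeightFromAnyVerticalRay} applied to a vertical ray and its bi-infinite extension.

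Phase 1 slides $X$ along the arc-length $X$-geodesic $\alpha\colon[0,a]\to X$ from $x_1$ to $x_2$ and lets $Y$ ride on $\eta$ at the complementary height:
\[\gamma_1(t)=\bigl(\alpha(t),\,\eta(-h_X(\alpha(t)))\bigr).\]
This ends at $(x_2,y_1^{\#})$, where $y_1^{\#}:=\eta(h_Y(y_2))$ sits at distance exactly $|\delta|$ from $y_1$ along $\eta$. Phase 2 swaps the roles: if $\alpha'\colon[0,d']\to Y$ is the arc-length $Y$-geodesic from $y_1^{\#}$ to $y_2$, set
\[\gamma_2(s)=\bigl(\zeta(-h_Y(\alpha'(s))),\,\alpha'(s)\bigr).\]
Both pieces live in $X\bowtie Y$ by construction, and the concatenation joins $(x_1,y_1)$ to $(x_2,y_2)$.

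The decisive length estimate is that each phase contributes only the length of its driving geodesic. For a chain $0=t_0<\dots<t_n=a$ in Phase 1, writing $H:=h_X\circ\alpha$, one has $d_X(\alpha(t_i),\alpha(t_{i+1}))=|t_{i+1}-t_i|$, and because $\eta$ is height-parametrized and $H$ is 1-Lipschitz, $d_Y(\eta(-H(t_i)),\eta(-H(t_{i+1})))=|H(t_{i+1})-H(t_i)|\le|t_{i+1}-t_i|$. Monotonicity of $N$ together with the normalization $N(1,1)=1$ then gives $N(d_X,d_Y)\le N(d_X,d_X)=d_X$ on each subinterval, so summing and taking the supremum over chains yields $\ell_N(\gamma_1)\le a$. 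The identical argument bounds $\ell_N(\gamma_2)\le d'$, and the triangle inequality in $Y$ gives $d'\le d_Y(y_1^{\#},y_1)+d_Y(y_1,y_2)=|\delta|+b$. Adding, $d_\bowtie\bigl((x_1,y_1),(x_2,y_2)\bigr)\le a+|\delta|+b\le a+b+\min(a,b)$.

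The one delicate ingredient is the global height parametrization of $\eta$ and $\zeta$: one must confirm that a bi-infinite vertical line is parametrized by height on the entire real line, which is where \cref{lemma:HeightFromAnyVerticalRay} is used. Once that is in hand, monotonicity of $N$ and the 1-Lipschitz property of heights do all the remaining metric work.
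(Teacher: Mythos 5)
Your proof is correct and follows essentially the same strategy as the paper's: drive one coordinate along a geodesic while the other rides a vertical line, and use monotonicity of $N$ plus the $1$-Lipschitz height to bound each leg by the driving geodesic's length. The only cosmetic difference is that the paper inserts an explicit middle vertical leg of length $|\delta|\le\min(a,b)$, whereas you absorb that correction into the second leg via the triangle inequality $d_Y(y_1^{\#},y_2)\le|\delta|+b$; the resulting bound $a+b+|\delta|$ is identical.
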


    Note that \cite{Ferragut:Visual_Boundary} Theorem A also provides a statement that points in a horocyclic product are near to one another if their coordinate distances are small. However, that theorem involves a bounded additive error, and in the sequel we will need to be able to give upper bounds on distances that we can send to $0$. Therefore, we need Lemma \ref{lemma:BoundedCoordinateDinstancesImpliesBoundedDistance}.
    
    \begin{proof}
        Let $\gamma_X$ and $\gamma_Y$ be the geodesics between $x_1$ and $x_2$, and between $y_1$ and $y_2$ respectively. Let $V_{X, x_2}$ and $V_{Y, y_1}$ be vertical lines in $X$ and $Y$ through $x_2$ and $y_1$ respectively, parameterized by height.
			
        Denote path concatenation by $*$, and consider the path
        \[\bigl(\gamma_X(t), V_{Y, y_1}(-h_X(\gamma_X(t)))\bigr) *\bigl(V_{X, x_2}|_{[h_X(x_2), h_X(x_1)]},V_{Y, y_1}|_{[-h_Y(y_2), -h_Y(y_1)]}\bigr) * \bigl(V_{X, x_2}(-h_Y(\gamma_Y(t))), \gamma_Y(t)\bigr),\]
	as illustrated Figure \ref{figure:DistanceUpperBound}.
			
	\begin{figure}
            \centering\includegraphics[scale=.45]{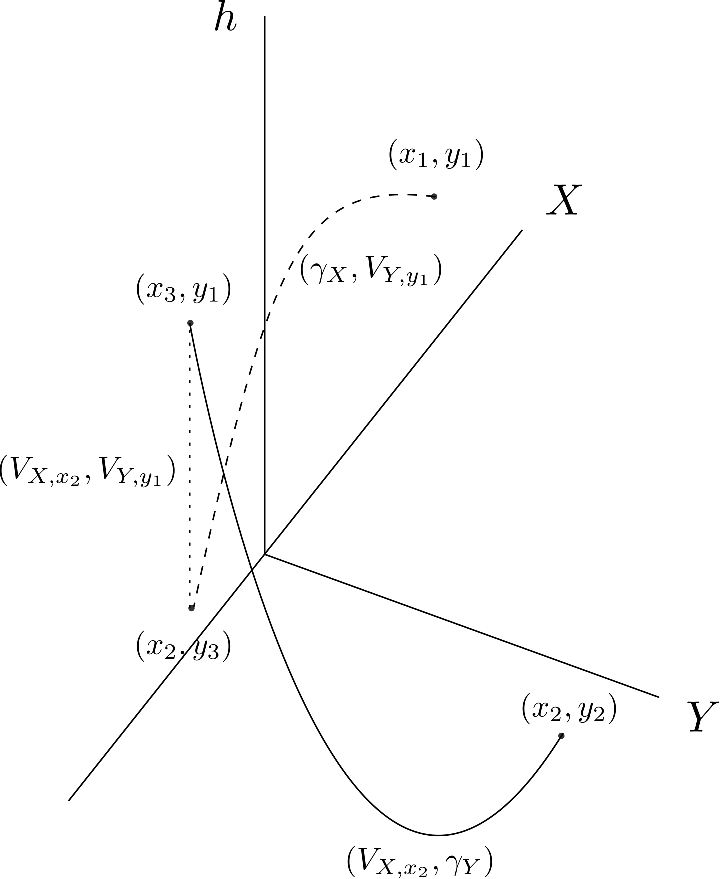}
            \caption{Illustration of the proof of Lemma \ref{lemma:BoundedCoordinateDinstancesImpliesBoundedDistance}. The first, second, and third subpaths are dashed, dotted, and solid respectively. $y_3$ is a point on $V_{Y, y_1}$ at height $-h_X(x_2)$, and $x_3$ is a point on $V_{X, x_2}$ at height $-h_Y(y_1)$}
            \label{figure:DistanceUpperBound}
        \end{figure}
			
        The first leg has $X$-length $d_X(x_1, x_2)$, and $Y$-length equal to the total variation of the height along $\gamma_X$. Since the height is $1$-Lipschitz, this variation is no more than $d_X(x_1, x_2)$. Hence the length of the first leg is no more than $d_X(x_1, x_2)$. By the same token, the length of the third leg is no more than $d_Y(y_1, y_2)$. The coordinates of the middle leg are both vertical segments, and therefore the length of this leg is its total height change. Since the height is $1$-Lipschitz, this height change is no more than $\min\{d_X(x_1, x_2), d_Y(y_1, y_2)\}$. Therefore, the distance between $(x_1, y_1)$ and $(x_2, y_2)$ is no more than $(d_X(x_1, x_2)+d_Y(y_1, y_2))+\min\{d_X(x_1, x_2), d_Y(y_1, y_2)\}$.
    \end{proof}

\subsection{Certain subspaces of $X\bowtie Y$}

    For any vertical line $V \subset Y$, consider the subspace $X\bowtie V \subset X\bowtie Y$. The projection $\pi_X:X\bowtie V$ onto the $X$-factor is a homeomorphism. However, $\pi_X$ is not typically an isometry between $(X\bowtie V, d_{\bowtie}|_{X\bowtie V})$ and $(X, d_X)$, contrary to an assertion without proof in \cite{Ferragut:Visual_Boundary}. The only cases where $\pi_X$ will be an isometry are when $X$ is a line (which we rule out by the assumption that the point $\infty_X$ is not isolated), or when the defining norm $N$ is the $L^\infty$ norm. We next describe the metric on these subspaces. All of our discussion will also go through for subspaces $V\bowtie Y$ for vertical $V$ in $X$, but is phrased in terms of $X$ for notational convenience.
        
    \begin{Def} \label{def:d'_initial}
        Let $V$ be any vertical geodesic in $Y$, and consider the subspace $X\bowtie V\subset X\bowtie Y$. Points in $X\bowtie V$ are uniquely determined by their $X$-coordinate, so that projection $\pi_X$ onto the first factor provides a bijection between $X$ and $X\bowtie V$. The metric $d'_{X}$ on $X$ will be metric so that $d'_{X}\circ \pi_X=d_{\bowtie}|_{X\bowtie V}$. 

        We will also write $X'$ to denote the metric space $(X, d'_X)$.
    \end{Def}

    We will show in Corollary \ref{cor:EquivalentDefinitions} that this equivalent to the following definition.

    \begin{Def}\label{def:d'_equivalent}
        Consider $X$ as a topological space, and define the length of a path $\gamma:[0,1]\to X$ to be \[ \sup_\mathscr{C}\sum_{i=0}^{n-1} N\bigl(d_X(\gamma(t_i), \gamma(t_{i+1})),|h_X(\gamma(t_i))-h_X(\gamma(t_{i+1}))|\bigr).\] 
        Then $d'_{X}(x_1, x_2)$ may be defined as the infimal path length.
    \end{Def}
    
    One sees from Definition \ref{def:d'_equivalent} that $d'_X$ does not depend on $Y$, or the choice of vertical geodesic $V$. This definition also shows that $\Isom(X, d_X)\subset \Isom(X, d'_X)$. We will use this definition in Section \ref{sec:NewMetricOnX} to deduce that $d'_X$ is proper and $\delta$-hyperbolic, and to show that isometries of $(X, d_X')$ change the height of every point by the same amount.
    
\subsection{Visual boundaries of horocyclic products}\label{subsec:FerragutVisualBoundaries}

The boundary theory of horocyclic products was introduced by Farb-Mosher \cite{FM1} in the special case of $\Ha^2 \bowtie T_n$. There, the upper and lower boundaries were defined via (quasi-isometric) embeddings of $\Ha^2$ and $\R $ to $\Ha^2 \bowtie T_n$. In this paper, we use the \textit{visual boundary} of $X\bowtie Y$, defined below. By a theorem of Ferragut, this boundary coincides with the boundary used by Farb-Mosher. Since $X\bowtie Y$ is not Gromov-hyperbolic, more care is required in the definition.

\begin{Def}
    Let $X$ be a geodesic metric space with basepoint $p$. 
    The visual boundary of $X$ with respect to $p$, denoted $\partial_{p} X$ is the space of equivalence classes of geodesic rays $\gamma:[0,\infty) \to X$ with $\gamma(0) = p$, where two geodesic rays $\gamma, \eta$ are considered equivalent if the Hausdorff distance $d_{\text{Haus}}(\eta, \gamma)$ is finite. 
\end{Def}

Notice that, unlike the Gromov-hyperbolic case, the boundary may depend on a choice of base point. This is because, in a proper geodesic $\delta$-hyperbolic metric space $X$, we are guaranteed a geodesic ray asymptotic to a ray $\eta$ starting at any point $p$ in $X$, so that the boundary does not depend on choice of base point. In analogy to the hyperbolic case, we will write $\partial_p X$ as $\partial X$ when the boundary does not depend on the basepoint, regardless of whether $X$ is hyperbolic or not.

\begin{Def}[Vertical geodesics in $X\bowtie Y$]
    Let $X$, $Y$ be Busemann spaces. A geodesic (or geodesic ray) $\gamma$ in $X\bowtie Y$ is said to be \textit{vertical} if the composition $h \circ \gamma$ is an isometric embedding. 
\end{Def}

Given vertical geodesics $V:\R \to X$ and $W :\R \to Y$, parameterized so that $h\circ V = h\circ W = \id_\R$, the map $V\bowtie W \to X\bowtie Y$ given by $t \to (V(t),W(-t))$ is a vertical geodesic. Its two endpoints at infinity represent points in $\partial(X\bowtie Y)$. Since $V$ and $W$ are vertical, they are specified by their endpoints not equal to $\infty_X$ or $\infty_Y$ (Lemma \ref{lemma:VerticalUniquenessInFactorSpaces}). Any other choice of vertical $V' \subset X$ would have $V\bowtie W(t)$ and $V'\bowtie W(t)$ converging in the $X$ factor as $t \to \infty$. Then the $+\infty$ endpoint of $V\bowtie W$ arising from this construction is specified by $W(\infty) \in \partial Y \setminus \{\infty_Y\}$. Likewise, the $-\infty$ endpoint of $V\bowtie W$ is specified by $V(-\infty) \in \partial X \setminus \{\infty_X\}$.

Bi-infinite vertical geodesics in $X\bowtie Y$ are thus parameterized by $(\partial X \setminus\{\infty_X\}) \times (\partial Y \setminus \{\infty_Y\})$. We call these two spaces the upper and lower boundaries of $X\bowtie Y$. For notational simplicity, these will be denoted as $$\partial^u Y = \partial Y \setminus \{\infty_Y\} \qquad \quad \partial_\ell X  = \partial X \setminus \{\infty_X\}.$$

The following theorem due to Ferragut says that the upper and lower boundaries in fact parameterize the entire visual boundary of a horocyclic product.

\begin{thm}[Ferragut, \cite{Ferragut:Visual_Boundary}, Corollary C]
    Let $\gamma:[0,\infty)\to X\bowtie Y$ be a geodesic ray. Then there is a vertical geodesic ray in $\eta$ with $d_{\text{Haus}}(\gamma,\eta) < \infty$. Consequently, 
    \[\partial(X\bowtie Y) = \partial^u Y \sqcup \partial_\ell X,\label{eqn:boundary_decomp}\]
    where we understand elements of $\partial^u Y$ and $\partial_\ell X$ to represent boundary points of vertical geodesics, as in the preceding paragraph. In particular, the visual boundary does not depend on the basepoint.
\end{thm}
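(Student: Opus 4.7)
The plan is to analyze the height $h \circ \gamma$, identify a boundary limit of the appropriate coordinate projection, and compare $\gamma$ with a vertical ray through that boundary point. Writing $x(t) := \pi_X(\gamma(t))$ and $y(t) := \pi_Y(\gamma(t))$, the admissibility hypothesis $N \geq L^1/2$ forces the $N$-length of any path to dominate $(\ell_X + \ell_Y)/2$, so both coordinate projections are $2$-Lipschitz, and $h \circ \gamma = h_X \circ x = -h_Y \circ y$ is $1$-Lipschitz.

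I would first show $h \circ \gamma$ tends to $+\infty$ or $-\infty$. Suppose for contradiction $|h \circ \gamma| \leq M$. Since $\gamma$ leaves every compact set, at least one projection has unbounded image. In $\CAT(-\kappa)$ spaces, horospheres are exponentially distorted, so the cheapest route in $X \bowtie Y$ between two distant points at bounded height is to climb (or descend) to an extreme height where one factor's horospheres have contracted, travel horizontally cheaply there, and return. The bounded-height hypothesis forbids this, but one can nonetheless construct a competing path of length strictly less than $t$ for large $t$ by climbing a controlled amount $T = T(t)$, contradicting $d_\bowtie(\gamma(0), \gamma(t)) = t$. \emph{I expect this to be the main obstacle}, as it is the one step requiring $\CAT(-\kappa)$-specific estimates rather than purely formal metric reasoning.

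By the $X \leftrightarrow Y$ symmetry of the horocyclic product, assume $h \circ \gamma \to +\infty$. A refinement of the previous step gives a linear lower bound $h(\gamma(t)) - h(\gamma(s)) \geq c(t-s)$ for some $c > 0$ and large $s < t$, hence $h_Y(y(s)) - h_Y(y(t)) \geq c(t-s)$. Combined with the $2$-Lipschitz upper bound $d_Y(y(s), y(t)) \leq 2(t-s)$ and the trivial lower bound $d_Y(y(s), y(t)) \geq |h_Y(y(s)) - h_Y(y(t))|$, this exhibits $y$ as a bi-Lipschitz quasi-geodesic ray in the Gromov-hyperbolic space $Y$. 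By the Morse lemma, $y(t)$ converges to a unique $\xi \in \partial Y$; since $h_Y \circ y \to -\infty$, the limit cannot be $\infty_Y$, so $\xi \in \partial^u Y$. The analogous argument shows $x(t)$ is a quasi-geodesic converging to $\infty_X$.

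Finally, let $W : \R \to Y$ be the unique vertical geodesic from $\infty_Y$ to $\xi$ (Lemma \ref{lemma:VerticalUniquenessInFactorSpaces}), let $V : \R \to X$ be any vertical geodesic to $\infty_X$, and set $\eta(s) := (V(s), W(-s))$. Since $x(t)$ is Morse-close to a geodesic ray to $\infty_X$, which is necessarily vertical by uniqueness of rays to a fixed boundary point in a uniquely-geodesic hyperbolic Busemann space, vertical convergence of $X$ gives $d_X(x(t), V(h(\gamma(t)))) \to 0$; symmetrically, $d_Y(y(t), W(-h(\gamma(t)))) \to 0$. Applying Lemma \ref{lemma:BoundedCoordinateDinstancesImpliesBoundedDistance} then yields $d_\bowtie(\gamma(t), \eta(h(\gamma(t)))) \to 0$, so $d_{\text{Haus}}(\gamma, \eta) < \infty$. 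The decomposition $\partial(X \bowtie Y) = \partial^u Y \sqcup \partial_\ell X$ follows directly from the parameterization of vertical geodesics by their endpoints established in the paragraph preceding the theorem statement.
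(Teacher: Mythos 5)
First, note that the paper does not prove this statement---it is quoted from Ferragut (Corollary C of \cite{Ferragut:Visual_Boundary})---so there is no internal proof to compare against and your argument must stand on its own. Your reduction of the theorem to the claim that $h\circ\gamma$ escapes to $\pm\infty$ at a definite linear rate is sound: granting that claim, the $2$-Lipschitz coordinate projections become bi-Lipschitz quasi-geodesics, the Morse lemma identifies their limit points, and Lemma \ref{lemma:BoundedCoordinateDinstancesImpliesBoundedDistance} converts bounded coordinate distances at matching heights into bounded $d_{\bowtie}$-distance. (One correction there: the Morse lemma only gives \emph{bounded} distance from $x(t)$, resp.\ $y(t)$, to the matching-height point of $V$, resp.\ $W$; vertical convergence applies to pairs of vertical rays, not to a quasi-geodesic versus a vertical ray, so the claimed ``$\to 0$'' is unjustified. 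Bounded is all you need for finite Hausdorff distance, so this is harmless.)

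The genuine gap is exactly where you flag it, and it is larger than you suggest. First, the negation of ``$h\circ\gamma$ tends to $+\infty$ or $-\infty$'' is not ``$h\circ\gamma$ is bounded'': you must also exclude oscillation (height unbounded in both directions, or tending to infinity with arbitrarily long backtracks), which the bounded case does not address. Second, even in the bounded case the shortcut is only gestured at; making it work requires a quantitative rate of convergence of vertical rays in a $\CAT(-\kappa)$ space (logarithmic in the horospherical displacement, so that the climb $T(t)$ is $o(t)$), and the shortcut must be built simultaneously in both factors, since both projections may be unbounded. Third, the ``refinement'' to the linear lower bound $h(\gamma(t))-h(\gamma(s))\ge c(t-s)$ is asserted rather than derived, and it does not follow formally from $h\circ\gamma\to+\infty$: ruling out long sub-segments with small net height change is the analytic heart of Ferragut's theorem. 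Compare the exponential length lower bound of Le Donne--Pallier--Xie quoted as Lemma \ref{lemma:LDPXLengthLowerBound} and the way Proposition \ref{prop:NewGeodesicsHaveLargeHeightVariation} and Step III of Proposition \ref{prop:Opposite_Boundaries_Unique_Geodesic} deploy it. Until those estimates are carried out, what you have is an outline of the right strategy rather than a proof.
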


\section{Three facts about $\CAT(-\kappa)$ spaces}
\label{sec:CAT-1}

    In this section, we prove three facts about $\CAT(-\kappa)$ spaces, two of which were asserted without proof in Section \ref{sec:Prelim}. The third fact will be used in the proof of \cref{thm:IsomGroupOfHorocyclicProduct}. This is the only section of the paper which makes direct use of comparison triangles. 
    
    We now fix some notation. Throughout this section, lowercase letters will refer to points in the $\CAT(-\kappa)$ space $X$, and points in corresponding comparison triangles will be denoted with the corresponding uppercase letter. The line segment connecting points $x$ and $y$ will be denoted $\overline{xy}$. The (bi-infinite) line connecting $X$ and $Y$ will be denoted $\overleftrightarrow{XY}$. 

    \theoremstyle{plain}
    \newtheorem*{BusemannLemma}{Theorem \ref{lemma:CAT-1ImpliesBusemann}}

    \begin{BusemannLemma}
            Let $X$ be $\CAT(-\kappa)$, and let $\gamma_1$ and $\gamma_2$ be any two geodesic segments parameterized by arc length. Then $d:[0, \ell(\gamma_1)]\times [0, \ell(\gamma_2)]\to \R$ is a convex function.
    \end{BusemannLemma}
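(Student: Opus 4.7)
The plan is to reduce the joint convexity of $(s,t) \mapsto d(\gamma_1(s), \gamma_2(t))$ on the parameter rectangle to a single midpoint inequality for triangles, which then follows directly from the $\CAT(-\kappa)$ comparison. Since $d$ is continuous, joint convexity on the rectangle is equivalent to midpoint convexity along every straight line segment in $[0,\ell(\gamma_1)] \times [0,\ell(\gamma_2)]$. Any such line segment parameterizes a pair of constant-speed geodesic reparameterizations $c_1, c_2 : [0,1] \to X$ of subsegments of $\gamma_1, \gamma_2$, so the task reduces to proving
$$d\bigl(c_1(\tfrac{1}{2}), c_2(\tfrac{1}{2})\bigr) \;\le\; \tfrac{1}{2} \bigl(d(c_1(0), c_2(0)) + d(c_1(1), c_2(1))\bigr).$$

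The key ingredient will be the following midpoint inequality: if $a, b, c \in X$ and $m_{ab}, m_{ac}$ are midpoints of $\overline{ab}, \overline{ac}$, then $d(m_{ab}, m_{ac}) \le \tfrac{1}{2} d(b,c)$. I would prove this by choosing a curvature-$(-\kappa)$ comparison triangle $\triangle ABC$ in the model surface $\Sigma$ of constant curvature $-\kappa$. The comparison points of $m_{ab}, m_{ac}$ are the midpoints $M_{AB}, M_{AC}$ of $\overline{AB}, \overline{AC}$ in $\Sigma$. A direct computation using the hyperbolic law of cosines shows $d_\Sigma(M_{AB}, M_{AC}) \le \tfrac{1}{2} d_\Sigma(B,C)$ (the hyperbolic midpoint segment is strictly shorter than half the opposite side). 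The $\CAT(-\kappa)$ condition then gives $d(m_{ab}, m_{ac}) \le d_\Sigma(M_{AB}, M_{AC}) \le \tfrac{1}{2} d(b,c)$.

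To conclude, I apply this midpoint inequality twice using a diagonal auxiliary midpoint. Set $m_i = c_i(\tfrac{1}{2})$ and let $n$ be the midpoint of the geodesic $\overline{c_1(0) c_2(1)}$. Applying the midpoint inequality to the triangle $\triangle c_1(0) c_1(1) c_2(1)$, with midpoints $m_1$ of $\overline{c_1(0) c_1(1)}$ and $n$ of $\overline{c_1(0) c_2(1)}$, gives $d(m_1, n) \le \tfrac{1}{2} d(c_1(1), c_2(1))$. Applying it to $\triangle c_1(0) c_2(0) c_2(1)$, with midpoints $n$ and $m_2$, gives $d(n, m_2) \le \tfrac{1}{2} d(c_1(0), c_2(0))$. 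The triangle inequality $d(m_1, m_2) \le d(m_1, n) + d(n, m_2)$ then yields the desired midpoint convexity.

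The main obstacle is conceptual rather than technical: recognizing that joint convexity on the rectangle decouples into midpoint convexity along single lines, and that the latter can be handled by splitting the associated "quadrilateral" into two triangles via the diagonal midpoint $n$, so that only the single-triangle comparison (rather than any genuine four-point estimate) is required. The comparison-triangle verification of the midpoint inequality is a short computation in the model surface, and the continuity argument promoting midpoint convexity to full convexity is standard.
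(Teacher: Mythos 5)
Your proof is correct, and it takes a recognizably different route from the paper's. Both arguments begin with the same reduction (continuity plus midpoint convexity along lines in the parameter rectangle, which the paper leaves implicit in the phrase ``for each such pair of segments'' and you make explicit), and both ultimately rest on comparison with the model surface. The difference is in how the four-point configuration is handled. The paper glues two curvature-$(-\kappa)$ comparison triangles along the diagonal $\overline{x_2x_3}$ to form a model quadrilateral, argues that the model segment $\overline{X_5X_6}$ must cross the gluing locus at a point $X_7$, pulls $X_7$ back to a point $x_7$ in $X$, and then chains the triangle inequality, the $\CAT(-\kappa)$ inequality in each triangle, and convexity of the distance function in the model surface. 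You instead stay in $X$: you split the quadrilateral along the other diagonal $\overline{c_1(0)c_2(1)}$, introduce its midpoint $n$, and apply a single-triangle ``midline'' comparison ($d(m_{ab},m_{ac})\le \tfrac12 d(b,c)$) to each of the two resulting triangles, finishing with the triangle inequality at $n$. This is essentially the classical Bridson--Haefliger argument for convexity of the metric in $\CAT(0)$ spaces. What your version buys is that it needs only the one-triangle midpoint inequality and avoids the gluing/crossing argument entirely (the crossing step is the one delicate geometric point in the paper's proof); what the paper's version buys is that it invokes convexity in the model surface as a black box rather than verifying the midline inequality by a trigonometric computation. Both arguments use only the $\CAT(0)$ consequence of $\CAT(-\kappa)$, so neither is sharper in terms of hypotheses. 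The one item you should actually write out is the model-surface inequality $d_\Sigma(M_{AB},M_{AC})\le\tfrac12 d_\Sigma(B,C)$; it is true (and strict in negative curvature), and follows either from the rescaled hyperbolic law of cosines as you indicate or, more quickly, from convexity of the distance between the two constant-speed geodesics issuing from $A$ in the model surface.
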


    \begin{proof}
        Let $x_1$ and $x_2$ be the starting and ending points of $\gamma_1$, and $x_3$ and $x_4$ the starting an ending points of $\gamma_2$. Let $x_5$ and $x_6$ be midpoints of $\overline{x_1x_2}$ and $\overline{x_3x_4}$ respectively.

        Since distances are continuous, it suffices to show that for each such pair of segments, 
        \[d_X(x_5, x_6) \le \frac{d_X(x_1, x_3)+d_X(x_2, x_4)}{2}.\]

        \begin{figure}
            \includegraphics{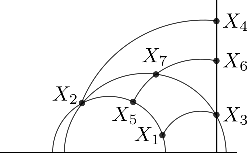}
            \caption{Illustration of the proof of Lemma \ref{lemma:CAT-1ImpliesBusemann}. Triangles $\triangle X_1X_2X_3$ and $\triangle X_2X_3X_4$ are comparison triangles for $\triangle x_1x_2x_3$ and $\triangle x_2x_3x_4$ respectively.}
            \label{figure:Busemann}
        \end{figure}

        As in Figure \ref{figure:Busemann}, consider the triangles $\triangle x_1x_2x_3$ and $\triangle x_2x_3x_4$. Find comparison triangles for them sharing the points $X_2$ and $X_3$. Reflecting about $\overleftrightarrow{X_2X_3}$ if necessary, we may assume that the interiors of $\overline{X_1X_2}$ and $\overline{X_3X_4}$ are on opposite sides of $\overleftrightarrow{X_2X_3}$. Take $X_5$ and $X_6$ the comparison points to $x_5$ and $x_6$.

        We claim that the unique geodesic, $\overline{X_5X_6}$, between $X_5$ and $X_6$ intersects the segment $\overline{X_2X_3}$. Certainly it intersects $\overleftrightarrow{X_2X_3}$ because $X_5$ and $X_6$ are on opposite sides of $\overleftrightarrow{X_2X_3}$. A segment between $X_6$ and a point on $\overline{X_1X_2}$ cannot meet a point on the opposite side of $\overleftrightarrow{X_1X_2}$ from $X_6$, which includes all points on $\overleftrightarrow{X_2X_3}$ past $X_2$. Similarly, a segment between $X_5$ and a point on $\overline{X_3X_4}$ cannot meet a point on the opposite side of $\overleftrightarrow{X_3X_4}$ from $X_5$, which rules out points on $\overleftrightarrow{X_2X_3}$ past $X_3$.

        Denote by $X_7$ the point $\overline{X_5X_6}\cap\overline{X_2X_3}$, and let it be the comparison point for $x_7$ in $X$ on $\overline{x_2x_3}$. We then compute

        \begin{align}
            d_X(x_5, x_6)&\le d_X(x_5, x_7)+d_X(x_7, x_6)\\
            &\le d_{\h^2}(X_5, X_7)+d_{\h^2}(X_7, X_6)\\
            &=d_{\h^2}(X_5, X_6)\\
            &\le \frac{d_{\h^2}(X_1, X_3)+d_{\h^2}(X_2, X_4)}{2}\\
            &= \frac{d_X(x_1, x_3)+d_X(x_2, x_4)}{2}
        \end{align}
        where we used the convexity of the distance function in negatively curved homogeneous surfaces on line 4.
    \end{proof}

    We next show that $\CAT(-\kappa)$ spaces are vertically convergent.

    \newtheorem*{VCLemma}{Theorem \ref{lemma:CAT-1ImpliesVerticalConvergence}}
    
    \begin{VCLemma}
        Let $X$ be $\CAT(-\kappa)$ with height function $h$ defined by the ray $\infty_X$. Then $X$ is vertically convergent with respect to $\infty_X$.
    \end{VCLemma}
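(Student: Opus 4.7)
The plan is to transfer the exponential decay of asymptotic rays in the model surface $\Sigma$ of constant curvature $-\kappa$ to $X$, by applying the $\CAT(-\kappa)$ comparison inequality to a triangle and sending one vertex to infinity.

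First I would pin down the shift $s$. Since (by the characterization quoted after \cref{def:vertically_Convergent}) vertical rays in a $\CAT(-\kappa)$ Busemann space are parameterized by height, I set $s = h(\eta_1(0)) - h(\eta_2(0))$, so that $h(\eta_1(t)) = h(\eta_2(s+t))$ for all $t \ge 0$. Because $X$ is $\delta$-hyperbolic and the two rays are asymptotic to $\infty_X$, this height alignment makes $f(t) := d(\eta_1(t), \eta_2(s+t))$ bounded on $[0,\infty)$ via the standard $5\delta$-lemma for asymptotic rays. By the Busemann property just proved, $f$ is convex; being bounded, it must be non-increasing, so $L := \lim_{t\to\infty} f(t) = \inf_t f(t) \ge 0$ exists, and it remains to show $L = 0$.

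For the key estimate, fix $t \ge 0$ and pick $M > t$ large. Consider the triangle $\triangle \eta_1(0) \,\eta_2(s)\, \eta_2(s+M)$ in $X$, with side lengths $D := d(\eta_1(0), \eta_2(s))$, $M$, and $b_M := d(\eta_1(0), \eta_2(s+M))$. The height alignment yields $b_M - M \to 0$ as $M \to \infty$, directly from the defining formula for the Busemann function of $\eta_2$ (based at $\eta_2(s)$) evaluated at $\eta_1(0)$. Form a comparison triangle $\triangle PQR$ in $\Sigma$, with $P,Q,R$ corresponding to $\eta_1(0), \eta_2(s), \eta_2(s+M)$. Let $Y_t$ be on $\overline{PR}$ at arc length $t$ from $P$ (the comparison point for $\sigma_M(t)$, where $\sigma_M$ is the geodesic in $X$ from $\eta_1(0)$ to $\eta_2(s+M)$), and let $Z_t$ be on $\overline{QR}$ at arc length $t$ from $Q$ (the comparison point for $\eta_2(s+t)$). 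Applying the $\CAT(-\kappa)$ inequality gives
\[
d\bigl(\sigma_M(t), \eta_2(s+t)\bigr) \le d_\Sigma(Y_t, Z_t).
\]

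To conclude, I send $M \to \infty$. Uniqueness of geodesic rays from $\eta_1(0)$ to $\infty_X$ in a $\CAT(-\kappa)$ space forces $\sigma_M \to \eta_1$ uniformly on compact sets, so $\sigma_M(t) \to \eta_1(t)$. On the $\Sigma$-side, the side-length data $(D, M, M + o(1))$ forces the comparison triangles, after a suitable translation, to converge to an ideal configuration: two rays from points $P, Q$ at distance $D$, both asymptotic to a common ideal point, with $P$ and $Q$ on a common horosphere centered at that point (this is exactly what $b_M - M \to 0$ encodes). A direct computation in the upper half-plane model of $\Sigma$ then bounds the distance between corresponding arc-length-$t$ points on those limit rays by $C(D, \kappa)\, e^{-\sqrt{\kappa}\, t}$. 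Combining, $f(t) \le C(D, \kappa)\, e^{-\sqrt{\kappa}\, t}$, and hence $L = 0$.

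The main obstacle is the last step's limit in $\Sigma$: one must verify that the comparison triangles, after recentering, converge to the ideal configuration with the asserted horospheric alignment. This can be handled either by a compactness/continuity argument on hyperbolic triangles, or bypassed entirely by a direct trigonometric calculation via the hyperbolic law of cosines that yields, for each $M$, a bound $d_\Sigma(Y_t, Z_t) \le C(D, \kappa) e^{-\sqrt{\kappa} t} + \varepsilon(M, t)$ with $\varepsilon(M, t) \to 0$ as $M \to \infty$ for fixed $t$.
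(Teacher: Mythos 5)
Your overall strategy---a comparison triangle with one vertex pushed to infinity along a vertical ray, transferred back through the $\CAT(-\kappa)$ inequality---is sound and is essentially the same device the paper uses (the paper sends the apex to infinity along $\eta_1$ rather than $\eta_2$ and compares against the segments from $\eta_1(T)$ to $\eta_2(s)$, which converge to $\eta_2$). However, there is a genuine gap at the claim that $b_M - M \to 0$. The defining formula only tells you that $\lim_M (b_M - M) = \beta_{\eta_2}(\eta_1(0))$, where $\beta_{\eta_2}$ is the Busemann function of the ray $\eta_2$ based at $\eta_2(s)$. The height alignment $h(\eta_1(0)) = h(\eta_2(s))$ is a statement about $\beta_\gamma$ for the ray $\gamma$ defining $h$, which is a \emph{different} Busemann function; deducing $\beta_{\eta_2}(\eta_1(0)) = 0$ from it is precisely the content of Lemma \ref{lemma:HeightFromAnyVerticalRay}, which the paper can only prove \emph{after} vertical convergence is established, and which fails in general Gromov-hyperbolic spaces. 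Your argument genuinely needs this input: if $\lim_M(b_M - M) = c > 0$, then in the limiting configuration in $\Sigma$ the points $P$ and $Q$ sit on horospheres differing by $c$, so $d_\Sigma(Y_t, Z_t) \to c$ rather than $0$, and you only get $L \le c$; the only a priori bound in the other direction is $c \le \lim_M f(M) = L$ (from $b_M \le M + f(M)$), so the two inequalities give nothing.

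The gap is fillable. Either invoke the $\CAT(0)$ fact that asymptotic rays define Busemann functions differing by an additive constant (Bridson--Haefliger II.8.20) and evaluate the constant along $\eta_2$ itself, or, more self-containedly: note that $b_M - M$ is non-increasing and nonnegative, hence converges to some $c \ge 0$; run your comparison argument matching $Y_t$ against $Z_{t-c}$ (these sit at matching Busemann levels in the limit configuration, so $d_\Sigma(Y_t, Z_{t-c})$ decays), concluding $d(\eta_1(t), \eta_2(s+t-c)) \to 0$; then use that $h$ is $1$-Lipschitz and $|h(\eta_1(t)) - h(\eta_2(s+t-c))| = c$ to force $c = 0$. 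The remaining steps of your proposal (convergence $\sigma_M \to \eta_1$ by uniqueness of rays from a point to a boundary point, the exponential estimate in $\Sigma$) are fine modulo the computations you already flag. Note that the paper's version of the argument sidesteps this issue entirely by never identifying heights in the comparison triangle with heights in $X$: it uses only the unconditional convergence of vertical rays in $\h^2$ from an arbitrary compact set of starting points, and invokes the $1$-Lipschitz property of $h_X$ only at the last step.
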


    The proof will rely on an alternate characterization of the boundary of $X$, whose properties we will briefly explain. The boundary $\partial X$ compactifies the space $X$ in the sense that there is a compact topology on $X\sqcup \partial X$, in which $X$ is an open subset with $\partial X$ as its topological boundary (\cite{BridsonHaefliger} Proposition III.H.3.7). This topology is defined as follows: fix a base point $p$, and define a sequence $(x_i)\in X$ to converge to $x_\infty\in X\sqcup\partial X$ if there are choices of geodesic segments $\xi_i$ parameterized by length between $p$ and $x_i$ converging to a segment or ray between $p$ and $x_\infty$ uniformly on compact subsets (\cite{BridsonHaefliger} Proposition III.H.3.5). In particular, for any rays $\eta_1$ and $\eta_2$ in the same class, the segments between $\eta_2(t_0)$ and $\eta_1(t)$ converge uniformly on compact subsets to the ray $\eta_2$ as $t\to \infty$. With this fact in hand, we proceed to prove Lemma \ref{lemma:CAT-1ImpliesVerticalConvergence}.
    
   \begin{proof}
        Let $\eta_1$ and $\eta_2$ be two vertical rays in $X$ and let $s=h(\eta_1(0))-h(\eta_2(0))$. WLOG we assume $s\ge 0$. The points $\eta_1(t)$ and $\eta_2(s+t)$ are at the same height.

        \begin{figure}
            \includegraphics[scale=0.75]{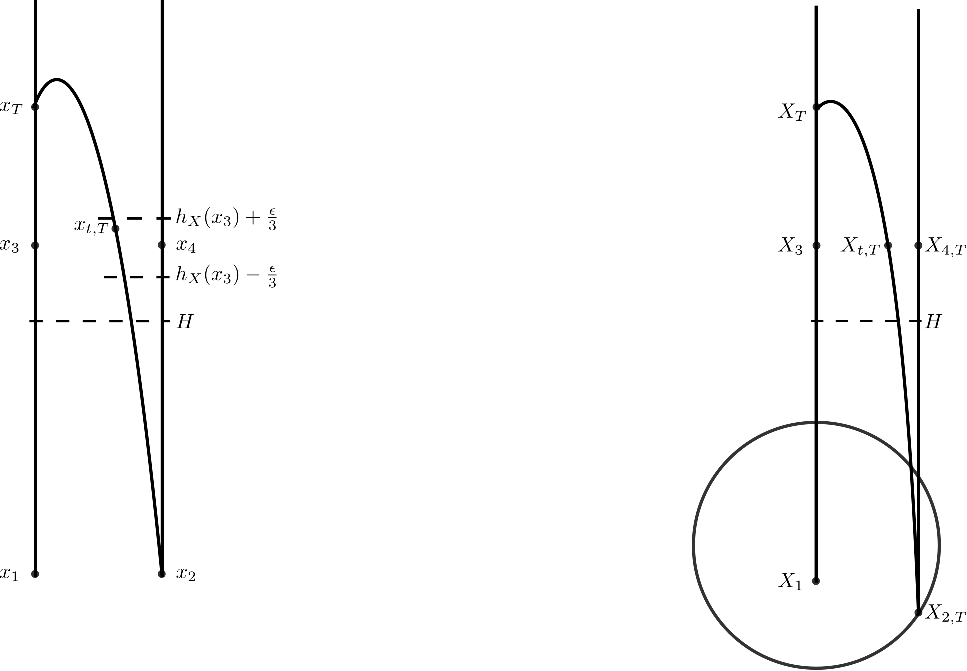}
            \caption{Illustration of the proof of Lemma \ref{lemma:CAT-1ImpliesVerticalConvergence}. On the left are points in $X$, and on the right are points in $\h^2$. The point $X_{2,T}$ varies along the righthand circle.}
        \end{figure}

        Let $\epsilon>0$. Let $x_1=\eta_1(0)$, $x_2=\eta_2(s)$, and $x_T=\eta_1(T)$, where $T>4$ so as not to create a notation conflict.
        
        Choose for the remainder of the proof a point $X_1$ at height $0$ in $\h^2$, and let $X_T$ be chosen so that $h_{\h^2}(X_T)=T=d_{\h^2}(X_1, X_T)$. Denote by $X_{2,T}$ a point in $\h^2$ so that $\triangle X_1X_TX_2$ is a comparison triangle for $\triangle x_1x_Tx_2$ in $X$. Let $\xi_1=\overrightarrow{X_1X_T}$ denote the vertical ray starting at $X_1$. Let $\xi_{2, T}$ denote the vertical ray starting at $X_{2,T}$. 
        
        Notice that $X_{2, T}$ varies with $T$, but is confined to a circle of radius $d_X(x_1, x_2)$ about $X_1$. Since $X_{2, T}$ varies along a compact set, there is a height $H$ so that points on $\xi_1$ and $\xi_{2, T}$ at the same height above $H$ are less than $\frac{\epsilon}{9}$ of one another. The convexity of the distance function between geodesics shows that, when $T>H$, the segment $\overline{X_TX_{2, T}}$ is always within a $\frac{\epsilon}{9}$-neighborhood of $\xi_{2, T}$. Therefore, each point on $\overline{X_TX_{2, T}}$ is less than $\frac{2\epsilon}{9}$ of a point on $\xi_{2, T}$ of matching height.
        
        Choose $t$ so that $h_X(\eta_1(t))>H$, and denote $\eta_1(t)=x_3$ and $X_3$ its comparison point. Let $x_4$ be the point on $\eta_2$ of matching height, namely $\eta_2(s+t)$. Consider $d_X(x_3, x_4)$. Choose $T>H$ sufficiently large so that the segment $\overline{x_Tx_2}$ is within less than $\frac{\epsilon}{6}$ of $\eta_2$ in the height range $[h_X(x_3)-\frac{\epsilon}{3}, h_X(x_3)+\frac{\epsilon}{3}]$. This can always be done because the segments $\overline{x_Tx_2}$ converge to $\eta_2$ in the compact-open topology.

        Since $h_X(x_3)>H$, for any choice of $T>H$, we see that $X_3$ is within $\frac{\epsilon}{9}$ of the point on $\gamma_{2, T}$ of matching height, which we denote $X_{4, T}$. When $T>t>H$, there is a point of matching height on $\overline{X_TX_2}$ at distance less than $\frac{2\epsilon}{9}$ of $X_{4, T}$. It follows that there is some point, $X_{t, T}$ on $\overline{X_TX_2}$ at distance less than $\frac{\epsilon}{3}$ from $X_3$.

        Since $\triangle X_1X_TX_2$ is a comparison triangle for $\triangle x_1x_Tx_2$, the point $X_{t,T}$ is a comparison point for $x_{t, T}$ which is at distance less than $\frac{\epsilon}{3}$ of $x_3$, therefore within the height range $(h_X(x_3)-\frac{\epsilon}{3}, h_X(x_3)+\frac{\epsilon}{3})$. But since $T$ was chosen large enough that $\overline{x_Tx_2}$ is within $\frac{\epsilon}{6}$ of $\eta_2$ in this height range, it follows that there is a point on $\eta_2$ at distance less than $\frac{\epsilon}{2}$ from $x_3$. Therefore, there is a point at height $h_X(x_3)$ on $\eta_2$ at distance less than $\epsilon$ from $x_3$. But the point at height $h_X(x_3)$ on $\eta_2$ is $x_4$. Thus, as long as $t>H$, we see that $d_X(\eta_1(t), \eta_2(t+s))<\epsilon$.
    \end{proof}
    
    The next lemma will be used in the proof of \cref{thm:IsomGroupOfHorocyclicProduct}. It says that if the height difference of two points $x,y \in X$ with $h(x) > h(y)$ is close to their distance, then $x$ is close to the vertical ray at $y$. Note that this is false for $\R^2$. 

    This lemma is the only stage at which we use the full strength of the $\CAT(-\kappa)$ assumption---everything else in the paper holds for spaces which are assumed to be Gromov-hyperbolic, Busemann and vertically convergent in place of $\CAT(-\kappa)$. 

    \begin{lemma}\label{lemma:AlmostVerticalSegments}
            
        Let $X$ have height function $h$. There is a continuous non-decreasing function $f:\R_{>0}\to \R_{>0}$ such that $\lim_{t\to 0}f(t)=0$ satisfying the following. Let $x,y\in X$ with $h(x)\ge h(y)$, and suppose $d(x,y)\leq h(y) -h(x) + C$. Let $V_{y}$ be the vertical geodesic ray starting at $y$. Then $d(x,V_y) \leq f(C)$.  
        
        Moreover, if $V_x$ is the vertical geodesic starting from $x$, then $V_x$ is in the $f(C)$-neighborhood of $V_{y}$.
            
    \end{lemma}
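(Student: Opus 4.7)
Following the template of Lemmas \ref{lemma:CAT-1ImpliesBusemann} and \ref{lemma:CAT-1ImpliesVerticalConvergence}, the plan is to reduce the assertion to an explicit trigonometric computation in a comparison triangle in the model surface $\h^2$ of curvature $-\kappa$, using vertical convergence to pin down the asymptotic geometry at the ideal vertex $\infty_X$. I read the hypothesis as $d(x,y) \le (h(x) - h(y)) + C$ (the displayed statement appears to swap $h(x)$ and $h(y)$); setting $A := h(x) - h(y) \ge 0$, the task is to show that $x$ lies close to the vertical ray $V_y$ from $y$, which I parameterize by height with $V_y(0) = y$.

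The first step is to apply Lemma \ref{lemma:CAT-1ImpliesVerticalConvergence} to $V_x$ and $V_y$, both asymptotic to $\infty_X$: from $d(V_x(t), V_y(A + t)) \to 0$ one deduces $d(x, V_y(A + T)) = T + o(1)$ as $T \to \infty$. For each large $T$, form the comparison triangle $\triangle Y X^T V^T \subset \h^2$ for $\triangle y\, x\, V_y(A+T)$, normalized so that $Y$ is a fixed base point and all $V^T$ lie on a single fixed geodesic ray $\rho$ from $Y$. The hyperbolic law of cosines, applied with $d(Y,X^T) = L := d(x,y) \le A + C$, $d(Y, V^T) = A+T$ and $d(X^T, V^T) = T + o(1)$, produces a definite limiting angle
\[\cos \alpha_\infty \;=\; \frac{\cosh(\sqrt{\kappa}\,L) - e^{-\sqrt{\kappa}\,A}}{\sinh(\sqrt{\kappa}\,L)}\]
as $T \to \infty$. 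Consequently $X^T \to X^\infty \in \h^2$, a point at distance $L$ from $Y$ subtending angle $\alpha_\infty$ with $\rho$. A brief manipulation, using $\cosh^2 - \sinh^2 \equiv 1$ together with $L \le A+C$, bounds $\sinh^2(\sqrt{\kappa}L)(1 - \cos^2\alpha_\infty)$ above by $e^{\sqrt{\kappa}C} - 1$; crucially, this estimate is independent of $A$. Hence $d(X^\infty, \rho)$ is bounded by an explicit continuous function $f(C)$ with $f(0) = 0$. Applying the $\CAT(-\kappa)$ inequality to points on the vertical side of $\triangle y\,x\,V_y(A+T)$ and letting $T \to \infty$ then yields $d(x, V_y) \le f(C)$.

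For the ``moreover'' clause, each pair $(V_x(t), y)$ with $t \ge 0$ satisfies the same hypothesis with the same constant $C$: indeed $d(V_x(t), y) \le t + L \le (A + t) + C = (h(V_x(t)) - h(y)) + C$. Applying the bound already obtained to every such pair places all of $V_x$ within the $f(C)$-neighborhood of $V_y$. The main obstacle I anticipate is the careful control of the limit $T \to \infty$---tracking the $o(1)$ error in $d(x, V_y(A+T))$, confirming that the comparison vertices $X^T$ genuinely converge in $\h^2$, and verifying that the foot of perpendicular from $X^\infty$ to $\rho$ actually lies on the ray rather than escaping---together with the algebraic check that the bound on $d(X^\infty, \rho)$ truly depends only on $C$ and not on $A$. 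The trigonometric manipulations themselves are routine; the conceptual content is that in $\h^2$ a triangle with two long sides whose lengths differ by at most $C$ and whose third ``vertical'' side tends to infinity must have its off-vertical vertex pinned near the vertical side, at a distance controlled by $C$ alone.
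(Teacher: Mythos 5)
Your proposal is correct, and it is a close cousin of the paper's argument rather than a copy of it. Both proofs form a comparison triangle in the model plane for $\triangle\bigl(y,\,x,\,V_y(\text{high})\bigr)$ and reduce to hyperbolic trigonometry, and both arrive at the same bound ($\cosh\inv(e^{C/2})=\sinh\inv(\sqrt{e^C-1})$ in the $\CAT(-1)$ case). The mechanisms differ: the paper works with a single triangle $\triangle xyz$ for $z$ finite but high, drops a perpendicular from $X$ to the vertical side at a foot $W$, and applies the right-triangle identity $\cosh(\mathrm{hyp})=\cosh(\mathrm{opp})\cosh(\mathrm{adj})$ on both sides of $W$ to extract $2\ln\cosh(d(X,W))\le D$ directly from the perimeter defect; you instead take a limit of comparison triangles, use the law of cosines to pin down the limiting angle at $Y$, and convert that angle into a perpendicular distance via $\sinh(d)=\sinh(L)\sin\alpha_\infty$. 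Your algebra checks out: $\sinh^2L\,\sin^2\alpha_\infty=-1+2e^{-A}\cosh L-e^{-2A}\le e^C-1$ using $L\le A+C$, and $\cos\alpha_\infty>0$ guarantees the foot of the perpendicular lies on the ray, the point you rightly flagged. You also correctly diagnosed the sign typo in the hypothesis (it should read $d(x,y)\le h(x)-h(y)+C$). Two small remarks: your use of vertical convergence to get $d(x,V_y(A+T))=T+o(1)$ is doing the same job as the paper's choice of $z$ with $d(x,z)\le d(x',z)+\epsilon$ (which itself implicitly rests on vertical convergence via Lemma \ref{lemma:HeightFromAnyVerticalRay}), so this is not an extra hypothesis; and your proof of the ``moreover'' clause---re-applying the main estimate to each pair $(V_x(t),y)$, which satisfies the hypothesis with the same $C$---is a genuinely different and arguably cleaner argument than the paper's appeal to convexity of $t\mapsto d(V_y(s+t),V_x(t))$, and it avoids invoking the Busemann property at that step.
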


    \begin{proof}

    We will show that the function $f(C) = \cosh\inv(e^{C/2})$ suffices when $X$ is $\CAT(-1)$. Once we do, we can analyze every other case as follows. Since rescaling the hyperbolic plane by a factor $c$ multiplies the curvature by $\frac{1}{c^2}$, for a $\CAT(-\kappa)$ space, we rescale the metric by a factor of $\sqrt{\kappa}$ and apply the $\CAT(-1)$ upper bound $d_{\h^2}(x, V_y)\le f(\sqrt{\kappa}C)$. Then $d_X(x, V_y)\le\frac{1}{\kappa}f(\sqrt{\kappa}C)$, which is an upper bound with the desired behavior. So it suffices to prove the inequality for the $\CAT(-1)$ case.
    
    Let $\epsilon > 0$ with $\epsilon < (h(x) - h(y))/2$ and let $x' \in V_y$ be the point with height $h(x') = h(x)$. Choose a point $z \in V_y$ high enough so that $d(x,z) \leq d(x',z) +\epsilon$. The geodesic triangle $\triangle xyz$ satisfies $$\ell(\overline{xz}) + \ell(\overline{xy}) \leq \ell(\overline{yz}) + C + \epsilon.$$ Set $D = C+ \epsilon$. It suffices to show that $d(x,V_y) \leq \cosh\inv(e^{D/2})$ by letting $\epsilon \to 0$. Let $\triangle XYZ$ be the comparison triangle for $\triangle xyz$ and $X'$ the comparison point for $x'$. Using the $\CAT(-1)$ inequality, it suffices to show that $d(X, \overline{YZ}) \leq \cosh\inv(e^{D/2})$. Hence, we have reduced the problem to hyperbolic trigonometry.
    
    We will imagine the comparison triangle to have vertical $\overline{YZ}$ side in the upper half plane model, with $Y$ below $Z$. See Figure \ref{fig:hyperbolic_trig}. Since $$d(X,Z) \leq d(X',Z) + \epsilon \leq d(X',Z) + (h(x)-h(y))/2 = d(X',Z) + d(X',Y)/2,$$ we conclude that $X$ is above $Y$. Let $W$ be the closest point projection of $X$ onto $\overleftrightarrow{YZ}$. Since $X$ is above $Y$, It follows that $W$ lies above $Y$. We may choose $z$ high enough so that $W$ also lies below $Z$. Then in fact $W$ lies on $\overline{YZ}$.

    \begin{figure}
        \centering
        \includegraphics[width=2.5cm]{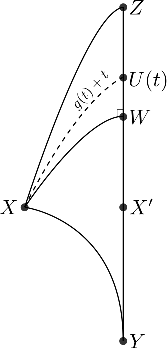}
        \caption{The comparison triangle in the proof of \ref{lemma:AlmostVerticalSegments}}
        \label{fig:hyperbolic_trig}
    \end{figure}

    Note that $\triangle XWZ$ and $\triangle XWY$ are both right triangles. Recall that for a right triangle in the hyperbolic plane, we have the identity \begin{equation} \cosh(\text{hyp}) = \cosh(\text{opp}) \cosh(\text{adj}), \label{Trig_Identity}\end{equation} where $\text{hyp},\; \text{opp},\; \text{adj}$ denote the lengths of the hypotenuse, opposite and adjacent sides. 

    
    
    Consider the function $$ g(t) = \cosh\inv\big(\cosh(\ell(\overline{XW})) \cdot \cosh(t)\big) - t.$$ In light of the identity (\ref{Trig_Identity}), this is the function used to compute the height of $X$ relative to $
    W$: We have $h(X) = h(W) - \lim_{t\to\infty} d(X, U(t)) - t = h(W) - \lim_{t\to\infty} g(t)$, where $U(t) \in \overline{YZ}$ is the point on $\overline{YZ}$ above $W$ with $d(W, U(t)) = t$. In particular, $g$ is decreasing.

    
    The identity (\ref{Trig_Identity}) also gives the relations $$g(\ell(\overline{ZW}))  = \ell(\overline{XZ}) - \ell(\overline{ZW}) \quad \text{and} \quad g(\ell(\overline{YW})) = \ell(\overline{XY}) - \ell(\overline{YW}).$$ 


    
    Since $\ell(\overline{YZ}) = \ell(\overline{YW}) + \ell(\overline{ZW})$, adding these relations gives $$g(\ell(\overline{ZW})) + g(\ell(\overline{YW})) = \ell(\overline{XZ}) + \ell(\overline{XY}) - \ell(\overline{YZ}) \leq D.$$
    
    We now explicitly compute the limit $\lim_{t\to \infty} g(t)$. Since $g$ is decreasing, this is also a lower bound for $g$.
    
    \begin{align*}
        \lim_{t\to\infty} g(t) &= \lim_{t\to\infty} \cosh\inv(\cosh(\ell(\overline{XW}))\cosh(t)) - t \\
        &= \lim_{t\to\infty} \ln(2\cosh(\ell(\overline{XW})) \cosh(t)) - t \\
        &= \lim_{t\to\infty} \ln(\cosh(\ell(\overline{XW}))) + \ln(2\cosh(t)) - t \\
        &= \ln (\cosh(\ell(\overline{XW}))) + \lim_{t\to\infty} \left[ \ln\left(2 \frac{e^{t} + e^{-t}}{2}\right) - t  \right] \\ 
        &= \ln(\cosh(\ell(\overline{XW})))
    \end{align*}

    
    \noindent Where the first step used the fact that $\lim_{t\to\infty} \cosh\inv(t) - \ln(2t) = 0$.
    
    We now have the bounds $$ \ln(\cosh(\ell(\overline{XW})))\leq g(\ell(\overline{ZW})) \quad \text{and} \quad \ln(\cosh(\ell(\overline{XW})))\leq g(\ell(\overline{YW})).$$ Adding these bounds gives $$2 \ln(\cosh(\ell(\overline{XW}))) \leq g(\ell(\overline{ZW})) + g(\ell(\overline{YW}))\leq D,$$ so $d(X,\overline{YZ}) = \ell(\overline{XW})\leq \cosh\inv(e^{D/2}),$ as desired.


    
    Finally, let $V_x$ be the vertical geodesic ray starting from $x$. Let $s > 0$ satisfy $d(x, V_y(s)) \leq f(C)$. Since $d(V_y(s+t), V_x(t))$ is a convex, decreasing function $[0,\infty) \to \R$, hence bounded by its initial value, which is itself bounded by $f(C)$.
    \end{proof}

\section{The metric space $X'$}\label{sec:NewMetricOnX}

    Recall that the metric space $X'$ is the space $X$, with the metric $d_{\bowtie}|_{X\bowtie V}$ for some  choice of vertical line $V$ in $Y$. Everything in this section will go through for $Y$ and $Y'=(Y, d_{\bowtie}|_{V\bowtie Y})$ for a vertical line $V$ in $X$ as well.

    In this section we will first show that $X'$ may equivalently be given the length metric associated to $N\bigl(d_X,\Delta h_X\bigr)$, where $N$ is the norm defining the metric $d_{\bowtie}$ and $\Delta h_X$ refers to the absolute difference in height between a pair of points. This will require showing that $X'$ is geodesic. We will then compare $d_X'$ to $d_X$ to show that $X'$ is proper and hyperbolic. Finally, we will show that isometries of $(X,d_X')$ have a well-defined height change.

    \subsection{Equivalence of the two definitions of $d_X'$.}

        We first show that $d_{\bowtie}|_{X\bowtie V}$ is geodesic using the monotonicity of $N$.
    
        \begin{lemma} \label{lemma:MonotoneImpliesConvex}
            Let $(x_1, y_1)$ and $(x_2, y_2)$ be in $X\bowtie V$. Then there is a geodesic between $(x_1, y_1)$ and $(x_2, y_2)$ that lies in $X\bowtie V$.
        \end{lemma}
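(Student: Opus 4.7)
The plan is to take a geodesic $\gamma: [0,1] \to X \bowtie Y$ between $(x_1, y_1)$ and $(x_2, y_2)$ (whose existence follows from a standard Arzel\`a--Ascoli argument using properness of $X$ and $Y$ together with \cref{lemma:BoundedCoordinateDinstancesImpliesBoundedDistance}), and to project it vertically into $X \bowtie V$. The key ingredient is the monotonicity of $N$, which forces this projection to be length non-increasing.

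Concretely, I would define $\pi: X \bowtie Y \to X \bowtie V$ by $\pi(x, y) = (x, V(-h_X(x)))$; this is continuous since $V$ is parameterized by height and $h_X$ is continuous. Let $\tilde\gamma := \pi \circ \gamma$. Since the given endpoints already lie in $X \bowtie V$, we have $\tilde\gamma(0) = (x_1, y_1)$ and $\tilde\gamma(1) = (x_2, y_2)$. For any chain $0 = t_0 < \cdots < t_n = 1$ in $\mathscr{C}$, the $X$-coordinates of $\tilde\gamma$ agree with those of $\gamma$, and the $Y$-coordinates of $\tilde\gamma$ lie on $V$ parameterized by height. Writing $\gamma = (\gamma_X, \gamma_Y)$, we therefore have
\[
d_X(\tilde\gamma(t_i), \tilde\gamma(t_{i+1})) = d_X(\gamma(t_i), \gamma(t_{i+1})),
\]
and
\[
d_Y(\tilde\gamma(t_i), \tilde\gamma(t_{i+1})) = |h_X(\gamma_X(t_i)) - h_X(\gamma_X(t_{i+1}))| = |h_Y(\gamma_Y(t_i)) - h_Y(\gamma_Y(t_{i+1}))| \le d_Y(\gamma_Y(t_i), \gamma_Y(t_{i+1})),
\]
using $h_X = -h_Y$ on $X \bowtie Y$ and the fact that $h_Y$ is $1$-Lipschitz. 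Monotonicity of $N$ then gives that each summand in the length formula for $\tilde\gamma$ is at most the corresponding summand for $\gamma$, so $\ell_N(\tilde\gamma) \le \ell_N(\gamma)$.

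Since $\gamma$ is a geodesic and $\tilde\gamma$ has the same endpoints, we conclude $\ell_N(\tilde\gamma) = d_{\bowtie}((x_1, y_1), (x_2, y_2))$, which is to say that $\tilde\gamma$ is the desired geodesic lying in $X \bowtie V$. The main obstacle is the length comparison itself: it relies essentially on monotonicity of $N$, since without monotonicity one could imagine $N$ increasing when one coordinate distance strictly decreases and the other stays fixed, breaking the argument. A secondary issue is confirming the existence of the initial geodesic in $X \bowtie Y$, which should be a standard consequence of properness of the factor spaces together with the coordinate-distance bound in \cref{lemma:BoundedCoordinateDinstancesImpliesBoundedDistance}.
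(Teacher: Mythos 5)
Your proposal is correct and follows essentially the same route as the paper: push the geodesic of $X\bowtie Y$ into $X\bowtie V$ via the vertical projection $(x,y)\mapsto (x, V(-h_X(x)))$, keep the $X$-coordinate distances, bound the new $Y$-coordinate distances by height differences using the $1$-Lipschitz property of the height function, and invoke monotonicity of $N$ to conclude the projected path is no longer than the original. The existence point you flag as a secondary issue is handled the same way implicitly in the paper (it simply starts from a geodesic in $X\bowtie Y$), so there is no substantive difference.
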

    
        \begin{proof}
            Let $\gamma:[0,1]\to X\bowtie Y$ be any geodesic path between $(x_1, y_1)$ and $(x_2, y_2)$ in $X\bowtie Y$. Consider the path $\gamma_X=\pi_X(\gamma)$. Denote by $i_V$ the inverse map to $\pi_X|_{X\bowtie V}$, and consider the path $\gamma'=i_V\circ\pi_X\circ\gamma$. This is a path in $X\bowtie V$ whose starting and ending points have $X$-coordinates $x_1$ and $x_2$ respectively. Therefore, $\gamma'$ is a path path connecting $(x_1, y_1)$ to $(x_2, y_2)$.
    
            The length of $\gamma'$ is given by \[\sup_\mathscr{C}\sum_{i=0}^{n-1} N\bigl(d_X\circ\pi_X(\gamma'(t_i), \gamma'(t_{i+1})),|h_X(\gamma'(t_i))-h_X(\gamma'(t_{i+1}))|\bigr).\]
    
            Since the height function is $1$-Lipschitz, it follows that $|h_X(\gamma'(t_i))-h_X(\gamma'(t_{i+1}))|\le d_Y\circ\pi_Y(\gamma(t_i), \gamma(t_{i+1})).$ Monotonicity of the norm shows that the length of $\gamma'$ is therefore at most 
            \[\sup_\mathscr{C}\sum_{i=0}^{n-1} N\bigl(d_X\circ\pi_X(\gamma'(t_i),\gamma'(t_{i+1})),d_Y\circ\pi_Y(\gamma(t_i), \gamma(t_{i+1}))\bigr).\]
    
            But $\pi_X(\gamma'(t))=\pi_X(\gamma(t))$, so that the above expression is the same as 
            \[\sup_\mathscr{C}\sum_{i=0}^{n-1} N\bigl(d_X\circ\pi_X(\gamma(t_i),\gamma(t_{i+1})),d_Y\circ\pi_Y(\gamma(t_i), \gamma(t_{i+1}))\bigr).\]
    
            The supremum of this expression among all chains is the length of $\gamma$. So we have shown that $\gamma'$ is no longer than a geodesic, and so it must be geodesic itself.
        \end{proof}
    
        We deduce the equivalence of the two definitions of $d'$ immediately.
    
        \begin{cor} \label{cor:EquivalentDefinitions}
            Definitions \ref{def:d'_initial} and \ref{def:d'_equivalent} are equivalent.
        \end{cor}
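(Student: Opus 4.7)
The plan is to unpack both definitions at the level of paths and use Lemma \ref{lemma:MonotoneImpliesConvex} to reduce from arbitrary paths in $X\bowtie Y$ to paths in $X\bowtie V$.

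First, I would observe that $\pi_X$ is a continuous bijection $X\bowtie V\to X$, so it induces a bijection between continuous paths in $X\bowtie V$ and continuous paths in $X$. Explicitly, a path $\gamma_X:[0,1]\to X$ lifts to the path $\widetilde\gamma(t)=(\gamma_X(t), V(-h_X(\gamma_X(t))))$ in $X\bowtie V$. Since $V$ is parameterized by height, the $Y$-distance between two points on $V$ equals the absolute difference in their heights, so for any chain $0=t_0<\cdots<t_n=1$ one has
\[d_Y\bigl(\widetilde\gamma(t_i),\widetilde\gamma(t_{i+1})\bigr)=\bigl|h_X(\gamma_X(t_i))-h_X(\gamma_X(t_{i+1}))\bigr|.\]
Hence the length $\ell_N(\widetilde\gamma)$ computed in $X\bowtie Y$ equals
\[\sup_{\mathscr{C}}\sum_{i=0}^{n-1}N\bigl(d_X(\gamma_X(t_i),\gamma_X(t_{i+1})),\ |h_X(\gamma_X(t_i))-h_X(\gamma_X(t_{i+1}))|\bigr),\]
which is exactly the length assigned to $\gamma_X$ by Definition \ref{def:d'_equivalent}.

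Next, I would invoke Lemma \ref{lemma:MonotoneImpliesConvex}: for any two points of $X\bowtie V$, the distance $d_{\bowtie}$ between them is realized by a geodesic contained in $X\bowtie V$. Therefore the infimum of $\ell_N$ over all paths in $X\bowtie Y$ joining two points of $X\bowtie V$ equals the infimum of $\ell_N$ over paths that stay in $X\bowtie V$. By the preceding paragraph, this second infimum transports under $\pi_X$ to the infimum in Definition \ref{def:d'_equivalent}. Combining, the two candidate values for $d'_X(x_1,x_2)$ agree.

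There is no real obstacle here; the only subtlety is that the infimum defining $d_{\bowtie}$ is a priori taken over all paths in $X\bowtie Y$, whereas Definition \ref{def:d'_equivalent} only uses paths in $X$ (equivalently, paths in $X\bowtie V$). Lemma \ref{lemma:MonotoneImpliesConvex} is precisely what removes this discrepancy, so the corollary follows at once.
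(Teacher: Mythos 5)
Your proposal is correct and follows the same route as the paper: the paper's one-line proof likewise observes that Definition \ref{def:d'_equivalent} computes the infimal $d_{\bowtie}$-length over paths confined to $X\bowtie V$, and that Lemma \ref{lemma:MonotoneImpliesConvex} shows $X\bowtie V$ is a geodesic subspace, so this agrees with the restricted metric of Definition \ref{def:d'_initial}. You have simply spelled out the path correspondence under $\pi_X$ in more detail.
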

    
        \begin{proof}
            Definition \ref{def:d'_equivalent} is the infimal $d_{\bowtie}$-length of a path between a pair of points on $X\bowtie V$ that lies on $X\bowtie V$, and $X\bowtie V$ is a geodesic subspace for $d_{\bowtie}$. 
        \end{proof}

    \subsection{Rough isometry of $d_X$ and $d'_X$.}

        In this subsection, we will show that  $d'_X$ is hyperbolic and proper by showing that $d'_X$ differs from $d_X$ by a bounded amount. In an early draft of this paper, the authors learned that this fact follows from Ferragut's Theorem A in \cite{Ferragut:Visual_Boundary} together with Lemma \ref{lemma:MonotoneImpliesConvex}. However, we present the proof from scratch because all the pieces will be necessary in the following subsection as well.
        
        The upper bound on $d'_X-d_X$ is easily shown to be $0$.
    
        \begin{lemma}\label{lemma:RIUpperBound}
            The metric $d'_X$ satisfies
            \[d'_X\le d_X,\] 
            with $d'_X=d_X$ for points connected by vertical segments. 
        \end{lemma}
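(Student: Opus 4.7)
The plan is to prove both inequalities directly from the length-metric definition of $d'_X$ in \Cref{def:d'_equivalent} and the two structural assumptions on $N$: monotonicity (for the upper bound) and admissibility $N \ge L^1/2$ (for the lower bound on vertical segments).

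For the upper bound, I would take any $d_X$-geodesic $\gamma:[0,1] \to X$ between $x_1$ and $x_2$ and estimate its $d'_X$-length. Since $h_X$ is $1$-Lipschitz, for any chain $0 = t_0 < \cdots < t_n = 1$ in $\mathscr{C}$ we have $|h_X(\gamma(t_i)) - h_X(\gamma(t_{i+1}))| \le d_X(\gamma(t_i), \gamma(t_{i+1}))$. Monotonicity of $N$, combined with the normalization $N(1,1) = 1$, then gives
\[N\bigl(d_X(\gamma(t_i), \gamma(t_{i+1})), |h_X(\gamma(t_i)) - h_X(\gamma(t_{i+1}))|\bigr) \le N\bigl(d_X(\gamma(t_i), \gamma(t_{i+1})), d_X(\gamma(t_i), \gamma(t_{i+1}))\bigr) = d_X(\gamma(t_i), \gamma(t_{i+1})).\]
Summing and passing to the supremum over $\mathscr{C}$ bounds the $d'_X$-length of $\gamma$ by its $d_X$-length, which equals $d_X(x_1, x_2)$. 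Infimizing over paths yields $d'_X(x_1, x_2) \le d_X(x_1, x_2)$.

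For the equality when $x_1$ and $x_2$ lie on a common vertical segment, note that $d_X(x_1, x_2) = |h_X(x_1) - h_X(x_2)|$ in that case. I would show the reverse inequality by bounding the $d'_X$-length of an arbitrary path $\gamma$ from $x_1$ to $x_2$ from below. Admissibility $N \ge L^1/2$ gives, for any chain,
\[\sum_{i=0}^{n-1} N\bigl(d_X(\gamma(t_i), \gamma(t_{i+1})), |h_X(\gamma(t_i)) - h_X(\gamma(t_{i+1}))|\bigr) \ge \frac{1}{2}\sum_{i=0}^{n-1} \Bigl( d_X(\gamma(t_i), \gamma(t_{i+1})) + |h_X(\gamma(t_i)) - h_X(\gamma(t_{i+1}))| \Bigr).\]
The triangle inequality bounds $\sum_i d_X(\gamma(t_i), \gamma(t_{i+1})) \ge d_X(x_1, x_2)$, and telescoping the height terms gives $\sum_i |h_X(\gamma(t_i)) - h_X(\gamma(t_{i+1}))| \ge |h_X(x_1) - h_X(x_2)| = d_X(x_1, x_2)$. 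Together these give $\ell_{d'_X}(\gamma) \ge d_X(x_1, x_2)$, so $d'_X(x_1, x_2) \ge d_X(x_1, x_2)$, completing the equality.

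There is no real obstacle here; the statement is essentially a norm-comparison estimate, with the $1$-Lipschitz height giving one direction via monotonicity and the admissibility hypothesis $N \ge L^1/2$ giving the other on vertical segments. The only thing to be a little careful about is that the length functional defining $d'_X$ is the sup over chains of sums of $N$-values rather than an integral, but both triangle-inequality arguments above are chain-uniform, so the bounds survive the supremum.
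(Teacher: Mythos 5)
Your proof is correct and follows essentially the same route as the paper: the upper bound via the $1$-Lipschitz height function and monotonicity of $N$ applied chain-by-chain to a $d_X$-geodesic, and then a lower bound on the length of arbitrary competing paths for points joined by a vertical segment. The only (cosmetic) difference is in that last step, where you invoke admissibility $N\ge L^1/2$ together with the telescoping of height differences, while the paper deduces the same lower bound from monotonicity (since $d_X\ge|\Delta h_X|$ on each link of the chain, $N(d_X,|\Delta h_X|)\ge N(|\Delta h_X|,|\Delta h_X|)=|\Delta h_X|$, which telescopes); both hypotheses are in force, so either justification is fine.
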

    	
        \begin{proof}
            Let $p$ and $q$ be any two points in $X$, and let $\gamma:[0,1]\to X$ be a $d_X$-geodesic between the two. We wish to compute the length of $\gamma$ with respect to $d'_X$. So consider any pair of points, $\gamma(t_1)$ and $\gamma(t_2)$. Since height change is $1$-Lipschitz, $|h(\gamma(t_1))-h(\gamma(t_2))|\le d_X(\gamma(t_1),\gamma(t_2))$. Then taking any finite chain of points in $\mathscr{C}$, the length associated to this chain is
            \begin{align*}
                \sum_{i=0}^{n-1} N\bigl(d_X(\gamma(t_i), \gamma(t_{i+1})), |h(\gamma(t_i))- h(\gamma(t_{i+1}))|\bigr) &\le \sum_{i=0}^{n-1} N\bigl(d_X(\gamma(t_i), \gamma(t_{i+1})), d_X(\gamma(t_i), \gamma(t_{i+1}))\bigr)\\
                &=\sum_{i=0}^{n-1} d_X(\gamma(t_i), \gamma(t_{i+1}))\\
                &=\ell_{d_X}(\gamma),
            \end{align*}
                
            where we used monotonicity in the first line and admissibility in the second. Hence, $\ell_{d'_X}(\gamma)\le \ell_{d_X}(\gamma)=d_X(p,q)$. 
                
            Moreover, if $\gamma$ is vertical, then the first line of the above sequence is an equality, so that $\ell_{d'_X}(\gamma)=\ell_{d_X}(\gamma)$. Monotonicity of $N$ then shows that no shorter path exists.
        \end{proof}
    
        The map $i_V:(X, d_X)\to (X\bowtie V, d_X')$ is therefore distance-non-increasing, and so continuous. Obtaining a bound on how much $i_V$ may shrink distances involves several steps. We denote $V_c(h_X)$ the total variation of the function $h$ along the path $c$.
        First, we will show that $d_X$-geodesics $\gamma$ have $V_\gamma(h_X)$ close to their $d_X$-length. Then we will show that $d_X'$-geodesics $\gamma'$ have $V_{\gamma'}(h_X)$ not much less than $V_\gamma(h_X)$, where $\gamma$ is the $d_X$-geodesic connecting the same pair of points. Using the fact that height variation is a lower bound for length, this will show that $\ell_{d_X'}(\gamma')$ is not much less than $V_\gamma(h_X)$, which is in turn not much less than $\ell_{d_X}(\gamma)$.
    	
       \begin{lemma}
       \label{lemma:BoxGeodesicsAreUniformlyRough}
            Let $X$ be $\delta$-hyperbolic. For any geodesic $\gamma$ in $X$ between points $p$ and $q$, $\ell_{d_X}(\gamma)-4\delta\le V_\gamma(h_X)$. Indeed, there is a point $r$ on $\gamma$ so that $\ell(\gamma)-4\delta\le |h_X(r)-h_X(p)|+|h_X(r)-h_X(q)|$. As a consequence, $\ell_{d_X'}(\gamma)\ge \ell_{d_X}(\gamma)-4\delta$.
            
            Moreover, if $h_X(r)\le \min\{h_X(p), h_X(q)\}$, then $d_X(p,q)\le 4\delta$.
        \end{lemma}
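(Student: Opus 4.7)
My plan is to produce a point $r\in\gamma$ close to both vertical rays $V_p$ and $V_q$ emanating from $p$ and $q$ toward $\infty_X$, and then to exploit the $1$-Lipschitz nature of $h$ to force $h(r)$ to sit near $\tfrac{1}{2}(L+h(p)+h(q))$, where $L=\ell(\gamma)$. The picture is the ideal triangle with vertices $p,q,\infty_X$ and sides $\gamma,V_p,V_q$; the point $r$ will play the role of the triangle's ``center'' on the side $\gamma$. Once $r$ is located, a short case analysis on the positions of $h(r),h(p),h(q)$ gives both the main inequality and the moreover clause.

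To locate $r$ I approximate the ideal triangle by the finite geodesic triangle $\triangle pqV_p(T)$ for large $T$. Vertical convergence (\cref{lemma:CAT-1ImpliesVerticalConvergence}) gives $d(q,V_p(T))=T+h(p)-h(q)+o(1)$, so the Gromov product $(q\mid V_p(T))_p$ converges to $u:=\tfrac{1}{2}(L+h(q)-h(p))$. The standard slim-triangle fact (see \cite{BridsonHaefliger}, Proposition III.H.1.17) says that in a $\delta$-slim triangle, two points on sides meeting at a common vertex at equal distance $t\le(q\mid V_p(T))_p$ from that vertex lie within $2\delta$ of one another. Applied to $\gamma(t)$ and $V_p(t)$ and passing to the limit in $t$ and $T$, this yields a point $r:=\gamma(u)$ with $d(r,V_p(u))\le 2\delta$. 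Since $V_p(u)$ sits at height $h(p)+u=\tfrac{1}{2}(L+h(p)+h(q))$, the $1$-Lipschitz property of $h$ gives $h(r)\ge\tfrac{1}{2}(L+h(p)+h(q))-2\delta$.

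The case analysis is now direct. If $h(r)\ge\max\{h(p),h(q)\}$, then $|h(r)-h(p)|+|h(r)-h(q)|=2h(r)-h(p)-h(q)\ge L-4\delta$. If, without loss of generality, $h(p)\le h(r)\le h(q)$, then the sum equals $h(q)-h(p)$, and $h(q)\ge h(r)\ge \tfrac{1}{2}(L+h(p)+h(q))-2\delta$ forces $h(q)-h(p)\ge L-4\delta$. If $h(r)\le\min\{h(p),h(q)\}$, combining this with $h(r)\ge\tfrac{1}{2}(L+h(p)+h(q))-2\delta$ gives $L\le -|h(p)-h(q)|+4\delta\le 4\delta$, which is the moreover clause and trivially implies the main inequality.

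To finish, $V_\gamma(h_X)\ge|h(p)-h(r)|+|h(r)-h(q)|\ge L-4\delta$ by using any partition containing the parameter of $r$, and admissibility $N\ge L^1/2$ together with $\sum_P N(a_i,b_i)\ge\tfrac{1}{2}(\sum_P a_i+\sum_P b_i)$ and passing to refinements gives $\ell_{d_X'}(\gamma)\ge\tfrac{1}{2}(\ell_{d_X}(\gamma)+V_\gamma(h_X))\ge L-2\delta\ge L-4\delta$. The main obstacle is the second paragraph: cleanly extracting the $2\delta$ slim-triangle estimate in the limit $T\to\infty$, where vertical convergence both pins down the Gromov-product limit $u$ and ensures that the witness points supplied by the slim-triangle condition accumulate on $V_p$ rather than drifting off with $T$.
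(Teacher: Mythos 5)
Your overall strategy---approximating the ideal triangle with vertices $p$, $q$, $\infty_X$ and locating a point $r$ on $\gamma$ near the vertical rays, then running a case analysis on $h(r)$---is the same as the paper's, and the final step (getting $\ell_{d_X'}(\gamma)\ge \tfrac12(\ell_{d_X}(\gamma)+V_\gamma(h_X))$ from admissibility plus common refinements) is a perfectly good alternative to the paper's monotonicity argument. The case analysis and the derivation of the moreover clause are also fine as conditional statements.

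The gap is in the second paragraph, and it is exactly where you suspected, but the problem is the constant rather than the limit $T\to\infty$. From $\delta$-slimness alone one cannot conclude that $\gamma(t)$ and $V_p(t)$ are within $2\delta$ for $t\le (q\mid V_p(T))_p$: slimness only says $\gamma(u)$ is within $\delta$ of $[p,V_p(T)]\cup[q,V_p(T)]$, and if the nearby point lies on the \emph{third} side $[q,V_p(T)]$ the $2\delta$ bound does not follow. The correct general consequence of $\delta$-slimness is the insize bound $4\delta$ (each internal point is within $2\delta$ of \emph{some} other internal point, whence pairwise distances at most $4\delta$); BH III.H.1.17 does not give $2\delta$ for the slim-to-thin direction. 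With the honest constant your argument yields $h(r)\ge \tfrac12(L+h(p)+h(q))-4\delta$ and hence $\ell(\gamma)-8\delta\le |h(r)-h(p)|+|h(r)-h(q)|$, which is weaker than the stated lemma (though still sufficient for everything downstream, and, amusingly, still strong enough to give $\ell_{d_X'}(\gamma)\ge L-4\delta$ via your averaging step). To recover $4\delta$ you must treat the bad case separately: if $\gamma(u)$ is $\delta$-close to $w\in[q,V_p(T)]$, use that these geodesics converge to the vertical ray at $q$ (so $h(w)\ge h(q)+d(q,w)-o(1)$ with $d(q,w)\ge L-u-\delta$) to get the same height lower bound $h(r)\ge\tfrac12(L+h(p)+h(q))-2\delta-o(1)$. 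The paper sidesteps this by choosing $r$ on $\gamma$ simultaneously within $\delta$ of \emph{both} vertical rays (a connectedness argument on the slim ideal triangle with sides $\gamma,\eta_p,\eta_q$) and then bounding $\ell(\gamma)$ by the length of the detour $\eta_p|_{[0,t_1]}*\gamma''*\eta_q|_{[t_2,0]}$, which gives $4\delta$ directly without any Gromov-product computation.
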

    	
    	
        \begin{proof}
            Let $p$ and $q$ be any two points in $X$, with a $d_X$-geodesic $\gamma$ between them. Consider the vertical rays $\eta_p$ and $\eta_q$ starting at $p$ and $q$.
    
            \begin{figure}
                \centering\includegraphics{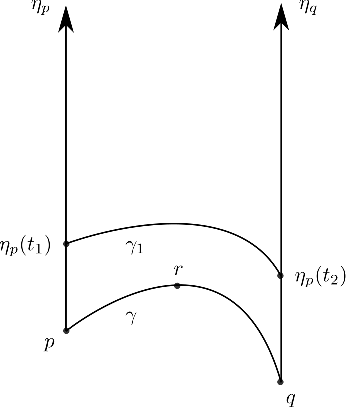}
                \caption{Proof of Lemma \ref{lemma:BoxGeodesicsAreUniformlyRough}}
                \label{fig:HeightVariationOfGeodesics}
            \end{figure}
        
            The triangle whose sides are $\gamma$, $\eta_p$, and $\eta_q$ is $\delta$-thin. So, as shown in Figure \ref{fig:HeightVariationOfGeodesics}, let $r$ be a point within $\delta$ of points $\eta_p(t_1)$ and $\eta_q(t_2)$. It follows that $|h_X(r)-h_X(\eta_p(t_1))|\le \delta$ and $|h_X(r)-h_X(\eta_q(t_2))|\le \delta$. Let $\gamma'$ be a geodesic between $\eta_p(t_1)$ and $\eta_q(t_2)$, necessarily of length no more than $2\delta$. Denote $*$ to be path concatenation, and consider the path 
            \[\xi=\eta_p|_{[0, t_1]}*\gamma'* \eta_q|_{[t_2, 0]}.\]
            
            Then we calculate
            
            \begin{align*}
                \ell(\gamma)-2\delta &\le \ell(\xi)-2\delta\\
                &\le |h_X(\eta_p(t_1))-h_X(p)|+|h_X(\eta_q(t_2))-h_X(q)|\\
                &\le |h_X(r)-h_X(p)|+\delta + |h_X(r)-h_X(q)|+\delta\\
                &\le V_{\gamma}(h_X)+2\delta.
            \end{align*}
            
            Therefore, $\ell_{d_X}(\gamma)-4\delta\le V_\gamma(h_X)$ as desired. Since $\ell_{d_X'}(\gamma)\ge V_\gamma(h_X)$, we determine that $\ell_{d_X'}(\gamma)\ge \ell_{d_X}(\gamma)-4\delta$
                
            For the final assertion, suppose $h_X(r)\le \min\{h_X(p), h_X(q)\}$. Then the point $r$ is within $\delta$ of points $\eta_p(t_1)$ and $\eta_q(t_2)$, and, since height is $1$-Lipschitz, these points cannot be above heights $h_X(p)+\delta$ and $h_X(q)+\delta$ respectively. Then
            \begin{align*}
                d_X(p,q)&\le d_X(p, \eta_p(t_1))+d_X(\eta_p(t_1), r)+d_X(r, \eta_q(t_2))+d_X(\eta_q(t_2), q)\\
                &\le 4\delta.
            \end{align*}
        \end{proof}
    	
        To show that $d_X'$-geodesics have large height variation, we must rule out the existence of paths in $X$ that are somewhat longer than the $d_X$-geodesics, but have much less height variation. The key observation in this analysis is a lemma due of Le Donne-Pallier-Xie.
    	\begin{lemma}[\cite{LDPX}, Lemma 2.3 (1)]\label{lemma:LDPXLengthLowerBound}
            Let $X$ be a proper geodesic $\delta$-hyperbolic metric space, $\infty_X$ a boundary point with associated height function $h$, and $S$ a horosphere about $\infty_X$ bounding horoball $B$. There is a constant $C$ depending only on $\delta$ so that the following holds. If $x_1$ and $x_2$ are points on $S$ and $c:[0, l]\to X\setminus B$ is a path between $x_1$ and $x_2$, then\[\ell_{d_X}(c)\ge 2(\max_c(h_X)-\min_c(h_X))+2^{\frac{d_X(x_1, x_2)-C-2}{2\delta}}-C-5d(x_1, x_2).\]
        \end{lemma}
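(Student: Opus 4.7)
The plan is to prove the two lower bounds separately and then add them. The first, $\ell_{d_X}(c) \ge 2(\max_c h_X - \min_c h_X)$, is elementary: since $h_X$ is 1-Lipschitz, $\ell_{d_X}(c) \ge V_c(h_X)$. Because $c$ starts and ends on $S$ at the same height while attaining both $\max_c h_X$ and $\min_c h_X$, a continuous real-valued function on an interval that returns to its initial value and realizes range $[\min_c h_X, \max_c h_X]$ must have total variation at least $2(\max_c h_X - \min_c h_X)$. So this reduces to a one-dimensional observation.

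The exponential term is the substantive part and uses $\delta$-hyperbolicity essentially. The underlying phenomenon is that in a $\delta$-hyperbolic space the induced metric on a horosphere is exponentially distorted relative to the ambient metric: two points $x_1, x_2$ at ambient distance $d$ on $S$ are joined by the ambient geodesic, which enters the horoball to depth roughly $d/2$, but any path forced to stay in $X \setminus B$ has length on the order of $2^{d/(2\delta)}$. The model case $\h^2$ is a direct horospherical computation giving $\ell(c) \ge 2\sinh(d/2) \approx e^{d/2}$; I would prove the general case by a doubling argument.

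Doubling step: let $m$ be the midpoint of the ambient geodesic $[x_1, x_2]$. Standard thin-triangle estimates for a geodesic triangle whose two vertices lie on the horosphere give $h_X(m) \ge d(x_1, x_2)/2 - O(\delta)$, so $m$ lies strictly inside $B$. Given a path $c$ in $X \setminus B$, pick $z \in c$ closest to the vertical ray through $m$; $\delta$-thinness of $\triangle x_1 x_2 z$ forces $z$ to be close to either $[x_1, m]$ or $[m, x_2]$, and after projecting down to $S$ via the Busemann flow the two subpaths of $c$ are seen as paths outside a slightly higher horoball joining pairs of points at ambient distance $d(x_1, x_2) - 2\delta - O(1)$. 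This yields a recursion $\phi(d) \ge 2 \phi(d - 2\delta) - O(\delta)$ for the infimal length $\phi(d)$, and iterating $\lfloor d/(2\delta) \rfloor$ times gives the $2^{(d - C - 2)/(2\delta)}$ bound. The additive $-C - 5 d(x_1, x_2)$ is there to absorb the slippage accumulated over the recursion.

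The main obstacle is executing the doubling step cleanly in a general $\delta$-hyperbolic space, where one lacks the exact trigonometry available in $\h^2$. Identifying the right ``halved'' points and verifying the recursion with controlled constants requires careful bookkeeping of $\delta$-slack through each thin-triangle and Busemann-projection argument, which is precisely where the crude-looking linear correction $-5 d(x_1, x_2)$ comes from.
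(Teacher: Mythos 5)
First, a point of comparison: the paper does not prove this lemma at all --- it is imported verbatim from \cite{LDPX} (their Lemma 2.3(1)) and used as a black box in Proposition \ref{prop:NewGeodesicsHaveLargeHeightVariation}. So you are reproving an external result, and there is no in-paper argument to measure against. Judged on its own terms, your sketch gets the two ingredients roughly right individually but fails at the point where the lemma actually has content. The elementary bound $\ell_{d_X}(c)\ge V_c(h_X)\ge 2(\max_c h_X-\min_c h_X)$ is correct, and your doubling recursion for the exponential term is a version of the standard exponential-divergence estimate (e.g.\ \cite{BridsonHaefliger} III.H.1.6--1.7 applied to the midpoint of $\overline{x_1x_2}$, which penetrates the horoball to depth about $d(x_1,x_2)/2-O(\delta)$ while $c$ must stay outside $B$); that part would go through, although ``projecting down to $S$ via the Busemann flow'' is misoriented under the conventions here --- $B$ is a superlevel set of $h$, so $c$ lies \emph{below} $S$ --- and choosing $z$ as the point of $c$ closest to a vertical ray is not how the halving recursion is set up (one bisects $c$ by arc length).

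The genuine gap is the opening sentence: ``prove the two lower bounds separately and then add them.'' From $\ell(c)\ge A$ and $\ell(c)\ge B$ you may only conclude $\ell(c)\ge\max(A,B)$, and the maximum is strictly weaker than the claimed sum in exactly the regime the paper needs the lemma for: a path that dips very deep (so $2(\max_c h-\min_c h)$ is large) between endpoints that are far apart on $S$ (so the exponential term is also large) satisfies both of your bounds individually while the asserted sum is much bigger than either. Your own recursion $\phi(d)\ge 2\phi(d-2\delta)-O(\delta)$ tracks only $d(x_1,x_2)$ and forgets the depth entirely, so it cannot recover the additive $2(\max_c h-\min_c h)$ either. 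To prove the stated inequality you must exhibit the two terms as lower bounds for (essentially) disjoint contributions to $\ell(c)$ --- for instance by splitting $c$ at carefully chosen times and bounding the excursion pieces by the depth and the middle piece by the divergence estimate, or by bounding $\ell(c)-V_c(h)$ from below by the exponential term --- and this bookkeeping (which is also where the $-5d(x_1,x_2)$ correction really comes from) is precisely what is missing from the sketch.
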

    
        Using this lemma, we can show that a $d_X'$-geodesic cannot have much less height variation than the corresponding $d_X$-geodesic.
            
        \begin{prop} \label{prop:NewGeodesicsHaveLargeHeightVariation}
            There is a constant $C$ depending only on $\delta$ so that, for any $d_X'$-geodesic $\gamma'$ and $d_X$-geodesic $\gamma$ between points $p$ and $q$, $V_{\gamma'}(h_X)\ge V_{\gamma}(h_X)-C$.    
        \end{prop}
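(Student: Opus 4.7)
The plan is to proceed by contradiction using Lemma~\ref{lemma:LDPXLengthLowerBound}: suppose $V_{\gamma'}(h_X) < V_\gamma(h_X) - C$ for a large constant $C$ depending on $\delta$, and extract a contradiction. First, Lemma~\ref{lemma:BoxGeodesicsAreUniformlyRough} produces a point $r \in \gamma$ with $(h(r)-h(p)) + (h(r)-h(q)) \ge V_\gamma(h_X) - 4\delta$; combined with the observation $V_{\gamma'}(h_X) \ge 2h^{+} - h(p) - h(q)$, where $h^{+}$ denotes the maximum of $h$ along $\gamma'$, this forces $h^{+} \le h(r) - (C-4\delta)/2$, so $\gamma'$ is confined well below the apex of $\gamma$.

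Next I would construct an auxiliary path $P$ by prepending the vertical segment from $p$ up to $p^{*}$ and appending the vertical segment from $q$ up to $q^{*}$, where $p^{*}, q^{*}$ lie on the horosphere $S = \{h = h^{+} + 1\}$. Then $P$ lies outside the horoball above $S$, and the triangle inequality, combined with the fact that $\gamma$ itself reaches height $h(r)$, gives $d_X(p^{*}, q^{*}) \ge C/2 - O(\delta)$. Applying Lemma~\ref{lemma:LDPXLengthLowerBound} to $P$ and subtracting the contributions of the two vertical segments should then yield an exponential lower bound of order $2^{C/(2\delta)} - O(C)$ on $\ell_{d_X}(\gamma')$.

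This should conflict with an upper bound on $\ell_{d_X}(\gamma')$ obtained from admissibility of $N$ and Lemma~\ref{lemma:RIUpperBound}: admissibility yields $\ell_{d_X'}(\gamma') \ge \tfrac{1}{2}(\ell_{d_X}(\gamma') + V_{\gamma'}(h_X))$, while Lemma~\ref{lemma:RIUpperBound} gives $\ell_{d_X'}(\gamma') \le d_X(p,q)$; together these imply $\ell_{d_X}(\gamma') \le 2d_X(p,q) - V_{\gamma'}(h_X)$. Confronting this linear upper bound with the exponential lower bound should force $C$ to be bounded by a constant depending only on $\delta$.

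The main obstacle is ensuring the resulting bound on $C$ is truly independent of $d_X(p,q)$—a naive balance of the exponential and linear terms gives only $C$ of order $\delta \log d_X(p,q)$. Upgrading to an absolute constant will likely require a finer use of the monotonicity bound $N(a,b) \ge N(1,0)\, a$ pointwise along $\gamma'$, which controls $\ell_{d_X'}(\gamma')$ from below in the low-height regime and lets one absorb the $d_X(p,q)$ dependence back into the exponential term.
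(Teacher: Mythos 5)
Your setup through the application of Lemma \ref{lemma:LDPXLengthLowerBound} is essentially the paper's own argument. (One small omission: without absolute values, your inequality $(h(r)-h(p))+(h(r)-h(q))\ge V_\gamma(h_X)-4\delta$ only holds when $r$ lies above both $p$ and $q$; the other two positions of $r$ must be handled separately, though both are immediate --- if $r$ lies between $p$ and $q$ in height then $V_{\gamma'}\ge |h(p)-h(q)|\ge V_\gamma-4\delta$ directly, and if $r$ lies below both then $d_X(p,q)\le 4\delta$.) The real issue is the one you flag in your last paragraph, and it is a genuine gap, not a technicality. The exponential term you extract is $2^{(d_X(p^*,q^*)-C_1-2)/(2\delta)}$, and $d_X(p^*,q^*)$ is controlled from below only by roughly twice the height deficit: in $\Ha^2$, for $p,q$ on a common horosphere at mutual distance $D$, the points at height $\max_\gamma h_X - R$ on the vertical rays through $p$ and $q$ are at distance $2R+O(1)$ from each other no matter how large $D$ is. So your lower bound on $\ell_{d_X}(\gamma')$ is a constant depending only on $C$ and $\delta$, while the upper bound $2d_X(p,q)-V_{\gamma'}(h_X)$ is at least $d_X(p,q)$ (since $V_{\gamma'}\le d_X(p,q)$); these conflict only when $C\gtrsim \delta\log_2 d_X(p,q)$, exactly as you say. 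The proposed repair does not work: admissibility already gives $N(a,b)\ge \frac{a+b}{2}\ge\frac{a}{2}$, and $N(1,0)$ can equal $\frac{1}{2}$ exactly (take $N=\frac{L^1}{2}$), so the pointwise bound $N(a,b)\ge N(1,0)\,a$ carries no new information.

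What is actually needed is a lower bound of the \emph{excess} form $\ell_{d_X}(\gamma')\ge d_X(p,q)+2^{cR/\delta}-O(R+\delta)$, where $R$ is the depth deficit; in $\Ha^2$, a path from $p$ to $q$ avoiding the ball of radius $R$ about the apex of $[p,q]$ has length at least $d(p,q)-2R+\pi\sinh R$. Lemma \ref{lemma:LDPXLengthLowerBound} cannot deliver this along your route: its term $2(\max_c h_X - \min_c h_X)$ records only the height \emph{range} of the concatenated path $P$, and after subtracting the lengths of the two appended vertical segments this term is consumed entirely --- what survives is $(h(p)-\min_{\gamma'}h)+(h(q)-\min_{\gamma'}h)\ge 0$, which vanishes when $\gamma'$ stays above two equal-height endpoints, so the $d_X(p,q)$-sized contribution cancels. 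With the excess form in hand the contradiction closes with a threshold depending only on $\delta$, since then $\ell_{d_X}(\gamma')+V_{\gamma'}(h_X)\ge 2d_X(p,q)+2^{cR/\delta}-O(R+\delta)>2d_X(p,q)$ once $R$ exceeds an absolute function of $\delta$. For what it is worth, the paper's proof takes your route verbatim and simply asserts at this exact step that the threshold $R_0$ depends only on $\delta$; you have correctly located the one step that requires a sharper divergence estimate than Lemma \ref{lemma:LDPXLengthLowerBound} as applied.
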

    
        \begin{proof}
            From Lemma \ref{lemma:BoxGeodesicsAreUniformlyRough}, we know that there is a point $r$ on $\gamma$ so that $V_{\gamma}(h_X)-C_1\le \ell_{d_X}(\gamma)-C_1\le |h_X(r)-h_X(p)| + |h(r)-h_X(q)|\le V_\gamma(h_X)\le \ell_{d_X}(\gamma).$ We consider cases based on the height of $r$. Assume WLOG $h_X(p)\ge h_X(q)$.
                
            \textbf{Case I:} $h_X(p)\ge h_X(q)\ge h_X(r)$.
                
            In this case, by the final piece of Lemma \ref{lemma:BoxGeodesicsAreUniformlyRough}, $d_X(p, q)\le 4\delta$. It follows that $V_\gamma(h_X)\le 4\delta$, so that $V_{\gamma'}(h_X)\le V_\gamma(h_X)-4\delta$.
    
            \textbf{Case II:} $h_X(p)\ge h_X(r)\ge h_X(q)$.
                
            In this case, $|h_X(r)-h_X(p)|+|h_X(r)-h_X(q)|=h_X(p)-h_X(q)$. Now, $V_{\gamma'}(h_X)\ge h_X(p)-h_X(q)$, which is at least $V_\gamma(h_X)-4\delta$.
    
            \textbf{Case III:} $h_X(r)\ge h_X(p)\ge h_X(q)$.
                
            In this case, we may as well replace $h_X(r)$ with $\max_\gamma (h_X)$ to get the inequality chain 
            \[V_{\gamma}(h_X)-4\delta\le \ell_{d_X}(\gamma)-4\delta\le 2\max_\gamma(h_X)-h_X(p)-h_X(q)\le V_\gamma(h_X)\le \ell_{d_X}(\gamma).\] 
            
            We now follow an argument from Le Donne-Pallier-Xie, depicted in Figure \ref{fig:PathsBelowMaxHeightAreLong}.
    
            \begin{figure}
                \centering\includegraphics{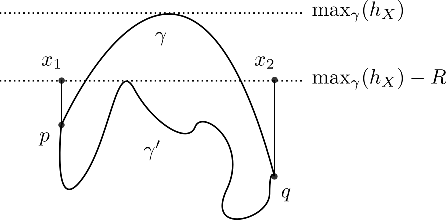}
                \caption{Proof of Proposition \ref{prop:NewGeodesicsHaveLargeHeightVariation}}
                \label{fig:PathsBelowMaxHeightAreLong}
            \end{figure}
                
            Let $R>0$, and suppose $\max_{\gamma'}(h_X)=\max_\gamma(h_X)-R$. Let the vertical geodesic rays from $p$ and $q$ be $\eta_p$ and $\eta_q$ respectively, and concatenate $\gamma'$ with segments of these rays so that the resulting path connects a point $x_1$ on $\eta_p$ at height $\max_\gamma(h_x)-R$ to a point $x_2$ on $\eta_q$ at the same height. Call this path $c$. Then $\ell_{d_X}(c)=2(\max_\gamma(h_X)-R)-h_X(p)-h_X(q)+\ell_{d_X}(\gamma')$. Applying Lemma \ref{lemma:LDPXLengthLowerBound} shows that
            \[ \ell_{d_X}(\gamma')\ge 2(\textstyle{\max_\gamma}(h_X)-R-\textstyle{\min_{\gamma'}}(h_X))+2^{\frac{d_X(x_1, x_2)-C_1-2}{2\delta}}-C_1-5d_X(x_1, x_2)-(2(\textstyle{\max_\gamma}(h_X)-R)-h_X(p)-h_X(q)),\]
            where $C_1$ depends only on $\delta$.
                
            Simplifying, and using the fact that $h_X(p)$ and $h_X(q)$ are evidently at least $\min_{\gamma'}(h_X)$,
                
            \[ \ell_{d_X}(\gamma')\ge 2^{\frac{d_X(x_1, x_2)-C_1-2}{2\delta}}-C_1-5d_X(x_1, x_2).\]
                        
            Applying the triangle inequality, we see that 
            \[d_X(x_1, x_2)\ge d_X(p, q)-(2\max_\gamma(h_X)-h_X(p)-h_X(q))+2R\ge (d_X(p,q)-V_\gamma(h_X))+2R\ge 2R.\]
            Consequently, there is some $R_0$ depending only on $C_1$ and $\delta$ (hence depending only on $\delta$) for which, if $R>R_0$ and a path $\gamma'$ from $p$ to $q$ has maximum height is $\max_\gamma(h_X)-R$, then $\ell_X(\gamma')\ge 2d_X(p, q)$. Since the norm $N$ is admissible, $N(a,b)\ge \frac{a+b}{2}$. Therefore, the $d'_X$-length of such a path must be at least $\frac{\ell_X(\gamma')}{2}\ge d_X(p, q)$, which renders $\gamma'$ longer than $\gamma$ in $d_X'$-length. So a $d_X'$-geodesic $\gamma'$ between $p$ and $q$ must have height variation at least $2(\max_{\gamma}(h_X)-R_0)-h_X(p)-h_X(q)\ge V_{\gamma}(h_X)-2R_0-4\delta$.
        \end{proof}
    
        \begin{cor}\label{cor:RILowerBound}
            There is a $C$ depending only on $\delta$ and the norm $N$ so that $d_X-C\le d'_X\le d_X$.
        \end{cor}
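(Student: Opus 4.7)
The upper bound $d_X' \le d_X$ is already the content of Lemma~\ref{lemma:RIUpperBound}, so the real task is to establish $d_X - C \le d_X'$. Fix $p, q \in X$, let $\gamma$ be a $d_X$-geodesic from $p$ to $q$, and let $\gamma'$ be a $d_X'$-geodesic from $p$ to $q$ (the existence of $\gamma'$ follows by applying Lemma~\ref{lemma:MonotoneImpliesConvex} to a $d_\bowtie$-geodesic between the lifts of $p,q$ in $X \bowtie V$; one could also run the argument below on a sequence of near-geodesics). The strategy is to combine three observations: admissibility of $N$ forces $\ell_{d_X'}(\gamma')$ to dominate both $\ell_{d_X}(\gamma')$ and the height variation $V_{\gamma'}(h_X)$ up to a factor of $1/2$; Proposition~\ref{prop:NewGeodesicsHaveLargeHeightVariation} keeps $V_{\gamma'}(h_X)$ within a constant of $V_\gamma(h_X)$; and Lemma~\ref{lemma:BoxGeodesicsAreUniformlyRough} keeps $V_\gamma(h_X)$ within $4\delta$ of $\ell_{d_X}(\gamma) = d_X(p,q)$.

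For the admissibility step, along any chain $0 = t_0 < \cdots < t_n = 1$ parameterizing $\gamma'$, abbreviate $d_i = d_X(\gamma'(t_i), \gamma'(t_{i+1}))$ and $h_i = |h_X(\gamma'(t_i)) - h_X(\gamma'(t_{i+1}))|$. Then $N \ge L^1/2$ gives $\sum_i N(d_i, h_i) \ge \tfrac{1}{2} \sum_i d_i + \tfrac{1}{2} \sum_i h_i$. A common-refinement argument (given $\epsilon > 0$, choose chains $P_1, P_2$ separately approximating the two suprema to within $\epsilon$, then pass to $P_1 \cup P_2$, noting that refining a chain only increases both $\sum d_i$ and $\sum h_i$) shows $\sup_P\bigl(\sum_P d_i + \sum_P h_i\bigr) = \ell_{d_X}(\gamma') + V_{\gamma'}(h_X)$. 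Taking the supremum over chains therefore yields
\[
\ell_{d_X'}(\gamma') \ge \tfrac{1}{2}\bigl(\ell_{d_X}(\gamma') + V_{\gamma'}(h_X)\bigr).
\]

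Now $\ell_{d_X}(\gamma') \ge d_X(p, q)$ since $\gamma$ is a $d_X$-geodesic, and Proposition~\ref{prop:NewGeodesicsHaveLargeHeightVariation} together with Lemma~\ref{lemma:BoxGeodesicsAreUniformlyRough} gives
\[
V_{\gamma'}(h_X) \ge V_\gamma(h_X) - C_1 \ge \ell_{d_X}(\gamma) - 4\delta - C_1 = d_X(p, q) - 4\delta - C_1,
\]
where $C_1$ depends only on $\delta$. Substituting yields
\[
d_X'(p, q) = \ell_{d_X'}(\gamma') \ge d_X(p, q) - \tfrac{4\delta + C_1}{2},
\]
so $C = (4\delta + C_1)/2$ suffices. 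The only non-routine point here is the coupling of $d_X$-length and height variation through admissibility in the first step; the substantive work --- the exponential length estimate from Le Donne-Pallier-Xie underlying Proposition~\ref{prop:NewGeodesicsHaveLargeHeightVariation} --- has already been carried out, so at this stage the corollary is essentially bookkeeping.
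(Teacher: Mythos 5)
Your proof is correct and follows essentially the same route as the paper: both rest on combining \cref{prop:NewGeodesicsHaveLargeHeightVariation} with \cref{lemma:BoxGeodesicsAreUniformlyRough} to control $V_{\gamma'}(h_X)$ from below by $d_X(p,q)$ minus a constant. The only (harmless) divergence is in the last step: the paper lower-bounds $\ell_{d_X'}(\gamma')$ directly by $V_{\gamma'}(h_X)$ using monotonicity and $N(1,1)=1$, whereas you use admissibility to get $\ell_{d_X'}(\gamma')\ge\tfrac12\bigl(\ell_{d_X}(\gamma')+V_{\gamma'}(h_X)\bigr)$ --- the same averaging inequality the paper deploys in \cref{d_X'GeodesicsAreAlmostd_XGeodesic} --- which even yields a slightly better constant.
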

    
        \begin{proof}
            The upper bound is Lemma \ref{lemma:RIUpperBound}. For the lower bound, $V_\gamma'(h_X)\ge V_{\gamma}(h_X)-C_1$, where $C_1$ is the constant in Proposition \ref{prop:NewGeodesicsHaveLargeHeightVariation}. By Lemma \ref{lemma:BoxGeodesicsAreUniformlyRough}, $V_\gamma(h_X)\ge \ell_{d_X}(\gamma)-4\delta$. Since height variation is a lower bound on length for $d_X'$, we see that $\ell_{d'_X}(\gamma')\ge \ell_{d_X}(\gamma)-4\delta-C_1.$
        \end{proof}
    
        Corollary \ref{cor:RILowerBound} immediately implies that $(X, d'_X)$ is hyperbolic. 
            
        As an additional consequence, Proposition \ref{cor:RILowerBound} implies that $d'_X$ is a proper metric, by way of a basic exercise.
    
        \begin{lemma}
            Let $Z$ and $W$ be metric spaces, where $W$ is proper, and let $q:W\to Z$ be a continuous and surjective quasi-isometry. Then $Z$ is proper. \hfill\qed
        \end{lemma}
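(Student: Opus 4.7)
The plan is as follows. To show $Z$ is proper, I will show that every closed ball in $Z$ has sequentially compact closure, which in a metric space is equivalent to the closure being compact.

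First, I unpack the hypothesis: $q$ is a quasi-isometry, so there exist constants $K\ge 1$ and $C\ge 0$ such that
\[\tfrac{1}{K}\, d_W(w,w') - C \;\le\; d_Z(q(w), q(w')) \;\le\; K\, d_W(w,w') + C\]
for all $w,w'\in W$. Fix a closed ball $\overline{B_Z(z_0, R)}$, and pick $w_0\in W$ with $q(w_0)=z_0$ using surjectivity of $q$. Take an arbitrary sequence $(z_n)$ in $\overline{B_Z(z_0, R)}$, so $d_Z(z_0, z_n)\le R$. By surjectivity again, choose $w_n\in W$ with $q(w_n)=z_n$. The lower half of the quasi-isometry inequality gives
\[\tfrac{1}{K} d_W(w_0, w_n) - C \;\le\; d_Z(z_0, z_n) \;\le\; R,\]
so $d_W(w_0, w_n)\le K(R+C)$, and the sequence $(w_n)$ lies in a fixed closed ball of $W$.

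Since $W$ is proper, this closed ball is compact, so $(w_n)$ admits a subsequence $w_{n_k}\to w_\infty\in W$. By continuity of $q$, the corresponding subsequence $z_{n_k}=q(w_{n_k})$ converges to $q(w_\infty)\in Z$. Thus every sequence in $\overline{B_Z(z_0, R)}$ has a convergent subsequence, so $\overline{B_Z(z_0, R)}$ is sequentially compact, hence compact. This shows $Z$ is proper.

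There is no real obstacle; the only subtlety is that pure quasi-isometry gives only a bounded (``coarse'') surjectivity, which is not enough to lift a sequence exactly, so we genuinely use honest surjectivity of $q$ to produce the $w_n$'s, and continuity of $q$ to pass the limit from $W$ to $Z$.
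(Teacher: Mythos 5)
Your proof is correct and is exactly the standard argument the paper has in mind when it leaves this lemma as a ``basic exercise'' (the paper gives no proof): lift a sequence from a ball in $Z$ via surjectivity, use the lower quasi-isometry bound to trap the lifts in a ball of $W$, extract a convergent subsequence by properness, and push it forward by continuity. The only (trivial) point left implicit is that the limit $q(w_\infty)$ lies in the closed ball, which follows from continuity of the metric.
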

    
        \begin{cor}
            $(X, d'_X)$ is a proper metric space. \hfill \qed
        \end{cor}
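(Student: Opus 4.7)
The plan is to apply the preceding lemma directly, with $W = (X, d_X)$, $Z = (X, d'_X)$, and $q$ equal to the identity map on the underlying set $X$. Each hypothesis of the lemma is already established elsewhere in the section, so the proof amounts to checking them off.

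First, $(X, d_X)$ is proper by our standing assumption that $X$ is a proper $\CAT(-\kappa)$ space. Second, the identity map $q = \id_X$ is tautologically surjective. Third, continuity of $q$ follows from Lemma \ref{lemma:RIUpperBound}, which gives $d'_X \leq d_X$; this already appeared in the remark immediately after that lemma, where it was noted that $i_V$ is distance-non-increasing and hence continuous. Finally, Corollary \ref{cor:RILowerBound} supplies the two-sided bound $d_X - C \leq d'_X \leq d_X$, so $q$ is in fact a $(1, C)$-quasi-isometric embedding, and surjectivity upgrades this to a quasi-isometry.

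With all four hypotheses verified, the lemma yields that $(X, d'_X)$ is proper. There is no real obstacle here: the entire subsection was organized precisely to produce Corollary \ref{cor:RILowerBound}, after which this properness statement is immediate. The only minor care needed is to confirm continuity in the correct direction, which is why we take $q$ to go from $(X, d_X)$ to $(X, d'_X)$ rather than the reverse; continuity of the inverse is not asserted (and not needed) for the lemma to apply.
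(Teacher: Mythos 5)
Your proposal is correct and is exactly the argument the paper intends: the corollary is stated with a \qed precisely because it follows by applying the preceding lemma to the identity map $(X,d_X)\to(X,d_X')$, which is continuous and surjective by Lemma \ref{lemma:RIUpperBound} and a quasi-isometry by Corollary \ref{cor:RILowerBound}. Nothing further is needed.
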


    \subsection{Height changes for isometries of $X'$.}

        The final goal for this section is to show that isometries of $X'$ fixing $\infty_X$ have well-defined $h_X$-change. If $X'$ were $\CAT(-1)$ this would be easy, but we do not know whether this is true. 
        
        First of all, a note on terminology. To avoid confusion about which geodesic rays can be parameterized by height, we will call \textit{$d_X$-vertical geodesics} those geodesics for $d_X$ in the class $[\infty_X]$, which are necessarily also geodesic for $d'_X$ and possible to parameterize by height. In contrast, we will use the phrase ``$d'_X$ geodesics in $[\infty_X]$", rather than the (equivalent but confusing) ``$d_X'$-vertical geodesics", since such geodesics may not be possible to parameterize by height.
        
        To show that $\Isom_{\infty_X}(X')$ has a well-defined height change, we will first introduce an equivalent form for $h_X$ in terms of $d_X'$ Busemann functions about $d_X$-vertical rays. Then we will show that $h_X$ can also be computed from any $d_X'$-geodesic ray in the class $[\infty_X]$. This will allow us to compute the $h_X$-change using only $d_X'$, for which we know our map is an isometry.

        \begin{lemma} \label{lemma:VerticalRayHeightsInX'}
            Let $\eta$ be a $d_X$-vertical geodesic, parameterized by height, and let $x_0$ be any point at height $0$. Then 
            \[h_X(x)=\lim_{t\to\infty} d_X'(\eta(t), x_0)-d_X'(\eta(t), x).\]
        \end{lemma}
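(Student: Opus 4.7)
The plan is to exploit vertical convergence of $X$ together with the fact (Lemma \ref{lemma:RIUpperBound}) that $d_X'$ and $d_X$ agree on $d_X$-vertical segments, so that asymptotic information about $d_X'(\eta(t),\cdot)$ can be extracted from a computation along $d_X$-vertical rays, where the two metrics coincide exactly. Specifically, I plan to prove the intermediate claim
\[ d_X'(\eta(t), z) = t - h_X(z) + o(1) \qquad \text{as } t\to\infty, \]
for every $z\in X$. Granted this, applying it once with $z=x_0$ (where $h_X(x_0)=0$) and once with $z=x$ and then subtracting makes the $t$-terms and $o(1)$-terms cancel, yielding $\lim_{t\to\infty} d_X'(\eta(t), x_0) - d_X'(\eta(t), x) = h_X(x)$.

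To establish the intermediate claim, let $\xi_z$ be the $d_X$-vertical ray from $z$ parameterized by arc length, which exists and is unique by Lemma \ref{lemma:VerticalUniquenessInFactorSpaces} and geodesic completeness. Both $\eta$ and $\xi_z$ are vertical, so by Lemma \ref{lemma:CAT-1ImpliesVerticalConvergence} there exists some $s$ with $d_X(\eta(t), \xi_z(s+t))\to 0$. Matching the heights of these two points (using $1$-Lipschitzness of $h_X$, since a nonzero asymptotic height difference would obstruct the distance from tending to zero) forces $s = -h_X(z)$. Since $d_X'\le d_X$, we then have $d_X'(\eta(t), \xi_z(t-h_X(z)))\to 0$ as well.

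Applying the triangle inequality for $d_X'$ now gives
\[ |d_X'(\eta(t), z) - d_X'(\xi_z(t-h_X(z)), z)| \le d_X'(\eta(t), \xi_z(t-h_X(z))) \to 0. \]
For $t\ge h_X(z)$, the restriction $\xi_z|_{[0,\,t - h_X(z)]}$ is a $d_X$-vertical segment of length $t - h_X(z)$ connecting $z$ to $\xi_z(t-h_X(z))$, so the equality clause in Lemma \ref{lemma:RIUpperBound} gives $d_X'(\xi_z(t-h_X(z)), z) = t - h_X(z)$. Combining the two displays yields the intermediate claim and hence the lemma. I do not anticipate a serious obstacle; the only mildly delicate point is the identification of the shift parameter $s = -h_X(z)$ in vertical convergence, but this is forced immediately by height matching.
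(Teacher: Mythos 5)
Your proof is correct and follows essentially the same route as the paper's: both arguments use vertical convergence plus $d_X'\le d_X$ to replace $\eta(t)$ by the point at the same height on the vertical ray through $z$, compute $d_X'$ exactly along that vertical ray, and conclude by a triangle-inequality/squeeze argument. The only cosmetic difference is that the paper obtains the lower bound from the general fact that $d_X'$ dominates height difference, whereas you use the reverse triangle inequality together with the equality clause of Lemma \ref{lemma:RIUpperBound}; both are valid.
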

    
        \begin{proof}
            Let $\eta_x$ and $\eta_{x_0}$ be $d_X$-vertical rays starting at $x$ and $x_0$ respectively. Parameterize each one by height. Then vertical convergence of $d_X$-vertical rays in $X$ shows that \[ \lim_{t\to\infty}d_X(\eta(t), \eta_x(t))=\lim_{t\to\infty}d_X(\eta(t), \eta_{x_0}(t))=0.\]
            Since $d_X\ge d_{X}'$, it follows that 
            \[ \lim_{t\to\infty}d_X'(\eta(t), \eta_x(t))=\lim_{t\to\infty}d_X'(\eta(t), \eta_{x_0}(t))=0.\]

            But since $d_X'$-distances are bounded below by height difference, we see that $t-h_X(x)\le d_X'(\eta(t), x)$ and $t\le d_X'(\eta(t), x_0)$. Since $t-h_X(x)=d'_X(\eta_x(t), \eta_x(h_X(x)))$, the triangle inequality shows that $t-h_X(x)+d'_X(\eta(t), \eta_X(t))\ge d'_X(\eta(t), x)$. But $d'_X(\eta(t), \eta_X(t))\to 0$ so that, by the squeeze theorem, $\lim_{t\to\infty} d_X'(\eta(t), x)-t+h(x)=0$ and $\lim_{t\to\infty} d_X'(\eta(t), x_0)-t=0$. Subtracting the second limit from the first gives
            \[h_X(x)=\lim_{t\to\infty} d'_X(\eta(t), x)-d_X'(\eta(t), x_0).\]
        \end{proof}
    
        In order to show that we can calculate heights using $d_X'$-geodesic rays in $[\infty_X]$, we will first need an elementary estimate of how far from $d_X$-geodesic such rays can be.
        
        \begin{lemma}\label{d_X'GeodesicsAreAlmostd_XGeodesic}
            Let $\gamma'$ be a $d_X'$ geodesic. Then there is a constant $C$ depending only on $\delta$ so that $\ell_{d_X}(\gamma'|_{[a,b]})\le d_X(\gamma'(a),\gamma'(b))+C.$
        \end{lemma}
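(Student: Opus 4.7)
The plan is to exploit admissibility of $N$ to compare $\ell_{d_X'}$ with $\ell_{d_X}$, and then use the height-variation lower bound established in Proposition \ref{prop:NewGeodesicsHaveLargeHeightVariation} to absorb the resulting factor of two. Since any sub-arc of a $d_X'$-geodesic is again a $d_X'$-geodesic between its endpoints, I may assume $[a,b]$ is the full domain of $\gamma'$; set $p = \gamma'(a)$ and $q = \gamma'(b)$.

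The key intermediate inequality is the path-length estimate
\[ 2\,\ell_{d_X'}(\gamma') \;\ge\; \ell_{d_X}(\gamma') + V_{\gamma'}(h_X). \]
For any partition $\mathscr{C} \in \mathscr{C}$, admissibility $N \ge L^1/2$ gives that the $d_X'$-partition sum is at least $\tfrac{1}{2}$ of the sum of the $d_X$-distances plus the $h_X$-differences between consecutive points. Both the $d_X$-partition sums and the height-variation sums are non-decreasing under refinement (by the triangle inequality for $d_X$ and for $|\cdot|$), so a common refinement of partitions nearly realizing $\ell_{d_X}(\gamma')$ and $V_{\gamma'}(h_X)$ individually realizes both simultaneously. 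Taking the supremum of the pointwise $N$-inequality over all partitions then yields the displayed estimate.

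With this in hand, the remaining ingredients are routine to assemble. Proposition \ref{prop:NewGeodesicsHaveLargeHeightVariation} gives $V_{\gamma'}(h_X) \ge V_{\gamma_0}(h_X) - C_1$, where $\gamma_0$ is a $d_X$-geodesic from $p$ to $q$ and $C_1 = C_1(\delta)$. Lemma \ref{lemma:BoxGeodesicsAreUniformlyRough} gives $V_{\gamma_0}(h_X) \ge \ell_{d_X}(\gamma_0) - 4\delta = d_X(p,q) - 4\delta$, so $V_{\gamma'}(h_X) \ge d_X(p,q) - 4\delta - C_1$. Since $\gamma'$ is a $d_X'$-geodesic, Lemma \ref{lemma:RIUpperBound} gives $\ell_{d_X'}(\gamma') = d_X'(p,q) \le d_X(p,q)$. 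Plugging these into the path-length estimate,
\[ \ell_{d_X}(\gamma') \;\le\; 2\,\ell_{d_X'}(\gamma') - V_{\gamma'}(h_X) \;\le\; 2d_X(p,q) - \bigl(d_X(p,q) - 4\delta - C_1\bigr) \;=\; d_X(p,q) + (4\delta + C_1), \]
proving the lemma with $C = 4\delta + C_1$ depending only on $\delta$.

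The main technical subtlety is the partition-supremum interchange used to derive the key inequality; it is routine from refinement monotonicity, but must be handled carefully because otherwise one only obtains a multiplicative bound $\ell_{d_X}(\gamma') \le 2 d_X(p,q) + O(1)$, which is insufficient. The rest of the argument is a direct combination of prior results.
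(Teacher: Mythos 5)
Your proof is correct and uses exactly the same ingredients as the paper's: the admissibility inequality $2\ell_{d'_X}(\gamma')\ge \ell_{d_X}(\gamma')+V_{\gamma'}(h_X)$, Proposition \ref{prop:NewGeodesicsHaveLargeHeightVariation}, Lemma \ref{lemma:BoxGeodesicsAreUniformlyRough}, and $d'_X\le d_X$. In fact your rearrangement (solving for $\ell_{d_X}(\gamma')$ and bounding $\ell_{d'_X}(\gamma')=d'_X(p,q)\le d_X(p,q)$ from above) is the correct way to assemble these pieces into the stated \emph{upper} bound, and your care with the partition-supremum interchange is a welcome addition.
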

    
        \begin{proof}            
            Let $\gamma$ be a $d_X$-geodesic from $\gamma'(a)$ to $\gamma'(b)$. From Proposition \ref{prop:NewGeodesicsHaveLargeHeightVariation} we have a constant $C_1$ depending only on $\delta$ so that $V_{\gamma'|_{[a,b]}}(h_X)\ge V_{\gamma}(h_X)-C_1$. From Lemma \ref{lemma:BoxGeodesicsAreUniformlyRough}, we know that  $V_{\gamma}(h_X)\ge \ell_{d_X}(\gamma)-4\delta$. Hence $V_{\gamma'|_{[a,b]}}(h_X)\ge \ell_{d_X}(\gamma)-(C_1+4\delta)$.

            Now, since the norm $N$ is admissible, $N(c, d)\ge \frac{|c|+|d|}{2}$. By the additivity of the operation $(c, d)\mapsto \frac{|c|+|d|}{2}$ for positive $c$ and $d$, one sees immediately that $$\ell_{d_X}(\gamma'|_{[a,b]})\ge \frac{V_{\gamma'|_{[a,b]}}(h_X)+\ell_{d_X}(\gamma'|_{[a,b]})}{2}$$ (This fact is also \cite{Ferragut:Visual_Boundary} Property 3.4). Now, $\ell_{d_X}(\gamma'|_{[a,b]})\ge \ell_{d_X}(\gamma)$ because $\gamma$ is a $d_X$-geodesic connecting $\gamma'(a)$ to $\gamma'(b)$. Putting these pieces all together, we see that 

            \begin{align*}
                \ell_{d'_X}(\gamma'|_{[a,b]})&\ge\frac{V_{\gamma'|_{[a,b]}}(h_X)+\ell_{d_X}(\gamma'|_{[a,b]})}{2}\\
                &\ge \frac{V_{\gamma'|_{[a,b]}}(h_X)+\ell_{d_X}(\gamma)}{2}\\
                &\ge \frac{2\ell_{d_X}(\gamma)-C_1+4\delta}{2}\\
                &\ge \ell_{d_X}(\gamma)-\frac{C_1}{2}-2\delta\\
                &=d_X(\gamma'(a), \gamma'(b))-\frac{C_1}{2}-2\delta.
            \end{align*}

            Finally, since $d_X\ge d'_X$, we see that $\ell_{d_X}(\gamma'|_{[a,b]})\ge \ell_{d'_X}(\gamma'|_{[a,b]})$, and so the result follows.
        \end{proof}
        
        Next, we show that the tails of rays in $[\infty_X]$ satisfying the conclusions of Lemma \ref{d_X'GeodesicsAreAlmostd_XGeodesic} are well-approximated by vertical lines. The following lemma is written in somewhat more generality than needed here, because we will re-use it in the next section in application to $X\bowtie Y$.
    
        \begin{lemma}\label{lemma:VerticalApproximation}
            Let $(Z, d_Z)$ be any geodesic metric space with a $1$-Lipschitz function $h_Z:Z\to \R$, which we will call a height function, and suppose that $\gamma'$ is a continuous and rectifiable ray (or line) parameterized by length so that for all $s, t$,  $d_Z(\gamma'(s), \gamma'(t))\ge |s-t|-C_1.$ Let $\eta$ be a geodesic ray (or line) parameterized by length starting at a point at height $0$, such that $h_Z(\eta(s))-h_Z(\eta(t))=s-t$, and $\gamma'\subset \mathscr{N}_{C_2}(\eta)$ for some $C_2>0$. Then for any $\epsilon>0$, there is a $t_\epsilon$ so that $s\ge t\ge t_{\epsilon}$ implies $h_Z(\gamma'(s))-h_Z(\gamma'(t))\ge s-t-\epsilon.$ Additionally, if $\gamma'$ is a line, there is also a $t_\epsilon'$ so that $s'\le t'\le t_{\epsilon}'$ implies $h_Z(\gamma'(t))-h_Z(\gamma'(s))\ge t-s-\epsilon.$
        \end{lemma}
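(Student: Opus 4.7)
The plan is to use the quasi-geodesic hypothesis to show that the ``wasted length'' $w(t,s) := (s-t) - d_Z(\gamma'(t),\gamma'(s))$ decays to zero as $t \to \infty$, and then transfer this to the height function $\sigma := h_Z \circ \gamma'$ via the $C_2$-proximity of $\gamma'$ to $\eta$. The second conclusion (for a bi-infinite line) will follow by symmetry: apply the first to the reversed parameterizations $\gamma'(-\cdot)$, $\eta(-\cdot)$ with $h_Z$ replaced by $-h_Z$.

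First I would show that $\gamma'(t)$ converges to one of the two endpoints of $\eta$ at infinity; WLOG the $+\infty$ end. Indeed, $d_Z(\gamma'(s),\gamma'(t)) \ge |s-t|-C_1$ forces $\gamma'(t)$ to leave every bounded set, and $\gamma' \subset \mathscr{N}_{C_2}(\eta)$ confines it to a neighborhood of $\eta$, so any choice of $p(t) \in \R$ with $d_Z(\gamma'(t), \eta(p(t))) \le C_2$ satisfies $p(t) \to +\infty$. Next I would show $w$ is superadditive ($w(t,s) \ge w(t,r) + w(r,s)$ for $t \le r \le s$, from the triangle inequality for $d_Z$) and non-decreasing in $s$, deduce that $W(t) := \lim_{s\to\infty} w(t,s) \in [0, C_1]$ exists, and use the inequality $W(t_0) \ge w(t_0,t) + W(t)$ together with $w(t_0,t) \to W(t_0)$ to conclude $W(t) \to 0$ as $t \to \infty$. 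Triangle inequalities through $\eta$ then yield $|p(s)-p(t)| \ge (s-t) - w(t,s) - 2C_2$. Using continuity of $\sigma$, the divergence $\sigma(t) \to +\infty$, and the intermediate value theorem (applied to rule out $p(s) < p(t)$ for $s \ge t$ with $t$ large: otherwise IVT locates $r > s$ with $\sigma(r) = \sigma(t)$, and the $\eta$-proximity bound forces $r - t$, hence $s - t$, to be bounded), I would combine with the $1$-Lipschitz estimate $|\sigma - p| \le C_2$ to obtain $\sigma(s) - \sigma(t) \ge (s-t) - w(t,s) - 4C_2$ for all sufficiently large $t$.

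The main obstacle is closing the residual additive $O(C_2)$ gap in this bound to reach the stated arbitrary-$\epsilon$ precision. The error arises because the closest point of $\eta$ to $\gamma'(t)$ need not lie at the same height as $\gamma'(t)$, producing a persistent $C_2$-scale discrepancy between $p(t)$ and $\sigma(t)$. Eliminating this discrepancy amounts to showing that the distance from $\gamma'(t)$ to $\eta$ itself tends to $0$. In the applications of the lemma, $Z$ will be either $X'$ or $X \bowtie Y$, both of which inherit vertical convergence from the $\CAT(-\kappa)$ factors; combining this with the asymptotic height formula of Lemma \ref{lemma:HeightFromAnyVerticalRay} should let one sharpen the $C_2$ bound to a quantity that goes to $0$ asymptotically, yielding the arbitrary-$\epsilon$ conclusion.
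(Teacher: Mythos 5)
There is a genuine gap, and you have in fact flagged it yourself: your argument terminates at $\sigma(s)-\sigma(t)\ge (s-t)-w(t,s)-4C_2$, and the additive $O(C_2)$ error cannot be removed by the route you propose. The suggested repair---importing vertical convergence of the ambient $\CAT(-\kappa)$ factors to force $d_Z(\gamma'(t),\eta)\to 0$---is not available under the stated hypotheses, where $Z$ is an \emph{arbitrary} geodesic space equipped only with a $1$-Lipschitz function $h_Z$; and in the paper's actual use of this lemma (Proposition \ref{prop:ArbitraryRayHeightsInX'}) the conclusion is precisely the input needed, via Lemma \ref{lemma:AlmostVerticalSegments}, to prove that the tail of $\gamma'$ is close to vertical, so assuming that closeness would be circular.

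The missing idea is to bypass the distance function and the projection $p(t)$ entirely and work directly with $f(t)=h_Z(\gamma'(t))-t$. Since $h_Z$ is $1$-Lipschitz and $\gamma'$ is parameterized by length, $h_Z(\gamma'(s))-h_Z(\gamma'(t))\le d_Z(\gamma'(s),\gamma'(t))\le s-t$ for $s\ge t$, so $f$ is non-increasing; and the desired conclusion is exactly the assertion that $f(t)-f(s)\le\epsilon$ for $s\ge t\ge t_\epsilon$, which follows from monotone convergence once $f$ is bounded below. The quasi-geodesic hypothesis and the $C_2$-proximity to $\eta$ enter only in that lower bound: each $\gamma'(t)$ lies within $2C_2$ of the matching-height point $\eta(h_Z(\gamma'(t)))$, so $t-C_1\le d_Z(\gamma'(0),\gamma'(t))\le |h_Z(\gamma'(t))-h_Z(\gamma'(0))|+4C_2$, whence $f(t)\ge f(0)-C_1-4C_2$ (for large $t$ the absolute value resolves positively because $h_Z(\gamma'(t))\to+\infty$ along the $C_2$-neighborhood of $\eta$). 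In this arrangement $C_1$ and $C_2$ only determine where the horizontal asymptote of $f$ sits---hence how large $t_\epsilon$ must be---and never contaminate the final estimate; that is why arbitrary $\epsilon$-precision is attainable. Your superadditivity analysis of $w(t,s)$ is correct and is the exact analogue of this monotone-convergence argument, but applied to the wrong functional: it controls length defect rather than height defect, and converting between the two is what reintroduces the irreducible $C_2$ slack.
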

    
        It is not hard to see that the first assumption shows that $\gamma'$ is roughly geodesic, and thus that if $(Z, d_Z)=(X, d_X)$, then $\gamma'$ always has a Hausdorff-close $d_X$-geodesic ray or line. If $\gamma'$ is in $[\infty_X]$, this geodesic will then be required to be $d_X$-vertical. So this lemma and the last one together tell us that if $\gamma'$ is a $d_X'$-geodesic in $[\infty_X]$, then we can find a point after which it is close to vertical.
    
        \begin{proof}
            First, suppose $\gamma'$ is a ray.
            
            Consider the function $f(t)=h_Z(\gamma'(t))-t$, and notice that it is non-increasing. Notice also that each point on $\gamma'$ is within $C_2$ of a point on $\eta$, and thus within $2C_2$ of a point of matching height. That is, $d_Z(\gamma'(t), \eta(h_Z(\gamma'(t))))\le 2C_2$. Then by the triangle inequality
    
            \begin{align*}
                f(t)-f(0) &= h_Z(\gamma'(t))-h_Z(\gamma'(0))-t\\
                &\ge h_Z(\gamma'(t))-h_Z(\gamma'(0))-d_Z(\gamma'(t), \gamma'(0))-C_1\\
                &\ge h_Z(\gamma'(t))-h_Z(\gamma'(0))-d_Z\Big{(}\gamma'(t), \eta(h_Z(\gamma'(t)))\Big{)}\\&-d_Z\Big{(}\eta(h_Z(\gamma'(t))), \eta(h_X(\gamma'(0)))\Big{)}-d_Z\Big{(}\eta(h_Z(\gamma'(0))), \gamma'(0)\Big{)}-C_1\\
                &\ge -4C_2-C_1.
            \end{align*}
    
            The function $f$ is therefore bounded below and non-increasing, so it has a horizontal asymptote as $t\to \infty$. Now, let $t_{\epsilon}$ be the time such that $f(t_{\epsilon})-\lim_{t\to\infty}f(t)=\epsilon$. Let $s\ge t\ge t_{\epsilon}$. We compute that $f(t)-f(s)\le \epsilon$, so that
            \[_Z(\gamma'(t))-t-h_Z(\gamma'(s))+s\le \epsilon.\]
            This rearranges to 
            \[h_Z(\gamma'(s))-h_Z(\gamma'(t))\ge s-t-\epsilon.\]
    
            When $\gamma'$ is a line, we split it into two rays $\gamma'_1=\gamma'|_{[0, \infty)}$ and $\gamma'_2=\gamma'|_{(-\infty, 0]}$ parameterized in reverse, and run the above argument twice.
        \end{proof}
    
        In light of Lemma \ref{lemma:AlmostVerticalSegments}, the tail of a $d_X'$-geodesic $\gamma'$ in $[\infty_X]$ can be approximated arbitrarily closely by vertical rays. We are therefore prepared to show that we compute $h_X$ using the $d_X'$-Busemann function about an arbitrary $d_X'$-geodesic ray $\gamma'$ in $[\infty_X]$, whether or not the $\gamma'$ is $d_X$-vertical.
    
        \begin{prop} \label{prop:ArbitraryRayHeightsInX'}
            Let $\gamma'$ be a $d_X'$-geodesic ray in $[\infty_X]$, and $x_0\in X$ any point at height $0$. Then \[\lim_{t\to\infty}  d_X'(\gamma'(t), x_0)-d_X'(\gamma'(t), x)= h_X(x)\]
        \end{prop}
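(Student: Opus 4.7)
The plan is to exhibit a $d_X$-vertical geodesic $V_\infty$ and a subsequence $t_n\to\infty$ along which $d_X'(\gamma'(t_n),V_\infty(s_n))\to 0$ at matching heights $s_n\to\infty$, and then push the conclusion of Lemma \ref{lemma:VerticalRayHeightsInX'} for $V_\infty$ back to $\gamma'$. Before starting, note that $t\mapsto d_X'(\gamma'(t),x)-t$ is non-increasing by the triangle inequality (since $\gamma'$ is a $d_X'$-geodesic) and bounded below by $-d_X'(\gamma'(0),x)$, so the $d_X'$-Busemann function $B_{\gamma'}(x):=\lim_t[d_X'(\gamma'(t),x)-t]$ exists; the quantity of interest is $B_{\gamma'}(x_0)-B_{\gamma'}(x)$, so any subsequential value will already equal the full limit.

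I will first use that $d_X$ and $d_X'$ are roughly isometric (Corollary \ref{cor:RILowerBound}), which makes $\gamma'$ a $d_X$-$(1,C)$-quasi-geodesic ray to $\infty_X$ and so, in the $\delta$-hyperbolic space $(X,d_X)$, places it within bounded $d_X$-Hausdorff distance of some $d_X$-vertical ray. Feeding this into Lemma \ref{lemma:VerticalApproximation} (applied in $(X,d_X)$) produces $h_X(\gamma'(s))-h_X(\gamma'(t))\ge (s-t)-\epsilon$ for $s\ge t\ge t_\epsilon$. Combined with the upper bound $d_X(\gamma'(s),\gamma'(t))\le (s-t)+C$ from Corollary \ref{cor:RILowerBound}, the pair $\gamma'(s),\gamma'(t)$ satisfies the hypothesis of Lemma \ref{lemma:AlmostVerticalSegments} with parameter $C+\epsilon$, and that lemma places the vertical ray $V_{\gamma'(s)}$ in the $f(C+\epsilon)$-neighborhood of $V_t:=V_{\gamma'(t)}$. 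Since both rays are parameterized by height and $h_X$ is $1$-Lipschitz, this one-sided containment upgrades to a two-sided bound at matching heights, so for fixed large $T$ all $V_t$ with $t\ge T$ are pairwise uniformly $d_X$-close to $V_T$.

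Write $\xi_t\in\partial X\setminus\{\infty_X\}$ for the lower endpoint of the bi-infinite extension of $V_t$. The uniform two-sided Hausdorff bound gives $d_X(x_0,V_t)\le d_X(x_0,V_T)+2f(C+\epsilon)$, and since the Gromov product $(\xi_t\,|\,\infty_X)_{x_0}$ is comparable (up to $O(\delta)$) to the distance from $x_0$ to the geodesic joining $\xi_t$ and $\infty_X$---namely $V_t$---the points $\xi_t$ cannot escape to $\infty_X$ in the visual topology. By compactness of $\partial X$, I can then extract a subsequence with $\xi_{t_n}\to\xi_\infty\neq\infty_X$; let $V_\infty$ be the unique $d_X$-vertical geodesic between $\xi_\infty$ and $\infty_X$ (Lemma \ref{lemma:VerticalUniquenessInFactorSpaces}), parameterized by height.

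Finally, by continuity of the two-endpoint-to-geodesic correspondence in the $\CAT(-\kappa)$ boundary, $V_{t_n}(h_0)\to V_\infty(h_0)$ at any fixed height $h_0$. Since $V_{t_n}$ and $V_\infty$ share their upper endpoint $\infty_X$, Lemma \ref{lemma:CAT-1ImpliesVerticalConvergence} together with the convexity of $s\mapsto d_X(V_{t_n}(s),V_\infty(s))$ forces this function to be non-increasing in $s$ and thus dominated for all $s\ge h_0$ by its value at $h_0$, which tends to $0$. Taking $s_n:=h_X(\gamma'(t_n))\to\infty$ then gives $d_X(\gamma'(t_n),V_\infty(s_n))\to 0$ and therefore $d_X'(\gamma'(t_n),V_\infty(s_n))\to 0$. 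Applying Lemma \ref{lemma:VerticalRayHeightsInX'} to $V_\infty$ at the parameters $s_n$ yields the subsequential limit $h_X(x)$, and by the opening remark this is the full limit. The hard part of the argument will be the Gromov-product step that confines the $\xi_t$ to a compact subset of $\partial X\setminus\{\infty_X\}$; this is precisely where the two-sided Hausdorff closeness of the $V_t$---assembled from Lemma \ref{lemma:AlmostVerticalSegments} and the $1$-Lipschitz height---is essential.
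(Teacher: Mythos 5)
Your opening reduction (the limit exists because $t\mapsto d_X'(\gamma'(t),x)-t$ is non-increasing and bounded below, so a subsequential computation suffices) is correct and matches the paper. The gap comes immediately afterward, in how you verify the hypothesis of Lemma \ref{lemma:AlmostVerticalSegments}. Because you keep the $d_X'$ arc-length parameterization and import the additive constant $C$ from Corollary \ref{cor:RILowerBound}, you only get $d_X(\gamma'(s),\gamma'(t))\le h_X(\gamma'(s))-h_X(\gamma'(t))+C+\epsilon$, so the closeness you extract is $f(C+\epsilon)$, which is bounded away from $0$ no matter how small $\epsilon$ is. The paper's proof avoids exactly this: it uses Lemma \ref{d_X'GeodesicsAreAlmostd_XGeodesic} to show $\gamma'$ is $d_X$-rectifiable and \emph{re-parameterizes by $d_X$ arc length}, so that $d_X(\gamma'(s),\gamma'(t))\le s-t$ with no additive error; then Lemma \ref{lemma:VerticalApproximation} gives $d_X(\gamma'(s),\gamma'(t))\le \Delta h+\epsilon'$ and the almost-vertical parameter is $\epsilon'$, which can be made arbitrarily small. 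That single re-parameterization is the missing idea, and with it the entire limiting construction of $V_\infty$ becomes unnecessary.

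Your attempted rescue via boundary compactness does not close the gap, because the key estimate $d_X(x_0,V_t)\le d_X(x_0,V_T)+2f(C+\epsilon)$ is not justified and is in general false. If $V_t$ denotes the upward ray from $\gamma'(t)$, then $d_X(x_0,V_t)\ge h_X(\gamma'(t))\to\infty$ since every point of $V_t$ lies at height at least $h_X(\gamma'(t))$. If $V_t$ denotes a bi-infinite extension, the two-sided Hausdorff comparison you derived from Lemma \ref{lemma:AlmostVerticalSegments} only controls heights above $h_X(\gamma'(t))$ and says nothing about the downward extensions, which can diverge. Concretely, in $\Ha^2$ (upper half-plane, $h=\log y$) a ray of the form $t\mapsto(Me^{t},e^{t})$ stays at bounded $d_X$-distance from the vertical axis and satisfies all the estimates you have established up to that point, yet the vertical lines through its points have lower ideal endpoints $Me^{t}\to\infty_X$; so the $\xi_t$ can escape to $\infty_X$, the Gromov products are unbounded, and no limit geodesic $V_\infty$ with $\xi_\infty\neq\infty_X$ can be extracted. (The later steps---convexity forcing $s\mapsto d_X(V_{t_n}(s),V_\infty(s))$ to be non-increasing, and transferring Lemma \ref{lemma:VerticalRayHeightsInX'} along a subsequence---would be fine if you had the convergence $d_X(\gamma'(t_n),V_\infty(s_n))\to 0$, but that input is exactly what the broken step was supposed to supply.)
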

        
        \begin{proof}
            Let $\gamma'$ be a $d_X'$-geodesic ray in $[\infty_X]$, and let $\epsilon>0$. By Lemma \ref{d_X'GeodesicsAreAlmostd_XGeodesic}, $\gamma'$ is roughly geodesic with respect to $d_X$, and therefore within bounded distance of a $d_X$-vertical geodesic $\eta$. Also by Lemma \ref{d_X'GeodesicsAreAlmostd_XGeodesic}, $\gamma'$ is $d_X$-rectifiable, so we parameterize it by $d_X$ arc length. Using Lemma \ref{lemma:VerticalApproximation}, find $t_{\epsilon'}$ after which $h_X(\gamma'(s))-h_X(\gamma'(t))\ge s-t-\epsilon'$. Choose $\epsilon'$ small enough that $f(\epsilon')\le \frac{\epsilon}{4}$, where $f$ is the function in Lemma \ref{lemma:AlmostVerticalSegments}. Now, since $\gamma'$ is parameterized by $d_X$ arc length, $d_X(\gamma'(s), \gamma'(t))\le |s-t|$ for all $s$ and $t$. Therefore, when $s>t>t_{\epsilon'}$, we see that $d_X(\gamma'(s), \gamma'(t)) \le s-t\le h_X(\gamma'(s))-h_X(\gamma'(t))+\epsilon'$. Applying Lemma \ref{lemma:AlmostVerticalSegments}, we see that for $t>t_{\epsilon'}$, $\gamma'|_{[t, \infty)}$ is within $\frac{\epsilon}{4}$ of the $d_X$-vertical line $\eta_{\gamma'(t)}$ passing through $\gamma'(t)$. It follows that for $s>t>\epsilon'$, $\gamma'(s)$ is within $\frac{\epsilon}{2}$ of a point on $\eta_{\gamma'(t)}$ of matching height.
    
            As in the previous proof, we know that $h_X(\gamma'(t))$ is at least $t-C$ and asymptotic to $t-C$ for some fixed constant $C$. Thus for any $x\in X$, $d'_X(\gamma'(t),x)-t$ is decreasing and bounded below by $-h_X(x)-C$. The same is true for $d'_X(\eta_{\gamma'(t_{\epsilon'})}(h(\gamma'(t))),x)-t$. Choosing $x_0$ to be a point at height $0$, and applying the above twice, we see that both
    
            \[\lvert \lim_{t\to \infty} d_X'(\gamma'(t), x)-d_X'(\eta_{\gamma'(t_{\epsilon'})}(h(\gamma'(t))), x)\rvert\]
            and
            \[\lvert \lim_{t\to\infty} d_X'(\gamma'(t), x_0)-d_X'(\eta_{\gamma'(t_{\epsilon'})}(h(\gamma'(t))), x_0)\rvert\]
            exist. It follows that they are at most $\frac{\epsilon}{2}.$ 
    
            But then
            \begin{align*}
                \left\lvert \lim_{t\to \infty} d_X'(\gamma'(t), x)-d_X'(\eta_{\gamma'(t_{\epsilon'})}(h(\gamma'(t))), x)\right\rvert + \left\lvert \lim_{t\to\infty} d_X'(\gamma'(t), x_0)-d_X'(\eta_{\gamma'(t_{\epsilon'})}(h(\gamma'(t))), x_0)\right\rvert &\le \epsilon\\
                \left\lvert \lim_{t\to \infty} d_X'(\eta_{\gamma'(t_{\epsilon'})}(h(\gamma'(t))), x)-d_X'(\gamma'(t), x)\right\rvert + \left\lvert \lim_{t\to\infty} d_X'(\gamma'(t), x_0)-d_X'(\eta_{\gamma'(t_{\epsilon'})}(h(\gamma'(t))), x_0) \right\rvert &\le \epsilon.
                \intertext{The triangle inequality for absolute values yields}                
                \left\lvert \lim_{t\to \infty} \Big{[}d_X'(\eta_{\gamma'(t_{\epsilon'})}(h(\gamma'(t))), x)-d_X'(\gamma'(t), x)\Big{]} + \lim_{t\to\infty} \Big{[} d_X'(\gamma'(t), x_0)-d_X'(\eta_{\gamma'(t_{\epsilon'})}(h(\gamma'(t))), x_0)\Big{]}\right\rvert &\le \epsilon.\\
                \intertext{We then exchange terms between the limits}
                \left\lvert \lim_{t\to \infty} \Big{[}d_X'(\eta_{\gamma'(t_{\epsilon'})}(h(\gamma'(t))), x)-d_X'(\gamma'(t), x)+d_X'(\gamma'(t), x_0)-d_X'(\eta_{\gamma'(t_{\epsilon'})}(h(\gamma'(t))), x_0)\Big{]}\right\rvert &\le \epsilon\\
                \left\lvert \lim_{t\to \infty} \Big{[}d_X'(\gamma'(t), x_0)-d_X'(\gamma'(t), x)\Big{]} - \lim_{t\to\infty}\Big{[}d_X'(\eta_{\gamma'(t_{\epsilon'})}(h(\gamma'(t))), x_0)-d_X'(\eta_{\gamma'(t_{\epsilon'})}(h(\gamma'(t))), x)\Big{]}\right\rvert &\le \epsilon
            \end{align*}
    
            where the last line is meant only if one, and thus both, of the limits exist. Using again the fact that $h(\gamma'(t))\ge t-C$ and is asymptotic to $t-C$, we see that 
            \[ \lim_{t\to\infty}\Big{[}d_X'(\eta_{\gamma'(t_{\epsilon'})}(h(\gamma'(t))), x_0)-d_X'(\eta_{\gamma'(t_{\epsilon'})}(h(\gamma'(t))), x)\Big{]}=\lim_{t\to\infty}\Big{[}d_X'(\eta_{\gamma'(t_{\epsilon'})}(t), x_0)-d_X'(\eta_{\gamma'(t_{\epsilon'})}(t), x)\Big{]},\]
    
            where again we mean that one limit exists if and only if the other does. Applying Lemma \ref{lemma:VerticalRayHeightsInX'}, we see that the righthand side exists and is equal to $h_X(x).$ As a consequence, we substitute to obtain
    
            \[\lvert \lim_{t\to \infty} \Big{[}d_X'(\gamma'(t), x_0)-d_X'(\gamma'(t), x)\Big{]} - h_X(x)\rvert \le \epsilon. \]
    
            Sending $\epsilon$ to $0$ gives the desired result.
        \end{proof}
    
        We are finally prepared to show that isometries of $X'$ fixing $\infty_X$ have well-defined $h_X$-change.
    
        \begin{thm}
            Let $f:X'\to X'$ be in $\Isom(X')_{\infty_X}$. Then $h_X(f(x))-h_X(x)$ does not depend on the choice of $x$.
        \end{thm}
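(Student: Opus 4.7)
The plan is to mirror the strategy of Proposition \ref{prop:HeightChangeHom}, but carried out entirely inside $X'$ using Proposition \ref{prop:ArbitraryRayHeightsInX'} in place of the usual definition of a Busemann function. The reason this should work is that $f$ is an isometry for $d'_X$, not for $d_X$, so the only handle we have on $h_X$ compatible with $f$ is through its $d'_X$-Busemann representation. Proposition \ref{prop:ArbitraryRayHeightsInX'} gives exactly such a representation, with the added flexibility that any $d'_X$-geodesic ray in $[\infty_X]$ may be used.

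Concretely, I would first fix a point $x_0\in X$ at height $0$ and a $d'_X$-geodesic ray $\gamma'$ in $[\infty_X]$ starting at $x_0$ (which exists since $(X,d'_X)$ is a proper geodesic space by Corollary \ref{cor:RILowerBound} and $\infty_X\in\partial X'$). Proposition \ref{prop:ArbitraryRayHeightsInX'} then gives
\[
h_X(x)=\lim_{t\to\infty}\bigl(d'_X(\gamma'(t),x_0)-d'_X(\gamma'(t),x)\bigr).
\]
By a trivial subtraction, the same proposition yields the ``base-point shifted'' version
\[
\lim_{t\to\infty}\bigl(d'_X(\eta(t),p)-d'_X(\eta(t),y)\bigr)=h_X(y)-h_X(p)
\]
for any $p,y\in X$ and any $d'_X$-geodesic ray $\eta$ in $[\infty_X]$.

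Next, I would use the hypothesis that $f\in\Isom(X')_{\infty_X}$ in two ways. First, because $f$ is a $d'_X$-isometry, it takes $d'_X$-geodesic rays to $d'_X$-geodesic rays and preserves Hausdorff distance, and because $f$ fixes $\infty_X$, the image $f\circ\gamma'$ lies in the class $[\infty_X]$. Second, applying $f$ inside the distances in the Busemann limit for $\gamma'$ gives
\[
h_X(x)=\lim_{t\to\infty}\bigl(d'_X(f\gamma'(t),f(x_0))-d'_X(f\gamma'(t),f(x))\bigr).
\]
Now I apply the base-point shifted Busemann identity to the ray $f\circ\gamma'$ with base point $f(x_0)$ and $y=f(x)$ to see that this right-hand side equals $h_X(f(x))-h_X(f(x_0))$. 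Thus
\[
h_X(f(x))-h_X(x)=h_X(f(x_0)),
\]
which is independent of $x$, as desired.

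The one genuine content point, and the step I expect to require the most care in a formal write-up, is verifying that $f\circ\gamma'$ is a $d'_X$-geodesic ray lying in $[\infty_X]$ so that Proposition \ref{prop:ArbitraryRayHeightsInX'} applies to it. That $f\circ\gamma'$ is a $d'_X$-geodesic is automatic from $f$ being a $d'_X$-isometry; that it belongs to $[\infty_X]$ follows from $f$ fixing the boundary point $\infty_X$ of $(X,d'_X)$ and from the characterization of boundary classes via finite Hausdorff distance, which $f$ preserves. Everything else is bookkeeping: the key substantive input to the argument is Proposition \ref{prop:ArbitraryRayHeightsInX'}, which frees us from ever needing to know whether $f$ sends $d_X$-vertical rays to $d_X$-vertical rays, a statement we have no direct way to verify in the absence of $\CAT(-\kappa)$ for $d'_X$.
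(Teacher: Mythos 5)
Your proposal is correct and follows essentially the same route as the paper: both arguments hinge on Proposition \ref{prop:ArbitraryRayHeightsInX'}, apply the $d'_X$-isometry $f$ inside the Busemann limit, and use that the image ray is again a $d'_X$-geodesic in $[\infty_X]$. The only cosmetic difference is that the paper seeds the computation with a $d_X$-vertical line (invoking Lemma \ref{lemma:VerticalRayHeightsInX'} for the untransformed terms) while you start from an arbitrary $d'_X$-geodesic ray and handle the base-point shift by subtracting two instances of the proposition; both are valid.
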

    
        \begin{proof}
            By Lemma \ref{lemma:VerticalRayHeightsInX'}, we choose a $d_X$-vertical line $\eta$ and a point $x_0$ at height $0$ so that,
            \[h_X(f(x))-h_X(x)= \lim_{t\to\infty} d_X'(\eta(t), x_0)-d_X'(\eta(t), f(x)) -  d_X'(\eta(t), x_0)+d_X'(\eta(t), x).\]
    
            Now, since $f$ is an isometry fixing $\infty_X$, $f(\eta)=\gamma'$ is a $d_X'$-geodesic in $[\infty_X]$. We can therefore compute
    
            \begin{align*}
            h_X(f(x))-h_X(x)&=\lim_{t\to\infty} d_X'(\eta(t), x_0)-d_X'(\eta(t), f(x))-d_X'(\eta(t), x_0)+d_X'(\eta(t), x)\\
            &=\lim_{t\to\infty} d_X'(\eta(t), x_0)-d_X'(\eta(t), f(x))-d_X'(\gamma'(t), f(x_0))+d_X'(\gamma'(t), f(x))\\
            &=\lim_{t\to\infty}\Big{[} \big{(}d_X'(\eta(t), x_0)-d_X'(\eta(t), f(x))\big{)}+\big{(}d_X'(\gamma'(t), f(x))-d_X'(\gamma'(t), x_0)\big{)}\Big{]}\\
            &\qquad +\lim_{t\to\infty} d_X'(\gamma'(t), x_0)-d_X'(\gamma'(t), f(x_0)).
            \end{align*}
    
            Applying Lemma \ref{lemma:VerticalRayHeightsInX'} to the first two terms on the third line, and Proposition \ref{prop:ArbitraryRayHeightsInX'} to the second two terms, we see that 
            \[h_X(f(x))-h_X(x)=h_X(f(x))-h_X(f(x))+h_X(f(x_0)) = h_X(f(x_0)),\]
            which does not depend on the choice of $x$.
        \end{proof}

\section{Isometry groups of horocyclic products}
\label{sec:Isom_group}

In this section we will prove \cref{thm:IsomGroupOfHorocyclicProduct}, which says that essentially every isometry $f \in \Isom(X\bowtie Y)$ takes the form $f(x,y) = (f_X(x), f_Y(y))$ for $f_X \in \Isom(X')_\infty$ and $f_Y \in \Isom(Y')_\infty$ satisfying $h(f_X) = -h(f_Y)$. This will be proven in three steps:

\begin{enumerate}

    \item Show that the only geodesic connecting two points on opposite boundaries in $\partial (X\bowtie Y) = \partial_\ell X \sqcup \partial^u Y$ is the vertical geodesic.
    \item Show that the action of any isometry $f \in \Isom(X\bowtie Y)$ on $\partial (X\bowtie Y) = \partial_\ell X \sqcup \partial^u Y$ respects the upper-lower partition. In particular, isometries send vertical geodesics to vertical geodesics.
    
    \item Use the above facts to construct coordinate maps $f_X$ and $f_Y$ for each map $f$ stabilizing $\partial^u Y$ and $\partial_\ell X$, satisfying $f(x,y) = (f_X(x), f_Y(y))$, and show that they are isometries for $d'_X$ and $d'_Y$. If $f$ does not stabilize $\partial^u Y$ and $\partial_\ell X$, use the above facts instead to construct an embedded copy of $Y'$ that $f$ sends isometrically to an embedded copy of $X'$.
\end{enumerate}

Throughout this section, $N$ will be an admissible monotone norm giving rise to a metric $d_{\bowtie}$ on $X\bowtie Y$, where $X$ and $Y$ are $\delta$-hyperbolic, proper, geodesically-complete, Busemann, and vertically-convergent, and neither one is a line.

\subsection{Step 1: Points in opposite boundaries are connected by a unique geodesic.}

Let $\xi_X \in \partial X \setminus \{\infty_X\}$ and $\xi_Y \in \partial Y \setminus \{\infty_Y\}$. By Lemma \ref{lemma:VerticalUniquenessInFactorSpaces}, there are unique vertical geodesics $V \subset X$, $W\subset Y$ limiting to $\xi_X$ and $\xi_Y$ respectively. Then $V\bowtie W\subset X\bowtie Y$ is a vertical geodesic connecting $\xi_X,\xi_Y \in \partial X\bowtie Y$. Our goal in this subsection is to prove that $V\bowtie W$ is the only such geodesic.

    \begin{prop}
    \label{prop:Opposite_Boundaries_Unique_Geodesic}
        Let $(X, d_X)$ and $(Y, d_Y)$ be have height functions $h_X$ and $h_Y$ defined about boundary points $\infty_X$ and $\infty_Y$. Equip $X\bowtie Y$ with an admissible metric $d_{\bowtie}$. If $\xi_X\ne \infty_X$ and $\xi_Y\ne \infty_Y$ are any two points in $\partial X$ and $\partial Y$, then there is a unique geodesic in $X\bowtie Y$ between $\xi_X$ and $\xi_Y$, and this geodesic is vertical.
    \end{prop}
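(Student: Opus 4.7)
The existence of a vertical geodesic between $\xi_X$ and $\xi_Y$ is immediate from Lemma \ref{lemma:VerticalUniquenessInFactorSpaces}: take the unique vertical geodesic lines $V \subset X$ and $W \subset Y$ limiting to $\xi_X$ and $\xi_Y$, parameterized by height, and set $\tilde\gamma(t) := (V(t), W(-t))$. This is a vertical geodesic in $X \bowtie Y$ with endpoints $\xi_X$ (at $t = -\infty$) and $\xi_Y$ (at $t = +\infty$).

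For uniqueness, let $\gamma : \R \to X \bowtie Y$ be any geodesic between $\xi_X$ and $\xi_Y$, parameterized by arc length. The plan is to reduce to showing $\gamma$ is itself vertical, then use factor-wise uniqueness. First, I would apply Ferragut's theorem to each half-ray of $\gamma$, producing vertical rays at finite Hausdorff distance from each half. Since vertical geodesic rays to $\xi_Y$ and $\xi_X$ are determined by their non-$\infty_X, \infty_Y$ endpoints via Lemma \ref{lemma:VerticalUniquenessInFactorSpaces}, these Hausdorff-close rays must be the two halves of $\tilde\gamma$, so $\gamma$ is at bounded Hausdorff distance $B$ from $\tilde\gamma$.

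The key step is to show $h \circ \gamma(t) = t + c$ exactly. Admissibility gives $d_{\bowtie} \geq N(d_X, d_Y) \geq (d_X + d_Y)/2 \geq |h_X \Delta|$ on $X \bowtie Y$, so $h$ is $1$-Lipschitz, which makes $\phi(t) := h(\gamma(t)) - t$ non-increasing and (from Hausdorff closeness to $\tilde\gamma$, where $h \equiv t$) uniformly bounded, hence with limits $L_\pm$ as $t \to \pm\infty$ satisfying $L_+ \leq L_-$. To force $L_+ = L_-$, I would analyze the Busemann functions $\beta_{\xi_Y}, \beta_{\xi_X}$ on $X \bowtie Y$ computed using $\tilde\gamma$ as reference ray. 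Along $\gamma|_{[0,\infty)}$, $\beta_{\xi_Y}$ should decrease at exact rate one toward $\xi_Y$, and symmetrically $\beta_{\xi_X}$ should decrease at exact rate one along $\gamma|_{(-\infty,0]}$ toward $\xi_X$. The two-leg-path upper bound $d_{\bowtie}(\tilde\gamma(t), x) \leq (t - h(x)) + d_{\bowtie}(x, \tilde\gamma(h(x)))$, combined with the trivial lower bound $d_{\bowtie}(\tilde\gamma(t), x) \geq t - h(x)$, yields that both $\beta_{\xi_Y}(x) + h(x)$ and $\beta_{\xi_X}(x) - h(x)$ are uniformly bounded on a tubular neighborhood of $\tilde\gamma$. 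Restricting to $\gamma$ and combining the exact linear rates of the Busemann functions should squeeze $\phi$ to a constant.

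Once $h \circ \gamma(t) = t + c$, I would conclude as follows. Since $d_X(\gamma_X(t_i), \gamma_X(t_{i+1})) \geq |h_X \Delta| = t_{i+1} - t_i$ and similarly for $d_Y$, monotonicity of $N$ together with $N(1,1) = 1$ gives $N(d_X \Delta, d_Y \Delta) \geq t_{i+1} - t_i$ for every chain link; summing over any chain on $[s,t]$ and using that $\gamma$ is a geodesic forces each inequality to be tight, and admissibility $N \geq L^1/2$ then forces $d_X \Delta = d_Y \Delta = |h \Delta|$ for each link. Hence $\pi_X \gamma$ and $\pi_Y \gamma$ are vertical geodesics in $X$ and $Y$; they must equal $V$ and $W$ by Lemma \ref{lemma:VerticalUniquenessInFactorSpaces}, so $\gamma = \tilde\gamma$. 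The main technical obstacle is verifying the exact linear change of the Busemann functions along $\gamma$ rather than merely along the reference ray $\tilde\gamma$; since $X \bowtie Y$ is not CAT$(0)$, this requires careful exploitation of the Hausdorff closeness of $\gamma$ to $\tilde\gamma$ and of vertical convergence in the $X$- and $Y$-factors.
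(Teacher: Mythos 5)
Your existence argument and your final step (from ``$h\circ\gamma(t)=t+c$ exactly'' to factorwise verticality via admissibility and Lemma \ref{lemma:VerticalUniquenessInFactorSpaces}) are both correct and match the paper. The gap is in the middle, and it is the heart of the matter. Your squeeze runs through two claims: (i) the Busemann functions $\beta_{\xi_Y},\beta_{\xi_X}$ of $X\bowtie Y$ decrease at \emph{exact} rate one along the respective half-rays of $\gamma$, and (ii) $\beta_{\xi_Y}+h$ and $\beta_{\xi_X}-h$ are uniformly bounded near $\tilde\gamma$. Claim (ii) is true but only gives a window of width comparable to the Hausdorff distance $B$; claim (i) is not a general fact in a space that is neither $\CAT(0)$ nor Gromov-hyperbolic. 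Exact rate-one decrease of $\beta_{\xi_Y}$ along $\gamma$ is equivalent to $\beta_{\tilde\gamma}-\beta_{\gamma}$ being constant along $\gamma$, which in turn requires that the tail of $\gamma$ actually \emph{converges} to $\tilde\gamma$ (distance tending to $0$), not merely stays within Hausdorff distance $B$. That convergence is precisely the hard content you have deferred. Moreover, even granting (i) on both half-rays, the exact inequalities available ($\phi$ non-increasing, $\beta_{\xi_Y}\ge -h$, $\beta_{\xi_X}\ge h$, $1$-Lipschitzness) all point the same way, and combining them with the width-$2B$ bounds from (ii) only yields $|L_+-L_-|\le O(B)$, not $L_+=L_-$. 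A bounded height deficit in the middle of $\gamma$ is invisible to purely asymptotic quantities unless you can drive the error terms to zero.

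The paper closes exactly this gap by a different mechanism: it argues by contradiction, assuming a height deficit $\epsilon$ on some $[t_1,t_2]$ and constructing an explicit competitor path (a single vertical segment $\eta_{X,\gamma_X(a_1)}\bowtie\eta_{Y,\gamma_Y(-b_1)}$ together with two short connectors supplied by Lemma \ref{lemma:BoundedCoordinateDinstancesImpliesBoundedDistance}) that is strictly shorter than $\gamma|_{[a_1,b_1]}$. To make the connectors short, it must show that the coordinate projections $\gamma_X,\gamma_Y$ have tails within $O(\epsilon)$ of vertical rays in $X$ and $Y$; this is done by combining Lemma \ref{lemma:VerticalApproximation} with Lemma \ref{lemma:AlmostVerticalSegments} --- the hyperbolic-trigonometry lemma that is the \emph{only} place the full $\CAT(-\kappa)$ hypothesis is used in the paper --- plus vertical convergence to merge nearby vertical rays, and a further adjustment (Step IV of the paper's proof) because the naive vertical segment is close to one endpoint but not obviously to the other. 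You name ``careful exploitation of Hausdorff closeness and vertical convergence'' as the remaining obstacle, but you never invoke Lemma \ref{lemma:AlmostVerticalSegments} or propose a substitute, and without it there is no way to upgrade bounded Hausdorff distance to vanishing distance. As written, the proposal does not contain a proof of uniqueness.
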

    
    \begin{proof}

        Let $d_{\bowtie}$ arise from an admissible norm $N$.

        \textbf{Step I: Construct vertical geodesics}
        
        Existence is straightforward. $X$ and $Y$ have the property that between any two boundary points there is a geodesic. Let $\eta_X$ (resp. $\eta_Y$) be geodesics between $\xi_X$ and $\infty_X$ (resp. between $\xi_Y$ and $\infty_Y$). Ferragut Proposition 2.7 shows that the arc-length parameterization of $\eta_X$ and $\eta_Y$ is the height parameterization, up to a constant. So we may assume that $h_X(\eta_X(t))=t$, and $h_Y(\eta_Y(t))=t$. The path $\eta(t)=(\eta_X(t), \eta_Y(-t))$ is in $X\bowtie Y$. One sees immediately that this path is a geodesic: 
        \[d_{\bowtie} \Big((\eta_X(t_1), \eta_Y(t_1)),(\eta_X(t_2), \eta_Y(t_2))\Big)\ge |t_1-t_2|\]
        because the height function is 1-Lipschitz, and the length of $\gamma$ in restriction to this segment is exactly $|t_1-t_2|$. This is the only vertical geodesic between the two, which follows from applying Lemma \ref{lemma:VerticalUniquenessInFactorSpaces} to each space. So it suffices to show that any geodesic connecting $\xi_X$ and $\xi_Y$ must be vertical.
        
        Therefore, let $\gamma=(\gamma_X, \gamma_Y)$ be a geodesic connecting $\xi_X$ and $\xi_Y$ that is not vertical, so that there is an $\epsilon>0$, and times $t_1<t_2$ so that $h(\gamma(t_2))-h(\gamma(t_1))=t_2-t_1-\epsilon$. 

       \textbf{Step II: Find a region $[a,b]$ outside which $\gamma$ is close to vertical.}

       By Ferragut \cite{Ferragut:Visual_Boundary} Corollary \textbf{C}, $\gamma$ is within an $M$-neighborhood of a vertical geodesic $\eta$ in $X\bowtie Y$, where $M$ depends only on $d_{\bowtie}$. Parameterize $\gamma$ by arc length starting at any point on $\gamma$ with height $0$.


       Let $\epsilon'>0$ so that $\epsilon'<\frac{\epsilon}{8}$ and $f(2\epsilon')<\frac{\epsilon}{8}$ where $f$ is the function in Lemma \ref{lemma:AlmostVerticalSegments}. Using Lemma \ref{lemma:VerticalApproximation} we see that there is an interval $[a,b]$, which we can take to include the points $t_1$ and $t_2$, so that for $s>t>b$ or $t<s<a$,
       \[h(\gamma(s))-h(\gamma(t))\ge s-t-\epsilon. \]
       Notice also that since $[a,b]$ contains $[t_1, t_2]$, we know that $h(\gamma(b))-h(\gamma(a))\le b-a-\epsilon$, which implies that
       \[d_{\bowtie}(\gamma(b), \gamma(a))=b-a\ge h(\gamma(b))-h(\gamma(a))+\epsilon.\]
       Our goal will be to find a shortcut between two points $\gamma(a_1)$ and $\gamma(b_1)$ where $a_1\le a$ and $b_1\ge b$.

       \textbf{Step III: Bound the distance between the tails of $\gamma_X$ and $\gamma_Y$ and vertical geodesics}
        
        \begin{figure}
            \centering
            \includegraphics[scale=0.6]{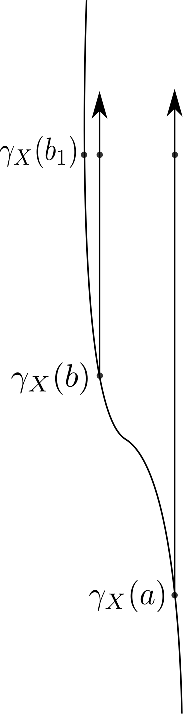}
            \caption{Past $b_1$, the path $\gamma_X$ is close to the vertical ray starting at $\gamma_X(b)$, and the vertical ray starting at $\gamma_X(b)$ is close to the vertical ray starting at $\gamma_X(a)$.}
            \label{figure:CloseDistanceFromA}
        \end{figure}
        
        Let $\eta_{X, \gamma_X(a)}$ and $\eta_{X, \gamma_X(b)}$ be the vertical geodesic rays in $X$ starting at $\gamma_X(a)$ and $\gamma_X(b)$ respectively. Consider a point $\gamma_X(t)$ for $t>b$. We wish to apply Lemma \ref{lemma:AlmostVerticalSegments} to show that $\gamma_X(t)$ is close to $\eta_{X, \gamma_X(b)}$. In order to do this, we must show that the $X$-distance between $\gamma_X(t)$ and $\gamma_X(b)$ is almost the height difference. 
        
        As shown in Figure \ref{figure:CloseDistanceFromA}, since $t>b$, the statement that $f(b)$ is near its asymptote tells us that $h(\gamma(t))-h(\gamma(b))\ge t-b-\epsilon'$. It follows that $d_X(\gamma_X(b), \gamma_X(t))\le t-b+\epsilon'$, for otherwise, for any chain of times $b=t_0<t_1<...t_n=t$, we compute the length of $\gamma$ along this chain as
        \begin{align*}
            \sum_{i=0}^{n-1} N\Big(d_X(\gamma_X(t_i), \gamma_X(t_{i+1})), d_Y(\gamma_Y(t_i), \gamma_Y(t_{i+1}))\Big)
            &\ge \frac{\sum_{i=0}^{n-1} d_X(\gamma_X(t_i), \gamma_X(t_{i+1})) + \sum_{i=0}^{n-1}d_Y(\gamma_Y(t_i), \gamma_Y(t_{i+1}))}{2}\\
            &\ge \frac{d_X(\gamma_X(b), \gamma_X(t)), d_Y(\gamma_Y(b), \gamma_Y(t))}{2}\\
            &> \frac{t-b+\epsilon'+t-b-\epsilon'}{2}\\
            &=t-b,
        \end{align*}
        where we used the fact that the height is $1$-Lipschitz in $Y$ in the second-to-last step. Hence, $$d_X(\gamma_X(t), \gamma_X(b))\le h(\gamma_X(t))-h(\gamma_X(b))+2\epsilon'.$$ 
        
        Therefore, applying Lemma \ref{lemma:AlmostVerticalSegments} to points $\gamma_X(t)$ for $t>b$ shows that each point of $\gamma_X([b, \infty))$ is less than $\frac{\epsilon}{8}$ from a point on $\eta_{X, \gamma_X(b)}$. Hence each point is less than $\frac{2\epsilon}{ 8}$ from the point on $\eta_{X, \gamma_X(b)}$ of matching height. 
        
        Now, consider the ray $\eta_{X, \gamma_X(a)}$. There is some height $H$, after which the distance between points at the same height on $\eta_{X, \gamma_X(a)}$ and $\eta_{X, \gamma_X(b)}$ are less than $\frac{\epsilon}{8}$ of one another by assumption. It follows that, past height $H$, the distance between a point on $\gamma_X$ and the point on $\eta_{X, \gamma_X(a))}$ is less than $\frac{3\epsilon}{8}$. Take $b_1$ so that $\gamma(b_1)$ is a point at height $H$.
        
        Next, reverse the roles of $X$ and $Y$, and apply the same argument to the interval $[a, b_1]$ to find an $a_1<a$ past which the $y$-distance between points at the same height on the vertical ray starting at $\eta_{Y, \gamma_Y(-b_1)}$ and on $\gamma_Y$ is smaller than $\frac{3\epsilon}{8}$.

        \textbf{Step IV: Find a single vertical segment that is close to $\gamma(a_1)$ and $\gamma(b_1)$}

        At this point we know that $\gamma_X(b_1)$ and $\eta_{X, \gamma_X(a)}(h_X(\gamma_X(b_1)))$ are near one another, and the same for $\gamma_Y(-a_1)$ and $\eta_{Y, \gamma_Y(-b_1)}(h_Y(\gamma_Y(-a_1)))$. If we take vertical segment 
        \[\eta_1=\eta_{X, \gamma_X(a)}\bowtie \eta_{Y, \gamma_Y(-b_1)},\]
        we have control over the quantities
        
        \[ d_X\Big{(}\gamma(b_1), \eta_1(h(\gamma(b_1)))\Big) \quad d_Y\Big{(}\gamma(b_1), \eta_1(h(\gamma(b_1)))\Big{)} \quad d_X\Big{(}\gamma(a), \eta_1(h(\gamma(a)))\Big{)} \quad d_Y\Big{(}\gamma(a_1), \eta_1(h(\gamma(a_1)))\Big{)}.\]
        
        Lemma \ref{lemma:BoundedCoordinateDinstancesImpliesBoundedDistance} therefore shows that $\eta_1$ comes close to $\gamma(b_1)$, but we do not know whether $\eta_1$ comes close to $\gamma(a_1)$. To construct the desired shortcut, we need a vertical segment $\eta_2$ that comes close to both $\gamma(a_1)$ and $\gamma(b_1)$, so we will need to make a slight adjustment to the $X$-coordinate of $\eta_1$.
        
        \begin{figure}
            \centering
            \includegraphics[scale=0.6]{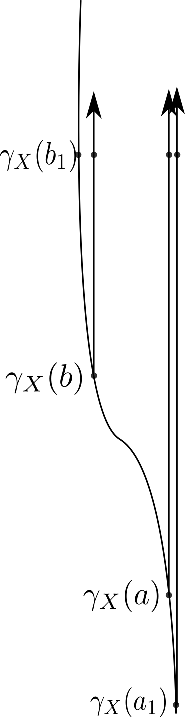}
            \caption{Since $a_1<a$, we again see that $\gamma_X(a)$ is near the vertical ray from $\gamma_X(a_1)$. Therefore, past $b_1$, $\gamma_X$ is close to the vertical ray from $\gamma_X(a_1)$}
            \label{figure:CloseDistanceFromA1}
        \end{figure}

        
         As shown in Figure \ref{figure:CloseDistanceFromA1}, since $a_1<a$, it follows that $h(\gamma(a))-h(\gamma(a_1))\ge a-a_1-\epsilon'$. Then $a-a_1-\epsilon'$ bounds below the $Y$-distance between $\gamma_Y(a)$ and $\gamma_Y(a_1)$. Then the $X$-distance between $\gamma_X(a)$ and $\gamma_X(a_1)$ can be at most $a-a_1+\epsilon'$. By the second part of the conclusion of Lemma \ref{lemma:AlmostVerticalSegments}, and the fact that $f(2\epsilon')<\frac{\epsilon}{8}$, each point of $V_{X, \gamma_X(a)}$ is within $\frac{\epsilon}{8}$ in $X$-distance from a point on $V_{X, \gamma_X(a_1)}$, and thus within distance $\frac{2\epsilon}{8}$ of a point at the same height.
        
        Using the triangle inequality, we determine that, at all heights $b_1$ and above, the $X$-distance between points of matching heights on $\gamma_X$ and $V_{X, \gamma_X(a_1)}$ is no more than $\frac{5\epsilon}{8}$. 

        \textbf{Step V: Constructing the shortcut}
        
        \begin{figure}
            \centering
            \includegraphics[scale=.4]{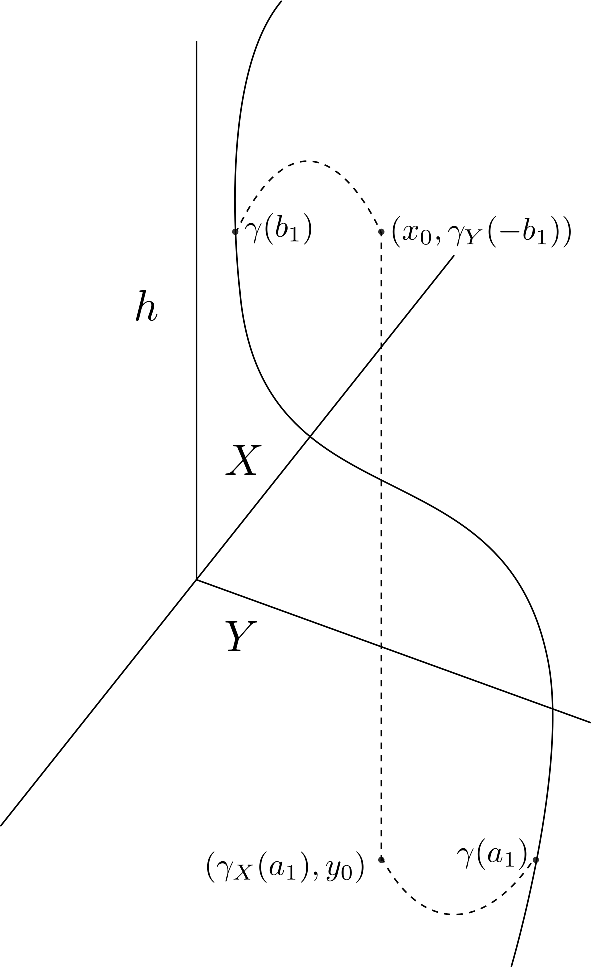}
            \caption{Construction of the final shortcut. $x_0$ and $y_0$ are the $x$- and $y$-coordinates of vertical geodesics starting at $\gamma_X(a_1)$ and $\gamma_Y(-b_1)$ respectively.}
            \label{figure:FinalShortcut}
        \end{figure}
        
        We will finally construct a shortcut between $\gamma(a_1)$ and $\gamma(b_1)$ using the vertical line $\eta_{X, \gamma_X(a_1)}\bowtie \eta_{Y, \gamma_Y(b_1)}$ as shown in Figure \ref{figure:FinalShortcut}. Recall that $\gamma|_{[a_1, b_1]}$ has length $b_1-a_1$, but that the height difference between $\gamma(b_1)$ and $\gamma(a_1)$ is no more than $b_1-a_1-\epsilon$. Consider the vertical geodesic 
        \[\eta_2=\eta_{X, \gamma_X(a_1)}\bowtie \eta_{Y, \gamma_Y(-b_1)}\]
        of length $h(\gamma(b_1))-h(\gamma(a_1))\le b_1-a_1-\epsilon$. Its starting point has $X$-coordinate $\gamma_X(a_1)$ and $Y$-coordinate at distance less than $\frac{3\epsilon}{8}$ from $\gamma_Y(a_1)$. Lemma \ref{lemma:BoundedCoordinateDinstancesImpliesBoundedDistance} therefore tells us that $d_{\bowtie}(\eta_2(h(\gamma(a_1))), \gamma(a_1))<\frac{3\epsilon}{8}$. Similarly, the endpoint of $\eta_2$ has $Y$-coordinate equal to $\gamma_Y(b_1)$, and $x$-coordinate at distance at most $\frac{5\epsilon}{8}$ from $\gamma_X(b_1)$, so that Lemma \ref{lemma:BoundedCoordinateDinstancesImpliesBoundedDistance} implies $d_{\bowtie}(\eta_2(h(\gamma(b_1))), \gamma(b_1))<\frac{5\epsilon}{8}$. But then by the triangle inequality, $d_{\bowtie}(\gamma(a_1), \gamma(b_1))<b_1-a_1-\epsilon+\frac{3\epsilon}{8}+\frac{5\epsilon}{8}$. So $\gamma$ was not geodesic.
    \end{proof}

\subsection{Step 2: Isometries respect the boundary decomposition and height.} \label{subsec:IsomsPreserveUpperAndLower}

By Proposition \ref{prop:Opposite_Boundaries_Unique_Geodesic}, points on opposite boundaries are connected by a unique geodesic (the vertical one). We would like to use this to conclude that vertical geodesics are mapped to vertical geodesics under isometries, but we do not yet know that isometries send pairs of opposite boundary to opposite boundary points. This possibility is addressed by the following lemma.

\begin{lemma}
    \label{lem:preserves_partition}
    Let $f \in \Isom(X\bowtie Y)$. If $\xi_1,\xi_2 \in \partial (X\bowtie Y)$ lie on opposite boundaries, then $\partial f (\xi_1)$ and $\partial f(\xi_2)$ lie on opposite boundaries. In particular, the action of $\Isom(X\bowtie Y)$ preserves the partition $\partial (X\bowtie Y) = \partial_\ell X \sqcup \partial_u Y$.
    
\end{lemma}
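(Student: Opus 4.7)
The strategy is to characterize opposite-boundary pairs in $\partial(X\bowtie Y)$ by the intrinsic metric property of being connected by a unique bi-infinite geodesic. Any isometry $f$ carries bi-infinite geodesics to bi-infinite geodesics and thus preserves, for each pair of boundary points, the cardinality of the set of geodesics between them. If this characterization holds, $\partial f$ preserves opposite-boundary pairs automatically, which is the content of the lemma.

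The implication ``opposite-boundary $\Rightarrow$ unique geodesic'' is exactly Proposition \ref{prop:Opposite_Boundaries_Unique_Geodesic}. What remains is the converse: distinct $\zeta_1, \zeta_2 \in \partial_\ell X$ (and symmetrically in $\partial^u Y$) are never connected by a unique bi-infinite geodesic, i.e., they admit either no such geodesic or at least two. Granting this dichotomy, the lemma follows: if $f$ carried an opposite pair $(\xi_1, \xi_2)$ to a same-boundary pair, the unique vertical geodesic between $\xi_1$ and $\xi_2$ would push forward to a unique geodesic between the image pair, contradicting the dichotomy.

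To attack the converse, I would start by applying Ferragut's Corollary \textbf{C} to both tails of a candidate bi-infinite geodesic $\gamma$ from $\zeta_1$ to $\zeta_2$: each tail is Hausdorff-close to a vertical ray in $X\bowtie Y$ descending to $\zeta_i$, which must have the form $V_{\zeta_i}\bowtie W_i$. Here $V_{\zeta_i}$ is uniquely determined by $\zeta_i$ by Lemma \ref{lemma:VerticalUniquenessInFactorSpaces}, but $W_i$ is a free choice of vertical geodesic in $Y$, giving an entire family of candidate asymptotic behaviors. I would then vary the $W_i$ and modify $\gamma$ in a bounded region near its ``peak'' to produce a second geodesic, using the vertical-shortcut technique from Step II of the proof of Proposition \ref{prop:Opposite_Boundaries_Unique_Geodesic} together with Lemma \ref{lemma:BoundedCoordinateDinstancesImpliesBoundedDistance} to bound the $d_\bowtie$-length of the modification by its coordinate-wise perturbation. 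The main obstacle I anticipate is precisely this length control: one must verify that the modified path is not merely a shorter competitor (which would contradict $\gamma$ being geodesic to begin with) but itself geodesic, and the admissibility bound $N \ge L^1/2$ is delicate to saturate because $\gamma$ already saturates it asymptotically on both tails.

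Once the characterization is established, the ``in particular'' clause follows by a short combinatorial argument: the preserved opposite-pair relation forces $\partial f$ to carry all of $\partial_\ell X$ either entirely into $\partial_\ell X$ or entirely into $\partial^u Y$, since sending two points of $\partial_\ell X$ to different components would contradict preservation of the opposite-pair relation with any fixed point of $\partial^u Y$.
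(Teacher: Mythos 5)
Your high-level strategy is the same as the paper's: isometries preserve the number of bi-infinite geodesics joining a pair of boundary points, opposite pairs are joined by a unique geodesic (Proposition \ref{prop:Opposite_Boundaries_Unique_Geodesic}), so it suffices to show same-boundary pairs are never joined by a unique geodesic. The problem is that you never actually establish this last step. Your plan --- start with a candidate geodesic $\gamma$ from $\zeta_1$ to $\zeta_2$, approximate its tails by vertical rays $V_{\zeta_i}\bowtie W_i$, vary the $W_i$, and surgery $\gamma$ near its peak --- runs into exactly the obstruction you name yourself: a perturbation of a geodesic has no reason to be a geodesic, and bounding its length via Lemma \ref{lemma:BoundedCoordinateDinstancesImpliesBoundedDistance} only shows it is not much longer, which proves nothing. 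As written, the proposal is a correct reduction plus an unresolved sketch of the key lemma, so it does not constitute a proof.

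The paper avoids the surgery entirely by building the two geodesics from scratch rather than modifying a given one. For any vertical geodesic $\alpha\subset Y$, the subspace $X\bowtie \alpha$ is a geodesic subspace of $X\bowtie Y$ isometric to $X'$ (Lemma \ref{lemma:MonotoneImpliesConvex}), so a bi-infinite geodesic of $X'$ pushed into $X\bowtie\alpha$ is automatically a genuine geodesic of $X\bowtie Y$ --- no length control is needed. Since $X'$ is proper and hyperbolic, $\zeta_1,\zeta_2\in\partial_\ell X$ are joined by a bi-infinite geodesic $\gamma$ in $X'$; letting $H$ be the maximal height of $\gamma$ and choosing vertical geodesics $\alpha_1,\alpha_2\subset Y$ with $\alpha_1(-H)\neq\alpha_2(-H)$ (possible precisely because $\infty_Y$ is not isolated --- a hypothesis your argument would also need but never invokes), the two images of $\gamma$ in $X\bowtie\alpha_1$ and $X\bowtie\alpha_2$ are distinct geodesics joining $\zeta_1$ to $\zeta_2$. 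Incidentally, this also shows the ``no geodesic'' branch of your dichotomy never occurs. If you want to salvage your write-up, replace the surgery step with this construction; the rest of your logic (including the final combinatorial deduction that $\partial f$ preserves the partition) is sound.
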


\begin{proof}
    Let $\eta_1,\eta_2$ lie in the same boundary---without loss of generality, say $\eta_1,\eta_2 \in \partial_\ell X$. We will show that there are multiple geodesics connecting $\eta_1,\eta_2$ in $X\bowtie Y$. Since points on opposite boundaries are connected by a unique geodesic (Proposition \ref{prop:Opposite_Boundaries_Unique_Geodesic}), this implies that $\partial f(\eta_1)$ and $\partial f(\eta)$ lie on the same boundary, proving the Lemma.

    The two geodesics connecting $\eta_1$ and $ \eta_2$ will be constructed as the image of a geodesic in $X'$ under two distinct embeddings $X' \to X\bowtie \alpha_i$, for distinct vertical geodesics $\alpha_1,\alpha_2 \subset Y$. Since $X'$ is quasi-isometric to $X$, we can consider $\eta_1,\eta_2$ as boundary points of $X'$. Lemma \ref{lemma:MonotoneImpliesConvex} implies that any two boundary points in $X'$ are connected by a geodesic, $\gamma$. 
    
    Since $Y$ is not a quasi-line (as $\infty_Y$ is not isolated), there are at least two vertical geodesics $\alpha_1,\alpha_2 \subset Y$, giving distinct embeddings $X' \to X\bowtie \alpha_i\subset X\bowtie Y$. This is not sufficient to produce distinct geodesics connecting $\eta_1,\eta_2$, since the geodesics $\alpha_i$ may agree above some height. Let $H$ be the maximal height of $\gamma$. If $\alpha_1(-H) \neq \alpha_2(-H)$, then the two images of $\gamma$ are distinct. Two such vertical geodesics $\alpha_1,\alpha_2$ are guaranteed to exist by the following lemma.

\end{proof}

\begin{lemma}
    If $\{\infty_Y\}$ is not isolated, then for all heights $H$, there are vertical geodesics $\alpha_1,\alpha_2$ parameterized by height satisfying $\alpha_1(H)\neq \alpha_2(H)$. 
\end{lemma}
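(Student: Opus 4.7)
The plan is to argue by contradiction: suppose there is a height $H$ such that every vertical geodesic parameterized by height passes through a single common point $p$ at time $H$. Using the assumption that $\infty_Y$ is not isolated, I will produce a sequence of vertical geodesics whose downward halves at $p$ must simultaneously converge to the upward vertical ray at $p$, which is impossible on height grounds.

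Concretely, choose a sequence $\xi_n \in \partial Y \setminus \{\infty_Y\}$ with $\xi_n \to \infty_Y$ in the visual topology on $Y \sqcup \partial Y$ based at $p$. By Lemma \ref{lemma:VerticalUniquenessInFactorSpaces} each $\xi_n$ determines a unique vertical geodesic $V_n : \R \to Y$ with $V_n(-\infty) = \xi_n$ and $V_n(+\infty) = \infty_Y$, and by the contradiction hypothesis $V_n(H) = p$ for all $n$. Define the downward and upward rays
\[V_n^-(t) = V_n(H - t), \qquad V^+(t) = V_n(H + t), \qquad t \ge 0.\]
Then $V_n^-$ is the geodesic ray from $p$ asymptotic to $\xi_n$, and $V^+$ is the vertical ray from $p$ asymptotic to $\infty_Y$; the latter is independent of $n$.

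By the description of the visual topology recalled inside the proof of Lemma \ref{lemma:CAT-1ImpliesVerticalConvergence}, the convergence $\xi_n \to \infty_Y$ is equivalent to the rays $V_n^-$ converging to $V^+$ uniformly on compact subsets of $[0,\infty)$. But for every $t \ge 0$ and every $n$, the $1$-Lipschitz property of $h_Y$ gives
\[d_Y(V_n^-(t), V^+(t)) \ge |h_Y(V_n^-(t)) - h_Y(V^+(t))| = 2t,\]
so fixing any $t > 0$ already contradicts uniform convergence on $[0,t]$. The only step requiring care, and the main obstacle, is matching our specific rays $V_n^-$ and $V^+$ with those appearing in the definition of visual convergence; this is handled by the uniqueness of geodesic rays from a basepoint to a given boundary point in a $\CAT(-\kappa)$ space (via convexity of the distance function), after which the argument reduces to the observation that downward rays cannot converge to an upward ray.
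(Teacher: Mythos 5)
Your proof is correct, and it is essentially the mirror image of the paper's argument. The paper proves the contrapositive: assuming the bottleneck, it shows $\{\infty_Y\}$ is open by exhibiting a basic compact-open neighborhood of rays based at a point just above the bottleneck that can contain only the class of the upward vertical ray, using that the $H$-horoball degenerates to a single ray. You instead argue by contradiction with a sequence $\xi_n \to \infty_Y$, base the visual topology at the bottleneck point $p$ itself, and observe that the unique rays from $p$ to the $\xi_n$ are precisely the downward halves $V_n^-$ of the vertical geodesics, so the $1$-Lipschitz height function yields the quantitative separation $d_Y(V_n^-(t), V^+(t)) \ge 2t$, which is incompatible with uniform convergence on compacta. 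What your version buys is that this lower bound sidesteps the (slightly delicate) point in the paper's proof that no geodesic ray from $y_0$ other than the vertical one can keep even its initial unit segment inside the $1/2$-neighborhood of $\gamma$; what it costs is a reliance on the sequential characterization of the boundary topology (first countability of $\partial Y$, which holds since $Y$ is proper) and on the uniqueness of the geodesic ray from $p$ to each $\xi_n$ and to $\infty_Y$, both of which you correctly identify and dispatch via convexity of the distance function. Either route is sound; yours is arguably the cleaner quantitative contradiction, the paper's the more economical topological one.
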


\begin{proof}
    We prove the contrapositive. Assume that there is some height $H$ so that for any height-parameterized vertical geodesics $\alpha_i$, there is a ``bottleneck point" $\alpha_1(H) = \alpha_2(H)$. Since they are vertical geodesics, for any $t \geq H$, we also have $\alpha_1(t) = \alpha_2(t)$. Equivalently, the $H$-horoball is a ray.

    Choose a basepoint $y_0 = \alpha_1(H+1)$. Let $\gamma$ be the geodesic ray based at $y_0$ representing $\infty_Y$. Set $K = [0,1]$, and $U = \Nbhd(\gamma,1/2)$, the $1/2$-neighborhood of $\gamma$. Then $$\{\delta:[0,\infty) \to Y \text{ geodesic ray } \mid \delta(0) = y_0 \; \delta(K)\subset U\}/\sim$$ is open in $\partial Y$ by definition of the compact-open topology. By the previous paragraph, this set contains only $[\gamma] = \infty_Y$. Then $\{\infty_Y\}$ is open, meaning $\infty_Y$ is an isolated point in $\partial Y$.

\end{proof}

Next we show that there is a well-defined action of $\Isom(X\bowtie Y)$ on the heights. Recall that Corollary \ref{cor:Factors_height_change_wellDef} gives a well-defined height change $\Isom(X)_\infty \to \R$ on the factor spaces. For the moment, we restrict attention to $\Stab(\partial^u Y) \subset \Isom(X\bowtie Y)$, a subgroup with index at most two.

\begin{lemma}
    Let $f \in \Isom(X\bowtie Y)$ stabilize the upper boundary. Then the quantity $h(f(x,y)) - h(x,y)$ is independent of the choice of $(x,y)\in X\bowtie Y$.
\end{lemma}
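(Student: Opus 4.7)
The plan is to reduce the statement to showing that the height change $h(f(\gamma(t))) - h(\gamma(t))$ along each vertical geodesic $\gamma \subset X\bowtie Y$ is a single constant $c_\gamma$, and then to show $c_\gamma$ does not depend on $\gamma$. Every point of $X\bowtie Y$ lies on some vertical geodesic (by geodesic-completeness of $X$ and $Y$, one extends the unique vertical rays through each coordinate to bi-infinite vertical lines and assembles them using the relation $h_X = -h_Y$), so this will suffice. For the first part, $f(\gamma)$ is a geodesic whose endpoints lie on opposite boundaries by Lemma \ref{lem:preserves_partition}, so Proposition \ref{prop:Opposite_Boundaries_Unique_Geodesic} forces $f(\gamma)$ itself to be vertical. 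Since $f$ stabilizes $\partial^u Y$, the orientation is preserved, and after parameterizing $\gamma$ by height, $h(f(\gamma(t))) = t + c_\gamma$ for some constant $c_\gamma \in \R$. In particular, along a single vertical geodesic the height change is constant.

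To show $c_{\gamma_1} = c_{\gamma_2}$ for arbitrary vertical geodesics, I would first treat the case where $\gamma_1,\gamma_2$ share an upper endpoint $\xi_Y \in \partial^u Y$. Then by Lemma \ref{lemma:VerticalUniquenessInFactorSpaces}, both $Y$-factors coincide with a single vertical line $W\subset Y$, while their $X$-factors $V_1, V_2$ are vertical lines limiting upward to $\infty_X$. Vertical convergence in $X$ gives some $s \in \R$ with $d_X(V_1(t), V_2(t+s)) \to 0$; because both $V_i$ are parameterized by height and $h_X$ is 1-Lipschitz, we must have $s = 0$. Lemma \ref{lemma:BoundedCoordinateDinstancesImpliesBoundedDistance} then yields $d_\bowtie(\gamma_1(t), \gamma_2(t)) \to 0$, so applying the isometry $f$ and the 1-Lipschitz height gives
\[
|c_{\gamma_1} - c_{\gamma_2}| = \lim_{t\to\infty}|h(f(\gamma_1(t))) - h(f(\gamma_2(t)))| = 0.
\]
The symmetric argument for $\gamma_1,\gamma_2$ sharing a lower endpoint uses vertical convergence in $Y$ as $t \to -\infty$. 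For the remaining case where $\gamma_1,\gamma_2$ share no endpoint, I would introduce the vertical geodesic $\gamma_3$ with endpoints $(\xi_X^2, \xi_Y^1)$ (which exists by combining the factor-wise vertical lines given by Lemma \ref{lemma:VerticalUniquenessInFactorSpaces}) and chain the preceding two cases to conclude $c_{\gamma_1} = c_{\gamma_3} = c_{\gamma_2}$.

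The main subtlety is checking that the shift $s$ in the vertical convergence vanishes---ensuring the coordinate distances tend to zero along matching height slices rather than along some off-diagonal slice. Once this is pinned down by the 1-Lipschitz property of $h_X$ (and symmetrically $h_Y$), the remainder is bookkeeping built on $f$ being an isometry and on the boundary-partition preservation already established in Lemma \ref{lem:preserves_partition}.
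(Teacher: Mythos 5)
Your proposal is correct and follows essentially the same route as the paper: both arguments use Proposition \ref{prop:Opposite_Boundaries_Unique_Geodesic} together with Lemma \ref{lem:preserves_partition} to see that $f$ carries height-parameterized vertical geodesics to height-parameterized vertical geodesics (up to a shift $c_\gamma$), and then equate the shifts of two asymptotic vertical geodesics by letting their distance tend to $0$ and using that height is $1$-Lipschitz. The only difference is bookkeeping---you chain through a third vertical geodesic sharing one endpoint with each, while the paper chains through points sharing an $X$- or $Y$-coordinate---and your observation that the shift $s$ in vertical convergence must vanish for height-parameterized lines is exactly the paper's step showing $d(V'(t+a),W'(t+b))\geq |a-b|$.
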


\begin{proof}
    Let $(x,y),(x,y') \in X\bowtie Y$ have the same $X$-coordinate. Let $\gamma:\R \to X$ be a vertical geodesic containing $x$ and $\alpha_1, \alpha_2: \R \to Y$ be vertical geodesics containing $y$ and $y'$ respectively. Further assume that $\gamma,\alpha_1,\alpha_2$ are all parameterized by height. Let $V = \gamma \bowtie \alpha_1$ and $W = \gamma \bowtie \alpha_2$. Note that $$V(-\infty) = W(-\infty) = \gamma(-\infty) \in \partial_\ell X \subset \partial (X\bowtie Y).$$

    Let $f\in \Isom(X\bowtie Y)$ stabilize the upper boundary. Proposition \ref{prop:Opposite_Boundaries_Unique_Geodesic} and Lemma \ref{lem:preserves_partition} together imply that $f\circ V$ and $f\circ W$ are vertical geodesics. Let $V'$ and $W'$ be the height-parameterized geodesics with the same image as $f\circ V$ and $f\circ W$ respectively. Then there are constants $a,b \in \R$ such that $$f\circ V(t) = V'(t+a)\quad \text{and}\quad f\circ W(t) = W'(t+b).$$ These constants $a$ and $b$ are exactly the height-change constants $h(f(x,y)) - h(x,y)$ and $h(f(x,y')) - h(x,y')$. Note that $a$ and $b$ depend only on the geodesics $V$ and $W$, not on $(x,y)$ and $(x',y')$.

    Since $V$ and $W$ are asymptotic and height-parameterized, $$\lim_{t\to-\infty} d(V(t),W(t)) = 0.$$ By applying $f$, we have that $$\lim_{t\to -\infty}d(V'(t + a), W'(t+b)) = 0.$$ But since $V',W'$ are height parameterized, $d(V'(t + a), W'(t+b))$ will always be at least $|a-b|$. Then $a = b$, or equivalently $$h(f(x,y)) - h(x,y) = h(f(x,y')) - h(x,y'),$$ so point with the same $X$-coordinate have the same height change. The same argument shows that points with the same $Y$-coordinate have the same height change. If $h(x,y) = h(z,w)$, then $$h(f(x,y)) - h(x,y) = h(f(z,y)) - h(z,y) = h(f(z,w)) - h(z,w),$$ so points at the same height have the same height-change. If $h(x,y) \neq h(z,w)$, then we can use the previous observation that the height change is constant along vertical geodesics to conclude that height change is independent of the point chosen.    
\end{proof}

\subsection{Step 3: Complete the proof of \cref{thm:IsomGroupOfHorocyclicProduct}}

\begin{proof}[Proof of \cref{thm:IsomGroupOfHorocyclicProduct}]
    Let $f \in \allowbreak \Stab_{\Isom(X\bowtie Y)}\allowbreak (\partial^u Y)$. Our goal is to find isometries $f_X \in \Isom(X')_\infty$ and $f_Y \in \Isom(Y')_\infty$ so that $f(x,y) = (f_X(x), f_Y(y))$. 

    We will first find the map $f_X$. Let $p = (x,y)$ and $q = (x,y')$ have the same $X$-coordinate. Let $\gamma:\R \to X$ be a vertical geodesic containing $x$ and $\alpha_1, \alpha_2: \R \to Y$ be vertical geodesics containing $y$ and $y'$ respectively. Form the geodesics $V = \gamma \bowtie \alpha_1$ and $W = \gamma \bowtie \alpha_2$. Note that $V(-\infty) = W(-\infty) = \gamma(-\infty) \in \partial_\ell X$. 

    Since $f$ leaves the upper and lower boundaries invariant, Proposition \ref{prop:Opposite_Boundaries_Unique_Geodesic} implies that $f\circ V$ and $f\circ W$ are vertical---write $f\circ V = (V'_X,V'_Y)$ and $f\circ W = (W'_X,W'_Y)$ for vertical geodesics $V'_X,W'_X \subset X$, $V'_Y,W'_Y \subset Y$. Since $f\circ V$ and $f\circ W$ remain downward-asymptotic, $V_X'(-\infty) = W_X'(-\infty)$, and therefore $V_X' = W_X'$ by Lemma \ref{lemma:VerticalUniquenessInFactorSpaces}. Since $f$ is height-respecting, $h(f(p)) = h(f(q))$, and therefore $\pi_X(f(p))$ and $\pi_X(f(q))$ both lie on $V_X'$ at the same height. Points on vertical geodesics are uniquely specified by their height, so $\pi_X(f(p)) = \pi_X(f(q))$. That is, the $X$-component of $f(x,y)$ is independent of $y$. Repeating this argument for points with the same $Y$-component shows that $f(x,y) = (f_X(x), f_Y(y))$ for bijections $f_X, f_Y$ of $X$ and $Y$ respectively. 

    We will now show that $f_X$ and $f_Y$ are isometries of $X'$ and $Y'$. Let $\gamma_Y$ be any vertical geodesic in $Y$. Then 
    \[X' \cong X\bowtie \gamma_Y\subset X\bowtie Y\]
    is an isometrically embedded copy of $X'$. Composing with $f$ gives another isometrically embedded copy of $X'$. All elements of $X\bowtie \gamma_Y$ lie on a vertical geodesic of the form $\alpha \bowtie \gamma_Y$. All geodesics of this form are upward-asymptotic, so all $f\circ (\alpha\bowtie \gamma_Y)$ are also upward-asymptotic. Then there is some $\eta_Y$ so that for all vertical $\alpha \subset X$, we have $f\circ (\alpha\bowtie \gamma_Y) = \alpha'\bowtie \eta_Y$ for some $\alpha'$. In particular, $f(X\bowtie \gamma_Y) \subset X\bowtie \eta_Y$. 

    By construction, $f_X$ is exactly the composition $$X' \cong X \bowtie \gamma_Y \hookrightarrow X\bowtie Y \overset{f}{\to} X\bowtie Y \overset{\pi_X}{\to} X'.$$ The first map is an isometry by construction, and the final two maps are isometries when restricted to a subset of the form $X \bowtie \eta_Y$. Then $f_X$ is an isometry. Repeating this argument for $f_Y$ completes the proof for the case that $\Stab_{\Isom(X\bowtie Y)}(\partial^u Y)=\Isom(X\bowtie Y)$.

    Suppose now that $f$ sends some point $\xi_Y$ of $\partial^uY$ to a point $\xi_X$ of $\partial_\ell X$. By Lemma \ref{lem:preserves_partition}, $f(\partial^uY)=\partial_\ell X$ and vice versa, so that the two are at least homeomorphic. Moreover, $f$ sends vertical lines to vertical lines. Therefore, $f$ sends the collection of vertical lines incident to $\xi_Y$ to the collection of vertical lines incident to $\xi_X$. 
    
    Denote $V_Y$ (resp. $V_X$) the vertical line in $Y$ (resp. $X$) between $\xi_Y$ (resp. $\xi_X$) and $\infty_Y$ (resp. $\infty_X$). Then the set of vertical lines in $X\bowtie Y$ incident to $\xi_Y$ is exactly $X\bowtie V_Y$, while the set of vertical lines incident to $\xi_X$ is $V_X\bowtie Y$. Therefore, $f$ sends an isometrically embedded copy of $Y'$ to an isometrically embedded copy of $X'$. It follows that $X'$ and $Y'$ are isometric. If so, then it is clear that the map $f:X\bowtie Y\to X\bowtie Y$ sending $(x, y)$ to $(y,x)$ is an isometry and that $f\circ (g_1, g_2)\circ f=(g_2, g_1)$ for any pair of maps $g_1:X\to X$ and $g_2:Y\to Y$, so that $\langle f\rangle$ normalizes $\Stab_{\Isom(X\bowtie Y)}(\partial^u Y)$.

\end{proof}

\section{Proof of \cref{thm:Upgrade_to_Millefeuille}: upgrading the action to millefeuille spaces.}
\label{sec:Upgrade_to_Millefeuille}

In this section, we will prove \cref{thm:Upgrade_to_Millefeuille}, which says that a geometric action of a discrete group $G$ on a horocyclic product $X\bowtie Y$ can be upgraded to a geometric action on a horocyclic product $Z\bowtie W$ of two millefeuille spaces $Z, W$ by possibly passing to a finite-index subgroup of $G$. We first recall the definition of a millefeuille space.

\begin{Def}
    Let $X$ be a homogeneous, negatively curved Riemannian manifold\footnote{The definition in \cite{CCMT} allows $X$ to be a $\CAT(-\kappa)$ space, although we will only be interested in the case where $X$ is a Heintze group. In particular, Theorem \ref{thm:CCMT} below involves millefeuille spaces, as defined here.} and $T_k$ be the regular $(k+1)$-valent tree. Equip both $X$ and $T_k$ with a Busemann function 
    $\beta_X,\beta_{T_k}$. The millefeuille space $X[k]$ is $$X[k] = \{(x,y) \in X \times T_k \mid \beta_X(x) = \beta_{T_k}(y) \}.$$ 
\end{Def}

\cref{thm:Upgrade_to_Millefeuille} is proven in three broad steps:

\begin{enumerate}
    \item Use structure theory of amenable hyperbolic groups to find millefeuille spaces $Z,W$ quasi-isometric to $X,Y$ and actions of $G$ on $Z$ and $W$.
    \item Show that those two actions give an action on $Z\bowtie W$.
    \item Show that the action on $Z\bowtie W$ is geometric.
\end{enumerate}



\subsection{Step 1: Obtaining millefeuille spaces $Z,W$.}
\label{subsec:use_CCMT}

The millefeuille spaces appearing in \cref{thm:Upgrade_to_Millefeuille} are ultimately found by invoking the following theorem due to Caprace-Cornulier-Monod-Tessera \cite{CCMT}. This specific statement is taken from the survey \cite{Yves_Survey}.

\begin{thm}[\cite{CCMT}, \cite{Yves_Survey} Theorem 19.14]
\label{thm:CCMT}
    Let $G$ be a noncompact, compactly generated locally compact group. If $G$ is amenable and hyperbolic, then it admits a continuous, proper and cocompact action by isometries on a Millefeuille space $M[k]$ fixing a point on the boundary. 
\end{thm}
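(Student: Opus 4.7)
The plan is to exploit the general principle that an amenable group acting on a Gromov-hyperbolic space must be elementary. Since $G$ is compactly generated and hyperbolic, it acts geometrically on its Cayley--Abels graph, which is itself hyperbolic. Amenability forbids free subgroups, so the classification of isometries of hyperbolic spaces forces $G$ to preserve setwise a subset of $\partial G$ of cardinality at most two. Since $G$ is noncompact, its orbit map is a quasi-isometric embedding, so $G$ cannot fix a bounded set and therefore must fix either one or two points at infinity.

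The case of two fixed points is the lineal case: up to a compact normal subgroup, $G$ is virtually $\Z$, and one realizes the action on $\R$, viewed as a degenerate millefeuille space $M[1]$ with $M$ a point. The substantive content is the focal case, where $G$ fixes a single $\xi \in \partial G$. Here I would consider the Busemann function about $\xi$; via the locally compact analogue of \cref{prop:HeightChangeHom}, this yields a continuous homomorphism $h : G \to \R$. Its kernel $N$ preserves each horosphere about $\xi$ and acts cocompactly on it, so the problem reduces to understanding $N$ as a locally compact group together with the conjugation action of any $g \in G$ with $h(g) > 0$.

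The core structural step is to show that $N$ is compact-by-nilpotent. The key dynamical input is that for any $g \in G$ with $h(g) > 0$, conjugation by $g$ contracts $N$ toward the identity, because translating upward toward $\xi$ contracts horospheres exponentially (this is essentially \cref{lemma:CAT-1ImpliesVerticalConvergence} applied to the model space). A contractive automorphism of a locally compact group, by the structure theorems of Siebert and Wang, forces $N$ to split as an almost-direct product of a simply connected nilpotent Lie group with a totally disconnected piece. The Lie factor, together with the $\R$-action by the contracting one-parameter subgroup, assembles into a Heintze group $M$ (a negatively curved homogeneous Riemannian manifold), while the totally disconnected factor, being a vertex-transitive contracting group, is forced by Trofimov's theorem to act on a regular tree $T_k$ whose valence $k+1$ is read off from the contraction ratio on a compact open subgroup.

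The final assembly step splits by the image of $h$. If $h(G)$ is dense (hence equal to $\R$), one obtains a cocompact action on the Heintze group $M$ directly; if $h(G) = c\Z$ is discrete, the discrete translations force a tree component and one obtains a cocompact action on $M[k]$, with the point at infinity preserved by construction. The principal obstacle is the compact-by-nilpotent structure of $N$: establishing Siebert's decomposition in this setting without assuming $N$ is already Lie requires the full machinery of totally disconnected locally compact groups (tidy subgroups, scale functions, Willis theory), and this is what occupies the bulk of the CCMT paper.
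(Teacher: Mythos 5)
This statement is an external result that the paper quotes verbatim from \cite{CCMT} (via \cite{Yves_Survey}); the paper supplies no proof of it, so there is nothing internal to compare your argument against. Judged on its own terms, your sketch does track the architecture of the Caprace--Cornulier--Monod--Tessera argument: amenability rules out the general-type case, leaving the elementary and focal cases; in the focal case one has the Busemann/modular homomorphism $h:G\to\R$ with kernel $N$, and the structure of $N$ under the dynamics of a positive-height element is the heart of the matter, with the Heintze factor, the tree factor, and the millefeuille space assembled exactly as you describe.

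That said, as a proof your proposal has real gaps beyond the one you flag. First, conjugation by $g$ with $h(g)>0$ is \emph{not} contractive toward the identity on $N$ in general; the correct notion in \cite{CCMT} is that of a \emph{compacting} automorphism (every compact set is eventually attracted into a fixed compact set), and one only obtains a genuinely contractive action after quotienting by a maximal compact normal subgroup (the polycompact radical), whose existence and behavior is itself a nontrivial step. Applying Siebert/Gl\"ockner--Willis structure theory for contraction groups before this reduction is not valid, since the contraction subgroup of $g$ need not be closed or cocompact in $N$ a priori. Second, Trofimov's theorem (on vertex-transitive graphs of polynomial growth) is not the tool that produces the tree: the regular tree arises from Bass--Serre theory applied to the ascending HNN decomposition of the totally disconnected factor over a compact open subgroup $U$ with $gUg^{-1}\subset U$, with valence read off from the index $[U:gUg^{-1}]$. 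Finally, the lineal case does not literally fit the millefeuille format as defined here (a point is not a negatively curved homogeneous manifold), so that degenerate case needs separate handling. None of this affects the paper, which correctly treats the theorem as a black box, but your outline should not be mistaken for a self-contained proof.
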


We first observe that this theorem can be applied to $\Isom(X')_\infty$ and $\Isom(Y')_\infty$ in our context. 

\begin{lemma}
\label{lem:CCMT_applies}
    Let $G$ act geometrically on $X\bowtie Y$. Then $\Isom(X')_\infty$ and $\Isom(Y')_\infty$ are noncompact, compactly generated, locally compact, amenable and hyperbolic.
\end{lemma}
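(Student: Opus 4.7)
The plan is to verify the five properties for $\Isom(X')_\infty$ in turn; the argument for $\Isom(Y')_\infty$ is symmetric.

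\emph{Local compactness.} Since $X'$ is proper (Corollary at the end of Section \ref{sec:NewMetricOnX}), $\Isom(X')$ is locally compact in the compact-open topology, and the stabilizer $\Isom(X')_\infty$ of a boundary point is a closed subgroup, so is itself locally compact.

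\emph{Cocompact action on $X'$.} The main step I would carry out first is to show that $\Isom(X')_\infty$ itself acts cocompactly on $X'$; then several remaining properties will follow simultaneously. The key input is the main theorem of Section \ref{sec:Isom_group}: passing to the finite-index subgroup $G' \leq G$ stabilizing the upper boundary, one obtains a projection $\pi_X:G' \to \Isom(X')_\infty$. Fix a basepoint $(x_0,y_0)\in X\bowtie Y$, so that $G'\cdot(x_0,y_0)$ is $R$-dense for some $R$. Given $x\in X$, choose a lift $(x,y)\in X\bowtie Y$ (which exists by geodesic-completeness of $Y$) and $g\in G'$ with $d_{\bowtie}(g\cdot(x_0,y_0),(x,y))\le R$; since $d_X \le d_\bowtie$ in the obvious sense on coordinates, $d_X(\pi_X(g)\cdot x_0,x)\le R$. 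Thus $\pi_X(G')\cdot x_0$ is $R$-dense in $(X,d_X)$, hence also $R$-dense in $X'=(X,d_X')$ by Corollary \ref{cor:RILowerBound}. Since $\pi_X(G')\subseteq \Isom(X')_\infty$, the larger group also acts cocompactly on $X'$.

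\emph{Compact generation, hyperbolicity, and noncompactness.} With the cocompact action in hand, I would invoke the Milnor-Schwarz lemma for locally compact groups (see e.g.\ \cite{Yves_Survey}): $\Isom(X')_\infty$ acts properly (since $X'$ is proper) and cocompactly on the proper geodesic space $X'$, and is therefore compactly generated and quasi-isometric to $X'$. Hyperbolicity of $\Isom(X')_\infty$ then follows from hyperbolicity of $X'$ (Corollary \ref{cor:RILowerBound}). For noncompactness, cocompactness of the $G'$-action on $X\bowtie Y$ combined with the fact that the height function $h:X\bowtie Y \to \R$ takes arbitrarily large values forces the existence of $g\in G'$ with arbitrarily large height change; projecting by $\pi_X$ gives elements of $\Isom(X')_\infty$ with nonzero height change, whose cyclic powers escape every compact set by Corollary \ref{cor:Factors_height_change_wellDef}.

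\emph{Amenability.} I expect this to be the main obstacle, and the only place where deeper structure theory is needed. The plan is to appeal to \cite{CCMT}: a compactly generated locally compact hyperbolic group whose action on its own Gromov boundary fixes a point is \emph{focal}, and focal hyperbolic groups are amenable. Since $\Isom(X')_\infty$ acts geometrically on $X'$ by the paragraphs above, the quasi-isometry to $X'$ lets one identify $\partial \Isom(X')_\infty$ with $\partial X'$, and the boundary point $\infty_X$, which is fixed by $\Isom(X')_\infty$ by construction, is then a globally fixed boundary point for the action of $\Isom(X')_\infty$ on its own boundary. Amenability follows.
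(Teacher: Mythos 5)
Your proposal is correct and follows the same overall architecture as the paper's proof: establish that $\Isom(X')_\infty$ acts geometrically on the proper hyperbolic space $X'$ by pushing the cocompact $G$-action through the coordinate projection, then harvest local compactness, compact generation, hyperbolicity and noncompactness from properness of $X'$ and the Milnor--Schwarz lemma for locally compact groups. Your cocompactness step is in fact slightly more careful than the paper's: you pass to the finite-index subgroup $G'$ stabilizing $\partial^u Y$ before invoking the coordinate maps of \cref{thm:IsomGroupOfHorocyclicProduct}, which is the honest way to get a homomorphism to $\Isom(X')_\infty$ (and a finite-index subgroup acting cocompactly suffices). The one genuine divergence is the amenability step: the paper quotes Adams' theorem, which says directly that in a proper geodesic hyperbolic space with cocompact isometry group every boundary-point stabilizer $\Isom(X')_\xi$ is amenable, whereas you route through the Caprace--Cornulier--Monod--Tessera classification, identifying $\partial\Isom(X')_\infty$ with $\partial X'$ via the Milnor--Schwarz quasi-isometry and concluding that a non-elementary hyperbolic locally compact group fixing a point of its own boundary is focal, hence amenable. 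Both are legitimate; Adams gives the conclusion with no case analysis, while your route needs the (available, since $\infty_X$ is not isolated so $\partial X'$ has more than two points) observation that the group is non-elementary in order to land in the focal rather than the elementary case. Your explicit construction of elements of unbounded height change for noncompactness also works, though the paper's one-line argument (a compact group cannot act cocompactly on a noncompact proper space) is shorter.
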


\begin{proof}
    By \cref{thm:IsomGroupOfHorocyclicProduct}, there is an action of $G$ on $X'$ under which the projection $\pi_{X}: X\bowtie Y \to X'$ $G$-equivariant. Since $G$ acts cocompactly on $X\bowtie Y$, it also acts cocompactly on $X'$, since $\pi_{X}$ is distance-decreasing and $X'$ is proper. In particular, $\Isom(X')_\infty$, and also $\Isom(X')$ act cocompactly on $X'$. Since $X'$ is proper, this implies that the action of $\Isom(X')$ on $X'$ is geometric (see \cite{MGLCG}, proposition 5.B.10). Because $\Isom(X')_\infty \subset \Isom(X')$ acts cocompactly and is a closed subgroup of a group acting geometrically, the action of $\Isom(X')_\infty$ is also geometric. Moreover, the properness of $X'$ implies that $\Isom(X')$ is locally compact, hence also $\Isom(X')_\infty$. It now follows from the Milnor-Schwarz lemma for locally compact groups (See \cite{MGLCG}, Theorem 4.C.5) that $\Isom(X')_\infty$ is compactly generated and quasi-isometric to $X'$, hence hyperbolic.

    Because $X'$ is noncompact and the action of $\Isom(X')_\infty$ on $X'$ is geometric, we conclude that $\Isom(X')_\infty$ is also noncompact. That $\Isom(X')_\infty$ is amenable follows directly from \cite{Adams}, Theorem 6.8 and Lemma 3.3, stated below.
\end{proof}

\begin{thm}[Adams]
    Let $X$ be a proper, geodesic hyperbolic space having cocompact isometry group\footnote{Adams actually uses the condition that $X$ is ``at most exponential". This means that there is a constant $\alpha > 1$ such that, for every $x \in X$ and every $r > 0$, the ball $B_X(x,r)$ contains at most $a^r$ pairwise disjoint balls of radius 1. As observed in \cite{CCMT}, this is implied by $X$ having bounded geometry, which is itself implied by $X$ having a cocompact group of isometries. }. Then for every $\xi \in \partial X$, the stabilizer $\Isom(X)_\xi$ is amenable.
\end{thm}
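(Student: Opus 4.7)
The plan is to exhibit $\Isom(X)_\xi$ as an extension of an amenable group by an amenable group, using the Busemann function at $\xi$. By the same argument as in \cref{prop:HeightChangeHom} applied with $\infty_X = \xi$, the Busemann function $\beta_\xi$ yields a continuous homomorphism $h:\Isom(X)_\xi \to \R$ recording height change. Since $\R$ is abelian and amenability is preserved by group extensions, it suffices to prove that the kernel $K = \ker(h)$ is amenable.

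The kernel $K$ is precisely the subgroup of $\Isom(X)_\xi$ preserving each horosphere based at $\xi$. Fix such a horosphere $H_0$; then $K$ acts on $H_0$ by isometries. Properness of this action is inherited from the proper $\Isom(X)$-action on $X$ via the closed embedding $H_0 \hookrightarrow X$. For cocompactness, I would combine the $\Isom(X)$-cocompactness on $X$ with a vertical-translation argument: given a point $x \in X$, slide it back to $H_0$ along a vertical geodesic, and then use an element of $\Isom(X)_\xi$ of the appropriate height to reduce to the base horosphere, producing a $K$-translate within bounded distance of any prescribed point of $H_0$.

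The core of the argument is then to show that a locally compact group $K$ acting properly cocompactly by isometries on the horosphere $H_0$ must be amenable. The key geometric input is that $H_0$, with its induced path metric, has subexponential volume growth: this follows from the standard fact that in a proper Gromov hyperbolic space, distances on horospheres are exponentially distorted relative to $X$, so balls of radius $r$ in $H_0$ embed into balls of radius $O(\log r)$ in $X$. Cocompactness of $\Isom(X)$ then bounds the volume of $H_0$-balls polynomially via a net argument, so a F\o lner sequence can be extracted from large balls in $H_0$, yielding amenability of $K$ by the F\o lner criterion for locally compact groups.

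The main obstacle is establishing subexponential growth of $H_0$ rigorously in this generality. In the classical settings (trees, pinched negatively curved manifolds, rank-one symmetric spaces, or $\CAT(-\kappa)$ with cocompact isometry group) it is standard, but a uniform quantitative horospherical contraction estimate is required in the general proper Gromov-hyperbolic setting. An alternative route that avoids this difficulty is to build an $\Isom(X)_\xi$-invariant mean directly on $L^\infty(\partial X)$: a Patterson-Sullivan-type measure on $\partial X$ is quasi-invariant under $\Isom(X)$ with cocycle expressible via $\beta_\xi$, and the contracting dynamics of sequences $g_n \in \Isom(X)_\xi$ with $h(g_n) \to \infty$ concentrate pushforwards near $\xi$; averaging appropriately yields the desired invariant mean, and hence amenability, without needing the explicit growth estimate on $H_0$.
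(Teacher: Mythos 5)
This statement is not proven in the paper at all: it is quoted verbatim from Adams and used as a black box, so there is no internal proof to compare against. Evaluating your proposal on its own terms, it has gaps that I do not think can be closed along the lines you describe. First, the height-change homomorphism: \cref{prop:HeightChangeHom} is proven under the hypothesis that $X$ is Busemann and vertically convergent, and the paper explicitly warns that in a general hyperbolic space the Busemann functions attached to different rays in the class of $\xi$ only agree up to a bounded error. For a general proper geodesic hyperbolic space the limit defining $\beta_\xi$ need not exist and the height change of an isometry is only a quasimorphism with bounded defect, so neither $h$ nor $K=\ker(h)$ is well defined at the stated level of generality. Second, and fatally for the main line: $K$ need not act cocompactly on a horosphere. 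Your ``vertical-translation argument'' requires, for each $x$, an element of $\Isom(X)_\xi$ with a prescribed height change near $x$; cocompactness of $\Isom(X)$ only supplies elements of $\Isom(X)$, which have no reason to fix $\xi$. Concretely, take $X$ to be a Cayley graph of a closed hyperbolic $3$-manifold group chosen so that $\Isom(X)$ is the group itself: then $\Isom(X)$ is cocompact, but $\Isom(X)_\xi$ is virtually cyclic for every $\xi$, while horospheres are coarsely $2$-dimensional, so no cocompact action on $H_0$ exists. Without cocompactness, subexponential growth of $H_0$ gives nothing: a proper, non-cocompact isometric action on a space of polynomial growth is available to any countable group. (Your distortion estimate itself is fine where the path metric on $H_0$ makes sense --- it is essentially \cref{lemma:LDPXLengthLowerBound} --- but horospheres need not be coarsely connected, e.g.\ in trees, which is exactly the phenomenon the paper exploits elsewhere.)

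The fallback via Patterson--Sullivan measures does not repair this. An $\Isom(X)_\xi$-invariant mean on $L^\infty(\partial X)$ exists for trivial reasons --- the point mass $\delta_\xi$ is fixed --- and the concentration-near-$\xi$ averaging you describe would produce exactly that; the existence of an invariant mean on some $G$-space never implies amenability of $G$. (Moreover, sequences $g_n$ with $h(g_n)\to\infty$ need not exist: $\Isom(X)_\xi$ may consist entirely of elliptic elements.) What Adams actually proves is amenability of the boundary action in Zimmer's sense, from which amenability of point stabilizers follows by general principles; alternatively, the Caprace--Cornulier--Monod--Tessera route builds F{\o}lner-type measures supported on balls in $X$ drifting toward $\xi$, using the at-most-exponential growth hypothesis to control $\|g_*\mu_n-\mu_n\|$ for $g$ fixing $\xi$. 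Either way, the F{\o}lner sets live in $X$ near $\xi$, not on a horosphere on which the stabilizer is assumed to act cocompactly. As written, your argument proves the theorem only in the special case where $\Isom(X)_\xi$ happens to act cocompactly on $X$ (e.g.\ for symmetric spaces and regular trees), which is far from the generality needed.
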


Combining Theorem \ref{thm:CCMT} and Lemma \ref{lem:CCMT_applies} give Millefeuille spaces $Z, W$ and continuous, proper and cocompact actions $\Isom(X')_\infty \to \Isom(Z)$ and $\Isom(Y')_\infty \to \Isom(W)$.

\subsection{Step 2: Obtaining an action on $Z\bowtie W$.}
\label{subsec:upgrade_actions_to_horocyclicprod}

The previous step together with \cref{thm:IsomGroupOfHorocyclicProduct} gives actions of $G$ on millefeuille spaces $Z,W$, but the product action on $Z\times W$ may not preserve $Z\bowtie W$---perhaps the image of $G$ does not lie in $\Isom(Z)_\infty\times_{-h} \Isom(W)_\infty$. But this is almost true:  we will show that $Z$ and $W$ are coarsely $G$-equivariantly quasi-isometric to $X$ and $Y$ (respectively), and hence any $g \in G$ acts by the same type of isometry (i.e. hyperbolic or parabolic) on all of $X,Y,Z,W$. This will allow us to re-scale the metrics on $Z$ and $W$ to find an isometric action on $Z\bowtie W$.

We have continuous, proper and cocompact actions $\Isom(X')_\infty \to \Isom(Z)$ and $\Isom(Y')_\infty \to \Isom(W)$. These actions are therefore geometric, so the Milnor-Schwarz lemma says that the orbit maps 
\[X' \overset{\mathcal{O}_X}{\longleftarrow} \Isom(X')_\infty \overset{\mathcal{O}_Z}{\longrightarrow} Z \quad \text{and}\quad Y' \overset{\mathcal{O}_Y}{\longleftarrow} \Isom(Y')_\infty \overset{\mathcal{O}_W}{\longrightarrow} W\]
are quasi-isometries. We denote by $\mathcal{O}_{*}\inv$ their coarse inverses. Really, the Milnor-Schwarz lemma gives two different pseudo-metrics on $\Isom(X')_\infty$ which make the respective orbit maps quasi-isometries. But the identity map on $\Isom(X')_\infty$ is itself an equivariant quasi-isometry between these pseudo-metrics, and since we only use compositions of the above maps, we will suppress the metric on $\Isom(X')_\infty$ and $\Isom(Y')_\infty$. 

Next we will show that the maps 
\[{XZ} = \mathcal{O}_Z \circ \mathcal{O}_X\inv, \quad f_{YW} = \mathcal{O}_W \circ \mathcal{O}_Y\inv, \quad f_{ZX} = \mathcal{O}_X\circ\mathcal{O}_Z\inv \quad \text{and} \quad f_{WY} = \mathcal{O}_Y\circ\mathcal{O}_W\inv \]
are coarsely $G$-equivariant. This means the following: a function $f:A \to B$ between metric spaces $A,B$ with isometric $G$ actions is coarsely $G$-equivariant if there is some $D > 0$ so that for all $g \in G$ and $a \in A$, 
\[d_B(g\cdot f(a), f(g\cdot a)) \leq D.\]
Note that the orbit maps are (genuinely) $G$-equivariant, hence coarsely $G$-equivariant, so the following lemma suffices to prove the coarse equivariance of $f_{XZ},f_{ZX},f_{YW}$ and $f_{WY}$.

\begin{lemma}
    Let $A,B,C$ be (pseudo-)metric spaces admitting isometric actions of $G$. 
    \begin{enumerate}
        \item Let  $f:A\to B$ be a coarsely $G$-equivariant quasi-isometry, and let $f\inv$ denote a coarse inverse. Then $f\inv$ is also coarsely $G$-equivariant.
        \item If $h: B\to C$ is coarsely $G$-equivariant, then $h\circ f$ is also coarsely $G$-equivariant.
    \end{enumerate} 
\end{lemma}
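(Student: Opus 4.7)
Both parts follow by the triangle inequality after inserting an appropriate intermediate point; the argument relies on the fact that the maps in question are coarsely Lipschitz, which is automatic for quasi-isometries. Part (2) is a direct calculation, while part (1) uses that a coarse inverse of a quasi-isometry is itself a quasi-isometry, so one can apply $f\inv$ to bounded sets without blowing up the error.

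For part (2), suppose $f$ has coarse equivariance constant $D_f$, $h$ has coarse equivariance constant $D_h$, and $h$ is a $(K,C)$-quasi-isometric embedding. For $g\in G$ and $a\in A$, inserting the intermediate point $h(g\cdot f(a))$ gives
\begin{align*}
d_C\bigl(g\cdot h(f(a)),\, h(f(g\cdot a))\bigr) &\leq d_C\bigl(g\cdot h(f(a)),\, h(g\cdot f(a))\bigr) + d_C\bigl(h(g\cdot f(a)),\, h(f(g\cdot a))\bigr)\\
&\leq D_h + K\, d_B(g\cdot f(a), f(g\cdot a)) + C\\
&\leq D_h + KD_f + C,
\end{align*}
where the first summand is bounded via coarse equivariance of $h$ and the second via coarse Lipschitz behavior of $h$ combined with coarse equivariance of $f$. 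The total bound is independent of $g$ and $a$.

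For part (1), fix $b\in B$ and $g\in G$ and set $a = f\inv(b)$. Let $D$ be a constant so that $d_B(f(f\inv(b')),b') \leq D$ and $d_A(f\inv(f(a')),a') \leq D$ for all $a'\in A$ and $b'\in B$. Since $f$ is a quasi-isometry, $f\inv$ is itself a $(K',C')$-quasi-isometric embedding for some constants. By the triangle inequality,
\begin{align*}
d_A\bigl(f\inv(g\cdot b),\, g\cdot f\inv(b)\bigr) &\leq d_A\bigl(f\inv(g\cdot b),\, f\inv(f(g\cdot a))\bigr) + d_A\bigl(f\inv(f(g\cdot a)),\, g\cdot a\bigr)\\
&\leq K'\, d_B(g\cdot b, f(g\cdot a)) + C' + D\\
&\leq K'\bigl(d_B(g\cdot b, g\cdot f(a)) + d_B(g\cdot f(a), f(g\cdot a))\bigr) + C' + D\\
&\leq K'(D + D_f) + C' + D,
\end{align*}
where the final inequality uses isometricity of the $G$-action together with $d_B(b,f(a))\leq D$ for the first summand, and coarse equivariance of $f$ for the second. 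This bound is independent of $g$ and $b$.

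The argument is entirely routine; there is no genuine obstacle beyond bookkeeping of constants. The only subtlety worth emphasizing is that the quasi-isometry hypothesis in part (1) enters precisely to ensure $f\inv$ is coarsely Lipschitz, so that a bounded error in $B$ can be transported back to a bounded error in $A$. Without this, applying $f\inv$ could in principle spread distances arbitrarily.
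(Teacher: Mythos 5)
Your proof is correct and follows essentially the same route as the paper's: bound the relevant distance in the target of the quasi-inverse (resp.\ the intermediate space) by the triangle inequality using the coarse-inverse property, the isometricity of the action, and the coarse equivariance of $f$, then push the bound through the coarsely Lipschitz map. The only difference is bookkeeping: the paper collapses all constants into a single $M$, while you track them separately, and both you and the paper tacitly use that $h$ in part (2) is coarsely Lipschitz, which holds in the intended application.
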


\begin{proof}    
    Let $M$ be the maximum of all constants involved in this problem: the quasi-isometry constants for $f,f\inv$ and $h$, the coarse equivariance constants for $f$, and the constants $d_\infty(\id_A, f\inv \circ f), d_\infty(\id_B, f \circ f\inv)$. 
    
    Let $b \in B$ and $g \in G$. Then
    
    \begin{align*}
        d_B(b, f(f\inv(b))&\le M\\
        d_B\Big(g\cdot b, g\cdot f(f\inv(b))\Big)&\le M \\
        d_B\Big(g\cdot f(f\inv(b)), f(g\cdot f\inv(b))\bigr) &\le M\\
        d_B\Big(g\cdot b, f(g\cdot f\inv(b))\Big)&\le 2M
    \end{align*}
    
    Applying $f\inv$ to this, we obtain
    
    \begin{align*}
        d_B\Big(f\inv (g\cdot b),f\inv(f(g\cdot f\inv(b)))\Big) &\le 2M^2+M\\
        d_B\Big(f\inv(f(g\cdot f\inv(b))),g\cdot f\inv ( b)\Big) &\le M\\
        d_B\Big(f\inv (g\cdot b),g\cdot f\inv ( b)\Big) &\le 2M^2+2M,
    \end{align*}
    
    so $f\inv$ is coarsely $G$-equivariant. 

    Let $a \in A$ and $g \in G$. Applying $h$ to the upper bound $d_B\Big(f(g\cdot a) ,g\cdot f(a)\Big)\le M$ gives 
    
    \begin{align*}
        d_C\Big(h\circ f(g\cdot a),h(g\cdot f(a))\Big)&\le M^2+M\\
        d_C\Big(h(g\cdot f(a)),g\cdot h\circ f(a)\Big) &\le M\\
        d_C\Big(h\circ f(g\cdot a),g\cdot h\circ f(a)\Big) &\le M^2+2M,
    \end{align*}
    
    so $h\circ f$ is also coarsely $G$-equivariant.
\end{proof}

We are especially concerned with coarsely $G$-equivariant maps between Gromov-hyperbolic spaces. We now briefly remind the reader of some facts about isometries of Gromov-hyperbolic spaces.

There is a natural topology on $\overline{X} = X\cup \partial X$ so that any quasi-isometry $f:X \to X$ extends to a map $\overline{f}: \overline{X} \to \overline{X}$ whose restriction to the boudnary is the map $\partial f$ described previously. In this topology, a sequence $x_i \in X$ converges to a boundary point if and only if $(x_i, x_j)_p \to \infty$ as $i,j \to \infty$, where $p$ is any basepoint, and $(x,y)_p$ denotes the Gromov product 
\[(x,y)_p = \frac{1}{2}\left[ d(x,p) + d(y,p) - d(x,y) \right].\]
Two sequences $x_i$ and $y_i$ limit to the same point at infinity if and only if $(x_i, y_j)_p \to \infty$ as $i,j \to \infty$. See \cite{BridsonHaefliger}, Chapter III.H, section 3 for a complete treatment of $\overline{X}$.

If $G$ acts on a Gromov-hyperbolic space $X$, the \textit{limit set} is $\partial_XG = \overline{G\cdot x_0} \cap \partial X$. We will say that an isometry $g$ of $X$ is 

\begin{enumerate}
    \item elliptic if $\partial_X \langle g\rangle$ is empty, or equivalently if $g$ has bounded orbits.
    \item parabolic if $\partial_X \langle g\rangle$ is a singleton, or equivalently $g$ has unbounded orbits and 
    \[\lim_{n \to \infty} \frac{1}{n} d(x,g^n\cdot x) = 0.\]
    \item hyperbolic if $\partial_X \langle g\rangle$ consists of exactly two points, or equivalently if $\lim_{n \to \infty} \frac{1}{n} d(x,g^n\cdot x) > 0$.
\end{enumerate}

Gromov \cite{Gromov_Hyperbolic_Groups} proved that any isometry Gromov-hyperbolic space $X$ must be exactly one of the above types. We note that any isometry $g$ fixing a point at infinity with non-zero height change $h(g)$ has $\lim_{n \to \infty} \frac{1}{n} d(x,g^n\cdot x) \geq |h(g)| > 0$, and is therefore hyperbolic.

\begin{lemma}
\label{lem:equivariant_preserves_type}
    Let $X$ and $Z$ be Gromov-hyperbolic spaces admitting isometric $G$ actions. Let $f:X \to Z$ be a coarsely $G$-equivariant quasi-isometry. Then 
    \begin{enumerate}
        \item The boundary map $\partial f: \partial X \to \partial Z$ is a (genuinely) $G$-equivariant map.
        \item The action of $g \in G$ on $X$ is  hyperbolic (resp. parabolic, elliptic) if and only if its action on $Z$ is hyperbolic (resp. parabolic, elliptic).
        \item If $g$ acts hyperbolically on $X$ with attracting fixed point $\xi \in \partial X$, then $\partial f (\xi)$ is the attracting fixed point of $g$ acting on $Z$.
    \end{enumerate}
    
\end{lemma}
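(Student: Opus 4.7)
The plan is to reduce the lemma to two standard facts about Gromov-hyperbolic spaces: every quasi-isometry extends continuously to a boundary map, and two quasi-isometries that are boundedly-close induce the same boundary map. Both facts follow from the Gromov-product characterization of convergence to the boundary recalled just before the lemma, which is clearly invariant under bounded perturbation of the maps involved.

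For part (1), I will view each $g \in G$ as an isometry of both $X$ and $Z$, and consider the two quasi-isometries $f \circ g,\; g \circ f : X \to Z$. Coarse $G$-equivariance of $f$ says precisely that these two maps are at uniformly bounded pointwise distance, so by the boundedly-close principle above they induce the same boundary map. By functoriality of the boundary extension, $\partial(f\circ g) = \partial f \circ \partial g$ and $\partial(g\circ f) = \partial g \circ \partial f$, so $\partial f \circ \partial g = \partial g \circ \partial f$, which is exactly $G$-equivariance of $\partial f$.

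For part (2), ellipticity is preserved under $f$: bounded orbits in $X$ map to bounded sets in $Z$, which sit within bounded distance of the corresponding $g$-orbit in $Z$ by coarse equivariance, and the same argument with a coarse inverse of $f$ runs backwards. For the hyperbolic/parabolic dichotomy I will use the stable translation length $\tau(g) = \lim_{n\to\infty} \tfrac{1}{n} d(x_0, g^n \cdot x_0)$. Combining the quasi-isometry inequality with the coarse-equivariance constant $D$ yields
\[d_Z(f(x_0), g^n \cdot f(x_0)) \le d_Z(f(x_0), f(g^n\cdot x_0)) + D \le K\, d_X(x_0, g^n \cdot x_0) + (K + D),\]
so $\tau_Z(g) \le K\, \tau_X(g)$; applying the same reasoning to a coarsely $G$-equivariant coarse inverse of $f$ (which is coarsely equivariant by the lemma preceding this one) gives the reverse inequality. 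Hence $\tau_X(g) > 0$ if and only if $\tau_Z(g) > 0$, and combined with the elliptic case and Gromov's trichotomy this settles the isometry-type correspondence.

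For part (3), if $\xi \in \partial X$ is the attracting fixed point of $g$, then by the standard north-south dynamics of a hyperbolic isometry, $g^n \cdot x_0 \to \xi$ in $\overline{X}$ for any basepoint $x_0 \in X$. Continuity of the boundary extension $\overline{f}$ gives $f(g^n \cdot x_0) \to \partial f(\xi)$, and coarse equivariance together with the boundedly-close principle forces $g^n \cdot f(x_0)$ to converge to the same point in $\partial Z$. By part (2), $g$ acts hyperbolically on $Z$ as well, so $g^n \cdot f(x_0)$ must converge to the attracting fixed point of $g$ acting on $Z$, and the two limits must therefore coincide. The only mildly technical point underlying all three parts is the boundedly-close principle for boundary extensions, but this is essentially a formal consequence of the Gromov-product characterization and should present no real obstacle.
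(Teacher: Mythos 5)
Your proposal is correct, and parts (1) and (3) follow essentially the same route as the paper (boundedly-close maps induce the same boundary map; orbit convergence identifies the attracting fixed point). The genuine difference is in part (2): the paper proves that $\xi$ is a limit point of $\langle g\rangle$ in $\partial X$ if and only if $\partial f(\xi)$ is a limit point of $\langle g\rangle$ in $\partial Z$, via an explicit Gromov-product estimate comparing the sequences $f(g^{k_i}\cdot x_0)$ and $g^{k_i}\cdot f(x_0)$, and then reads off the type from the cardinality of the limit set. You instead use the stable translation length $\tau(g)=\lim_n \frac{1}{n}d(x_0,g^n\cdot x_0)$, showing $\tau_Z(g)\le K\,\tau_X(g)$ and conversely via the coarsely equivariant coarse inverse, and handle the elliptic case by bounded orbits; this is more elementary and avoids boundary considerations for part (2) entirely, but it relies on the equivalence of the metric and limit-set characterizations (which the paper states when defining the types, so this is legitimate). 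The trade-off is that the paper's limit-set argument already establishes that $\partial f$ matches up the fixed points of $g$, which makes part (3) immediate, whereas your route defers the essential boundary comparison (that $f(g^n\cdot x_0)$ and $g^n\cdot f(x_0)$ converge to the same point of $\partial Z$) to part (3), where it is exactly the Gromov-product computation the paper performs in part (2). Both arguments are complete.
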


\begin{proof}
    Let $\gamma$ be a geodesic ray representing a point $\xi = [\gamma] \in \partial X$. Since $f$ is coarsely $G$-equivariant, $f(g\cdot \gamma)$ has finite Hausdorff distance from $g\cdot f(\gamma)$. Then 
    \[\partial f(g\cdot \xi) = [f(g\cdot \gamma) ] = [g\cdot f(\gamma)] = g\cdot \partial f(\xi),\]
    so $\partial f$ is (genuinely) $G$-equivariant.

    For part two, we will show that $\xi \in \partial X$ is a limit point of $\langle g \rangle$ if and only if $\partial f(\xi)$ is a limit point of $\langle g \rangle$. 
    
    Let $\xi$ be a limit point of $\langle g \rangle$, so that there is a sequence of orbit points $g^{k_i}\cdot x_0$ limiting to $\xi$. Then $f(g^{k_i}\cdot x_0)$ limits to $\partial f(\xi)$. We now show that the two sequences $f(g^{k_i}\cdot x_0)$ and $g^{k_i}f(x_0)$ limit to the same point at infinity. Indeed, coarse $G$-equivariance implies that
    \[2(f(g^{k_i} x_0), g^{k_j}f(x_0))_{f(x_0)} = d(f(g^{k_i} x_0), f(x_0)) + d(g^{k_j}f(x_0),f(x_0)) - d(f(g^{k_i} x_0), g^{k_j}f(x_0))\]
    has uniformly bounded difference from
    \[d(f(g^{k_i} x_0), f(x_0)) + d(f(g^{k_j}x_0),f(x_0)) - d(f(g^{k_i} x_0), f(g^{k_j} x_0)) = (f(g^{k_i}x_0), f(g^{k_j}x_0))_{f(x_0)},\]
    which converges to $\infty$ as $i,j\to \infty$, since $f(g^{k_i}x_0)$ limits to a point at infinity. 
    
    Applying this argument to $f\inv$ shows that $\xi$ is a limit point of $\langle g \rangle$ if and only if $\partial f(\xi)$ is also a limit point of $\langle g\rangle$. This completes the proof of part 2, since the type of an element is determined by the number of limit points of $\langle g \rangle$.

    For part three, note that by part one,  $g$ acts hyperbolically on both $X$ and $Z$ (part two), and that $\partial f$ conjugates the dynamical systems $(\partial X, g)$ and $(\partial Z, g)$, (part one) hence sends attracting points to attracting points.
\end{proof}

This has the following useful consequence: say $g\in G$ fixes $\xi \in \partial X$ and let $\beta_X$ and $\beta_Z$ be two Busemann functions based at $\xi$ and $\partial f (\xi)$ (respectively) with corresponding height functions $h_{X,\xi}:\Isom(X)_\xi\to \R$ and $h_{Z,\partial f(\xi)}:\Isom(Z)_{\partial f(\xi)}\to \R$ (which exist when $X,Z$ are vertically convergent, see Proposition \ref{prop:HeightChangeHom}). Then $h_{X,\xi}(g)$ and $h_{Z,\partial f(\xi)}(g)$ have the same sign. 


\begin{lemma}[Discrete height change]
    \label{lem:discrete_heights}
    Let $X,Y$ be proper, $\CAT(-\kappa)$ spaces and let 
    \[G \subset \Isom(X\bowtie Y) \subset \Isom(X')_\infty \times_{-h} \Isom(Y')_\infty \]
    be a discrete group acting geometrically. Then the height change homomorphism $h:G \to \R$ has discrete image.
\end{lemma}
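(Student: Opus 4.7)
The plan is to suppose for contradiction that $h(G)$ is not discrete, then to produce, via a conjugation trick, infinitely many distinct elements of $G$ moving a fixed basepoint by a bounded amount---contradicting properness. Since the only non-discrete subgroups of $\R$ are dense, one can select distinct $g_n \in G$ with $0 < h(g_n) \to 0$.

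The first key step is to show that each $g_n$ admits a vertical axis $L_n \subset X\bowtie Y$ on which it acts as a translation of length $|h(g_n)|$. Writing $g_n = (f_{X,n}, f_{Y,n}) \in \Isom(X')_\infty \times_{-h} \Isom(Y')_\infty$, each factor has nonzero height change. In any proper Gromov-hyperbolic space, an isometry fixing a boundary point with nonzero height change there must be hyperbolic: an elliptic element has bounded orbits and a parabolic element has sublinear orbit growth, and in either case $|h(f^n x) - h(x)| = n|h(f)|$ would grow sublinearly, forcing $h(f) = 0$. Thus $f_{X,n}$ is hyperbolic with a second fixed point $\xi_{X,n} \in \partial_\ell X$, and similarly $f_{Y,n}$ has a second fixed point $\xi_{Y,n} \in \partial^u Y$. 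Then $g_n$ fixes the opposite-boundary pair $\xi_{X,n},\xi_{Y,n} \in \partial(X\bowtie Y)$, and by \cref{prop:Opposite_Boundaries_Unique_Geodesic} there is a unique geodesic $L_n$ between them, which is vertical. Since $g_n$ fixes each endpoint separately, it acts on $L_n \cong \R$ as an orientation-preserving isometry, i.e.\ a translation, whose length equals $|h(g_n)|$ because $L_n$ is parameterized by height.

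The second step uses cocompactness to conjugate these axes near a fixed basepoint. I will choose $R > 0$ so that $G \cdot p_0$ is $R$-dense in $X \bowtie Y$ for a fixed $p_0$. For each $n$, pick $q_n \in L_n$ and $t_n \in G$ with $d(t_n q_n, p_0) \le R$; then $t_n L_n$ passes within $R$ of $p_0$, and $t_n g_n t_n\inv$ preserves $t_n L_n$ and translates it by $h(g_n)$. A routine triangle-inequality estimate via the nearest point from $p_0$ to $t_n L_n$ yields
\[d(p_0, t_n g_n t_n\inv p_0) \le 2R + |h(g_n)| \le 2R + 1\]
for large $n$. By properness of the $G$-action, the set $\{g \in G : d(p_0, g p_0) \le 2R + 1\}$ is finite, so the sequence $t_n g_n t_n\inv$ takes only finitely many values. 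Pigeonhole produces some $a \in G$ with $t_n g_n t_n\inv = a$ for infinitely many $n$; since $h$ is a homomorphism into an abelian group, conjugation preserves height change, so $h(g_n) = h(a)$ is constant on this subsequence. Combined with $h(g_n) \to 0$, this forces $h(g_n) = 0$ on these indices, contradicting $h(g_n) > 0$.

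The main obstacle is the first step: verifying that each $g_n$ has a genuine vertical axis on which it acts as a translation. This rests on \cref{prop:Opposite_Boundaries_Unique_Geodesic} for uniqueness of the geodesic between opposite-boundary fixed points, together with the standard classification of isometries of Gromov-hyperbolic spaces to exclude the elliptic and parabolic cases. Once the axes are in hand, the cocompactness-and-pigeonhole closure is routine.
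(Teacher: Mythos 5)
Your proposal is correct and follows essentially the same route as the paper: establish that each $g_n$ preserves the unique vertical geodesic joining its two boundary fixed points and translates it by $|h(g_n)|$, conjugate that axis into a fixed compact set by cocompactness, bound the displacement of a basepoint by $2R + |h(g_n)|$, and invoke properness/discreteness to reach a contradiction. The only (cosmetic) difference is the endgame---the paper chooses the $h(g_i)$ pairwise distinct so that infinitely many distinct conjugates land in a finite set, whereas you pigeonhole to force infinitely many equal conjugates and hence a constant positive height change tending to $0$---and your justification of the existence of the axis (ruling out elliptic and parabolic factors via the $1$-Lipschitz height, then applying \cref{prop:Opposite_Boundaries_Unique_Geodesic}) is exactly the reasoning the paper leaves implicit.
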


\begin{proof}
    Assume for the sake of contradiction that there are elements $g_1,g_2,\ldots$ with $h(g_i)> 0$ strictly decreasing and $\lim_{i\to\infty } h(g_i) = 0$. Since $h(g_i)\neq 0$, $g_i$ has exactly two fixed points on $\partial (X\bowtie Y)$. Let $\Axis(g_i) = V_i$ be the unique vertical geodesic connecting these points. Since $g_i$ preserves the two endpoints, and $V_i$ is the unique geodesic connecting them, we have $g_i\cdot V_i = V_i$, and hence $h(g_i) = d(g_i\cdot x, x)$ for any $x \in V_i$.

    Let $K\subset X\bowtie Y$ be a compact set whose $G$-translates cover $X\bowtie Y$. For each $i \in \N$, choose some $k_i\in G$ so that $k_i\cdot V_i$ intersects $K$ non-trivially, and set $h_i = k_ig_ik_i\inv$. Note that $$\Axis(h_i) = \Axis(k_i g_i k_i\inv) = k_i\Axis(g_i) = k_i \cdot V_i$$ intersects $K$. Also, $h(h_i) = h(g_i)$. 

    Let $D$ be the diameter of $K$ and pick some $x_0 \in K$. The triangle inequality shows that 
    \[d(h_i\cdot x_0, x_0) \leq 2D + h(h_i) \leq 2D + h(g_i).\]
    Because $X\bowtie Y$ is proper, the evaluation map $\ev_{x_0}:\Isom(X\bowtie Y) \to X$ is proper, and hence $\ev_{x_0}\inv (B_{x_0}(2D + h(g_1)))$ is compact. Clearly $h_i \in \ev_{x_0}\inv (B_{x_0}(2D + h(g_1)))$. Since $G \subset \Isom(X\bowtie Y)$ is discrete, it has finite intersection with $\ev_{x_0}\inv (B_{x_0}(2D + h(g_1)))$. But the $h_i$ have distinct height changes, so there are infinitely many elements of $G$ in $\ev_{x_0}\inv (B_{x_0}(2D + h(g_1)))$. This is a contradiction.
\end{proof}

\begin{lemma}
\label{lem:obtain_action_on_Millefeuille}
    Let $G$ be a discrete group acting geometrically on $X\bowtie Y$ and let $Z,W$ be the Millefeuille spaces obtained from \cite{CCMT}, as above. After possibly re-scaling the metrics on $Z$ and $W$, the two actions $G \to \Isom(X')_\infty \to \Isom(Z)_\infty$ and $G \to \Isom(Y')_\infty \to \Isom(W)_\infty$ give a well-defined action of $G$ on $Z\bowtie W$.
\end{lemma}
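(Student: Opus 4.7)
The plan is to show that, after rescaling, the height changes on $Z$ and $W$ induced by the $G$-action satisfy $h_Z(g) = -h_W(g)$ for every $g \in G$, which is exactly the condition needed for the product action on $Z\times W$ to preserve $Z\bowtie W$. Since we already know from $G \subset \Isom(X')_\infty \times_{-h}\Isom(Y')_\infty$ that $h_{X'}(g) = -h_{Y'}(g)$ on $G$, the task reduces to showing that $h_Z$ is a fixed positive scalar multiple of $h_{X'}$, and likewise $h_W$ is a positive scalar multiple of $h_{Y'}$; suitable rescaling of the two millefeuille metrics then matches the two constants.

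The key step is to show that the two homomorphisms $h_{X'},h_Z:G\to \R$ have the same kernel. I would identify $\ker h_{X'}$ (and analogously $\ker h_Z$) with the set of non-hyperbolic elements of $G$: an elliptic isometry has bounded orbits and hence trivial height change; a parabolic isometry has sublinear displacement, so the $1$-Lipschitz property of the height function forces $h(g)=0$; and a hyperbolic isometry fixing the distinguished boundary point translates along a vertical axis asymptotic to that point, which gives nonzero height change via Proposition \ref{prop:ArbitraryRayHeightsInX'} for $X'$ and its analogue for the explicit horocyclic product $Z = M[k]$. By Lemma \ref{lem:equivariant_preserves_type}, the coarsely $G$-equivariant quasi-isometry $f_{XZ}$ preserves hyperbolicity of elements of $G$, so $\ker h_{X'}=\ker h_Z$.

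Next, Lemma \ref{lem:discrete_heights}, applied to the original geometric action on $X\bowtie Y$ (whose height change agrees with $h_{X'}$), says that $h_{X'}(G)$ is a discrete subgroup of $\R$, hence of the form $c\Z$ for some $c\ge 0$. Thus $G/\ker h_{X'}$ is either trivial or infinite cyclic. In the trivial case $h_{X'}\equiv 0$ and so $h_Z\equiv 0$, making the horocyclic condition automatic. Otherwise both $h_{X'}$ and $h_Z$ descend to injective homomorphisms from $G/\ker h_{X'}\cong \Z$ into $\R$, and any two such injections differ by a nonzero scalar; Lemma \ref{lem:equivariant_preserves_type}(3), which says that $f_{XZ}$ sends attracting fixed points of hyperbolic elements to attracting fixed points, forces the scalar to be positive, yielding $h_Z = c_{XZ}\,h_{X'}$ for some $c_{XZ}>0$. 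The same argument gives $h_W = c_{YW}\,h_{Y'}$ for some $c_{YW}>0$. Rescaling the metric on $Z$ by $1/c_{XZ}$ and on $W$ by $1/c_{YW}$ makes $h_Z = h_{X'} = -h_{Y'} = -h_W$, as required.

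The main obstacle is verifying the hyperbolic-iff-nonzero-height-change dichotomy on the millefeuille space $Z$, since the analysis of Section \ref{sec:NewMetricOnX} was carried out on $X'$ rather than on a general Gromov-hyperbolic space with a distinguished boundary point. For the explicit space $Z = M[k]$ the horocyclic product structure provides a height function whose restriction to any vertical geodesic agrees with arc length, so the relevant Busemann functions and height functions coincide along rays toward $\infty_Z$; alternatively, one can argue uniformly in the two settings via the displacement growth $\lim_n d(x,g^n x)/n$ together with the $1$-Lipschitz bound on height, which together characterize hyperbolicity as equivalent to $h(g)\neq 0$.
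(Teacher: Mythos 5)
Your proposal is correct and follows essentially the same route as the paper: reduce to matching the height-change homomorphisms, use Lemma \ref{lem:equivariant_preserves_type} (type preservation and preservation of attracting fixed points) to get $\ker h_Z = \ker h_{X'}$ and sign agreement, use Lemma \ref{lem:discrete_heights} to see the common quotient is (trivial or) $\Z$, and then rescale the millefeuille metrics. The paper phrases the final step by normalizing on a single element of minimal positive height change rather than by a scalar relating the two homomorphisms, but this is the same argument; your explicit handling of the trivial-quotient case and of the hyperbolic-iff-nonzero-height-change dichotomy is a slight tightening of points the paper leaves implicit.
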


\begin{proof}
    By Theorem $\ref{thm:IsomGroupOfHorocyclicProduct}$, it suffices to show that, after possibly re-scaling the metrics, the actions of $G$ on $Z$ and $W$ satisfy $h_Z(g) = -h_W(g)$ for all $g$. By Lemma \ref{lem:equivariant_preserves_type}, $$\ker(h_Z) = \ker(h_X) = \ker(h_Y) = \ker(h_W),$$ so both maps $h_Z,h_W:G\to \R$ factor through $G/\ker(h_Z)$, which, by \ref{lem:discrete_heights}, is isomorphic to $\Z$. Moreover, if $h_X(g) > 0$, then $h_Z(g) > 0$ also. Choose $g \in G$ with minimal positive height change $h_X(g)$. Then $h_Z(g) = \min\{h_Z(h) \mid h \in G, h_Z(h) > 0\}$ also. Likewise, $h_Y(g),$ $h_W(g)$ have the maximal negative height change among their respective actions.   
    Then the two maps $h_Z,h_W:G\to \R$ are determined by the image of $g$. Re-scale the metric on $Z$ so that $h_Z(g) = 1$, and likewise re-scale the metric on $W$ so that $h_W(g) = -1$. This is possible since $h_Z(g) > 0$ and $h_W(g) < 0$.

    We then have actions of $G$ on $Z$ and $W$ both fixing a point at infinity satisfying $h_Z(g) = -h_W(g)$ for all $g$. This gives an action on $Z\bowtie W$.
\end{proof}

Note that this proof in fact constructs an action of $G$ by elements of $\Isom(Z)\times_h \Isom(W)$, not just by $\Isom(Z')\times_h \Isom(W')$.

\subsection{Step 3: The action on $Z\bowtie W$ is geometric}

In this section, we finish the proof of \cref{thm:Upgrade_to_Millefeuille} by showing that the action on $Z\bowtie W$ is geometric.

It will be convenient to re-scale the metrics on $X$ and $Y$ so that $h_{X \bowtie Y}(G) = \Z \subset \R$, and therefore\footnote{Alternatively, one may re-scale the metrics on $Z$ and $W$ in the proof of Lemma \ref{lem:obtain_action_on_Millefeuille} so that $h_X(g) = h_Z(g)$, without assuming they are integers.} $h_X(g) = h_Z(g)$. The quasi-isometries $f_\ast$ are then \textit{coarsely height preserving}, in the sense given by the following lemma.


\begin{lemma}
    Let $G$ act cocompactly on proper $\CAT(-\kappa)$ spaces $X,Z$, fixing a point at infinity. Let $h_X,h_Z$ be two height functions based at this distinguished point at infinity, and assume $h_X(g) = h_Z(g)$. Let $f:X\to Z$ be a coarsely $G$-equivariant quasi-isometry. Then there is a $D>0$ so that for all $x \in X$, we have $$|h_Z(f(x)) - h_X(x)| < D.$$  
\end{lemma}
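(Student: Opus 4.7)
The plan is to define the error function $\phi:X\to\R$ by $\phi(x) = h_Z(f(x)) - h_X(x)$ and show that it is bounded. The proof will rest on two properties of $\phi$: it is coarsely Lipschitz and it is coarsely $G$-invariant. Combined with cocompactness of the $G$-action on $X$, these force $\phi$ to be bounded.

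\textbf{Step 1: $\phi$ is coarsely Lipschitz.} Since $f$ is a quasi-isometry, there are constants $A,B$ with $d_Z(f(x_1),f(x_2)) \leq A\, d_X(x_1,x_2) + B$. Because $h_X$ and $h_Z$ are $1$-Lipschitz (as remarked after the definition of Busemann functions), the triangle inequality gives $|\phi(x_1) - \phi(x_2)| \leq (A+1)\, d_X(x_1,x_2) + B$. In particular, $\phi$ has bounded oscillation on any bounded subset of $X$.

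\textbf{Step 2: $\phi$ is coarsely $G$-invariant.} Let $D_0$ be the constant for coarse $G$-equivariance of $f$, so $d_Z(f(gx),g\cdot f(x)) \leq D_0$ for all $g\in G$, $x\in X$. Using Proposition \ref{prop:HeightChangeHom} (applied to both $X$ and $Z$) together with the hypothesis $h_X(g) = h_Z(g)$, we compute
\begin{align*}
\phi(gx) - \phi(x) &= \bigl(h_Z(f(gx)) - h_Z(f(x))\bigr) - \bigl(h_X(gx) - h_X(x)\bigr) \\
&= \bigl(h_Z(f(gx)) - h_Z(g\cdot f(x))\bigr) + \bigl(h_Z(g\cdot f(x)) - h_Z(f(x))\bigr) - h_X(g) \\
&= \bigl(h_Z(f(gx)) - h_Z(g\cdot f(x))\bigr) + h_Z(g) - h_X(g) \\
&= h_Z(f(gx)) - h_Z(g\cdot f(x)).
\end{align*}
Since $h_Z$ is $1$-Lipschitz, this difference is bounded in absolute value by $D_0$.

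\textbf{Step 3: Cocompactness finishes the proof.} Let $K\subset X$ be a compact set with $G\cdot K = X$, fix $x_0 \in K$, and set $M = (A+1)\diam(K) + B$. Step 1 gives $|\phi(k) - \phi(x_0)| \leq M$ for all $k\in K$. Any $x\in X$ may be written as $x = g\cdot k$ for some $g\in G$, $k\in K$, so by Step 2,
\[|\phi(x) - \phi(x_0)| \leq |\phi(gk) - \phi(k)| + |\phi(k) - \phi(x_0)| \leq D_0 + M.\]
Thus $D = |\phi(x_0)| + D_0 + M$ is a uniform bound for $|\phi|$, which is the desired conclusion.

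The main obstacle, such as it is, is bookkeeping in Step 2: one must carefully combine the coarse equivariance of $f$ with the fact that the height-change maps on $X$ and $Z$ agree on $G$ in order to get a clean bound on $\phi(gx) - \phi(x)$. Once this is in place, the remainder of the proof is a direct application of cocompactness.
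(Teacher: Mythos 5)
Your proof is correct and uses the same ingredients as the paper's (coarse equivariance, the agreement of the height-change homomorphisms on $G$, $1$-Lipschitzness of Busemann functions, and cocompactness); packaging the argument as boundedness of the error function $\phi(x) = h_Z(f(x)) - h_X(x)$ is just a cleaner bookkeeping of the same computation.
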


\begin{proof}
    Fix a basepoint $x_0 \in X$. Let $M$ be the maximum of the quasi-isometry constants, the coarse $G$-equivariance constants, and the coarse density constant of the orbits. Let $x \in X$. Then there is a $g \in G$ with $d(g\cdot x_0, x) < M$. Note that $|h_X(g\cdot x_0) - h_X(x)| < M$. Applying $f$, we obtain $$d(f(g\cdot x_0), f(x)) < M^2 + M \quad \text{and by equivariance,} \quad d(g\cdot f(x_0),f(x)) < M^2 + 2M,$$ so 

    \begin{align*}
        M^2 + 2M &> |h_Z(f(x) - h_Z(g\cdot f(x_0))| \\
        &= |h_Z(f(x)) - (h_Z(f(x_0)) + h(g))|\\
        &= |h_Z(f(x)) - h_Z(f(x_0)) + h_X(x_0) - h_X(g\cdot x_0)|.
    \end{align*}

Then $$|h_Z(f(x)) - h_X(g\cdot x_0)| < M^2 + 2M + |h_X(x_0) - h_Z(f(x_0))|.$$ Finally, apply $|h_X(g\cdot x_0) - h_X(x)| < M$ to obtain $$|h_Z(f(x)) - h_X(x)| < M^2 + 3M + |h_X(x_0) - h_Z(f(x_0))|.$$

\end{proof}

We are now prepared to prove \cref{thm:Upgrade_to_Millefeuille}.

\begin{proof}[Proof of \cref{thm:Upgrade_to_Millefeuille}]
    Lemma \ref{lem:obtain_action_on_Millefeuille} gives (possibly degenerate) Millefeuille spaces $Z,W$ with $G$-actions giving a $G$-action on $Z\bowtie W$. The preceding discussion also found coarsely $G$-equivariant, coarsely height preserving quasi-isometries $f_{XZ}:X' \to Z$ and $f_{YW}:Y'\to W$ with coarse inverses $f_{ZX}$ and $f_{WY}$ respectively. It remains to show that the action on $Z\bowtie W$ is geometric---that is, properly discontinuous and cocompact.

    We first show that the action has a coarsely dense orbit. Since $Z\bowtie W$ is proper, this is equivalent to the action being cocompact. The goal is to find a point $(z_0, w_0)$ in $Z\bowtie W$ so that, for each $(z, w)$ in $Z\bowtie W$, there is an element $g\in G$ so that $d_{\bowtie}\Big(g\cdot (z_0, w_0), (z,w)\Big)$ is uniformly bounded. In light of Lemma \ref{lemma:BoundedCoordinateDinstancesImpliesBoundedDistance}, it suffices to find uniform bounds on $d_Z(z, g\cdot z_0)$ and $d_W(w, g\cdot w_0)$ for the same element of $G$. 

    Fix a point $(x_0, y_0)$ in $X\bowtie Y$. Choose $M$ the maximum of the following: the coarse height preservation constants, coarse equivariance constants, and quasi-isometry constants of the maps $f_{XZ}, f_{ZX}, f_{WY}, f_{YW}$, the distance to the identity of the compositions $f_{ZX}\circ f_{XZ}$, $f_{XZ}\circ f_{ZX}$, $f_{WY}\circ f_{YW}$, and $f_{YW}\circ f_{WY}$, and the coarse density of the orbit of $(x_0, y_0)$ in $X\bowtie Y$ under the group $\Stab_{\Isom(X\bowtie Y)}(\partial^u(Y))$. 
    
    Since $f_{XZ}$ and $f_{YW}$ are only $M$-coarsely height-preserving, the point $\Big(f_{XZ}(x_0), f_{YW}(y_0)\Big) \in Z \times W$ may not lie on $Z\bowtie W$. However, we can choose $z_0$ and $w_0$ within $M$ of $f_{XZ}(x_0)$ and $f_{YW}(y_0)$ so that $(z_0, w_0)$ is on $Z\bowtie W$. We wish to show that the orbit of $(z_0, w_0)$ is coarsely dense.

    As with $\Big(f_{XZ}(x_0), f_{YW}(y_0)\Big)$, the point $(f_{ZX}(z), f_{WY}(w)) \in X\times Y$ may not lie on $X\bowtie Y$, but we can find a point $(x, y)$ in $X\bowtie Y$ so that $d'_X(f_{ZX}(z), x)\le M$ and $d'_Y(f_{WY}(w),y)\le M$. Choose $g\in \Stab_{\Isom(X\bowtie Y)}(\partial^u(Y))$ so that $d_{X\bowtie Y}\Big(g\cdot(x_0, y_0), (x,y)\Big)\le M$. Since $g$ acts factorwise, and the metric is admissible, it follows that $d'_X(g\cdot x_0, x), d'_Y(g\cdot y_0, y) \le 2M$. Consider then the point $g\cdot (z_0, w_0)$.  

    To bound $d_Z(g\cdot z_0, z)$, observe that
    
    \[d'_X\Big(x, f_{ZX}(z)\Big)\le M\]

    and
    
    \[d'_X\Big(x, g\cdot x_0\Big)\le M\]
    
    so that

    \[d'_X\Big(g\cdot x_0, f_{ZX}(z)\Big)\le 2M.\]

    Applying $f_{XZ}$ to both sides shows that
    
    \[d_Z\Big(f_{XZ}(g\cdot x_0), f_{XZ}\circ f_{ZX}(z)\Big)\le 2M^2+M.\]

    But of course, 

    \[d_Z\Big(f_{XZ}\circ f_{ZX}(z), z\Big)\le M,\]

    so that 

    \[d_Z\Big(f_{XZ}(g\cdot x_0), z\Big)\le 2M^2+2M.\]

    Now, by the coarse equivariance of $f_{XZ}$, we see that 

    \[d_Z\Big(g\cdot f_{XZ}(x_0), z\Big)\le 2M^2+3M.\]

    However, $z_0$ is within $M$ of $f_{XZ}(x_0)$, and $g$ acts isometrically on $Z$. Therefore, 

    \[d_Z\Big(g\cdot z_0, z\Big)\le 2M^2+4M.\]

    Analogous logic applies for $Y$ and $W$. This shows that the orbit of $(z_0, w_0)$ is coarsely dense.

    We will now show that the action is properly discontinuous. Let $B(R)$ be a closed ball of radius $R$ in $Z\bowtie W$. We will show that there are only finitely many $g \in G$ satisfying $B(R) \cap g\cdot B(R) \neq \emptyset$. 
    
    There exist compact $C_X \subset X$ and $C_Y \subset Y$ such that $f_{ZX}(\pi_Z \circ B(R)) \subset C_X$ and $f_{WY}(\pi_W \circ B(R))\subset C_Y$. Consider the set $K=\Nbhd_{2M}(C_X)\bowtie \Nbhd_{2M}(C_Y)$ (where neighborhoods are taken with respect to the metrics $d'_X$ and $d'_Y$). $K$ is evidently compact, so there are only finitely many elements $g$ of $G$ such that $gK\cap K\ne\emptyset$. We will show that $gB(R)\cap B(R)\ne\emptyset$ implies that $gK\cap K\ne\emptyset$, so that only finitely many elements of $G$ fail to move $B(R)$ off of itself.

    Let $(z,w)\in B(R)$ satisfy $g\cdot (z,w) \in B(R)$. Then $(f_{ZX}(z), f_{WY}(w))$ and $(f_{ZX}(g\cdot z), f_{WY}(g\cdot w))$ are both in $C_X \times C_Y$ by construction.

    By $M$-coarse height preservation, we can find $(x,y)$ in $\Nbhd_{M}(C_X)\bowtie \Nbhd_{M}(C_Y)\subset K$ so that $d'_X(f_{ZX}(z), x), d'_Y(f_{WY}(w), y)\le M$. We wish to show that $g\cdot (x,y)\subset K$. To do this, note that 

    \[d'_X(g\cdot f_{ZX}(z), g\cdot x), d'_Y(g\cdot f_{WY}(w), g\cdot y)\le M\]

    because $g$ acts isometrically on $d'_X$ and $d'_Y$. Using coarse equivariance, we see that

    \[ d'_X( f_{ZX}(g\cdot z), g\cdot x), d'_Y(f_{WY}(g \cdot w), g\cdot y)\le 2M.\]

    But $f_{ZX}(g\cdot z)\in C_X$ and $f_{WY}(g \cdot w)\in C_Y$, so that $g\cdot(x,y)$ is in $K$ as desired.
\end{proof}

\section{Proof of \cref{thm:boundary_implies_algebra}: Connectedness of the boundaries detects algebra}
\label{sec:boundary_implies_algebra}

In this section, we prove \cref{thm:boundary_implies_algebra}, which recovers algebraic properties of groups acting on horocyclic products from the connectedness of the upper and lower boundaries. 

The starting point for the proof is the conclusion of \cref{thm:Upgrade_to_Millefeuille}, which allows us to upgrade the action of $G$ on $X\bowtie Y$ to an action of a finite-index subgroup of $G$ on $Z\bowtie W$, for $Z$ and $W$ millefeuille spaces quasi-isometric to $X$ and $Y$ respectively. Since both trees and non-degenerate millefeuille spaces give rise to disconnected upper and lower boundaries, $\partial_\ell X$ (resp. $\partial^u Y$) is connected if and only if $Z$ (resp. $W$) is a Heintze group. The hypotheses of parts 1, 2 and 3 thus specify how many of $Z,W$ are Heintze groups. Part 2 is the most complicated of the cases and contains all the essential arguments for part 3.

\subsection{Proof of \cref{thm:boundary_implies_algebra}, Part 1: both boundaries disconnected.}

In this subsection, we will prove Part one of \cref{thm:boundary_implies_algebra}, which says that if $G$ acts geometrically on a horocyclic product $X\bowtie Y$ with both upper and lower boundaries disconnected, then $G$ is not finitely presented. Really we prove the contrapositive: if $G$ is finitely presented, then at least one boundary is connected.

The basic strategy is to apply the Bieri-Strebel splitting theorem to the map $G\to \Z$ to conclude that $G$ is an ascending HNN extension over a finitely-generated group. Britton's lemma then implies that every $g \in G$ can be written as $g = t^{-x} h t^{y}$ for $x,y \geq 0$. Geometrically, this means any point in $Z\bowtie W$ on the $0$-horocycle can be connected to the basepoint by a (coarse) path avoiding the set $\{h >0\}$ or $\{h < 0\}$ (depending on whether $h(t)$ is positive or negative). If both $X$ and $Y$ have disconnected boundaries, then $Z$, $W$ are either trees or non-degenerate millefeuille spaces. This is a contradiction in either case, since there are points on the $0$-horocycle which cannot be connected by paths avoiding the $0$-horoball. 

The first part of this proof is a direct application of the Bieri-Strebel splitting theorem:

\begin{lemma}
\label{lem:HNN_from_finitely_presented_action}
    Let $G$ be a finitely presented group acting geometrically on $X\bowtie Y$. Then there is a finitely-generated group $H$ and an injection $f:H\to H$ such that $$G \cong \HNN(H,f) = \langle H, t \mid tst\inv = f(s) \rangle$$ and the homomorphism $G \to \Z$ given by the exponent sum of $t$ is either the height change homomorphism $h:G \to \Z$ or $-h$. 
    
\end{lemma}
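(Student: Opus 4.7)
The strategy is to combine the Bieri--Strebel splitting theorem with amenability of $G$, which is inherited from the structure of $\Isom(X\bowtie Y)$.

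By Lemma~\ref{lem:discrete_heights} and after rescaling the metrics on $X$ and $Y$, we may assume $h:G\to \Z$ is surjective. The Bieri--Strebel splitting theorem applies to any finitely presented group equipped with a surjective homomorphism to $\Z$ and yields an HNN decomposition $G = \langle H, t \mid tAt\inv = B \rangle$ in which $H$, $A$, and $B$ are finitely generated subgroups of $G$ with $h(H) = 0$, $h(t) = 1$, and $A, B \leq H$ (a priori proper). The content of the lemma is that this HNN extension is in fact ascending, i.e.\ $A = H$ or $B = H$. Once this is established, possibly swapping $t$ for $t\inv$ yields $tHt\inv \subseteq H$ and the claimed presentation $G \cong \HNN(H,f)$ with $f(s) = tst\inv$, the exponent sum of $t$ agreeing with $h$ or $-h$.

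To upgrade the splitting to an ascending one, I would use amenability of $G$. By \cref{thm:IsomGroupOfHorocyclicProduct}, a finite-index subgroup of $G$ embeds as a discrete, hence closed, subgroup of the locally compact group $\Isom(X')_\infty \times_{-h} \Isom(Y')_\infty$; by Lemma~\ref{lem:CCMT_applies}, each of the two factors is amenable, so the product is amenable. Closed subgroups of locally compact amenable groups are amenable, and amenability is preserved by finite extensions, so $G$ itself is amenable.

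Now consider the action of $G$ on the Bass--Serre tree $T$ associated to the HNN decomposition. Since $h(t) = 1$, the stable letter $t$ acts hyperbolically on $T$, so $G$ has no globally fixed vertex. A classical theorem (see, e.g., Serre's \emph{Trees}) asserts that any amenable group acting on a simplicial tree must fix a vertex, invert an edge, or fix an end; an edge inversion would force a conjugate of $t^{2}$ to lie in $H$, contradicting $h(t^{2})=2\neq 0$, so $G$ must fix an end $\xi\in\partial T$. Because $t$ fixes $\xi$ and is hyperbolic, $\xi$ is one of the two endpoints of the axis of $t$. The vertex stabilizer $H$, which fixes the basepoint $[H]\in T$ lying on this axis, must then fix the entire ray from $[H]$ to $\xi$. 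Using the standard HNN identifications $H \cap tHt\inv = B$ and $H \cap t\inv H t = A$, the condition that $H$ fix the next vertex along this ray becomes $B=H$ or $A=H$, according to whether $\xi$ is the positive or negative end.

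The main obstacle I anticipate is the dictionary translation in the final paragraph, matching ``$H$ fixes the ray to $\xi$ in the Bass--Serre tree'' with the edge-group equations $A=H$ or $B=H$; this requires careful tracking of the HNN conventions but is ultimately routine. The remaining ingredients---the amenability of $G$, the Bieri--Strebel splitting, and the theorem that amenable groups act on trees with a fixed vertex or end---are each standard.
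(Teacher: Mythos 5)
Your proof takes essentially the same route as the paper: apply Bieri--Strebel to the height homomorphism $h:G\to\Z$ (discrete by Lemma~\ref{lem:discrete_heights}), deduce amenability of $G$ from the structure of $\Isom(X\bowtie Y)$, and analyze the Bass--Serre tree to force the splitting to be ascending. The only difference is in the last step's phrasing: the paper argues the contrapositive (a non-ascending splitting gives two hyperbolic elements with distinct endpoint pairs, hence a free subgroup by ping-pong, contradicting amenability), while you invoke the fixed-point classification for amenable actions on trees directly. One caution: the trichotomy you quote is incomplete---an amenable group acting on a tree may also preserve a pair of ends without fixing either (e.g., the infinite dihedral group acting on a line), and your stated reason for excluding edge inversions (a conjugate of $t^2$ landing in $H$) does not follow; both cases are in fact vacuous here because the Bass--Serre action of an HNN extension is edge-transitive and without inversions, which forces any invariant line to be all of $T$ and hence $A=B=H$, but this needs to be said.
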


\begin{proof}

By Lemma \ref{lem:discrete_heights}, the height change homomorphism $h: G \to \R$ has discrete image, hence giving a homomorphism $G \to \Z$. By the Bieri-Strebel splitting theorem \cite{Bieri-Strebel}, there is a subgroup $H \subset \ker(h)$, an element $t \in h\inv(1)$, subgroups $L,K \subset H$ and a map $\phi: L\to K$ such that $$G \cong \HNN(H,L,K,\phi) = \langle H, t \mid t\ell t\inv = \phi(\ell), \; \ell \in L \rangle.$$ An HNN extension as above is said to be \textit{ascending} if either $H = L$ or $H = K$. If $G$ is non-ascending, the action on the Bass-Serre tree $T$ does not fix an end (see Lemma \ref{lem:get_HNN}) and therefore contains two hyperbolic elements with different endpoints. The ping-pong lemma on $\partial T$ shows that sufficiently high powers of these hyperbolic elements must generate a free group. But $G$ is a finite extension of a discrete subgroup of the amenable group $\Isom(X\bowtie Y)$. It is therefore amenable and cannot contain a free group. We conclude that the above HNN extension is ascending, and after possibly exchanging $K$ and $L$ while inverting $\phi$, $G$ takes the desired form.
\end{proof}

By possibly reversing the roles of $X$ and $Y$, we may assume that $h(t) > 0$. The next step of the argument uses this ascending HNN extension to prove the following: for each $p$ in the $0$-horosphere of $X$, there is a (coarse) path connecting $p$ to the basepoint $x_0 \in X$ which avoids the $0$-horoball. This is a geometric incarnation of Britton's lemma \cite{Britton}, stated below.

\begin{lemma}[Britton, HNN case]
\label{lem:Britton}
    Let $G \cong \langle H,t \mid tht\inv = f(h) \rangle$ be an ascending HNN extension. Then every $g \in G$ can be written $g = t^{-x} h t^{y}$ for $h \in H$ and $x,y \geq 0$. 
\end{lemma}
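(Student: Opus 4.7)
The plan is to rewrite any word for $g$ into the claimed normal form by iterating the defining relation. Writing $g$ as an alternating word $g = h_0 t^{\epsilon_1} h_1 t^{\epsilon_2} \cdots t^{\epsilon_n} h_n$ with each $h_i \in H$ (possibly trivial) and $\epsilon_i \in \{+1, -1\}$, the defining relation together with its two immediate consequences gives three rewriting moves,
\[ t h t^{-1} = f(h), \qquad t h = f(h)\, t, \qquad h t^{-1} = t^{-1} f(h), \]
each valid for every $h \in H$.

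First I would eliminate all \emph{pinches}, by which I mean subwords of the form $t h_i t^{-1}$. The decisive feature of the ascending case is that the defining relation applies to every element of $H$, rather than merely a proper subgroup $L \subsetneq H$, so any such pinch may be collapsed to the single element $f(h_i) \in H$. Each such collapse strictly reduces the number of $t^{\pm 1}$ letters, so iterating until no pinch remains is a terminating procedure. At that point the sign sequence $(\epsilon_1, \ldots, \epsilon_n)$ contains no $+1$ immediately followed by $-1$, hence consists of a block of $-1$'s followed by a block of $+1$'s, giving
\[ g = h_0 \, t^{-1} h_1 \, t^{-1} \cdots t^{-1} h_x \, t \, h_{x+1} \, t \cdots t \, h_{x+y}. \]

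Second, the identity $h t^{-1} = t^{-1} f(h)$ lets me sweep each $h_i$ with $i < x$ successively to the right through the block of $t^{-1}$'s, gathering all the leading $H$-letters into one element sandwiched between $t^{-x}$ and the first $t$; symmetrically, $th = f(h) t$ sweeps each $h_j$ with $j > x$ leftward through the block of $t$'s. Merging the resulting left-hand and right-hand $H$-elements into a single $h \in H$ produces $g = t^{-x} h t^{y}$ with $x, y \ge 0$, as desired.

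The procedure is essentially a mechanical rewriting, and I do not foresee a genuine technical obstacle. The one substantive conceptual point is that the argument invokes the ascending hypothesis $L = H$ exactly at the pinch-elimination step: in a non-ascending HNN extension one would additionally require $h_i \in L$ in order to collapse a pinch $t h_i t^{-1}$, which in general fails, and the classical Britton's lemma gives only a weaker normal form.
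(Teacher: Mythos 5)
Your proof is correct. The paper does not actually prove Lemma \ref{lem:Britton} --- it is stated as a known result and attributed to Britton \cite{Britton} --- so there is no in-paper argument to compare against; your rewriting argument is the standard self-contained derivation of the ascending normal form. The two points that need care are both handled: collapsing a pinch $t h_i t^{-1}$ may merge adjacent $H$-letters and create a new pinch, but each collapse strictly decreases the number of $t^{\pm1}$ letters, so the elimination terminates; and the sweeping moves $ht^{-1}=t^{-1}f(h)$ and $th=f(h)t$ preserve the sign sequence, so they cannot reintroduce a pinch. You also correctly isolate where the ascending hypothesis $L=H$ enters, namely that \emph{every} pinch $t h_i t^{-1}$ is collapsible rather than only those with $h_i$ in a proper associated subgroup.
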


\begin{lemma}
\label{lem:Connected_outside_horoball}
    Let $G = \HNN(H,f)$ act geometrically on $X\bowtie Y$ with $h(t) > 0$ and $H \subset \ker(h)$ finitely generated. Let $D>0$, and fix a point $x_0\in X$ with $h_X(x_0)=0$. Then there is a constant $C>0$ so that for all $x_1 \in X$ in the $D$-neighborhood of the $0$-horocycle, there is a sequence of points $x_1 = p_0,p_1,\ldots, p_n=x_0$ with $d_X(p_i,p_{i+1}) < C$ and $h(p_i) \leq 0$.  
\end{lemma}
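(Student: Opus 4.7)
The plan is to exploit the ascending HNN structure $G = \langle H, t \mid tht^{-1} = f(h),\ h \in H\rangle$ algebraically via Britton's lemma (\cref{lem:Britton}), and then translate the resulting normal form into a path in $X$ using the $G$-action. The geometric input is that the cocompact action of $G$ on $X\bowtie Y$ projects to a cobounded action on $X$: fixing $(x_0, y_0) \in X\bowtie Y$ in a compact fundamental domain $K$, for any $x_1 \in X$ we can find $y_1$ with $(x_1,y_1) \in X\bowtie Y$ and then some $g \in G$ with $d_\bowtie((x_1,y_1), g\cdot (x_0,y_0)) \leq \operatorname{diam}(K)$, whence $d_X(x_1, g\cdot x_0) \leq \operatorname{diam}(K) =: M$.

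Next I would adjust $g$ to land below the $0$-horocycle. Since $|h(g) - h_X(x_1)| \leq M$, we have $|h(g)| \leq D + M$. Letting $k = \max\{0, \lceil h(g)/h(t)\rceil\}$ and replacing $g$ by $g' = t^{-k}g$, we arrange $h(g') \in [-h(t), 0]$, at the cost of moving $g'\cdot x_0$ a distance $O(D)$ from $x_1$ (via the vertical-style displacement from $t^{-k}$). Applying Britton's lemma to $g'$ yields $g' = t^{-x}h_0 t^{y}$ with $x \geq y \geq 0$ (because $h(g') \leq 0$ forces $(y-x)h(t) \leq 0$) and $h_0 \in H$. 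Fix a finite generating set $S$ of $H$ and write $h_0 = s_1 \cdots s_m$.

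The path from $x_0$ to $g'\cdot x_0$ is then obtained by reading this word letter by letter through the action:
\[
x_0,\ t^{-1}x_0,\ \ldots,\ t^{-x}x_0,\ t^{-x}s_1 x_0,\ \ldots,\ t^{-x}h_0 x_0,\ t^{-x}h_0 t\, x_0,\ \ldots,\ t^{-x}h_0 t^{y}x_0 = g'\cdot x_0.
\]
Consecutive points differ by multiplication by one of $t^{\pm 1}$ or $s \in S$, so each step has length at most
\[
K := \max\bigl\{d_X(x_0, t^{\pm 1}x_0),\ d_X(x_0, s\cdot x_0) : s \in S\bigr\}.
\]
The heights descend from $0$ to $-xh(t)$ in the first leg, stay constant at $-xh(t)$ on the middle leg (since $H \subset \ker(h)$), and climb back up to $h(g') \leq 0$ on the third leg; since $y \leq x$, every point in the list has $h \leq 0$. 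Finally, I prepend a short path connecting $x_1$ to $g'\cdot x_0$, covering the $O(D)$-distance in $\lceil \operatorname{diam}/K\rceil$ bounded steps.

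The main obstacle is ensuring that this last connecting segment from $x_1$ to $g'\cdot x_0$ also stays at height $\leq 0$. For $x_1$ already below the $0$-horocycle this is straightforward by routing through a vertical downward ray from $x_1$ to the height of $g'\cdot x_0$ and then following the cobounded orbit structure horizontally; for $x_1$ strictly above the horocycle one first travels vertically down to a point of height $0$ (a step of length at most $D$) before entering the word-path construction. The bookkeeping that converts this finite collection of subpaths into a single sequence with a uniform step bound $C = C(D, K, M, h(t))$ is routine once the word-path backbone is in place. All constants depend only on $D$, on $\operatorname{diam}(K)$, on the fixed finite generating set of $H$, and on $h(t)$, as required.
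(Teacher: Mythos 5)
Your overall strategy---Britton normal form $t^{-x}h_0t^{y}$, a word path read through the orbit, and cocompactness to transfer it to $X$---is the same as the paper's, which works only with $g\in\ker(h)$ (so $x=y$ in the normal form) and then quotes coarse density of $\mathcal{O}(\ker(h))$ in the $D$-neighborhood of the $0$-horosphere. The one step of yours that fails as written is the adjustment $g'=t^{-k}g$. Left multiplication by $t^{-k}$ does shift the height correctly, but $d_X(g\cdot x_0,\,t^{-k}g\cdot x_0)$ is the displacement of the fixed isometry $t^{-k}$ at the point $g\cdot x_0$, and for a hyperbolic isometry this is unbounded as $g\cdot x_0$ ranges over $X$ (in a tree it grows linearly in the distance from $g\cdot x_0$ to $\Axis(t)$). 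So the claim that $g'\cdot x_0$ stays within $O(D)$ of $x_1$ is false in general. The repair is to multiply on the right: take $g'=gt^{-k}$. This changes $h(g')$ by the same amount, while $d_X(g\cdot x_0,\,gt^{-k}\cdot x_0)=d_X(x_0,t^{-k}\cdot x_0)$ is bounded purely in terms of $D$, $M$ and $h(t)$, since $k\le (D+M)/h(t)+1$. (The same right-multiplication trick is what implicitly justifies the paper's unproved assertion that $\mathcal{O}(\ker(h))$ is coarsely dense in the $D$-neighborhood of the horosphere.) With that correction the rest of your argument goes through: $h(g')\le 0$ forces $x\ge y$, so every orbit point in your list has height $\le 0$, and the ``main obstacle'' you flag at the end largely disappears---the connecting segment can be taken to be the single jump from $x_1$ to $g'\cdot x_0$, which is uniformly bounded, and the only point of possibly positive height is $x_1$ itself, which is already unavoidable in the statement as phrased.
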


\begin{proof}

    By possibly re-scaling the metric on $X\bowtie Y$, we may assume that $h(G) = \Z \subset \R$ (not merely isomorphic to $\Z$). Let $S$ be a finite generating set for $H$. Then $\langle S \cup \{t\}\rangle  = G$. Choose a basepoint $(x_0, y_0) \in X\bowtie Y$. Since the action is geometric, the orbit map $\mathcal{O}:\Cay(G,S\cup \{t\}) \to X\bowtie Y$ given by $g \to g \cdot (x_0, y_0)$ is a quasi-isometry. It is moreover height-preserving in the sense that $h_{\Isom(X\bowtie Y)}(g) = h_{X\bowtie Y}(\mathcal{O}(g))$. 

    For any $g \in \ker(h)$, use Lemma \ref{lem:Britton} to write $g = t^{-a}kt^{b}$ for $x=y \geq 0$ and $k \in H$. Write $k = s_1\cdots s_k$ a product of elements of $S$. Then the sequence of group elements 
    \[t\inv, t^{-2},\ldots, t^{-x}, t^{-x}s_1,\ldots t^{-x}s_1\cdots s_k, t^{-x}s_1\cdots s_k t,\ldots t^{-x}s_1\cdots s_kt^x\]
    forms a coarse $1$-path in the Cayley graph. Applying $\mathcal{O}$ to this path gives a $C'$-coarse path connecting $(x_0,y_0)$ to $\mathcal{O}(g)$, for $C'$ depending only on the quasi-isometry constants. Note that every element of this coarse path has negative height. 

    The lemma now follows by noting that $\mathcal{O}(\ker(h))$ is coarsely dense in the $0$-horosphere of $X\bowtie Y$, (hence also in its $D$-neighborhood) and that the projection $\pi_X:X\bowtie Y \to X$ is $2$-Lipschitz (because $N\ge \frac{L^1}{2})$ and height preserving.
\end{proof}

We are now prepared to prove Part 3 of \cref{thm:boundary_implies_algebra}

\begin{proof}[Proof of Part 3 of \cref{thm:boundary_implies_algebra}]

Let $G$ act geometrically on $X\bowtie Y$. From Lemma \ref{lem:HNN_from_finitely_presented_action}, $G$ decomposes as an ascending HNN extension $G = \HNN(H,f)$ for $H \subset \ker(h)$ finitely generated. By possibly swapping $X$ and $Y$, we may assume that the stable letter $t$ has positive height change $h(t)>0$. Sections \ref{subsec:use_CCMT} and \ref{subsec:upgrade_actions_to_horocyclicprod} give a millefeuille space $Z$ and a coarsely equivariant and coarsely height-preserving quasi-isometries $f_{ZX}:Z \to X$, $f_{XZ}:X\to Z$. 

Since both\footnote{Note that we needed both boundaries to be disconnected, since we do not know if their roles have been swapped in the previous sentence.} the upper and lower boundaries of $X\bowtie Y$ are disconnected, we conclude that $Z$ is either a tree or a non-degenerate millefeuille space. In either case, for all $C> 0$, there are points on the $0$-horocycle of $Z$ which cannot be connected to the basepoint $z_0$ by a $C$-coarse path in the $0$-horoball. We will now show that this contradicts Lemma \ref{lem:Connected_outside_horoball}.

Let $D$ be the coarse height preservation constant for $f_{ZX}$. Then for any $z \in Z$ with $h(z) = 0$, its image $f_{ZX}(z)$ has $|h(f_{ZX}(z))|\leq D$ and hence is in the $D$ neighborhood of the $0$-horosphere for $X$. Lemma \ref{lem:Connected_outside_horoball} gives a constant $C$ so that any such point can be connected to the basepoint with a $C$-coarse path. The image of such a $C$-coarse path under $f_{XZ}$ is a $E$-coarse path connecting $z_0$ to $z$, with $E$ depending only on $C$ and the quasi-isometry constants of $f_{XZ}$. If $D'$ is the coarsely height preserving constant for $f_{XZ}$, then all elements of this path have height at most $D'$.

Since $X$ is either a tree or a non-degenerate millefeuille space, there is a point $z \in Z$ which cannot be connected to the basepoint with an $E$-coarse path avoiding the $D'$-horoball. This contradicts the previous paragraph.   
\end{proof}

\subsection{Proof of \cref{thm:boundary_implies_algebra}, Part 2: $W$ has a tree factor}

When only one of $\partial_\ell X, \partial^u Y$ is connected, $X'$ and $Y'$ are not isometric, so \cref{thm:IsomGroupOfHorocyclicProduct} gives
\[\Isom(X\bowtie Y) \subset \Isom(X')_\infty \times_{-h} \Isom(Y').\]
In particular, the finite-index subgroup in \cref{thm:Upgrade_to_Millefeuille} may be taken to be the entire group $G$, that is $G$ acts on $Z\bowtie W$, not just a finite-index subgroup. Since exactly one boundary is connected, exactly one of $Z,W$ is a Heintze group, which we assume without loss of generality to be $Z$. We then have a geometric action of a discrete group $G$ on $Z\bowtie W$, where $Z$ is a Heintze group and $W$ is either a tree or a non-degenerate millefeuille space. By the remark following Lemma \ref{lem:obtain_action_on_Millefeuille}, $G$ actually acts by $\Isom(Z)\times_{-h} \Isom(W)$, not just $\Isom(Z')\times_{-h} \Isom(W')$. Our first task is to compute $\Isom(W)$.

\begin{lemma}[Isometry group of a millefeuille space]
\label{lem:Isom_of_millefeuille}
    Let $M = N\rtimes \R$ be a nontrivial Heintze group and let $k \geq 2$. Then
    \[\Isom(M[k]) = \Isom(M)_\infty \times_h \Aut(T)_\infty.\]
\end{lemma}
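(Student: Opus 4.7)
Plan: I would apply \cref{thm:IsomGroupOfHorocyclicProduct} to the presentation of $M[k]$ as a horocyclic product and then identify the two factors in the resulting fiber product. The Busemann-function condition $\beta_M = \beta_{T_k}$ defining $M[k]$ coincides with the horocyclic-product condition of \cref{def:horocyclic_prod} after negating the tree's height function; under this identification, the sign convention $h(f_M) = -h(f_T)$ in \cref{thm:IsomGroupOfHorocyclicProduct} becomes $\times_h$ in the lemma statement. The hypotheses of the theorem are easily verified: a nontrivial Heintze group $M$ is a simply connected, negatively curved homogeneous Riemannian manifold, hence proper, geodesically complete, $\CAT(-\kappa)$ for some $\kappa > 0$, with $\infty_M$ non-isolated in its connected boundary sphere; and the regular $(k+1)$-valent tree $T_k$ with $k \ge 2$ is proper, geodesically complete, $\CAT(-1)$, with $\infty_{T_k}$ non-isolated in its Cantor boundary. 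Neither space is a line. This yields $\Isom(M[k]) \subset \Isom(M')_\infty \times_h \Isom(T_k')_\infty$, possibly extended by an order-two coordinate swap, which is ruled out here because $\partial M$ is connected while $\partial T_k$ is totally disconnected, so $M'$ and $T_k'$ have non-homeomorphic boundaries.

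I then identify the factors. For the tree, \cref{cor:RILowerBound} gives $d'_{T_k}$ within bounded additive error of $d_{T_k}$, so $d$ and $d'$ induce the same topology on $T_k$. Vertices are intrinsically the (topologically) branching points of $T_k$, so any $d'$-isometry permutes vertices and thus is a combinatorial automorphism; combined with the $\infty$-fixing assumption, this yields $\Isom(T_k')_\infty = \Aut(T_k)_\infty$ (the converse inclusion being clear, since $\Aut(T_k)_\infty$ preserves both $d_{T_k}$ and $h_{T_k}$). For the Heintze side, $\Isom(M)_\infty \subset \Isom(M')_\infty$ is immediate: a $d_M$-isometry fixing $\infty_M$ preserves $d_M$ and translates $h_M$ by a constant (\cref{prop:HeightChangeHom}), hence preserves the length integrand $N(d_M, \Delta h_M)$ defining $d'_M$. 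The reverse inclusion is the main technical point: a $d'_M$-isometry fixing $\infty_M$ is, via \cref{cor:RILowerBound}, a quasi-isometry of $(M, d_M)$ fixing $\infty_M$, which induces a quasi-symmetric self-map of the parabolic visual boundary $\partial M \setminus \{\infty_M\} \cong N$ with its canonical Carnot--Carath\'eodory metric. By Pansu-style rigidity for Heintze groups, such quasi-symmetries are realized by honest $d_M$-isometries of $M$ fixing $\infty_M$, giving $\Isom(M')_\infty \subset \Isom(M)_\infty$.

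The reverse inclusion of the lemma statement is then direct: any $(f_M, f_T) \in \Isom(M)_\infty \times_h \Aut(T_k)_\infty$ acts factor-wise on $M \times T_k$, preserves the millefeuille condition via the height-compatibility $h(f_M) = h(f_T)$, and preserves the length density $N(d_M, d_{T_k})$ because both factor metrics and both factor height functions are preserved. The main obstacle is thus the identification $\Isom(M')_\infty = \Isom(M)_\infty$, which requires rigidity input specific to Heintze groups rather than the $\CAT(-\kappa)$ machinery developed in this paper.
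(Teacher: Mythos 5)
Your strategy rests on the premise that $M[k]$ is a horocyclic product in the sense of Definition \ref{def:horocyclic_prod}, and this premise is false. The horocyclic product is cut out by $h_X(x)=-h_Y(y)$, i.e.\ $\beta_X(x)=-\beta_Y(y)$, whereas the millefeuille space is cut out by $\beta_M(x)=\beta_{T_k}(y)$ --- the heights agree rather than cancel. You propose to reconcile the signs by ``negating the tree's height function,'' but for $k\ge 2$ the function $-\beta_{T_k}$ is not a Busemann (or height) function of $T_k$ with respect to any boundary point: the tree branches downward and not upward, so there is no ``opposite end'' to base it at. The sign difference is not cosmetic: horocyclic products are never Gromov-hyperbolic (they contain Sol-like quasi-flats), while $M[k]$ is Gromov-hyperbolic --- this is the whole point of the CCMT construction, and it is why the lemma is stated with $\times_h$ while the ambient decomposition of $\Isom(Z\bowtie W)$ uses $\times_{-h}$. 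Consequently \cref{thm:IsomGroupOfHorocyclicProduct}, whose proof runs through Ferragut's boundary decomposition $\partial(X\bowtie Y)=\partial^u Y\sqcup\partial_\ell X$ for opposite-sign products, simply does not apply to $M[k]$.

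Even granting the reduction, your identification $\Isom(M')_\infty=\Isom(M)_\infty$ does not go through. An isometry of $(M,d'_M)$ fixing $\infty_M$ is only a rough isometry of $(M,d_M)$ by \cref{cor:RILowerBound}, and ``Pansu-style rigidity'' neither covers all Heintze groups (quasi-isometric rigidity is open in general, and fails outright at this level for $M=\Ha^2$, where rough isometries fixing $\infty$ induce a vastly larger group of boundary maps than the affine group of $\R$) nor would it give what you need even where it holds: such theorems place a quasi-isometry at \emph{bounded distance} from an isometry, which does not make it one, so equality of the two isometry groups does not follow. The paper avoids both problems by arguing intrinsically in $M[k]$ with its own metric: the distinguished point is the unique cut point of $\partial M[k]$, hence fixed; the branching horospheres $\pi^{-1}(v)$ are metrically characterized, so $f$ descends to $f_T\in\Aut(T_k)_\infty$; and each preimage $\pi^{-1}(\ell)$ of a vertical line in $T_k$ is an \emph{isometrically} embedded copy of $(M,d_M)$ (no $d'$ appears), so restricting and projecting yields an honest $f_M\in\Isom(M)_\infty$, well-defined because any two leaves agree above a horoball. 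If you want a proof along the lines you sketched, you would need to develop the analogue of \cref{thm:IsomGroupOfHorocyclicProduct} for same-sign fiber products from scratch, at which point the paper's direct argument is both shorter and stronger.
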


\begin{proof}
    Let $f$ be an isometry of $M[k]$. The boundary $\partial M[k]$ has exactly one cut point (the distinguished point at infinity), which is therefore preserved by any isometry. There is a projection $\pi:M[k] \to T$. We will call the preimages of vertices under $\pi$ \textit{branching horocycles}. Isometries must preserve branching horospheres, and since vertices are adjacent if and only if their branching horocycles have hausdorff distance 1, $f$ induces a tree automorphism, $f_T \in \Aut(T_k)_\infty$. This gives the first projection $\Isom(M[k]) \to \Aut(T)_\infty$. From the construction, if $s:T\to M[k]$ is any section of $\pi$, then $f_T = \pi \circ f \circ s$.

    For the other projection, note that every vertical line $\ell \subset T_k$, the preimage $\pi\inv(\ell)$ is an isometric copy of $M$. From the above paragraph, $f(\pi\inv(\ell)) = \pi\inv(f_T(\ell))$. The projection $p:M[k] \to M$ is isometric when restricted to preimages of vertical lines in $T$. Define $f_M\in \Isom(M)_\infty$ as $p\circ f|_{\pi\inv(\ell)}$. This is independent of choice of $\ell$, since the preimages of any two vertical lines agree above some horoball in $M$, and any two isometries of $M$ agreeing on an open set are equal.
    
    The two maps $f_T$ and $f_M$ satisfy $h_T(f_T) = h_M(f_M)$, so $f \mapsto (f_M, f_T)$, is a homomorphism into $\Isom(M)_\infty \times_h\Aut(T)_\infty$. It is clearly surjective, and injective since any two isometries $f,g \in \Isom(M[k])$ with $f_T = g_T$ and $f_M = g_M$ must both send an arbitrary point $(m,v) \in M[k]$ to $(f_M(m), f_T(v)) = (g_M(m), g_T(v))$, so $f = g$.
\end{proof}

We then have a decomposition $$\Isom(Z\bowtie W) \cong \Isom(Z)_\infty \times_{-h} \left( \Isom(M)_\infty \times_h \Aut(T_k)_\infty  \right).$$ In particular, $G$ acts on $T_k$, fixing an end. This is the source of the HNN decomposition.

\begin{lemma}
\label{lem:get_HNN}
    Let $T$ be a regular tree with valence $k\geq 3$. Let $G$ act cocompactly on $T$, fixing an end. Then $T/G$ is a loop, and $G$ splits as an ascending HNN extension.
\end{lemma}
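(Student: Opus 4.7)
The plan is to analyze the quotient graph $\Gamma = T/G$ combinatorially, conclude that it is a single loop (using that $h(G) = \Z$ in the rescaled setup), and then read off the ascending HNN structure via Bass-Serre theory. Using that $G$ fixes an end $\xi$ of $T$, \cref{prop:HeightChangeHom} produces a height-change homomorphism $h: G \to \R$; since $T$ is simplicial with edge length $1$, $h(G) \subset \Z$, and cocompactness forces $h(G) \neq 0$ (otherwise a single $G$-orbit of vertices would lie in one horocycle). Consistent with the rescaling performed in the proof of \cref{thm:Upgrade_to_Millefeuille}, we may assume $h(G) = \Z$. Orient each edge of $T$ toward $\xi$; this is $G$-equivariant, and each vertex $v$ has a unique outgoing ``up-edge'' to its parent $p(v)$, giving a $G$-equivariant bijection between vertices and edges. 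Passing to $G$-orbits yields $|V(\Gamma)| = |E(\Gamma)|$.

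In the directed quotient, each vertex $[v]$ has out-degree $1$; write $d([v])$ for its in-degree, equivalently the number of $G_v$-orbits on the $k-1 \geq 2$ children of $v$. Each $d([v]) \geq 1$, and $\sum_{[v]} d([v]) = |E(\Gamma)| = |V(\Gamma)|$ forces $d([v]) = 1$ for every $[v]$. Hence every vertex of $\Gamma$ has both in-degree and out-degree $1$, and connectedness of $T$ forces $\Gamma$ to be a single directed cycle of length $n$, say. Iterating the parent map $n$ times sends $v$ to some $p^n(v) \in [v]$, so there exists $g \in G$ with $g\cdot v = p^n(v)$ and $h(g) = n$; hence $n \in h(G) = \Z$, and minimality of $n$ yields $n = 1$. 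Thus $\Gamma$ is a loop.

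Bass-Serre theory applied to a loop presents $G$ as an HNN extension of the vertex group $G_v$ with edge group $G_e = G_v \cap G_{p(v)}$. I would verify $G_e = G_v$ as follows: any $g \in G_v$ fixes $v$ and preserves $\xi$, hence preserves the unique ray from $v$ to $\xi$; as an isometry fixing this ray's endpoint, $g$ fixes the ray pointwise, and in particular fixes $p(v)$. Therefore $G_v \subset G_{p(v)}$, so $G_e = G_v$, and one of the two edge inclusions in the HNN presentation is the identity --- which is exactly the condition for the extension to be ascending.

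The main obstacle is the double-counting step: the identity $\sum_{[v]} d([v]) = |V(\Gamma)|$ combined with the lower bound $d([v]) \geq 1$ is what rigidly forces each $d([v]) = 1$ and ultimately pins $\Gamma$ down to a single directed cycle. Once this is in hand, identifying the cycle length from $h(G) = \Z$ and extracting the ascending structure are routine, the latter resting on the observation that stabilizers monotonically increase along the ray to $\xi$.
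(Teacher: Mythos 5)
Your overall strategy is sound and genuinely different from the paper's in its key step. Where the paper proves that the vertex stabilizer $H=\Stab(v_0)$ acts transitively on $\up_1(v_0)$ by a contradiction argument involving axes of hyperbolic elements (if some child orbit were missed, no axis could enter $\up(w)$, contradicting cocompactness), you get the same transitivity from a clean double count: orienting edges toward the fixed end gives a $G$-equivariant bijection between vertices and edges, so $|V(\Gamma)|=|E(\Gamma)|$, and since every vertex orbit has out-degree $1$ and in-degree at least $1$, all in-degrees equal $1$. This is shorter and arguably more transparent than the paper's argument. Your treatment of the ascending condition (stabilizers increase along rays to $\xi$ because an isometry fixing a vertex and an end fixes the connecting ray pointwise) is also a nice explicit justification of something the paper leaves implicit in its appeal to the Bass--Serre structure theorem. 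Both you and the paper must normalize so that the minimal positive height change is $1$; note that the paper does this by \emph{modifying the tree} (keeping only vertices at heights in $h(t)\Z$), not by rescaling the metric --- rescaling does not change the combinatorial quotient, so your appeal to ``the rescaling performed in the proof of \cref{thm:Upgrade_to_Millefeuille}'' is not quite the right mechanism, though the intent is the same. Without some such normalization the literal statement ``$T/G$ is a loop'' fails (e.g.\ the index-two subgroup $\Z[1/4]\rtimes 2\Z$ of $\BS(1,4)$ acts cocompactly on $T_5$ fixing an end with quotient a $2$-cycle).

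There is one genuine gap: the deduction that the cycle length $n$ equals $1$. You show that the parent map returns $[v]$ to itself after $n$ steps, producing $g\in G$ with $h(g)=n$, and conclude ``$n\in h(G)=\Z$, and minimality of $n$ yields $n=1$.'' But $n\in\Z$ is vacuous; what you need is the \emph{reverse} containment $h(G)\subseteq n\Z$, so that $h(G)=\Z$ forces $n=1$. Having an element $g$ with $h(g)=1$ does not immediately help, because $g\cdot v$ sits at the same height as $p(v)$ but need not lie in the orbit $[p(v)]$ --- that two vertices at equal height are $G$-equivalent is essentially what is being proved. The fix is short: let $P:V(T)\to\Z/n$ record the position of $[w]$ in the directed cycle. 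Then $P$ is $G$-invariant, and along any edge both $P$ and $h$ change by $+1$ (toward $\xi$) or $-1$ (away), so $P(w')-P(w)\equiv h(w')-h(w)\pmod n$ for all $w,w'$. Applying this to $w'=g\cdot w$ gives $h(g)\equiv P(g\cdot w)-P(w)=0\pmod n$, i.e.\ $h(G)\subseteq n\Z$, and then $h(G)=\Z$ gives $n=1$. With this insertion your argument is complete.
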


\begin{proof}

    Since $G$ fixes an end, there is an associated height change homomorphism $h: G \to \Z$. Let $t$ be an element with the largest negative height change. By possibly modifying the tree to include only vertices at height $h(t)\Z$, we can obtain a cocompact action of $G$ on $T_{k^{-h(t)}}$. This lets us assume that $h(t) = -1$. Let $v_0 \in \Axis(t)$ be at height $0$ and set $v_k = t^k v_0$. Also set $H = \Stab(v_0)$. For $k\in \N$ and vertex $w \in T$, let $$\up_k(w) = \{z \in T \mid \beta(z) = \beta(w)+k\text{ and }d(w,z) = k\}$$ and $\up(w) = \cup_{k = 0}^\infty \up_k(w)$. This notation is taken from \cite{Caplinger}, which uses the opposite height convention for trees. 

    First, we show that $H$ acts transitively on $\up_1(v_0)$. If not, there is a vertex $w \in \up_1(v_0)$ so that no $h\in H$ satisfies $h\cdot v_1 = w$. But then no hyperbolic element $g \in G$ can have axis intersecting $\up(w)$---if it did, then either $gt^{h(g)}$ or $g\inv t^{-h(g)}$ would be an element of $H$ sending $v_1$ to $w$. But then no two elements of $\up(w)$ at different heights can ever be identified in the quotient $T/G$. This contradicts the cocompactness of the action, proving the claim.

    Conjugating the previous paragraph by $t$ shows that $t^k H t^{-k} \subset H$ acts transitively on $\up_1(v_k)$. Then $H$ also acts transitively on $\up_k(v_0)$ for all $k \in \N$. 

    We now show that the quotient $T/G$ is a loop. Let $e$ be an arbitrary edge in $T$. There is some sufficiently large $k$ so that $t^k (e) \in \up(v_0)$. From the previous paragraph, there is some $h \in H$ so that $h t^k (e) \subset \Axis(t)$. Finally, there is some $m$ so that $t^{-m} h t^{k} (e)$ is the edge connecting $v_0$ and $v_1$. Then all edges in $T$ are identified, and therefore $T/G$ is a loop.

    The structure theorem of Bass-Serre theory (See \cite{Trees}, Theorem 13) now gives that $G = \HNN(H,f)$, where $H = \Stab_G(v_0)$, and $f$ is the inclusion $H \to H$ given by $h \to tht\inv$. Note that the index $[H:f(H)] = k-1$, where $k$ is the valence of $T$.
\end{proof}

To finish the proof of the theorem, we need to show that this vertex stabilizer $H$ is virtually nilpotent. From the explicit form of $\Isom(Z\bowtie W)$, we observe that $\Stab_{\Isom(Z\bowtie W)}(v_0)$ acts on $\pi\inv(v_0) \subset Z\bowtie W$, where $\pi$ is the projection $Z\bowtie W \to T_k$. Since $Z = N_1 \rtimes \R$ is a Heintze group and $W$ is either a tree $T_k$ or a nondegenerate millefeuille space $(N_2 \rtimes \R)[k]$, this preimage $\pi\inv(v_0)$ is either $N_1$ or $N_1 \times N_2$. We consider the induced path metrics on $\pi\inv(v_0)$. When $\pi\inv(v_0) = N_1$, this metric is simply the left invariant Riemannian metric used in $Z = N_1 \rtimes \R$. When $\pi\inv(v_0) = N_1 \times N_2$, the induced path metric is not Riemannian---it is instead the path metric defined by the ambient norm $N$, or in other words $$d_{N_1\times N_2}( (n,m), (\ell, p))=  \min_{\gamma = (\gamma_1, \gamma_2)} \sup_{t_0 < t_1 <\cdots t_n}  \sum_{i = 1}^{n-1} N( d_{N_1}(\gamma_1(t),\gamma_1(t+1)), d_{N_2}\gamma_2(t),\gamma_2(t+1) ),$$ where $d_{N_i}$ are the two left-invariant Riemannian norms and the minimum is taken over all paths connecting $(n,m)$ to $(\ell,p)$. In particular, this metric is also left-invariant.  

Then the vertex stabilizer $H$ acts on a nilpotent Lie group with a left-invariant (possibly non-Riemannian) metric. We will show that this action is in fact geometric, and then invoke Auslander's generalization of Bieberbach's theorem to conclude that $H$ is virtually a lattice in a nilpotent lie group.

\begin{lemma}
\label{lem:Geometric_on_horosphere}
    Let $Z = N_1\rtimes \R$ be a Heintze group, $W = (N_2\rtimes \R)[k]$ be a (possibly degenerate) millefeuille space, and let $G$ act geometrically on $Z\bowtie W$. Let $T$ be the tree of $W$ and $v_0 \in T$. Then $H = \Stab_G(v_0)$ acts geometrically on $\pi\inv(v_0)$ with the induced path metric.
\end{lemma}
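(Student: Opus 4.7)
The plan is to verify the three defining properties of a geometric action---isometric, properly discontinuous, and cocompact---for the $H$-action on $(\pi^{-1}(v_0), d_{\mathrm{path}})$. The first two will be essentially automatic; the real content lies in cocompactness.

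For isometricity, since each $h \in H$ fixes $v_0$, it preserves $\pi^{-1}(v_0)$ setwise, and since $H$ acts on $Z\bowtie W$ by $d_\bowtie$-isometries, it acts on $\pi^{-1}(v_0)$ by isometries of $d_{\mathrm{path}}$---the latter being defined intrinsically in terms of $d_\bowtie$ and the subset $\pi^{-1}(v_0)$. For proper discontinuity, the identity map $(\pi^{-1}(v_0), d_{\mathrm{path}}) \to (Z\bowtie W, d_\bowtie)$ is $1$-Lipschitz (as $d_\bowtie \leq d_{\mathrm{path}}$) and hence continuous, so $d_{\mathrm{path}}$-compact sets remain compact in $(Z\bowtie W, d_\bowtie)$; properness of the $G$-action on $Z\bowtie W$ then restricts to properness of the $H$-action.

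For cocompactness, I would lean on Lemma \ref{lem:get_HNN}, which gives $T/G$ a loop and hence $G$ transitive on the vertices of $T$. Fix $K_\bowtie \subset Z\bowtie W$ compact with $G\cdot K_\bowtie = Z\bowtie W$; since $\pi$ is continuous and $T$ is locally finite, $\pi(K_\bowtie)$ meets only finitely many vertices, say $v_1,\ldots,v_m$, and for each I choose $g_i \in G$ with $g_i v_0 = v_i$. Given any $p \in \pi^{-1}(v_0)$, writing $p = g\cdot k$ with $k \in K_\bowtie$ forces $g^{-1}v_0 = \pi(k) = v_i$ for some $i$, and setting $h := g\cdot g_i$ yields $h \in H$ with $h^{-1} p \in g_i^{-1} K_\bowtie \cap \pi^{-1}(v_0)$. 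The finite union $K := \bigcup_{i} \bigl(g_i^{-1} K_\bowtie \cap \pi^{-1}(v_0)\bigr)$ is therefore $d_\bowtie$-compact in $\pi^{-1}(v_0)$ and satisfies $H\cdot K = \pi^{-1}(v_0)$.

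The main obstacle will be upgrading $d_\bowtie$-compactness of $K$ to $d_{\mathrm{path}}$-compactness. I plan to do this by showing that the two metrics on $\pi^{-1}(v_0)$ induce the same topology, so that compactness of any set transfers between them. One direction is automatic from the Lipschitz comparison; for the other, I would argue that for points in $\pi^{-1}(v_0)$ sufficiently close in $d_\bowtie$, every minimizing $d_\bowtie$-path stays within $\pi^{-1}(v_0)$, so $d_\bowtie$ and $d_{\mathrm{path}}$ agree on a small scale. This uses the standard Heintze-type trade-off in a horocyclic product: a ``shortcut'' through the tree (into a descendant of $v_0$, which forces the $Z$-coordinate to rise) pays linearly in the excursion height but only saves exponentially in the horospheric distance, so for small $d_\bowtie$ no excursion is ever beneficial. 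Once the topologies are shown to coincide, $K$ is $d_{\mathrm{path}}$-compact, and the $H$-action on $(\pi^{-1}(v_0), d_{\mathrm{path}})$ is cocompact, completing the verification.
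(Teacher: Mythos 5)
Your overall structure is right, and your cocompactness argument takes a genuinely different route from the paper's. Where the paper picks an element $t$ with axis through $v_0$ and shows that every $x\in\pi\inv(v_0)$ has an $H$-orbit point of $x_0=\pi\inv(v_0)\cap\Axis(t)$ within $6M$ of it (by looking at a point directly above $x$ and pulling the nearby $G$-orbit point back down the vertical geodesic), you instead run a fundamental-domain argument: finitely many vertices meet $\pi(K_{\bowtie})$, and translating back by the $g_i$ produces an ambient-compact $K\subset\pi\inv(v_0)$ with $H\cdot K=\pi\inv(v_0)$. That part is correct and arguably cleaner; note only that in the degenerate case ($T$ a line) you cannot cite Lemma \ref{lem:get_HNN} for vertex-transitivity and must instead use discreteness of the height change (Lemma \ref{lem:discrete_heights}) and a rescaling, and that in any case you only need finitely many vertex orbits, not transitivity. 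The proper-discontinuity and isometricity steps match the paper's.

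The genuine problem is the mechanism you propose for the final transfer step. It is not true that for $d_{\bowtie}$-close points of $\pi\inv(v_0)$ every minimizing path stays in $\pi\inv(v_0)$: already in $\Ha^2$, the geodesic between two distinct points of a horocycle always leaves the horocycle (it bulges toward the point at infinity), no matter how close the points are; the same happens in a Heintze factor of $Z\bowtie W$, with the excursion absorbed by a descent into a child edge of $v_0$ on the tree side. Your ``pays linearly, saves exponentially'' heuristic in fact points the wrong way for large separations and only shows that the saving is second-order for small ones. What is true, and what you actually need, is the weaker statement that the two metrics induce the same topology on $\pi\inv(v_0)$ (equivalently, $d_{\mathrm{path}}\to 0$ as $d_{\bowtie}\to 0$), so that compactness of $K$ transfers. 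This is also the crux of the paper's proof, which handles it by identifying the induced path metric concretely as a left-invariant (proper, geodesic) metric on the nilpotent Lie group $N_1\times N_2$ inducing the manifold topology, then using homogeneity to get a uniform radius $R$ with $B_{Z\bowtie W}(z,6M)\cap\pi\inv(v_0)\subset B_{\pi\inv(v_0)}(z,R)$. If you replace your ``geodesics stay in the fiber'' claim with that identification (or with the asymptotic comparison $d_{\bowtie}\le d_{\mathrm{path}}\le d_{\bowtie}+o(d_{\bowtie})$ at small scales), your proof goes through.
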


\begin{proof}

    Since $\pi\inv(v_0)$ is closed, if $K \subset \pi\inv(v_0)$ is compact in $\pi\inv(v_0)$ then it is also compact in $Z\bowtie W$. Since the action of $G$ on $Z\bowtie W$ is geometric, there are only finitely many elements $g \in G$ such that $gK\cap K \neq \emptyset$, hence only finitely many $g \in H$ with this property, so the action of $G$ on $\pi\inv(v_0)$ is properly discontinuous. It remains to show that the action of $H$ on $\pi\inv(v_0)$ is cocompact.

    Let $t \in G$ have minimal positive height change with $T$-axis intersecting $v_0$. This is possible since $T/G$ is a loop (Lemma \ref{lem:get_HNN}), which implies that $\ker(h)$ acts transitively on each horosphere. Since $t$ acts hyperbolically on each factor, it fixes two points on the boundary of $Z\bowtie Y$. Let $\Axis(t)$ denote the vertical line connecting these two points and let $x_0  = \pi\inv(v_0) \cap \Axis(t)$. Because the action on $Z\bowtie W$ is geometric, there is some $M$ so that $\Nbhd_M(G\cdot x_0) = Z\bowtie W$. 
    
    Let $x \in \pi\inv(v_0)$, and let $y \in Z\bowtie W$ be any point with $h(y) - h(x) = 2M = d_{Z\bowtie W}(x,y)$ (that is, $y$ is directly above $x$ at distance $2M$). Then there is some $g \in G$ with $d_{Z\bowtie W}(g\cdot x_0,y) < M$. Then $d_{Z\bowtie W}(g\cdot x_0, x) < 3M$. Also, since $\pi$ is $2$-Lipschitz, we know that $d_T(\pi(g\cdot x_0), \pi(y))<2M$ while $d_T(\pi(y), v_0)=2M=h_T(\pi_T(y))-h_T(v_0)$. Since the distance from $\pi(y)$ to any point outside $\up(v_0)$ is at least $2M$, we see that $\pi(g\cdot x_0) \in \up(v_0)$. In particular, any vertical geodesic containing $g\cdot x_0$ intersects $\pi\inv(v_0)$ in a single point. Since $x_0 \in \Axis(t)$, this point must be $gt^{-h(g)}\cdot x_0$, and the triangle inequality gives $d_{Z\bowtie W}(gt^{-h(g)} \cdot x_0, x) \leq 6M$. Note that $gt^{-h(g)} \in H$.

    Though we have found an orbit point of $H$ at bounded distance from $x$, we are not done, since this bound is only obtained in the horocyclic product metric---we need to relate this to the path metric on the horosphere component $\pi\inv(v_0)$. Let $z \in \pi\inv(v_0)$ be arbitrary. The $6M$ ball $B_{Z\bowtie W}(z,6M)$ based at $z$ is compact, so the intersection $B_{Z\bowtie W}(z, 6M) \cap \pi\inv(v_0)$ is also compact and therefore bounded in the path metric on $\pi\inv(v_0)$---that is, there is some $R >0$ so that $$B_{Z\bowtie W}(z,6M) \cap \pi\inv(v_0)\subset B_{\pi\inv(v_0)}(z,R).$$ Since $N_1 \times N_2 \subset \Stab_{\Isom(Z\bowtie W)}(v_0)$ acts transitively on $\pi\inv(v_0)$, the choice of $R$ can be made independently of $z$. In particular, this means that $x \in B_{\pi\inv(v_0)}(gt^{-h(g)}\cdot x_0, R)$, so the action of $H$ on $\pi\inv(v_0)$ is cobounded, hence cocompact. 
\end{proof}

We now have a geometric action of $H = \Stab(v_0)$ on a simply connected nilpotent Lie group $\pi\inv(v_0) = N_1\times N_2$. To finish the proof of \cref{thm:boundary_implies_algebra}, Part 2, we need to conclude that $H$ is virtually nilpotent. To do this, we will invoke the following two theorems.

\begin{thm}[Kivioja-Le Donne \cite{Isom_Nilpotent}]
    Let $N$ be a nilpotent lie group equipped with a left-invariant metric that induces the manifold topology. Then $$\Isom(N) \cong N \rtimes \Aut \Isom(N),$$ where $\Aut \Isom(N)$ denotes the group of automorphisms of $N$ that are also isometries.
\end{thm}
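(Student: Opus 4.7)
The strategy is to decompose $\Isom(N)$ using the left-regular action of $N$ on itself. Let $L_n$ denote left-translation by $n$; by left-invariance of the metric this is an isometry, giving an injective continuous homomorphism $N \hookrightarrow \Isom(N)$ whose image $L_N$ is closed (if $L_{n_k} \to f$ in $\Isom(N)$, then $n_k = L_{n_k}(e) \to f(e)$ in $N$, and one checks that $f = L_{f(e)}$). Set $K := \Isom(N)_e$. For every $f \in \Isom(N)$ the composition $L_{f(e)}^{-1} \circ f$ fixes $e$, giving a unique decomposition $f = L_{f(e)} \cdot (L_{f(e)}^{-1} \circ f) \in L_N \cdot K$, and clearly $L_N \cap K = \{e\}$. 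Hence $\Isom(N) = L_N \cdot K$ as a set.

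To upgrade this set decomposition to a semidirect product, $K$ must normalize $L_N$. A direct computation gives $(k L_n k^{-1})(e) = k(n)$, so $k L_n k^{-1} \in L_N$ is equivalent to $k L_n k^{-1} = L_{k(n)}$, which in turn is equivalent to $k(nm) = k(n)\, k(m)$ for all $n, m \in N$. Thus the whole theorem reduces to showing that \emph{every isometry $k$ of $N$ fixing $e$ is a Lie-group automorphism of $N$}. This is the main obstacle and constitutes the technical heart of Kivioja--Le Donne.

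To attack that step, I would first invoke Montgomery--Zippin's solution of Hilbert's fifth problem (applied to the proper, locally compact, homogeneous metric space $N$) to promote $\Isom(N)$ to a Lie group acting smoothly on $N$, so every $k \in K$ is a smooth diffeomorphism fixing $e$ with well-defined differential $dk_e$ on the Lie algebra $\mathfrak{n}$. The homomorphism property is then proved by induction on the nilpotency class $c$ of $N$. For $c=1$, $N$ is a normed abelian Lie group and Mazur--Ulam gives that any isometry fixing the identity is linear, hence a homomorphism. For the inductive step, the center $Z(N)$ admits an intrinsic metric characterization (for instance, as the set of $c$ for which right-translation $R_c$ is an isometry, which is a subgroup of $\Isom(N)$ normalized by $K$), so $K$ preserves $Z(N)$. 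Hence $k$ descends to an isometry of $N/Z(N)$ fixing the identity, which is an automorphism by the inductive hypothesis. Combined with $k|_{Z(N)}$ being an automorphism by the abelian base case, together with the fact that $\exp : \mathfrak{n} \to N$ is a diffeomorphism, the Baker--Campbell--Hausdorff formula then lets one promote $dk_e$ to a homomorphism on $N$ which must agree with $k$.

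Once this is established, $K$ acts on $N$ by Lie-group automorphisms that are also isometries, so $K \subseteq \Aut\Isom(N)$; conversely, any isometric automorphism fixes $e$ and so lies in $K$. Therefore $K = \Aut\Isom(N)$ and $\Isom(N) = L_N \rtimes K = N \rtimes \Aut\Isom(N)$, as desired.
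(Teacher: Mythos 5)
This theorem is quoted from Kivioja--Le Donne and used as a black box; the paper contains no proof of it, so the comparison here is with the argument in \cite{Isom_Nilpotent} itself. Your framing is correct and matches theirs: left translations give a closed copy of $N$ in $\Isom(N)$, the stabilizer $K$ of the identity complements it, and the entire theorem reduces to showing that every isometry fixing $e$ is a group automorphism (equivalently, that $K$ normalizes $L_N$). Your computation that $kL_nk\inv = L_{k(n)}$ iff $k$ is multiplicative is also right.

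The proof you propose for the key step, however, has genuine gaps. First, the base case: a left-invariant metric on an abelian Lie group inducing the manifold topology need not be induced by a norm (it need not be homogeneous or even geodesic), so Mazur--Ulam does not apply as stated; one instead needs Montgomery--Zippin plus an averaging argument to reduce to a Euclidean metric. Second, the inclusion $Z(N) \subseteq \{c : R_c \in \Isom(N)\}$ is clear, but the reverse inclusion is not, and the claim that this subgroup is normalized by $K$ is unsupported --- conjugating a right translation by an isometry fixing $e$ has no reason to be a right translation. Third, and most seriously, even granting that $k$ preserves $Z(N)$, descends to an automorphism of $N/Z(N)$, and restricts to an automorphism of $Z(N)$, this does not make $k$ an automorphism of $N$: one can twist any such map by an arbitrary function $N/Z(N) \to Z(N)$ and preserve all of those properties. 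Your final sentence, that BCH ``lets one promote $dk_e$ to a homomorphism which must agree with $k$,'' asserts precisely the conclusion without proving it. The actual argument of Kivioja--Le Donne avoids this induction entirely: after using Montgomery--Zippin to make $\Isom(N)$ a Lie group with compact point stabilizers, they average to produce an invariant Riemannian metric and then invoke Wolf's theorem that $L_N$ is the \emph{nilradical} of the Riemannian isometry group. Being characteristic, the nilradical is normalized by all of $\Isom(N)$, which is exactly the normality of $L_N$ that your reduction requires.
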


We will of course apply the above theorem to the nilpotent Lie group $\pi\inv(v_0) = N_1\times N_2$. Note that the Arzela-Ascoli theorem implies that $\Aut \Isom(\pi\inv(v_0))$ is compact.

\begin{thm}[Auslander \cite{Auslander}]
    Let $N$ be a connected, simply connected nilpotent Lie group and let $C$ be any compact Lie group of automorphisms of $N$. Let $\Gamma \subset N\rtimes C$ be a discrete cocompact subgroup. Then there is a finite group $F$ and a short exact sequence $$1 \to \Gamma \cap N \to \Gamma \to F \to 1.$$  
\end{thm}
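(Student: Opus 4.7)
The projection $\pi: N\rtimes C \to C$ has kernel $N$, so restricting to $\Gamma$ gives
\[1 \to \Gamma \cap N \to \Gamma \to \pi(\Gamma)\to 1.\]
Setting $F = \pi(\Gamma)$, the task reduces to showing $\pi(\Gamma)$ is finite. Since $\pi(\Gamma)\subset C$ and $C$ is compact, it suffices to show $\pi(\Gamma)$ is discrete in $C$. The plan has two stages: first, prove that $\Gamma\cap N$ is a uniform lattice in $N$; second, run a commutator argument together with induction on the nilpotency class of $N$ to force discreteness of $\pi(\Gamma)$.

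The first stage is the technical heart and is the main obstacle. The action of $\Gamma$ on $N \cong (N\rtimes C)/C$ is properly discontinuous (since $\Gamma$ is discrete and $C$ is compact) and cocompact (since $(N\rtimes C)/\Gamma$ is compact), so the statement reduces to a Bieberbach-type rigidity: a discrete group of isometries of the simply-connected nilpotent Lie group $N$ acting cocompactly has a uniform lattice as its translation subgroup. I would prove this via a Zassenhaus-neighborhood argument---using Baker--Campbell--Hausdorff to show that elements of $\Gamma$ sufficiently close to the identity commute---combined with descent through the successive quotients of the lower central series of $\mathfrak{n}$, on each of which $C$ may be averaged to act orthogonally to keep the arithmetic manageable.

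For the second stage, suppose for contradiction that $\pi(\gamma_i) \to e$ in $C$ with $\pi(\gamma_i) \neq e$, and write $\gamma_i = (m_i, c_i)$ so that $c_i = \pi(\gamma_i)$. For any $n \in \Gamma\cap Z(N)$---a lattice in $Z(N)$ by the structure theory of lattices in nilpotent Lie groups---a direct computation in $N\rtimes C$ gives
\[ [\gamma_i, n] \;=\; m_i\,(c_i\cdot n)\, m_i\inv\, n\inv \;=\; (c_i\cdot n)\, n\inv \;\in\; \Gamma\cap N, \]
where the second equality uses that $n$ and $c_i\cdot n$ both lie in $Z(N)$ (automorphisms preserve the center). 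Continuity of the $C$-action gives $[\gamma_i, n] \to e$; discreteness of $\Gamma$ forces $[\gamma_i, n] = e$ for all large $i$, so $c_i$ fixes $\Gamma\cap Z(N)$ pointwise in the tail. Density of the lattice in $Z(N)$ then forces $c_i$ to act trivially on $Z(N)$ eventually. Passing to the quotient $N/Z(N)$, of smaller nilpotency class, the induction hypothesis gives that $c_i$ also acts trivially on $N/Z(N)$ eventually. A short linear-algebra check closes the argument: any automorphism of $N$ trivial on both $Z(N)$ and $N/Z(N)$ corresponds on the Lie algebra to $\id + \psi$ with $\psi:\mathfrak{n}\to\mathfrak{z}(\mathfrak{n})$ vanishing on $\mathfrak{z}(\mathfrak{n})$ (so $\psi^2 = 0$), so its $k$th power is $\id + k\psi$, which can remain in the compact group $C$ for all $k\in\Z$ only if $\psi = 0$. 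Hence $c_i = e$ for all large $i$, contradicting $c_i \ne e$ and establishing that $\pi(\Gamma)$ is discrete, hence finite.
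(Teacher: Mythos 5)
This statement is Auslander's generalization of Bieberbach's first theorem, and the paper does not prove it: it is imported verbatim from \cite{Auslander} as a black box, so there is no internal proof to compare against. Judged on its own terms, your proposal has a genuine gap exactly where the theorem lives. The exactness of the sequence is indeed trivial (restrict the projection $\pi:N\rtimes C\to C$ to $\Gamma$), so all of the content is in showing $F=\pi(\Gamma)$ is finite. You correctly locate the key input --- that $\Gamma\cap N$ is a uniform lattice in $N$ --- but this \emph{is} the theorem: it is the Bieberbach-type rigidity statement that Auslander's paper exists to establish, and your ``stage one'' disposes of it with a one-sentence plan (``Zassenhaus neighborhood, BCH, descend through the lower central series'') that is never executed. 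Everything downstream presupposes it: the density step needs $\Gamma\cap Z(N)$ to be a lattice in $Z(N)$, and the inductive step needs the image of $\Gamma$ in $(N/Z(N))\rtimes \overline{C}$ to be discrete and cocompact, which again requires $\Gamma\cap Z(N)$ cocompact in $Z(N)$. So stage two cannot run without stage one, and stage one is missing.

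There is also a structural redundancy worth flagging: once you know $\Gamma\cap N$ is cocompact in $N$, the finiteness of $\pi(\Gamma)$ follows in two lines from the standard fact that if $H$ is a closed normal subgroup of a locally compact group $G$, $\Gamma\subset G$ is discrete, and $\Gamma\cap H$ is cocompact in $H$, then the image of $\Gamma$ in $G/H$ is discrete; applied with $H=N$, $G/H=C$ compact, this gives $\pi(\Gamma)$ discrete in a compact group, hence finite. Your entire commutator-and-induction ``stage two'' is therefore unnecessary (its individual computations, such as $[\gamma_i,n]=(c_i\cdot n)n\inv$ for central $n$ and the $\psi^2=0$ unipotence argument, are correct, but they prove nothing that does not already follow immediately from stage one). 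The honest summary is that the proposal reduces the theorem to itself and then adds a superfluous epilogue; to count as a proof it would need the Zassenhaus/Bieberbach argument carried out in full, which is a substantial piece of work.
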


We are now prepared to prove \cref{thm:boundary_implies_algebra}, Part 2.

\begin{proof}[Proof of \cref{thm:boundary_implies_algebra}, Part 2]

    Let $G$ act geometrically on a horocyclic product $X\bowtie Y$. By \cref{thm:Upgrade_to_Millefeuille}, we can upgrade this action to an action on $Z\bowtie W$, where $Z,W$ are millefeuille spaces. Since exactly one of the boundaries $\partial^\ell X$ $\partial^u Y$ are connected, exactly one of $Z,W$ is a Heintze group; without loss of generality, say it is $Z$. Moreover, $W$ is a millefeuille space with non-trivial tree factor, meaning (by combining \cref{thm:IsomGroupOfHorocyclicProduct} and Lemma \ref{lem:Isom_of_millefeuille}) $G$ acts cocompactly on a regular tree with valence at least $3$, fixing an end. By Lemma \ref{lem:get_HNN}, $G = \HNN(H, f)$, where $H = \Stab_G(v_0)$ and $f:H \to H$ is an injection with finite-index image at least 2. Lemma \ref{lem:Geometric_on_horosphere} says that $H$ acts geometrically on a nilpotent Lie group with left-invariant metric, which is to say that some quotient $H/K$ by a finite subgroup $K$ is a uniform lattice in $\Isom(N) = N \rtimes \Aut\Isom(N) $ (by the above theorem of Kivioja and Le Donne). Finally, the above Auslander theorem says that $H/K$ is virtually nilpotent, and therefore that $H$ itself is virtually nilpotent by Gromov's polynomial growth theorem \cite{Gromov}.
\end{proof}

We remark that this proof says something even stronger than the statement given in \cref{thm:boundary_implies_algebra}. Namely, that $H$ is nearly a uniform lattice in a nilpotent Lie group, and the map $f$ is nearly an Anosov map. 

\begin{cor}
\label{cor:thmA_part2_STRONG}
    Let $X$ and $Y$ be proper, geodesically-complete $\CAT(-\kappa)$ spaces equipped with height functions based at non-isolated points at infinity. Let $G$ be a finitely-generated group acting geometrically on $X\bowtie Y$ with metric $d_{\bowtie}$ arising from a monotone norm. Assume that exactly one of $\partial_l X$ and $\partial^uY$ is connected. Then there is a subgroup $H\subset G$ and a finite-index injection $f:H\to H$ so that $$G = \HNN(H,f).$$ There is a finite, normal subgroup $K \subset H$ invariant under $f$ so that $H/K$ contains a finite index subgroup $\Gamma$ isomorphic to a lattice in a product of simply connected nilpotent Lie groups $N_1\times N_2$. There are also one parameter families of expanding maps $\alpha_i(t) \in \Aut(N_i)$ so that the restriction of the induced map $\overline{f}:H/K\to H/k$ to $\Gamma$ agrees with the map $(\alpha_1(1)), \alpha_2(-1)$ on $N_1\times N_2$.  

\end{cor}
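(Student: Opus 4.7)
The plan is to augment the proof of Part~2 of \cref{thm:boundary_implies_algebra}, tracking additional structure at each step. That proof already produces millefeuille spaces $Z = N_1 \rtimes \R$ and $W = (N_2 \rtimes \R)[k]$, an ascending HNN decomposition $G = \HNN(H, f)$ with $H = \Stab_G(v_0)$ and $f(h) = tht\inv$ for a stable letter $t$ with $h_Z(t) = 1 = -h_W(t)$, a geometric action of $H$ on the horosphere $\pi\inv(v_0) \cong N_1 \times N_2$, and via Kivioja--Le~Donne and Auslander the finite normal subgroup $K = \ker(H \to \Isom(N_1\times N_2))$ together with a finite-index lattice $\Gamma = (H/K) \cap (N_1 \times N_2) \subset H/K$.

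To verify that $K$ is $f$-invariant, I would use \cref{lem:Isom_of_millefeuille} together with the observation that any isometry of a Heintze group fixing $\infty$ and acting trivially on one horocycle is the identity (which follows from $\Isom(N_i \rtimes \R)_\infty = (N_i \rtimes \R) \rtimes C_i$ and a direct calculation on the horocycle). This forces each $k \in K$ to act trivially on every horosphere $\pi\inv(v)$ whose vertex $v$ is fixed by the induced tree automorphism, and in particular on $\pi\inv(t\inv v_0)$. Combining this with $f(k) \cdot x = t \cdot k \cdot (t\inv x)$ gives $f(k) \cdot x = x$ for $x \in \pi\inv(v_0)$, so $f(K) \subseteq K$ and $\bar f \colon H/K \to H/K$ is well defined.

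For the identification of $\bar f|_\Gamma$, the isometry $t$ restricts to $T \colon \pi\inv(t\inv v_0) \to \pi\inv(v_0)$. Under the natural identifications of each slice with $N_1 \times N_2$ via vertical flows in the two Heintze factors, $T$ decomposes as a product $T_1 \times T_2$ with $T_i = L_{t_{0,i}} \circ A_i$, where $A_i$ is a composite of the Heintze automorphism $\alpha_i(\pm 1)$ with a compact metric-preserving automorphism centralizing the Heintze flow. Conjugation by $T$ sends each left-translation $L_\gamma$ to $L_{B(\gamma)}$ where $B(n_1, n_2) = (t_{0,1} A_1(n_1) t_{0,1}\inv,\, t_{0,2} A_2(n_2) t_{0,2}\inv)$, so $\bar f|_\Gamma$ is the restriction of the product automorphism $B = B_1 \times B_2$. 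Because $B_1$ is expanding on $N_1$ and $B_2$ is contracting on $N_2$, each $B_i$ is an exponential of a derivation with nonzero real eigenvalues, giving one-parameter families $\alpha_i$ with $(B_1, B_2) = (\alpha_1(1), \alpha_2(-1))$.

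The hardest step will be the last, where one must verify that $B_1$ and $B_2$ embed in one-parameter families of \emph{expanding} automorphisms despite the unipotent inner conjugation and compact metric-preserving corrections. The compact part centralizes the Heintze derivation within $\Isom(N_i \rtimes \R)_\infty$, contributing only pure imaginary eigenvalues at the Lie algebra level, while the unipotent inner conjugation by $t_{0,i}$ is nilpotent there; Baker--Campbell--Hausdorff bookkeeping then shows the total derivation retains the expanding signature inherited from the Heintze flow. Verifying the commutation of the compact part with the Heintze flow, and executing the Lie-algebra computation carefully, is the technical heart of the proof.
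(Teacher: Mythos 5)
The paper offers no written proof of this corollary---it is stated as a byproduct of the proof of Part 2 of \cref{thm:boundary_implies_algebra}---so your attempt to supply the missing details is the right exercise. Your first two steps are sound and surely what the authors intend. Taking $K=\ker\bigl(H\to\Isom(\pi\inv(v_0))\bigr)$, the rigidity fact that an isometry of a Heintze group fixing $\infty$ and acting trivially on one horosphere is the identity (every point lies on a unique vertical geodesic crossing that horosphere, and that geodesic is fixed pointwise) shows that $k\in K$ acts trivially on $\pi\inv(v)$ for every vertex $v$ fixed by $k_T$; since $k_T$ fixes the ray from $v_0$ to $\infty_T$, which contains $t\inv v_0$, you get $f(K)\subseteq K$ exactly as you describe. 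The conjugation formula identifying $\bar f|_\Gamma$ with the restriction of $\mathrm{Inn}_{t_0}\circ(B_1\times B_2)$, where each $B_i$ is the Heintze dilation composed with a compact metric-preserving correction, is likewise correct.

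The gap is in the last step, and it cannot be closed as you propose. Your assertion that the compact part contributes ``only pure imaginary eigenvalues at the Lie algebra level'' presupposes that $A_i$ lies in the image of the exponential map of the compact stabilizer, i.e.\ in its identity component; it need not. Concretely, take $G=\BS(1,-2)=\langle a,t\mid tat\inv=a^{-2}\rangle$ acting geometrically on $\Ha^2\bowtie T_2$ (with $T_2$ the $3$-valent Bass--Serre tree): here $H=\langle a\rangle\cong\Z$, $K=1$, $N_1=\R$, $N_2$ is trivial, $t$ acts on $\Ha^2$ by the orientation-reversing isometry $z\mapsto -2\bar z$, and $\bar f|_\Gamma$ is multiplication by $-2$. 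No one-parameter family in $\Aut(\R)=\R^\times$ evaluates to $-2$ at time $1$, since one-parameter subgroups of $\R^\times$ land in $\R_{>0}$; the compact correction is $-1\in O(1)$, which is not an exponential, so the composite automorphism admits no real logarithm and no amount of Baker--Campbell--Hausdorff bookkeeping will produce the required family. The conclusion can only be reached in a weakened form---either $\bar f|_\Gamma$ agrees with $(\alpha_1(1),\alpha_2(-1))$ only up to composition with a finite-order, metric-preserving automorphism, or one must pass to the finite-index sub-HNN extension with stable letter a suitable power of $t$ so that the compact parts land in the identity components. Your write-up should prove one of these corrected versions and explicitly flag the discrepancy with the corollary as stated.
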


In this way, all groups subject to the hypotheses of Part 2 of the theorem closely resemble the groups constructed in Subsection \ref{subsec:Construct_millefeuille_example}.

\subsection{Proof of \cref{thm:boundary_implies_algebra}, Part 3: both boundaries connected.}

The proof of Part three of \cref{thm:boundary_implies_algebra} is similar to (and much simpler than) the proof of Part 2. Since both boundaries are assumed to be connected, $Z$ and $W$ are both Heintze groups. As before, Lemma \ref{lem:discrete_heights} gives a homomorphism $h:G\to \Z$, which allows us to write $G = \ker(h)\rtimes \Z$. The main difference from Part 2 is that the horosphere $h\inv_{Z\bowtie W}(0)$ is now connected, so $\ker(h)$ is finitely generated. We will then use the same argument as before to conclude that $\ker(h)$ is virtually nilpotent.

\begin{proof}[Proof of \cref{thm:boundary_implies_algebra}, Part 1]

    Let $G$ act geometrically on a horocyclic product $X\bowtie Y$. By \cref{thm:Upgrade_to_Millefeuille}, we can upgrade this action to an action on $Z\bowtie W$, where $Z,W$ are millefeuille spaces by passing to a subgroup of index at most two. Since both upper and lower boundaries are connected, $Z$ and $W$ must be Heintze groups, and in particular the horospheres of $Z\bowtie W$ are left cosets of the connected, simply connected nilpotent Lie group $h_{Z\bowtie Y}^{-1}(0) = N_1 \times N_2$. We consider $W$ as a degenerate millefeuille space, whose tree factor $T$ is a line. Let $v_0 \in T$ be the vertex at height $0$ and let $h:G \to \Z$ be the height-change homomorphism. Since $T$ is a line, $H = \Stab(v_0) = \ker(h)$ and $\pi\inv(v_0) = \beta\inv(0)$. 

    Lemma \ref{lem:Geometric_on_horosphere} still applies to this case, so $H$ acts geometrically on $\pi\inv(v_0)$, a connected, simply connected nilpotent Lie group with a left-invariant metric. As before, the Kivioja-Le Donne and Auslander theorems imply that $H$ is virtually nilpotent. 
\end{proof}

As before, the proof actually gives a stronger result: 

\begin{cor}
\label{cor:thmA_part3_STRONG}
     Let $X$ and $Y$ be proper, geodesically-complete $\CAT(-\kappa)$ spaces equipped with height functions based at non-isolated points at infinity. Let $G$ be a finitely-generated group acting geometrically on $X\bowtie Y$ with metric $d_{\bowtie}$ arising from a monotone norm. Assume that both $\partial_l X$ and $\partial^uY$ are connected. Then there is a subgroup $H\subset G$ and an automorphism $f \in \Aut(H)$ so that a finite-index subgroup of $G$ is isomorphic to $H\rtimes_f \Z$. There is a finite, normal subgroup $K \subset H$ invariant under $f$ so that $H/K$ contains a finite index subgroup $\Gamma$ isomorphic to a lattice in a product of simply connected nilpotent Lie groups $N_1\times N_2$. There are also one parameter families of expanding maps $\alpha_i(t) \in \Aut(N_i)$ so that the restriction of the induced map $\overline{f}:H/K\to H/k$ to $\Gamma$ agrees with the map $(\alpha_1(1)), \alpha_2(-1)$ on $N_1\times N_2$.  
\end{cor}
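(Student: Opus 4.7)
The plan is to closely follow the proof of Part 3 of \cref{thm:boundary_implies_algebra} while extracting additional algebraic structure at each step. First, apply \cref{thm:Upgrade_to_Millefeuille} to obtain a subgroup $G' \leq G$ of index at most two acting geometrically on $Z \bowtie W$, with $Z, W$ millefeuille spaces. Because $\partial_\ell X$ and $\partial^u Y$ are both connected, neither $Z$ nor $W$ has a tree factor, so $Z = N_1 \rtimes_{\alpha_1} \R$ and $W = N_2 \rtimes_{\alpha_2} \R$ are Heintze groups with defining one-parameter families of expanding automorphisms $\alpha_i$. By Lemma \ref{lem:discrete_heights}, the height-change $h: G' \to \R$ has image isomorphic to $\Z$. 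Picking $t \in G'$ with $h(t) = 1$ and setting $H = \ker h$, the resulting short exact sequence splits since $\Z$ is free, giving $G' \cong H \rtimes_f \Z$ with $f \in \Aut(H)$ equal to conjugation by $t$. This is the finite-index subgroup of $G$ asserted by the corollary.

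Next, I would apply Lemma \ref{lem:Geometric_on_horosphere}, viewing $W$ as a degenerate millefeuille space whose tree factor is a line and taking $v_0$ the unique height-$0$ vertex. This yields a geometric action of $H$ on the horocycle $\pi^{-1}(v_0) = N_1 \times N_2$ equipped with its induced left-invariant path metric. By the Kivioja--Le Donne theorem, $\Isom(N_1 \times N_2) \cong (N_1 \times N_2) \rtimes C$ for a compact group $C$ of isometric automorphisms, and Auslander's theorem then produces a finite normal subgroup $K \subset H$ (which I would take to be characteristic in $H$, hence $f$-invariant) such that $H/K$ contains a finite-index subgroup $\Gamma$ isomorphic to a uniform lattice in $N_1 \times N_2$. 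After possibly passing to a further finite-index, $f$-invariant sublattice, $\Gamma$ itself may be assumed $f$-invariant.

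The main new ingredient is identifying $\overline{f}|_\Gamma$ with the specified expanding map. For a Heintze group $N \rtimes_\alpha \R$, every element of $\Isom(N \rtimes \R)_\infty$ with height change $c$ has the form $(n, s) \mapsto (g \cdot \alpha(c)(n), s + c)$ for some $g \in N$. Applied to the projections of $t$ onto $Z$ and $W$, whose height changes are $+1$ and $-1$ respectively, the action of $t$ on the horocycle $N_1 \times N_2$ is the composition of left translation by some $(g_1, g_2) \in N_1 \times N_2$ with the automorphism $A = (\alpha_1(1), \alpha_2(-1))$. Conjugation by $t$ on $\gamma \in \Gamma \subset N_1 \times N_2$ then yields $t \gamma t^{-1} = (g_1, g_2)\, A(\gamma)\, (g_1, g_2)^{-1}$, which equals $A'(\gamma)$ for $A' = (g_1 \alpha_1(1) g_1^{-1},\, g_2 \alpha_2(-1) g_2^{-1})$. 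Replacing each $\alpha_i$ by the conjugated one-parameter family $s \mapsto g_i \alpha_i(s) g_i^{-1}$ (which remains a family of expanding automorphisms of $N_i$, since inner automorphisms preserve expansiveness and the one-parameter subgroup property) yields the form stated in the corollary.

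The principal obstacle is reconciling the abstract conjugation $f(h) = t h t^{-1}$ in $G'$ with the concrete geometric action of $t$ on the horocycle $N_1 \times N_2$; this hinges on the explicit description of isometries of a Heintze group fixing its distinguished boundary point, together with the observation that inner-automorphism conjugates of a one-parameter family of expanding automorphisms remain such families. A minor bookkeeping issue is to check that the successive finite-index passages (to $G'$ from $G$, and then to a characteristic $f$-invariant $K$ and $\Gamma$) can all be taken compatibly; the characteristic choice of $K$ and the Zassenhaus-style intersection argument for $\Gamma$ handle this.
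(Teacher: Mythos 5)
Your proposal follows the paper's intended route exactly: the paper does not write out a separate proof of this corollary but derives it from the proof of Part~3 (upgrade to $Z\bowtie W$ via \cref{thm:Upgrade_to_Millefeuille}, note both factors are Heintze groups, split off $\Z$ using Lemma~\ref{lem:discrete_heights}, apply Lemma~\ref{lem:Geometric_on_horosphere} with $W$ viewed as a degenerate millefeuille space, then Kivioja--Le Donne and Auslander), and your extraction of the extra structure is the natural way to make the ``stronger result'' explicit. The outline is correct.

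Two details deserve more care than you give them. First, declaring $K$ ``characteristic, hence $f$-invariant'' is not justified: the kernel of a particular action need not be characteristic. The clean argument is that an element of $\Isom(Z)_\infty\times_{-h}\Isom(W)_\infty$ with zero height change which fixes a single horosphere of $Z\bowtie W$ pointwise must be the identity (use the explicit form of isometries of a Heintze group fixing $\infty$), so $K$ is the kernel of the $G'$-action on all of $Z\bowtie W$ intersected with $H$; this is normal in $G'$ and therefore $f$-invariant. Second, your normal form for elements of $\Isom(N\rtimes\R)_\infty$ omits the compact rotational factor: such an isometry is $(n,s)\mapsto(g\cdot\alpha(c)(\phi(n)),s+c)$ with $\phi$ in a compact group of automorphisms, so $\overline f|_\Gamma$ is $\gamma\mapsto gA(\phi(\gamma))g\inv$ rather than $gA(\gamma)g\inv$, and absorbing $\phi$ into a one-parameter expanding family needs an argument (e.g.\ writing $\phi$ as the time-one map of a one-parameter subgroup of the compact factor commuting with $\alpha$, possibly after passing to the identity component). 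Relatedly, your final substitution is computationally off: $\mathrm{conj}_g\circ A$ is not $\mathrm{conj}_g\circ A\circ\mathrm{conj}_g\inv$; to realize $\gamma\mapsto gA(\gamma)g\inv$ as $\beta(1)$ for a conjugated family $\beta(s)=\mathrm{conj}_u\circ\alpha(s)\circ\mathrm{conj}_u\inv$ you must solve $uA(u)\inv=g$, which is possible because the expanding automorphism $A$ has no eigenvalue $1$ on the Lie algebra. These are repairable gaps, not wrong turns, and the paper itself glosses over the same points.
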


\bibliographystyle{alpha}
\bibliography{bib}

\end{document}